\documentclass[12pt]{amsart}

\usepackage{amsfonts,amssymb,amsmath,amsthm,amstext}
\usepackage{xcolor}
\usepackage{geometry}
\usepackage{mathrsfs}
\usepackage{txfonts}
\usepackage{url}
\usepackage[colorlinks, linkcolor=blue, citecolor=blue, urlcolor=blue, hypertexnames=false]{hyperref}
\allowdisplaybreaks
\numberwithin{equation}{section}
\newtheorem{theorem}{Theorem}[section]
\newtheorem{proposition}[theorem]{Proposition}
\newtheorem{lemma}[theorem]{Lemma}
\newtheorem{corollary}[theorem]{Corollary}
\theoremstyle{remark}
\newtheorem{remark}[theorem]{Remark}

\newcommand{\R}{\mathbb R}
\newcommand{\bbC}{\mathbb C}

\newcommand{\Lipd}{\operatorname{Lip}_d}
\newcommand{\dist}{\operatorname{dist}}
\newcommand{\one}{\mathbf 1}
\newcommand{\supp}{\operatorname{supp}}

\title[Calder\'on commutators and chamber lifting]
{Calder\'on-type commutators and chamber lifting\\ in the Dunkl setting}

\author{Yongsheng Han}
\address{Department of Mathematics, Auburn University, Auburn, AL 36849-5310, U.S.A.}
\email{hanyong@auburn.edu}

\author{Ming-Yi Lee}
\address{Department of Mathematics, National Central University, Chung-Li 320, Taiwan, Republic of China}
\email{mylee@math.ncu.edu.tw}

\author{Ji Li}
\address{School of Mathematical and Physical Sciences, Macquarie University, NSW 2109, Australia}
\email{ji.li@mq.edu.au}

\author{Eric Sawyer}
\address{Department of Mathematics and Statistics, McMaster University, 1280 Main Street West, Hamilton, Ontario L8S 4K1, Canada}
\email{sawyer@mcmaster.ca}

\author{Liangchuan Wu}
\address{School of Mathematical Sciences, Anhui University, Hefei, 230601, P.R.~China}
\email{wuliangchuan@ahu.edu.cn}

\subjclass[2020]{Primary 42B20; Secondary 42B35, 33C52}
\keywords{Calder\'on-type commutators, Dunkl Riesz transforms, chamber lifting, orbit Lipschitz symbols, heat kernels}

\date{}

\begin{document}

\begin{abstract}
Let $G$ be a finite reflection group, and let $T_j$, $\Delta_\kappa$, and $d\omega$ denote its Dunkl operators,
Laplacian, and measure, respectively.  Write $\mathcal R_j=-T_j(-\Delta_\kappa)^{-1/2}$ for the $j$th Dunkl
Riesz transform.  We study the Calder\'on-type commutator $[M_b,T_i\mathcal R_j]$ on the full space
$L^p(\R^N,d\omega)$, without assuming $G$-invariance of the functions.  For $b\in\Lipd$, a
full-space Dunkl $T1$ argument for $[M_b,(-\Delta_\kappa)^{1/2}]$, combined with the exact factorization
$$
    [M_b,T_i\mathcal R_j]   =-M_{\partial_i b}\mathcal R_j-\mathcal R_iM_{\partial_jb}
     +\mathcal R_i[M_b,(-\Delta_\kappa)^{1/2}]\mathcal R_j,
$$
implies boundedness on $L^p(d\omega)$ for every $1<p<\infty$.  We also prove that the prescribed heat truncations are uniformly
bounded on $L^2(d\omega)$ and converge jointly to this operator.  To control these truncations, we lift all reflected values of an
arbitrary function to separate coordinates on a fixed Weyl chamber.  This chamber lifting
retains all non-$G$-invariant information and places every possible orbit singularity on the ordinary chamber diagonal of
a finite matrix operator.  Wall-layer estimates and scalar $T1$ bounds for the integrated entries are uniform in both heat
endpoints.  A dense-core argument then proves joint, path-independent
weak-operator convergence to the factorized commutator.  The lifted limit is associated, in the separated-support
sense, with a finite matrix of scalar Calder\'on--Zygmund kernels on the chamber.
\end{abstract}

\maketitle

\section{Introduction}

Calder\'on-type commutators are among the earliest influential examples of non-convolution singular integrals.
In his 1965 paper \cite{Calderon1965}, Calder\'on introduced
$$
     C_1f(x)=\frac1\pi\operatorname{p.v.}\int_{\R}\frac{a(x)-a(y)}{(x-y)^2}f(y)\,dy.
$$
This operator may be viewed formally as the commutator of multiplication by a Lipschitz coefficient with
$\frac d{dx}H$, where $H$ is the Hilbert transform.  For an ordinary derivative,
the Leibniz rule reduces the corresponding commutator to multiplication by $a'$.  Calder\'on's theorem showed that
the same relation between coefficient regularity and boundedness persists for a genuinely singular, nonlocal operator.

Calder\'on's proof was not a routine Fourier-multiplier argument.  The factor $a(x)-a(y)$ destroys translation
invariance, and the proof used complex function theory, a bilinear analytic representation, and area-integral
estimates.  More concretely, Calder\'on reduced the relevant bilinear form to a Hardy-space estimate for an expression
of the form
$$
     B(F,G)(x)=i\int_0^\infty F'(x+iy)G(x+iy)\,dy.
$$
This expression may be regarded as an early predecessor of the paraproduct point of view.  The equivalence between
Hardy-space norms and area integrals was a central part of the argument.  Thus the commutator was, from the beginning,
a model problem in which coefficient regularity, cancellation, and non-convolution singular integrals meet.

The David--Journ\'e $T1$ theorem \cite{DavidJourne1984} later gave a real-variable criterion for such operators.
In that theory, $L^2$ boundedness is reduced to kernel estimates, weak boundedness, and the conditions
$T1,T^*1\in\operatorname{BMO}$.  For Calder\'on's commutator, integration by parts formally identifies $C_1(1)$,
up to the convention for $H$, with $H(a')$, which belongs to BMO when $a'\in L^\infty$.  This helps explain why
Calder\'on's commutator became a standard test object for Calder\'on--Zygmund theory beyond convolution kernels.
Calder\'on also considered higher-dimensional commutators associated with homogeneous singular kernels.  Subsequent work developed these
ideas in multilinear, endpoint, BMO, and geometric directions; see, for example,
\cite{Calderon1980,CoifmanMeyer1975,ChristJourne1987,Hofmann1994} and the references therein.

The present paper studies an analogue of this question in the rational Dunkl setting.  The classical Fourier
transform extends to an isometry on $L^2(\R^N)$ and is compatible with translations, dilations, and rotations.
To study analysis associated with finite reflection groups, Dunkl introduced the differential-difference operators
in \cite{Dunkl1989}; the associated transform theory was subsequently developed in \cite{Dunkl1992,deJeu1993}.  Let
$R\subset\R^N$ be a normalized root system, let $G$ be the finite reflection group generated by $R$, and let
$\kappa:R\to[0,\infty)$ be a nonnegative
$G$-invariant multiplicity function.  We write $T_j$ for the Dunkl operators, $d\omega$ for the associated measure,
and
$$
      \Delta_\kappa=\sum_{j=1}^NT_j^2,   \qquad   d(x,y)=\min_{\mathsf g\in G}\|x-\mathsf g y\|
$$
for the Dunkl Laplacian and the orbit pseudometric, respectively.

The Dunkl transform plays the role of the Fourier transform and diagonalizes $\Delta_\kappa$, while $d$ records
the singular geometry of the $G$-orbits.  The general theory of the transform, intertwining operators, and radial
translation formulas was developed in \cite{deJeu1993,Rosler1999,Rosler2003,DunklXu}.  Related convolution and
transference results appear in \cite{ThangaveluXu2005,DaiWang2010}.
The resulting harmonic-analysis theory includes Riesz transforms, Hardy spaces, spectral multipliers,
Littlewood--Paley operators, variation and oscillation operators, and commutators; see, for example,
\cite{BetancorCiaurriVarona2007,AmriGasmiSifi2010,ThangaveluXu2007,AmriSifi2012,AnkerDziubanskiHejna2019,Dziubanski2016,
DunklMultiplier2019,DziubanskiHejna2020Atomic,ABFR2024,Hejna2023} and the references therein.

The sharp $L^p$ bounds for Dunkl area integrals in \cite{ChangGongLiLiangWickWuArea}, obtained by the third and fifth
authors with other collaborators, are also conceptually close to the present work.  The same chamber-wise viewpoint
is used here, but the present commutator problem is more delicate.  The finite-coordinate chamber lifting retains the
values of an arbitrary function on all reflected chambers and represents the prescribed heat truncations by a finite
matrix on one fixed chamber.  Uniform $L^2$ bounds for these truncations are not inherited
from a square-function identity and do not follow from the factorization below alone; they are proved by scalar
$T1$ estimates for the integrated matrix entries.

Many classical operators in Dunkl analysis retain a transform or convolution structure.  Write
$$
      D=(-\Delta_\kappa)^{1/2},   \qquad   \mathcal R_j=-T_jD^{-1},    \qquad j=1,\ldots,N,
$$
so that $\mathcal R_j$ is the $j$th Dunkl Riesz transform.  We study the first-order commutator
$[M_b,T_i\mathcal R_j]$, where $M_bf=bf$.  Although the
Dunkl transform diagonalizes $T_i\mathcal R_j$, it does not diagonalize multiplication by a nonconstant $b$.
Consequently, the commutator remains a genuine non-convolution singular integral.

The coefficient class used in this paper is the orbit Lipschitz space $\Lipd(\R^N)$.  Since $d$ is a
pseudometric, the condition on the orbit diagonal must be included explicitly: $b(x)=b(y)$ whenever $d(x,y)=0$, and
$$
       \|b\|_{\Lipd}:=\sup_{d(x,y)>0}\frac{|b(x)-b(y)|}{d(x,y)}<\infty.
$$
It follows that
$$
     |b(x)-b(y)|\leq\|b\|_{\Lipd}d(x,y)
$$
for all $x,y$.  This condition is adapted to the full orbit geometry used in the kernel estimates.  In the classical
commutator, $a(x)-a(y)$ compensates one order of the singularity on the ordinary diagonal $x=y$.  In the Dunkl
setting, differentiated heat kernels also contain terms evaluated at reflected points.  The factor $b(x)-b(y)$ is
therefore estimated against the distance to the orbit diagonal $d(x,y)=0$, not only against $\|x-y\|$.

Euclidean Lipschitz regularity alone need not provide this uniform cancellation for the full reflection group.
If $b$ is not $G$-invariant, then $b(x)-b(\mathsf g x)$ need not vanish on a reflected diagonal.  On the other hand, if
$b\in\Lipd$, then $b(\mathsf g x)=b(x)$ for every $\mathsf g\in G$ and $b$ satisfies the Euclidean Lipschitz
estimate since $d(x,y)\leq\|x-y\|$.  Conversely, if $b$ is $G$-invariant and Euclidean Lipschitz, then, for every
$\mathsf g\in G$,
$$
     |b(x)-b(y)|=|b(x)-b(\mathsf g y)|   \leq\operatorname{Lip}_{\rm Euc}(b)\|x-\mathsf g y\|.
$$
Here $\operatorname{Lip}_{\rm Euc}(b)$ denotes the Euclidean Lipschitz seminorm.  Taking the minimum over
$\mathsf g$ proves the orbit Lipschitz estimate.  Thus $\Lipd$ is exactly the class of
$G$-invariant Euclidean Lipschitz functions, with the same seminorm.

The class $\Lipd$ serves two distinct purposes in the proofs.  Its defining estimate supplies one factor of
$d(x,y)$ to cancel an orbit singularity in the kernels of $[M_b,D]$ and of the heat regularizations.  Its
$G$-invariance permits, in the weak sense on the natural first-order domain, the ordinary product rule
$$
    T_k(bf)=(\partial_kb)f+bT_kf,
$$
which is needed for the exact factorization.  Euclidean Lipschitz regularity also ensures
$\partial_kb\in L^\infty$ and controls the kernel differences used in the $T1$ arguments.  Thus $\Lipd$ is a
sufficient coefficient class for the full orbit formulation; we make no necessity or optimality claim.  If some
multiplicities vanish, the corresponding reflection terms do not occur in the Dunkl operators, and the full-orbit
condition may be stronger than required.  In particular, when $\kappa\equiv0$, it selects the $G$-invariant symbols
from the usual Euclidean Lipschitz class.

The invariance of the symbol should not be confused with a reduction to invariant functions.  The commutator acts on
an arbitrary function.  Multiplication by $b$ is not diagonalized by the Dunkl transform unless $b$ is
constant, so the operator remains a non-convolution singular integral on the full space.  Passing to
$\R^N/G$ would identify the values of a function on different reflected chambers and would therefore change the
operator.  Many effective tools in Dunkl analysis are adapted to $G$-invariant functions or to scalar objects on the
orbit space, whereas that reduction is unavailable here.  All reflected values of the function must instead be
retained, which is why chamber lifting leads to finite matrix kernels rather than to one scalar kernel on the quotient.

The kernel geometry presents a further difficulty.  The heat kernel is localized by $d$, whereas its regularity is
measured through Euclidean displacements.  Applying $T_{i,x}T_{j,x}$ to the heat kernel produces Euclidean
coefficients such as
$$
    \frac{(y_i-x_i)(y_j-x_j)}{t^2},
$$
as well as heat kernels evaluated at reflected points.  Hence possible singularities occur along the full orbit
diagonal, while regularity estimates compare Euclidean distances.  Since $d(x,y)$ and $\|x-y\|$ are not uniformly
comparable, the proof uses a Euclidean correction to the orbit-Gaussian heat-kernel bound to absorb these
coefficients.  At the same time, the orbit Lipschitz estimate contributes a factor of $d(x,y)$, which is absorbed by
the orbit-Gaussian decay in the scale estimates.

Among the closest published results, \cite{HanLeeLiWick2024} treats the zero-order commutators
$[M_b,\mathcal R_j]$ and their BMO and VMO symbol classes.  Those results do not cover the order-one operator
$[M_b,T_i\mathcal R_j]$.  The mixed Euclidean--orbit $T1$ theorem of \cite{TanHanHanLeeLi2025} supplies a
full-space boundedness criterion, but it neither constructs this commutator by itself nor controls the prescribed
two-ended heat truncations.  We therefore separate two questions.  The first is whether the commutator has a bounded
realization on the full $L^p$ space.  The second is whether the prescribed two-ended heat truncations are uniformly
bounded on $L^2$ and converge jointly to that realization.  The first result answers the bounded-realization question by an
exact factorization through $[M_b,D]$.  The second uses chamber lifting to retain all reflected values of a general
function, proves uniform $L^2$ control of the heat truncations, and identifies the resulting finite matrix kernel on a
Weyl chamber.

Our first result constructs the commutator without chamber lifting and without using the prescribed heat
truncations.  Throughout the paper the complex $L^2$ pairing is linear in the first variable.

\begin{theorem}\label{cc:thm:operator}
Let $b\in\Lipd(\R^N)$ and fix $i,j\in\{1,\ldots,N\}$.  For $f,g\in C_c^\infty(\R^N)$, define
$$
     \mathfrak c_b^{ij}(f,g)    :=\langle bT_i\mathcal R_jf,g\rangle-\langle T_i\mathcal R_j(bf),g\rangle.
$$
The distributional commutator $B_b=[M_b,D]$ extends boundedly to $L^p(\R^N,d\omega)$ for every
$1<p<\infty$.  For every such $p$, the form $\mathfrak c_b^{ij}$ has a unique bounded realization
$\mathcal C_{b,\mathrm{abs}}^{ij}$ on $L^p(\R^N,d\omega)$, and
\begin{equation}\label{cc:eq:factor}
     \mathcal C_{b,\mathrm{abs}}^{ij}  =-M_{\partial_i b}\mathcal R_j
     -\mathcal R_iM_{\partial_jb}    +\mathcal R_iB_b\mathcal R_j.
\end{equation}
Here $\partial_kb$ is the almost-everywhere defined weak derivative of the Euclidean Lipschitz representative of
$b$.  Moreover,
$$
      \|B_b\|_{L^p\to L^p}   +\|\mathcal C_{b,\mathrm{abs}}^{ij}\|_{L^p\to L^p} 
      \leq C_p\|b\|_{\Lipd},    \qquad 1<p<\infty,
$$
where $C_p$ depends only on $p$ and the fixed Dunkl structure.
\end{theorem}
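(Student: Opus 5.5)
The proof has two parts. First we show that $B_b=[M_b,D]$ is bounded on every $L^p(d\omega)$. Then we derive the factorization \eqref{cc:eq:factor} at the level of bilinear forms on $C_c^\infty$. Boundedness of $\mathcal C_{b,\mathrm{abs}}^{ij}$ then follows from the $L^p$ boundedness of $\mathcal R_i,\mathcal R_j$, which is known from the Dunkl literature cited in the introduction. It also uses $\|\partial_kb\|_\infty\le \|b\|_{\Lipd}$, which holds because $\Lipd$ consists of the $G$-invariant Euclidean Lipschitz functions with the same seminorm. Uniqueness of the bounded realization is automatic, since $C_c^\infty$ is dense in $L^p(d\omega)$ and in $L^{p'}(d\omega)$.

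For $B_b$ we represent $D$ through the heat semigroup,
$$
D=\frac{1}{\Gamma(1/2)}\int_0^\infty (I-e^{t\Delta_\kappa})\,t^{-3/2}\,dt
\quad\text{or equivalently}\quad
D f=-\frac{1}{\sqrt\pi}\int_0^\infty \partial_t e^{t\Delta_\kappa}f\,t^{-1/2}dt .
$$
This gives, off the orbit diagonal, the kernel $K_b(x,y)=(b(x)-b(y))K_D(x,y)$, where $K_D(x,y)=c\int_0^\infty \partial_t h_t(x,y)\,t^{-1/2}dt$. The standard Dunkl heat kernel bounds give
$$
|\partial_t h_t(x,y)|\lesssim t^{-1}\,\omega(B(x,\sqrt t))^{-1}e^{-c\,d(x,y)^2/t}.
$$
From these we get $|K_D(x,y)|\lesssim d(x,y)^{-1}\omega(B(x,d(x,y)))^{-1}$, and the factor $|b(x)-b(y)|\le\|b\|_{\Lipd}d(x,y)$ cancels the extra order. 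The smoothness estimates for $K_b$ in $x$ and $y$ are obtained in the same way. We use the Euclidean regularity of $h_t$ (the Euclidean-correction form of the heat-kernel gradient bound) for the kernel part, and the Euclidean Lipschitz bound $|b(x)-b(x')|\le\|b\|\,\|x-x'\|$ for the symbol part. The result is precisely the mixed Euclidean--orbit Calder\'on--Zygmund kernel condition required by the $T1$ theorem of \cite{TanHanHanLeeLi2025}.

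It then remains to check weak boundedness and the conditions $B_b1,\ B_b^*1\in\mathrm{BMO}$. Since $D$ is formally self-adjoint, $B_b^*=-B_{\bar b}$, so only $B_b1$ needs treatment. Formally $B_b1=-Db$. Writing $D=-\sum_k\mathcal R_kT_k$ and using the product rule for the $G$-invariant $b$ together with $T_k1=0$, we get $Db=-\sum_k\mathcal R_k(\partial_kb)$. The Riesz transforms map $L^\infty$ into BMO, so this lies in BMO with norm $\lesssim\|b\|_{\Lipd}$. Making this rigorous means pairing with mean-zero test functions and truncating $b$ by $G$-invariant cutoffs; this is the step I expect to be the main obstacle. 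Weak boundedness follows from the size bound, since $b(x)-b(y)$ cancels one order on normalized bumps. The $T1$ theorem then gives $\|B_b\|_{L^p\to L^p}\lesssim\|b\|_{\Lipd}$.

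For the factorization, take $f,g\in C_c^\infty$ and write $T_i\mathcal R_j=-T_iT_jD^{-1}$. We commute $b$ successively through $T_i$, through $T_j$ and through $D^{-1}$, using the weak product rule $T_k(bf)=(\partial_kb)f+bT_kf$, which is valid because $b$ is $G$-invariant. The identity $[M_b,D^{-1}]=-D^{-1}[M_b,D]D^{-1}$ holds on the range where both sides make sense. Collecting terms, and using $\mathcal R_i=-T_iD^{-1}$, gives
$$
[M_b,T_i\mathcal R_j]=-M_{\partial_ib}\mathcal R_j-\mathcal R_iM_{\partial_jb}+\mathcal R_iB_b\mathcal R_j .
$$
Each step is justified in the weak sense on the natural first-order domain, with $\mathcal R_jf\in L^2\cap L^p$ and $D^{-1}$ applied only to functions in the range of $T_j$. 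The formal manipulation of $D^{-1}$ is avoided by instead verifying the identity after pairing with $g$ and using $T_jD^{-1}=-\mathcal R_j$ on the Dunkl transform side. All terms on the right-hand side are bounded on $L^p$, which yields the stated estimate.
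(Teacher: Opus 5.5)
Your architecture matches the paper's: a Dunkl $T1$ argument for $B_b=[M_b,D]$, the identity $B_b1=-Db$ written through $\mathcal R_k(\partial_kb)$, then the factorization and the $L^p$ bounds for the Riesz transforms. There is, however, a genuine gap in the kernel step: the estimates you derive are not the mixed Dunkl--Calder\'on--Zygmund conditions you claim.

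From $|\partial_th_t|\lesssim t^{-1}\omega(B(x,\sqrt t))^{-1}e^{-cd(x,y)^2/t}$ you obtain only $|K_D(x,y)|\lesssim 1/(d(x,y)\,V(x,y,d(x,y)))$. Hence, with $L_b=\|b\|_{\Lipd}$, you get $|K_b(x,y)|\lesssim L_b/V(x,y,d(x,y))$. The symbol part $(b(x)-b(x'))K_D(x,y)$ of your smoothness estimate is then only $\lesssim L_b\|x-x'\|/(d(x,y)V)$. These are orbit-scale bounds.

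The hypotheses \eqref{cc:eq:dCZ-size}--\eqref{cc:eq:dCZ-smooth} of Theorem~\ref{cc:thm:dunkl-T1} require the extra factors $(d(x,y)/\|x-y\|)^\varepsilon$ and $(\|x-x'\|/\|x-y\|)^\varepsilon$. These are small near a reflected diagonal $x=\mathsf g y$ away from the ordinary one, where $d(x,y)\ll\|x-y\|$. This is not cosmetic. Orbit-scale bounds are also satisfied by kernels such as $k(x,\sigma y)$, which are genuinely singular on a reflected diagonal. For such operators the tests against $1$ carry no information: if $T=S-SR_\sigma$ with $R_\sigma f=f\circ\sigma$ and $S^*1=0$, then $T1=T^*1=0$ whatever $S$ is. This is exactly why the later chamber argument must test against the indicators $\chi_\tau$.

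The paper obtains the missing factor as follows. It writes $J_D(x,y)=-\frac1{2\sqrt\pi}\int_0^\infty h_t(x,y)\,t^{-3/2}\,dt$, which is your $\partial_t$-form integrated by parts. It then keeps the Dziuba\'nski--Hejna factor $(1+\|x-y\|/\sqrt t)^{-2}$ of Lemma~\ref{cc:lem:heat-buffer} in the bound for $h_t$ itself. Splitting the scale integral at $s=d(x,y)$ and $s=\|x-y\|$ gives \eqref{cc:eq:D-int-size}, that is, $|J_D|\lesssim 1/(\|x-y\|V(x,y,d(x,y)))$. Consequently $|K_b|\lesssim L_b\frac{d(x,y)}{\|x-y\|}\frac{1}{V(x,y,d(x,y))}$, and the symbol term becomes $\lesssim L_b\|x-x'\|/(\|x-y\|V)$. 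Your $\partial_t$-form does not readily carry this buffer. By Lemma~\ref{cc:lem:heat-structure}, $\partial_th_t=\Delta_{\kappa,x}h_t$ contains the reflected terms $t^{-1}h_t(\sigma_\alpha x,y)$, and their bounds have no decay in $\|x-y\|$.

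Two further steps are under-justified.

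(i) Weak boundedness does not follow from the size bound, because an order-zero majorant $1/V(x,y,d(x,y))$ is not locally integrable. The paper uses the antisymmetry $K_b(y,x)=-K_b(x,y)$, which comes from $J_D(x,y)=J_D(y,x)$. This lets it replace $F(x,y)$ by $\frac12(F(x,y)-F(y,x))=O(\min\{1,\|x-y\|/r\})$. It is precisely the mixed factor that turns $\frac{d(x,y)}{\|x-y\|}\min\{1,\|x-y\|/r\}$ into $\min\{1,d(x,y)/r\}$, which is summable over orbit annuli. With your orbit-scale bound, the reflected diagonals would still produce a logarithmic divergence. The same computation (Lemma~\ref{cc:lem:half-kernel}) also identifies the Schwartz kernel of \eqref{cc:eq:half-form} and rules out an extra distribution supported on the orbit diagonal.

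(ii) In the factorization, for an unbounded Lipschitz $b$ the function $b\mathcal R_jf$ need not lie in $L^2(d\omega)$, let alone in $\operatorname{Dom}(D)$, and $D^{-1}$ is unbounded. So appealing to the natural first-order domain does not make the manipulation with $[M_b,D^{-1}]$ rigorous. The paper avoids $D^{-1}$ entirely. For bounded $b$, so that $M_b$ preserves $\operatorname{Dom}(D)$, it uses $B_b\mathcal R_j+D[M_b,\mathcal R_j]=[M_b,-T_j]=M_{\partial_jb}$ together with $T_i=-\mathcal R_iD$. It then passes to general $b$ through $b_n=\max\{-n,\min\{b,n\}\}$, applied to real and imaginary parts separately. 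This limit uses $\sup_n\|B_{b_n}\|_{2\to2}\lesssim L_b$, the fact that $b_n=b$ on the supports of the test functions for large $n$, and bounded a.e.\ convergence $\partial_kb_n\to\partial_kb$.

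Also, the correct identity is $D=\sum_k\mathcal R_kT_k$, without the minus sign; this is harmless for the BMO bound.
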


The proof is entirely on the full space.  We first treat $B_b=[M_b,D]$.  The coefficient difference turns the
kernel of $D$ into a Dunkl--Calder\'on--Zygmund kernel.  We verify its associated distribution, weak boundedness,
and the two BMO tests, and then apply the full-space Dunkl $T1$ theorem of
\cite[Theorem~1.3]{TanHanHanLeeLi2025}.  The
$G$-invariance of $b$ permits the ordinary product rule for the Dunkl derivatives.  Combining this rule with the
identities relating $D$, $T_j$, and $\mathcal R_j$ yields precisely \eqref{cc:eq:factor}.  Neither a Weyl
chamber nor a choice of heat approximation is used in this construction.

The second question concerns a fixed approximation family rather than the existence of some bounded realization.
Let $H_t=e^{t\Delta_\kappa}$ and let $h_t(x,y)$ be its heat kernel.  For $0<a<B<\infty$, let
\begin{equation}\label{cc:eq:heat-trunc}
    C_{a,B}^{ij,b}f(x)  :=-\frac1{\sqrt\pi}\int_{a^2}^{B^2}\int_{\R^N}
    \bigl(b(x)-b(y)\bigr)T_{i,x}T_{j,x}h_t(x,y)f(y)\,d\omega(y)\frac{dt}{\sqrt t}.
\end{equation}
For a fixed finite interval the formula is first defined on bounded compactly supported functions and extends to
$L^2$ by a two-sided Schur estimate.  The factorization identifies the endpoint limit on a dense first-order core,
but it does not imply a uniform $L^2$ bound for this particular two-ended family.

Single-scale kernel estimates and square Carleson tests do not yield the required uniform bound.
Proposition~\ref{cc:prop:single-scale} exhibits a scalar approximate-identity family for which
the square tests hold but the norm of the integrated truncation grows like $\log\log B$.  We therefore treat every
integrated truncation as one Calder\'on--Zygmund operator and prove estimates uniform in both endpoints.

The geometric step in this second argument is to retain all reflected values rather than identify them.  Fix an
open Weyl chamber $\mathcal C$, enumerate
$$
     G=\{\mathsf g_1,\ldots,\mathsf g_{M_G}\},   \qquad M_G=|G|,    \qquad \mathsf g_1=\mathsf e,
$$
where $\mathsf e$ is the identity element, and define
$$
       Uf(x)=\bigl(f(\mathsf g_1x),\ldots,f(\mathsf g_{M_G}x)\bigr),    \qquad x\in\mathcal C.
$$
The map $U$ is an isometric isomorphism from $L^p(\R^N,d\omega)$ onto
$L^p(\mathcal C,d\omega;\ell^p_{M_G})$.  We refer to this finite-coordinate representation as chamber lifting.
For $1\leq\rho,\tau\leq M_G$, the chamber identity
$$
      d(\mathsf g_\rho x,\mathsf g_\tau y)=\|x-y\|,    \qquad x,y\in\overline{\mathcal C},
$$
places every possible orbit singularity on the common chamber diagonal, while the reflected values of an arbitrary
function remain as separate matrix components.  This is a finite matrix representation, not a quotient reduction.

The chamber representation introduces one additional testing issue.  Chamber indicators are discontinuous across the
reflection walls, so
testing the lifted entries requires the functions
$$
      \chi_\tau=\one_{\mathsf g_\tau\mathcal C}.
$$
Their estimates use smooth chamber cutoffs and wall-layer bounds for heat-scale neighborhoods of the reflecting
hyperplanes.  These component estimates are then used in the integrated $T1$ tests.  Writing $B(x,r)$ for the
Euclidean ball, one must also compare the full-space normalization
$$
     V(x,y,r)=\max\left\{\omega(B(x,r)),\omega(B(y,r))\right\}
$$
with the chamber normalization
\begin{equation}\label{cc:eq:VC}
      V_{\mathcal C}(x,y,r)    =\max\left\{\omega(B(x,r)\cap\mathcal C),\omega(B(y,r)\cap\mathcal C)\right\}.
\end{equation}
Thus the chamber construction is not merely a reformulation: it turns the orbit-singular heat truncation into a
finite matrix whose integrated entries are ordinary Calder\'on--Zygmund operators on the chamber, all with the common
possible singular set $x=y$.

The second principal result identifies the factorized operator with the canonical heat realization.

\begin{theorem}\label{cc:thm:heat-limit}
Under the hypotheses of Theorem~\ref{cc:thm:operator}, let $C_{a,B}^{ij,b}$ be defined by
\eqref{cc:eq:heat-trunc}.  Then
$$
      \sup_{0<a<B<\infty}   \|C_{a,B}^{ij,b}\|_{L^2(\R^N,d\omega)\to L^2(\R^N,d\omega)}    \leq C\|b\|_{\Lipd}.
$$
Moreover, for every $f,g\in L^2(\R^N,d\omega)$,
$$
      \langle C_{a,B}^{ij,b}f,g\rangle   \longrightarrow    \langle\mathcal C_{b,\mathrm{abs}}^{ij}f,g\rangle
$$
as $a\to0+$ and $B\to+\infty$, jointly and independently of the path.  Equivalently,
$$
    UC_{a,B}^{ij,b}U^{-1}   \longrightarrow    U\mathcal C_{b,\mathrm{abs}}^{ij}U^{-1}
$$
in the weak-operator topology on $L^2(\mathcal C,d\omega;\ell^2_{M_G})$.  Thus the joint heat limit exists, is
unique, and coincides with the operator in Theorem~\ref{cc:thm:operator}.
\end{theorem}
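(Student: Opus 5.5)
The proof splits into a uniform bound and a convergence statement, and the chamber lifting $U$ organizes both. Since $U$ is an isometric isomorphism, it suffices to bound $UC_{a,B}^{ij,b}U^{-1}$ on $L^2(\mathcal C,d\omega;\ell^2_{M_G})$ uniformly in $a<B$. This operator is an $M_G\times M_G$ matrix whose $(\rho,\tau)$ entry has scalar kernel
$$
K^{\rho\tau}_{a,B}(x,y)=-\frac1{\sqrt\pi}\int_{a^2}^{B^2}\bigl(b(\mathsf g_\rho x)-b(\mathsf g_\tau y)\bigr)\,(T_{i}T_{j}h_t)(\mathsf g_\rho x,\mathsf g_\tau y)\,\frac{dt}{\sqrt t},\qquad x,y\in\mathcal C .
$$
Two facts make each entry a scalar Calder\'on--Zygmund kernel on $\mathcal C$ with respect to $d\omega$, uniformly in $a,B$. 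First, $G$-invariance gives $b(\mathsf g_\rho x)-b(\mathsf g_\tau y)=b(x)-b(y)$, which is bounded by $\|b\|_{\Lipd}\|x-y\|$. Second, the chamber identity $d(\mathsf g_\rho x,\mathsf g_\tau y)=\|x-y\|$ holds.

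For the size estimate I would use the heat-kernel bound $|T_iT_jh_t(x,y)|\lesssim t^{-1}V(x,y,\sqrt t)^{-1}(1+\|x-y\|/\sqrt t)^{-2}e^{-cd(x,y)^2/t}$. The factor $(1+\|x-y\|/\sqrt t)^{-2}$ is the Euclidean correction that absorbs the coefficients $(y_i-x_i)(y_j-x_j)/t^2$. Integrating in $t$ then gives $|K^{\rho\tau}_{a,B}|\lesssim\|b\|_{\Lipd}V(x,y,\|x-y\|)^{-1}$, and smoothness follows the same way from one more Dunkl derivative. Finally I would compare $V$ with $V_{\mathcal C}$ from \eqref{cc:eq:VC}. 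Since $\omega(B(x,r)\cap\mathcal C)\ge c\,\omega(B(x,r))$ for $x\in\mathcal C$ (the ball in $\mathcal C$ occupies a fixed fraction by $G$-invariance of $\omega$), the two normalizations are comparable.

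Next I would verify the scalar $T1$ hypotheses for each integrated entry on the space of homogeneous type $(\mathcal C,\|\cdot\|,d\omega)$.

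1. \emph{Weak boundedness.} This follows from the antisymmetric factor $b(x)-b(y)$ together with the size bound.
2. \emph{The $T1$ tests.} Here $U^{-1}$ of the $\tau$-th unit vector times $1$ is $\chi_\tau=\one_{\mathsf g_\tau\mathcal C}$. So entry $(\rho,\tau)$ applied to $1$ equals $C_{a,B}^{ij,b}\chi_\tau$ restricted to $\mathsf g_\rho\mathcal C$. I would write $\chi_\tau=\psi_\tau+(\chi_\tau-\psi_\tau)$, where $\psi_\tau$ is a smooth cutoff adapted at scale $\sqrt t$ to the walls. For the smooth part I would use the product rule $T_k(bf)=(\partial_kb)f+bT_kf$ and the heat semigroup, which express it through $\partial b$ times bounded square-function-type quantities that land in BMO uniformly. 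For the remainder, which is supported in a wall layer of width $\sim\sqrt t$, I would use the wall-layer bound $\omega(\{x\in B(x_0,r):\dist(x,\text{walls})<\sqrt t\})\lesssim(\sqrt t/r)^{\theta}\omega(B(x_0,r))$. This bound should make the $t$-integral converge geometrically, uniformly in $a,B$, and give $L^\infty$ control.
3. \emph{The adjoint tests.} These are symmetric.

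The $T1$ theorem then gives $\|UC_{a,B}U^{-1}\|\lesssim\|b\|_{\Lipd}$ uniformly. The main obstacle is this step: obtaining BMO bounds for $C_{a,B}\chi_\tau$ uniform in both endpoints. Proposition~\ref{cc:prop:single-scale} warns that scale-by-scale estimates can lose $\log\log B$, so the estimate must be done for the integrated kernel. I would try to exhibit exact cancellation, namely $\sum_\tau C_{a,B}\chi_\tau=C_{a,B}1$, which is controlled by $\partial b$ via the product rule, and then bound the individual wall contributions with the layer estimate.

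For convergence, the uniform bound reduces matters to a dense core: it suffices to show $\langle C_{a,B}f,g\rangle\to\mathfrak c_b^{ij}(f,g)$ for $f,g\in C_c^\infty$, because $\mathcal C_{b,\mathrm{abs}}^{ij}$ realizes $\mathfrak c_b^{ij}$ by Theorem~\ref{cc:thm:operator}. For such $f$ I would write $C_{a,B}f=bT_iT_jS_{a,B}f-T_iT_jS_{a,B}(bf)$, where $S_{a,B}=\pi^{-1/2}\int_{a^2}^{B^2}H_t\,t^{-1/2}dt$ (sign conventions to be matched with \eqref{cc:eq:heat-trunc}). On the Dunkl transform side, $T_iT_jS_{a,B}$ has a multiplier that converges boundedly to that of $T_i\mathcal R_j$ as $a\to0$, $B\to\infty$ in any manner, since $\pi^{-1/2}\int_0^\infty e^{-t|\xi|^2}t^{-1/2}dt=|\xi|^{-1}$. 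Pairing with $g$ and using dominated convergence on the transform side for both terms ($bf\in L^2$ is compactly supported and Lipschitz, and $\bar b g\in L^2$) gives joint, path-independent convergence. Uniform boundedness plus an $\varepsilon/3$ argument then extends the convergence to all $f,g\in L^2$. Conjugating by the unitary $U$ yields the weak-operator statement on $L^2(\mathcal C,d\omega;\ell^2_{M_G})$.
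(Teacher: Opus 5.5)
Your architecture matches the paper's: chamber lift, a scalar $T1$ theorem for each integrated entry on $(\mathcal C,\|\cdot\|,d\omega_{\mathcal C})$, cutoffs with the product rule and wall layers for the tests, then dense-core multiplier convergence plus the uniform bound. The convergence half is essentially the paper's argument. One correction there: the multipliers are $O(\|\xi\|)$, not bounded, so the domination must come from $f,\,bf,\,\overline b g\in\operatorname{Dom}(D)$, which the weak product rule supplies. The uniform $L^2$ bound, however, has two genuine gaps.

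\emph{Weak boundedness does not follow from antisymmetry.} The full-space identity $K(y,x)=-K(x,y)$ becomes, after lifting, $k^{\rho\tau}(y,x)=-k^{\tau\rho}(x,y)$. It exchanges entries, so only the diagonal entries are antisymmetric. Off-diagonal entries need not be. Take $G=\mathbb Z_2^2$, $(i,j)=(1,2)$ and $\mathsf g_\tau=\sigma_1$. Only the principal term of \eqref{cc:eq:heat-structure} survives, and $A_t^{12}(y,\sigma_1x)=-A_t^{12}(x,\sigma_1y)$, so $k^{1\tau}(y,x)=k^{1\tau}(x,y)$ is symmetric. Meanwhile the size bound $V_{\mathcal C}(x,y,\|x-y\|)^{-1}$ alone diverges logarithmically on $Q\times Q$. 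Extra input tied to the walls is needed. The paper's route (Proposition~\ref{cc:prop:entry-WBP}) is as follows:
write the pairing as $\int_a^B\langle\Theta_s(\Phi\chi_\tau),\Psi\chi_\rho\rangle\,ds/s$ with the $G$-invariant extensions $\Phi=\varphi\circ\pi$, $\Psi=\psi\circ\pi$;
use the size bound for $s\ge r$;
for $s<r$, mollify and replace $\chi_\tau$ by $\eta_{\tau,s}$, at a cost bounded by $\int_0^r\int_E(m_s+\mathcal G_sm_s)$;
then apply Lemma~\ref{cc:lem:theta-identity}. After this, only the kernel $(b(x)-b(y))T_{i,x}h_{s^2}$, which has no $1/s$ singularity, acts on $sT_jf_s=O((s/r)^\gamma+m_s)$.

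\emph{Your $T1$ test is not closed, and the proposed mechanism cannot close it.} Fix $x$ with $\delta_{\mathcal W}(x)=\delta$. For every $s\gtrsim\delta$, the part of $\Omega_\tau$ at orbit distance $<s$ from $x$ lies in the layer $\{\delta_{\mathcal W}\le 2s\}$. So the size bound for the cutoff remainder is of order $\|b\|_{\Lipd}$ at each such scale, and integrating gives $\log(B/\delta)$. There is no geometric convergence and no $L^\infty$ control; your layer estimate helps only for $s\le r$, after averaging over $Q$. There is a second omission: the product rule on the smooth part also produces the commutator term $s[M_b,T_iH_{s^2}](T_j\eta_{\tau,s})$. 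Here $T_j\eta_{\tau,s}$ is not supported in the layer, because its difference quotients leave the tail $|sT_j\eta_{\tau,s}|\lesssim m_s=\min\{1,s/\delta_{\mathcal W}\}$.

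The paper's argument is genuinely BMO-type. It writes $\Theta_s\chi_\tau=-sT_iH_{s^2}g_\tau+E_{\tau,s}$, with $|E_{\tau,s}|\lesssim\|b\|_{\Lipd}\mathcal G_sm_s$ and $|E_{\tau,s}(x)-E_{\tau,s}(x')|\lesssim\|b\|_{\Lipd}\|x-x'\|/s$. On $Q=B(x_0,r)$:
the scales $s<r$ are averaged using $\int_0^r\int_Q\mathcal G_sm_s\,d\omega\,ds/s\lesssim\omega(Q)$;
the scales $s\ge r$ are handled by the regularity estimate after subtracting $\int_{\max\{a,r\}}^BE_{\tau,s}(x_0)\,ds/s$ (Lemma~\ref{cc:lem:wall-BMO});
the main term integrates to $\mathscr R_{i;a,B}g_\tau$, which needs the separate uniform $L^\infty\to\operatorname{BMO}$ bound of Lemma~\ref{cc:lem:heat-Riesz}.
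The identity $\sum_\tau C_{a,B}^{ij,b}\chi_\tau=C_{a,B}^{ij,b}1$ is no substitute, since each entry needs its own test.

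Two minor points. The bound you state for $T_iT_jh_t$ with the factor $(1+\|x-y\|/\sqrt t)^{-2}$ retained is false: the principal term uses that factor up, and the reflected terms carry $\|\sigma_\alpha x-y\|$ instead. The weaker bound $t^{-1}V^{-1}e^{-cd^2/t}$ suffices for what you need. Also, kernel smoothness comes from Euclidean increments of $h_t$, not from a further Dunkl derivative.
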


The canonical heat realization has the following precise chamber-kernel description.  It is stated here with the
main results and proved in Section~\ref{cc:sec:matrix}.

\begin{corollary}\label{cc:cor:matrix-kernel}
Let $\mathbb T_b^{ij}=U\mathcal C_{b,\mathrm{abs}}^{ij}U^{-1}$ and let
$d\omega_{\mathcal C}=d\omega|_{\mathcal C}$.  For $d(z,w)>0$, let
$$
      K_b^{ij}(z,w)    =-\frac1{\sqrt\pi}\bigl(b(z)-b(w)\bigr)    \int_0^\infty T_{i,z}T_{j,z}h_t(z,w)\frac{dt}{\sqrt t},
$$
and, for $x\ne y$ in $\mathcal C$, let
$$
     \mathbb K_b^{ij,\rho\tau}(x,y)   =K_b^{ij}(\mathsf g_\rho x,\mathsf g_\tau y).
$$
The first integral is absolutely convergent.  For $1\leq\rho,\tau\leq M_G$, the $(\rho,\tau)$ entry of
$\mathbb T_b^{ij}$ is associated with
$\mathbb K_b^{ij,\rho\tau}$ in the separated-support sense on $\mathcal C$.  
If $r=\|x-y\|$, then
$$
     |\mathbb K_b^{ij,\rho\tau}(x,y)|    \leq    C\|b\|_{\Lipd}\frac1{V_{\mathcal C}(x,y,r)}.
$$
If $x,x',y\in\mathcal C$ and $\|x-x'\|\leq r/4$, then
$$
      |\mathbb K_b^{ij,\rho\tau}(x,y)-\mathbb K_b^{ij,\rho\tau}(x',y)|    \leq    C\|b\|_{\Lipd}
       \frac{\|x-x'\|}{r}   \frac1{V_{\mathcal C}(x,y,r)}.
$$
Here $V_{\mathcal C}(x,y,r)$ is defined in \eqref{cc:eq:VC}.
If $x,y,y'\in\mathcal C$ and $\|y-y'\|\leq r/4$, the analogous estimate holds in the second variable.  The
constants are independent of $\rho$ and $\tau$.
\end{corollary}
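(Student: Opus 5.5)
The proof has four parts: pointwise heat-kernel bounds that make the $t$-integral absolutely convergent, the size and smoothness estimates for $K_b^{ij}$, the transfer of these bounds to the chamber entries, and the identification of the kernel of the heat limit.

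\textbf{Heat-kernel bounds and convergence.} I would use the standard Dunkl heat-kernel bound with the Euclidean correction, in the form already used for Theorem~\ref{cc:thm:operator}:
$$
|T_{i,x}T_{j,x}h_t(x,y)|\le \frac{C}{t}\Bigl(1+\frac{\|x-y\|}{\sqrt t}\Bigr)^{-2}\frac{e^{-c\,d(x,y)^2/t}}{V(x,y,\sqrt t)}.
$$
This absorbs the Euclidean coefficients $(y_i-x_i)(y_j-x_j)/t^2$ and the reflected heat terms. The $t$-derivative version and the $x$- and $y$-gradient versions gain a further factor $\|x-x'\|/\sqrt t$. With $\delta=d(z,w)>0$:
\begin{itemize}
\item For $t\le\delta^2$, the Gaussian factor makes the integrand of $\int_0^\infty\cdots\,dt/\sqrt t$ summable.
\item For $t\ge\delta^2$, doubling gives $V(z,w,\sqrt t)\gtrsim (\sqrt t/\delta)^{N}V(z,w,\delta)$, so the integrand is $O(t^{-3/2})$ times a volume factor.
\end{itemize}
Hence the integral converges absolutely and is bounded by $C\delta^{-1}V(z,w,\delta)^{-1}$. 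Multiplying by $|b(z)-b(w)|\le\|b\|_{\Lipd}\delta$ gives the size bound $|K_b^{ij}(z,w)|\lesssim \|b\|_{\Lipd}V(z,w,\delta)^{-1}$.

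\textbf{Transfer to the chamber entries.} For $x,y\in\mathcal C$ the chamber identity gives $d(\mathsf g_\rho x,\mathsf g_\tau y)=\|x-y\|=r$. Since $\omega$ is $G$-invariant,
$$
\omega(B(\mathsf g_\rho x,r))=\omega(B(x,r))\le\sum_{\mathsf g}\omega(B(x,r)\cap\mathsf g\mathcal C).
$$
I would then check that each term on the right is at most $C\,\omega(B(x,r)\cap\mathcal C)$. This follows because the density $\prod|\langle\alpha,\cdot\rangle|^{2\kappa(\alpha)}$ is $G$-invariant. For $x\in\mathcal C$, the reflected pieces $\mathsf g^{-1}(B(x,r)\cap\mathsf g\mathcal C)=B(\mathsf g^{-1}x,r)\cap\mathcal C$ lie in $B(x,r)\cap\mathcal C$, since projecting onto the chamber is a contraction. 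Hence $V(x,y,r)\sim V_{\mathcal C}(x,y,r)$, which gives the size estimate with constants independent of $\rho,\tau$.

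\textbf{Smoothness.} Write
$$
K(x)-K(x')=(b(\mathsf g_\rho x)-b(\mathsf g_\rho x'))\,I(x)+(b(\mathsf g_\rho x')-b(\mathsf g_\tau y))\,(I(x)-I(x')),
$$
where $I$ is the $t$-integral.
\begin{itemize}
\item The first term is bounded by $\|b\|\,\|x-x'\|\cdot r^{-1}V^{-1}$.
\item For the second, $\|x-x'\|\le r/4$ with $x,x'\in\mathcal C$ keeps $d(\mathsf g_\rho x',\mathsf g_\tau y)\sim r$. The gradient heat bound then gives $|I(x)-I(x')|\lesssim \|x-x'\|\,r^{-2}V^{-1}$.
\end{itemize}
In the gradient bound, the Euclidean factor $\|x-y\|$ and the orbit factor $d$ coincide along the segment from $x$ to $x'$, which stays inside the convex set $\mathcal C$. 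This is exactly why the chamber formulation removes the Euclidean/orbit mismatch. The second variable is handled symmetrically, using the $y$-regularity of $h_t$.

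\textbf{Identification of the kernel and the main obstacle.} Take $\phi,\psi\in C_c^\infty(\mathcal C)$ with disjoint supports. Put them in slots $\tau$ and $\rho$ and pull back to $f=\phi\circ\mathsf g_\tau^{-1}$ and $g=\psi\circ\mathsf g_\rho^{-1}$. The supports of $f$ and $g$ are then at positive orbit distance, since $d(\mathsf g_\rho x,\mathsf g_\tau y)=\|x-y\|$. By Theorem~\ref{cc:thm:heat-limit}, $\langle \mathbb T_b^{ij}\phi e_\tau,\psi e_\rho\rangle=\lim\langle C_{a,B}^{ij,b}f,g\rangle$. On $\supp g\times\supp f$ the integrand is dominated uniformly in $a,B$ by the absolutely convergent majorant from the first step, so dominated convergence gives $\iint\mathbb K_b^{ij,\rho\tau}\phi\bar\psi\,d\omega\,d\omega$.

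I expect the main obstacle to be the gradient heat-kernel estimate with the Euclidean correction when $\kappa$-reflection terms appear. Those terms involve $h_t(\sigma_\alpha x,y)$ divided by $\langle\alpha,x\rangle$, so one has to check that differentiating them keeps the $d$-Gaussian and the decay $\|x-x'\|/\sqrt t$. I would try first to quote the existing Dunkl heat-kernel regularity bounds (Dziubański–Hejna type) in the mixed form used for Theorem~\ref{cc:thm:operator}.
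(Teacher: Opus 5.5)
\paragraph{Overall.} Your four steps follow the paper's route:
\begin{itemize}
\item integrate second-derivative heat bounds in $t$ to get size and smoothness of $K_b^{ij}$ in terms of $d$ and $V$ (Lemma~\ref{cc:lem:K-size}, Propositions~\ref{cc:prop:K-x}--\ref{cc:prop:K-y});
\item transfer these with $d(\mathsf g_\rho x,\mathsf g_\tau y)=\|x-y\|$ and $\omega(B(x,r))\leq M_G\,\omega(B_{\mathcal C}(x,r))$ (Lemma~\ref{cc:lem:chamber-volume});
\item identify the kernel by dominated convergence in the truncated pairings together with Theorem~\ref{cc:thm:heat-limit} (Proposition~\ref{cc:prop:full-kernel}, Lemma~\ref{cc:lem:lift-kernel}).
\end{itemize}
The gap is in the heat-kernel input, which you misstate and leave unproved.

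\paragraph{The displayed second-derivative bound is false.} Your bound for $T_{i,x}T_{j,x}h_t$ keeps the factor $(1+\|x-y\|/\sqrt t)^{-2}$. That does not follow from your reasoning, and it fails in general, for two reasons:
\begin{itemize}
\item absorbing the principal coefficient $(y_i-x_i)(y_j-x_j)/(4t^2)$ uses up exactly that factor from Lemma~\ref{cc:lem:heat-buffer};
\item the reflected terms $t^{-1}h_t(\sigma_\alpha x,y)$ carry only $(1+\|\sigma_\alpha x-y\|/\sqrt t)^{-2}$.
\end{itemize}
For a concrete failure, take $G=\mathbb Z_2^2$ with positive multiplicity on the first factor, $x=(X,0)$, $y=(-X,\sqrt t)$ and $X\gg\sqrt t$. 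The product structure and the sharp rank-one behaviour $h_t(X,-X)\approx t\,X^{-2}\,\omega(B(X,\sqrt t))^{-1}$ give $d(x,y)=\sqrt t$ but $|T_{1,x}T_{2,x}h_t(x,y)|\approx t^{-1}V^{-1}\sqrt t/X$. This exceeds your right-hand side by a factor of order $X/\sqrt t$.

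None of your $t$-integrations actually uses this factor. What you need is $|A_t^{ij}(x,y)|\lesssim t^{-1}V(x,y,\sqrt t)^{-1}e^{-cd(x,y)^2/t}$ and its first-order regularity for $\|x-x'\|\leq\sqrt t$, i.e.\ \eqref{cc:eq:heat-second}--\eqref{cc:eq:heat-second-y}. Your remark that the Euclidean and orbit factors ``coincide'' on the chamber is also beside the point. For $\rho\neq\tau$ one has $\|\mathsf g_\rho x-\mathsf g_\tau y\|\neq\|x-y\|$ in general, and no chamber convexity is needed because everything downstream depends only on $d$ and $V$.

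\paragraph{The missing idea for the flagged obstacle.} The paper resolves it with one identity, not further citations.
\begin{itemize}
\item Since $T_{j,x}h_t(x,y)=\frac{y_j-x_j}{2t}h_t(x,y)$, applying $T_i$ produces the reflection difference quotient of a linear function, which is the constant $-\alpha_j/(t\|\alpha\|^2)$.
\item Hence (Lemma~\ref{cc:lem:heat-structure}) $A_t^{ij}$ is the principal term, plus $-\frac{\delta_{ij}}{2t}h_t(x,y)$, plus constant multiples of $t^{-1}h_t(\sigma_\alpha x,y)$. No division by $\langle\alpha,x\rangle$ survives.
\item Size and $x$-regularity follow from Lemma~\ref{cc:lem:heat-buffer} applied at $(x,y)$ and at $(\sigma_\alpha x,y)$. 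Use $\|\sigma_\alpha x-\sigma_\alpha x'\|=\|x-x'\|$ and the $G$-invariance of $d$ and $V$; the Euclidean buffer is spent only on the principal term.
\item $y$-regularity comes from $A_t^{ij}(x,y)=\overline{A_t^{ij}(y,x)}$.
\end{itemize}

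\paragraph{A small-$t$ split in the smoothness integral.} The regularity bound is available only for $t\geq\|x-x'\|^2$. Below that scale you must use two size bounds. That is where $\|x-x'\|\leq r/4$ and $d(\mathsf g_\rho x',\mathsf g_\tau y)\approx r$ enter (Corollary~\ref{cc:cor:nearby}).

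With these inputs, the rest of your argument goes through as written.
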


For each finite heat interval, the lifted truncation is treated entrywise.  Wall-layer estimates control the
discontinuities of the chamber indicators.  We then prove uniform $T1$ and $T^*1$ bounds in
$\operatorname{BMO}(\mathcal C)$, a uniform weak boundedness property, and uniform standard-kernel estimates for the
integrated entries.  The scalar $T1$ theorem on spaces of homogeneous type
\cite[Theorem~12.3]{AuscherHytonenWavelets} supplies the required $L^2$ bounds.  On a dense core the heat multipliers
converge at both endpoints to the form in Theorem~\ref{cc:thm:operator}; uniform boundedness extends this
convergence jointly to all of $L^2$.  Finally, separated-support convergence identifies the finite matrix kernel of
the lifted limit.

Thus the two arguments have different roles: factorization provides the full-space $L^p$ bound, whereas chamber lifting
establishes uniform $L^2$ bounds and joint weak convergence for the prescribed heat truncations and identifies the matrix
kernel of their limit.  The orbit Lipschitz hypothesis is sufficient for both arguments; the present paper does not
characterize all admissible symbols.

This paper is organised as follows.  Section~\ref{cc:sec:prelim} collects the orbit geometry, heat-kernel
estimates, and chamber notation used below.  Section~\ref{cc:sec:full} proves the half-Laplacian
commutator theorem and the exact factorization.  Section~\ref{cc:sec:kernels} establishes the scale and
integrated kernel bounds.  Section~\ref{cc:sec:testing} proves the wall and component estimates.
Section~\ref{cc:sec:lifting} establishes the integrated tests, uniform $L^2$ bounds, and joint
heat limit.  Finally, Section~\ref{cc:sec:matrix} identifies the lifted matrix kernel and records the
entrywise Calder\'on--Zygmund consequence.

\section{Preliminaries}\label{cc:sec:prelim}

Let $R\subset\R^N$ be a normalized root system and let $G$ be the finite reflection group generated by $R$. Throughout this paper,
$$
        \kappa:R\to[0,\infty)
$$
is a nonnegative $G$-invariant multiplicity function.
The chamber lemmas collected in this section are reserved for the canonical heat-realization argument.  The proof of
Theorem~\ref{cc:thm:operator} uses only the full-space orbit geometry, the product rule, and the heat-kernel estimates.
We use the convention
\begin{equation}\label{cc:eq:dunkl-T}
      T_\xi f(x)=\partial_\xi f(x)+\sum_{\alpha\in R}\frac{\kappa(\alpha)}2 
      \langle\alpha,\xi\rangle\frac{f(x)-f(\sigma_\alpha x)}{\langle\alpha,x\rangle}.
\end{equation}
Equivalently, one may sum over a positive subsystem $R_+$. The associated Dunkl measure is
\begin{equation*}
    d\omega(x)=w_\kappa(x)\,dx,  \qquad
    w_\kappa(x)=\prod_{\alpha\in R_+}|\langle x,\alpha\rangle|^{2\kappa(\alpha)}.
\end{equation*}
Let
$$
       \mathbf N:=N+\sum_{\alpha\in R}\kappa(\alpha)
$$
be the homogeneous dimension. The Dunkl Laplacian is
$\Delta_\kappa=\sum_{j=1}^N T_j^2, $
and $H_t=e^{t\Delta_\kappa}$ denotes the heat semigroup with kernel $h_t(x,y)$. Unless otherwise specified, the orbit distance is the full
$G$-orbit distance
\begin{equation*}
     d(x,y)=d_G(x,y):=\min_{\mathsf g\in G}\|x-\mathsf g y\|.
\end{equation*}
For $r>0$, let
\begin{equation*}
    V(x,y,r):=\max\big\{\omega(B(x,r)),\omega(B(y,r))\big\},
\end{equation*}
and
\begin{equation*}
    \mathcal O(B(x,r)):=\{y:d(x,y)<r\}=\bigcup_{\mathsf g\in G}B(\mathsf g x,r).
\end{equation*}

\subsection{Orbit geometry and volume estimates}

We recall several fundamental facts regarding the orbit geometry and volume estimates. See for example
\cite{BetancorCiaurriVarona2007,AmriGasmiSifi2010,ThangaveluXu2007,AmriSifi2012,
Dziubanski2016,DunklMultiplier2019,DziubanskiHejna2020Atomic,ABFR2024,Hejna2023}.

\begin{lemma}
The function $d$ is $G$-invariant in both variables, satisfies the triangle
inequality, and obeys $d(x,y)\leq\|x-y\|$.
\end{lemma}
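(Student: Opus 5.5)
The plan is to check the three properties directly from the definition $d(x,y)=\min_{\mathsf g\in G}\|x-\mathsf g y\|$. The only structural input is that $G$ is a finite group of orthogonal transformations, since it is generated by reflections. Finiteness guarantees that the minimum is attained, and orthogonality gives $\|\mathsf h u-\mathsf h v\|=\|u-v\|$ for every $\mathsf h\in G$.

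\emph{Invariance and the Euclidean bound.} For invariance in the second variable, fix $\mathsf h\in G$. Then
$$d(x,\mathsf h y)=\min_{\mathsf g}\|x-\mathsf g\mathsf h y\|=d(x,y),$$
because $\mathsf g\mapsto\mathsf g\mathsf h$ is a bijection of $G$. For the first variable, orthogonality gives
$$\|\mathsf h x-\mathsf g y\|=\|x-\mathsf h^{-1}\mathsf g y\|,$$
and the same reindexing shows $d(\mathsf h x,y)=d(x,y)$. Symmetry follows in the same way:
$$\|x-\mathsf g y\|=\|\mathsf g^{-1}x-y\|=\|y-\mathsf g^{-1}x\|,$$
and $\mathsf g\mapsto\mathsf g^{-1}$ is a bijection. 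Finally, choosing $\mathsf g=\mathsf e$ in the minimum gives $d(x,y)\le\|x-y\|$.

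\emph{Triangle inequality.} Given $x,y,z$, pick minimizers $\mathsf g_1,\mathsf g_2\in G$ with
$$d(x,y)=\|x-\mathsf g_1y\|,\qquad d(y,z)=\|y-\mathsf g_2z\|.$$
By orthogonality, $\|\mathsf g_1y-\mathsf g_1\mathsf g_2z\|=\|y-\mathsf g_2 z\|$. The Euclidean triangle inequality then yields
$$d(x,z)\le\|x-\mathsf g_1\mathsf g_2 z\|\le\|x-\mathsf g_1 y\|+\|\mathsf g_1y-\mathsf g_1\mathsf g_2z\|=d(x,y)+d(y,z).$$

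No step here is a real obstacle. The one point that needs care is to apply $\mathsf g_1$ to the second minimizing pair, rather than trying to combine the two minimizers directly, so that the composed element $\mathsf g_1\mathsf g_2$ is again in $G$. We also note that $d$ is only a pseudometric, since $d(x,\mathsf g x)=0$; the lemma makes no claim of definiteness.
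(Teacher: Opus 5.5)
Your argument is correct and complete. The paper gives no proof of this lemma (it recalls it as a standard fact with references), but it uses the same reindexing of the minimum and the same identity-element bound implicitly later, e.g.\ in the proof of Lemma~\ref{cc:lem:closest} and in the orbit-Lipschitz lemma, so your route matches the paper's own usage.
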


We shall repeatedly use
\begin{equation*}
    \omega(B(x,r))\approx r^N\prod_{\alpha\in R_+}
    \big(|\langle x,\alpha\rangle|+r\big)^{2\kappa(\alpha)}.
\end{equation*}

\begin{lemma}\label{cc:lem:volume}
For $0<r\leq r_2$,
$$
   c\left(\frac {r_2}r\right)^N\omega(B(x,r))\leq  \omega(B(x,r_2))
   \leq C\left(\frac {r_2}r\right)^{\mathbf N}\omega(B(x,r)).
$$
\end{lemma}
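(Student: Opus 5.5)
The plan is to start from the product formula for the Dunkl volume,
$$
\omega(B(x,r))\approx r^N\prod_{\alpha\in R_+}\big(|\langle x,\alpha\rangle|+r\big)^{2\kappa(\alpha)},
$$
recalled just before the statement, and compare the factors at the radii $r$ and $r_2$ one at a time. The implicit constants in this formula depend only on the fixed Dunkl structure. Since $0<r\le r_2$, the comparison reduces to bounding, for each $\alpha\in R_+$, the ratio
$$
\frac{|\langle x,\alpha\rangle|+r_2}{|\langle x,\alpha\rangle|+r}
$$
from above and below.

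For the lower bound I will use only the monotonicity $|\langle x,\alpha\rangle|+r_2\ge|\langle x,\alpha\rangle|+r$. Because $\kappa\ge0$, each factor $\big(|\langle x,\alpha\rangle|+r_2\big)^{2\kappa(\alpha)}$ is at least the corresponding factor at radius $r$. The Euclidean factor contributes exactly $(r_2/r)^N$. Combining these with the two-sided comparability gives
$$
\omega(B(x,r_2))\ge c\,(r_2/r)^N\omega(B(x,r)).
$$

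For the upper bound I will use the elementary inequality
$$
\frac{s+r_2}{s+r}\le\frac{r_2}{r}, \qquad s\ge0,\ r\le r_2.
$$
This holds because $r(s+r_2)\le r_2(s+r)$ is equivalent to $rs\le r_2s$. Raising it to the power $2\kappa(\alpha)$ and multiplying over $\alpha\in R_+$ gives the factor $(r_2/r)^{\sum_{\alpha\in R_+}2\kappa(\alpha)}$. Together with $(r_2/r)^N$ from the Euclidean factor, the total exponent is $N+\sum_{\alpha\in R}\kappa(\alpha)=\mathbf N$. Here I use $G$-invariance of $\kappa$ and the fact that $R=R_+\cup(-R_+)$, so that $\sum_{R}\kappa=2\sum_{R_+}\kappa$.

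There is no real obstacle in this argument. The only step needing attention is the bookkeeping of exponents in the upper bound: matching the sum over $R_+$ of $2\kappa(\alpha)$ with the homogeneous dimension $\mathbf N$, which is defined as a sum over all of $R$.
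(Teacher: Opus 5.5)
Your proof is correct, and it is the standard derivation from the product formula $\omega(B(x,r))\approx r^N\prod_{\alpha\in R_+}(|\langle x,\alpha\rangle|+r)^{2\kappa(\alpha)}$ that the paper records just before the lemma. The paper gives the lemma as a recalled fact without proof, and your exponent bookkeeping $\sum_{\alpha\in R}\kappa(\alpha)=2\sum_{\alpha\in R_+}\kappa(\alpha)$, using $\kappa(-\alpha)=\kappa(\sigma_\alpha\alpha)=\kappa(\alpha)$, correctly matches the homogeneous dimension $\mathbf N$.
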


\begin{lemma}
For every $x\in\R^N$ and $r>0$,
$\omega(\mathcal O(B(x,r)))\approx \omega(B(x,r))$.
\end{lemma}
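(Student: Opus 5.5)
The estimate $\omega(B(x,r))\approx\omega(\mathcal O(B(x,r)))$ will follow from two facts: $\mathcal O(B(x,r))$ is a finite union of reflected balls, and $\omega$ is $G$-invariant. The lower bound is immediate, since $\mathsf e\in G$ gives $B(x,r)\subset\mathcal O(B(x,r))$ and hence $\omega(B(x,r))\le\omega(\mathcal O(B(x,r)))$.

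For the upper bound I use subadditivity:
$$
\omega(\mathcal O(B(x,r)))\le\sum_{\mathsf g\in G}\omega(B(\mathsf g x,r)).
$$
Each $\mathsf g\in G$ is orthogonal, so $\mathsf g B(x,r)=B(\mathsf g x,r)$ and $dx$ is preserved. The weight $w_\kappa$ is also $G$-invariant: a reflection $\sigma_\beta$ permutes $R$ and preserves $\kappa$, so it permutes $R_+$ up to signs. Each factor $|\langle\mathsf g^{-1}x,\alpha\rangle|^{2\kappa(\alpha)}=|\langle x,\mathsf g\alpha\rangle|^{2\kappa(\mathsf g\alpha)}$ therefore reappears in the product, and $w_\kappa(\mathsf g^{-1}x)=w_\kappa(x)$. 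A change of variables then gives $\omega(B(\mathsf g x,r))=\omega(B(x,r))$ for every $\mathsf g$. Summing over the group yields
$$
\omega(\mathcal O(B(x,r)))\le |G|\,\omega(B(x,r)),
$$
and the implicit constant $|G|$ depends only on the root system. The two bounds together prove the lemma.

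The only step needing care is the invariance of $w_\kappa$. The point is that $\kappa$ is constant on $G$-orbits of roots, and the absolute values remove the sign ambiguity when $\mathsf g$ carries a positive root to a negative one. No other step is delicate, and in particular Lemma~\ref{cc:lem:volume} (doubling) is not needed.
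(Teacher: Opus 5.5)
Your proof is correct, and it is the standard argument. The paper states this lemma without proof, as a recalled fact. Wherever it is used (for instance in the proofs of Lemma~\ref{cc:lem:WBP-annuli} and Lemma~\ref{cc:lem:gaussian-mass}), the paper relies on exactly your bound $\omega(\mathcal O(B(x,r)))\le M_G\,\omega(B(x,r))$, obtained from the union $\bigcup_{\mathsf g\in G}B(\mathsf g x,r)$ and the $G$-invariance of $d\omega$. The $G$-invariance of $w_\kappa$, which the paper takes for granted, is also correctly verified in your argument.
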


\begin{lemma}
If $r=d(x,y)>0$, then
$\omega(B(x,r))\approx\omega(B(y,r)),\ V(x,y,r)\approx\omega(B(x,r))\approx\omega(B(y,r))$.
\end{lemma}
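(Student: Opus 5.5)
The plan is to first show that the two balls have comparable measure, then deduce the statement about $V$. The key tool is the explicit volume formula
$$
\omega(B(x,r))\approx r^N\prod_{\alpha\in R_+}\bigl(|\langle x,\alpha\rangle|+r\bigr)^{2\kappa(\alpha)},
$$
together with the $G$-invariance of $\omega$, the orbit balls, and the doubling property of Lemma~\ref{cc:lem:volume}.

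Since $r=d(x,y)=\min_{\mathsf g}\|x-\mathsf g y\|$, we can choose $\mathsf g\in G$ with $\|x-\mathsf g y\|=r$. The weight $w_\kappa$ is $G$-invariant, because $G$ permutes $R$ up to sign and $\kappa$ is $G$-invariant. Also $\mathsf g$ is an isometry mapping $B(y,r)$ onto $B(\mathsf g y,r)$. Hence
$$
\omega(B(y,r))=\omega(B(\mathsf g y,r)).
$$
Next, $B(\mathsf g y,r)\subset B(x,2r)$ and $B(x,r)\subset B(\mathsf g y,2r)$, since the centres are at Euclidean distance $r$. Lemma~\ref{cc:lem:volume} with $r_2=2r$ gives
$$
\omega(B(x,2r))\le C2^{\mathbf N}\omega(B(x,r)),
$$
and the same bound for the ball centred at $\mathsf g y$. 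Combining these inclusions yields
$$
\omega(B(\mathsf g y,r))\le\omega(B(x,2r))\lesssim\omega(B(x,r)),
$$
and symmetrically $\omega(B(x,r))\lesssim\omega(B(\mathsf g y,r))$. This proves $\omega(B(x,r))\approx\omega(B(y,r))$.

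The statement for $V$ is then immediate. By definition, $V(x,y,r)$ is the maximum of two quantities that are each comparable to the other. So $V$ is comparable to either of them, with constants depending only on the doubling constant.

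No step is a real obstacle. The only point requiring care is the $G$-invariance of $\omega$. It follows because each $\mathsf g\in G$ permutes the roots with $\kappa(\mathsf g\alpha)=\kappa(\alpha)$, so $\prod_{\alpha\in R_+}|\langle \mathsf g x,\alpha\rangle|^{2\kappa(\alpha)}=w_\kappa(x)$, while $|\det\mathsf g|=1$. An alternative route avoids this check: use the lemma $\omega(\mathcal O(B(x,r)))\approx\omega(B(x,r))$ with $\mathcal O(B(y,r))\subset\mathcal O(B(x,2r))$. The direct argument above is shorter, so I will use it.
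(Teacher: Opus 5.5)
Your proof is correct and matches the paper's approach. The paper only recalls this lemma with references, but at the end of the proof of Proposition~\ref{cc:prop:half-CZO} it uses the same argument: pick $\mathsf g$ with $\|x-\mathsf g y\|=d(x,y)$, then apply $G$-invariance of $\omega$ and doubling across the inclusions $B(\mathsf g y,r)\subset B(x,2r)$ and $B(x,r)\subset B(\mathsf g y,2r)$. The explicit volume formula you list as the key tool is never used and can be dropped.
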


\begin{lemma}
For every $\mathsf g,\mathsf h\in G$,
$\omega(B(\mathsf g x,r))=\omega(B(x,r)),\ V(\mathsf g x,\mathsf h y,r)=V(x,y,r)$.
\end{lemma}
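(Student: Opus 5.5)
The identities to prove are $\omega(B(\mathsf g x,r))=\omega(B(x,r))$ and $V(\mathsf g x,\mathsf h y,r)=V(x,y,r)$ for all $\mathsf g,\mathsf h\in G$. The plan rests on two invariance facts: $G$ acts by orthogonal maps of $\R^N$, and the weight $w_\kappa$ is $G$-invariant.

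First, each $\mathsf g\in G$ is a product of reflections $\sigma_\alpha$, so it is a linear isometry. Hence $\mathsf g B(x,r)=B(\mathsf g x,r)$ and $|\det\mathsf g|=1$.

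Second, we show $w_\kappa(\mathsf g z)=w_\kappa(z)$. It suffices to take $\mathsf g=\sigma_\beta$ for a root $\beta$. Write
\[
w_\kappa(z)=\prod_{\alpha\in R_+}|\langle z,\alpha\rangle|^{2\kappa(\alpha)}
=\prod_{\alpha\in R}|\langle z,\alpha\rangle|^{\kappa(\alpha)}.
\]
This uses $\kappa(-\alpha)=\kappa(\alpha)$, which follows from $G$-invariance since $\sigma_\alpha\alpha=-\alpha$. Next,
\[
\langle\sigma_\beta z,\alpha\rangle=\langle z,\sigma_\beta\alpha\rangle .
\]
Since $\sigma_\beta$ permutes $R$ and $\kappa(\sigma_\beta\alpha)=\kappa(\alpha)$, the product over $R$ is unchanged.

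Now change variables $y=\mathsf g z$, which has Jacobian of modulus one:
\[
\omega(B(\mathsf g x,r))=\int_{\mathsf gB(x,r)}w_\kappa(y)\,dy=\int_{B(x,r)}w_\kappa(\mathsf g z)\,dz=\int_{B(x,r)}w_\kappa(z)\,dz=\omega(B(x,r)).
\]

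Finally, applying this identity separately to $x$ with $\mathsf g$ and to $y$ with $\mathsf h$ gives
\[
V(\mathsf g x,\mathsf h y,r)=\max\{\omega(B(\mathsf gx,r)),\omega(B(\mathsf hy,r))\}=\max\{\omega(B(x,r)),\omega(B(y,r))\}=V(x,y,r).
\]

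The only point needing any care is the invariance of $w_\kappa$, specifically passing between the product over $R_+$ and the product over $R$ so that the permutation of roots by $\sigma_\beta$ applies cleanly. The rest is a direct change of variables.
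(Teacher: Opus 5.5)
Your proof is correct and is essentially the paper's argument. The paper lists this lemma among standard facts without proof and relies implicitly on the same two ingredients elsewhere: $G$ acts by Euclidean isometries, and $d\omega$ is $G$-invariant. You have simply written out the verification that $w_\kappa$ is $G$-invariant, using $\kappa(-\alpha)=\kappa(\alpha)$ and the fact that $\sigma_\beta$ permutes $R$.
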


\subsection{Chamber geometry}

Let $\mathcal C$ be the fixed open Weyl chamber.  Enumerate
$$
     G=\{\mathsf g_1,\ldots,\mathsf g_{M_G}\},
        \qquad M_G=|G|,\qquad \mathsf g_1=\mathsf e,
$$
and write $\Omega_\rho:=\mathsf g_\rho\mathcal C$.

\begin{lemma}\label{cc:lem:partition}
For $\alpha\in R$, let
$\alpha^\perp:=\{x\in\R^N:\langle x,\alpha\rangle=0\}$ and
$\mathcal W:=\bigcup_{\alpha\in R}\alpha^\perp$. Then
$$
     \R^N\setminus\mathcal W=\bigsqcup_{\rho=1}^{M_G}\Omega_\rho,
     \qquad \omega(\mathcal W)=0.
$$
\end{lemma}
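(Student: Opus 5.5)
The plan is to use the standard structure theory of finite reflection groups; no analysis enters here. First I record that the reflecting hyperplanes $\alpha^\perp$, $\alpha\in R$, are finitely many, since $R$ is finite. By definition, the open Weyl chambers are the connected components of $\R^N\setminus\mathcal W$. The fixed chamber can be written as $\mathcal C=\{x:\langle x,\alpha\rangle>0\ \text{for all }\alpha\in R_+\}$ for a suitable positive subsystem $R_+$. Each $\mathsf g\in G$ is orthogonal and permutes $R$, so $\mathsf g$ permutes the hyperplanes. Hence $\mathsf g$ maps $\R^N\setminus\mathcal W$ homeomorphically onto itself and sends components to components. Consequently every $\Omega_\rho=\mathsf g_\rho\mathcal C$ is an open Weyl chamber and lies in $\R^N\setminus\mathcal W$.

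Next I prove that the sets $\Omega_\rho$ cover the complement of the walls and are pairwise disjoint. Both facts follow from the classical statement that $G$ acts simply transitively on the set of Weyl chambers (see, e.g., Humphreys, \emph{Reflection Groups and Coxeter Groups}, Thm.~1.12).

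For covering, I would give a direct argument if needed. Fix $x\notin\mathcal W$ and pick $\mathsf g\in G$ maximizing $\langle \mathsf g x,\lambda\rangle$, where $\lambda\in\mathcal C$ is fixed. If $\langle\mathsf g x,\alpha\rangle<0$ for some simple root $\alpha$, then $\langle\sigma_\alpha\mathsf g x,\lambda\rangle=\langle\mathsf g x,\lambda\rangle-2\langle\mathsf g x,\alpha\rangle\langle\alpha,\lambda\rangle/\|\alpha\|^2$ is strictly larger, a contradiction. So $\mathsf g x\in\overline{\mathcal C}$. Since $\mathsf g x\notin\mathcal W$, in fact $\mathsf g x\in\mathcal C$, and therefore $x\in\mathsf g^{-1}\mathcal C$.

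For disjointness, suppose $\Omega_\rho\cap\Omega_\tau\neq\emptyset$. Since these are components, $\mathsf g_\tau^{-1}\mathsf g_\rho\mathcal C=\mathcal C$. The stabilizer of a chamber is trivial, because the length of an element equals the number of positive roots it sends to negative ones. Thus $\mathsf g_\rho=\mathsf g_\tau$, i.e. $\rho=\tau$, and the union is disjoint. This step is the only substantive one, and I would cite it rather than reprove it.

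Finally, $\omega(\mathcal W)=0$. Each $\alpha^\perp$ is a hyperplane, so it has Lebesgue measure zero. The weight $w_\kappa$ is locally integrable, being a polynomial-type product with nonnegative exponents, and $d\omega=w_\kappa\,dx$ is absolutely continuous. A finite union of null sets is null, which gives the claim.
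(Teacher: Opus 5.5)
Your proposal is correct and follows the same route as the paper. The paper simply cites the Weyl chamber decomposition, which you unpack through simple transitivity: a maximization argument gives the covering and the trivial chamber stabilizer gives disjointness. For the null set, the paper bounds $\omega(\mathcal W)\leq\sum_{\alpha\in R}\int_{\alpha^\perp}w_\kappa\,dx=0$, which is exactly your absolute-continuity step.
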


\begin{proof}
The first identity is the Weyl chamber decomposition.
Since $R$ is finite and each $\alpha^\perp$ is a Lebesgue-null linear hyperplane,
$$
     0\leq\omega(\mathcal W)  =\int_{\mathcal W}w_\kappa(x)\,dx
     \leq\sum_{\alpha\in R}\int_{\alpha^\perp}w_\kappa(x)\,dx=0.
$$
\end{proof}

\begin{lemma}\label{cc:lem:closest}
If $x,y\in\overline{\mathcal C}$, then
$d(x,y)=\|x-y\|$.  Consequently,
\begin{equation*}
     d(\mathsf g_\rho x,\mathsf g_\tau y)=\|x-y\|,  \qquad x,y\in\overline{\mathcal C}.
\end{equation*}
\end{lemma}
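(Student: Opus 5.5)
We prove the first assertion, $d(x,y)=\|x-y\|$ for $x,y\in\overline{\mathcal C}$, and then deduce the displayed identity from the $G$-invariance of $d$. Since $d(x,y)\le\|x-y\|$ always holds (take $\mathsf g=\mathsf e$), it suffices to show
$$
\|x-\mathsf g y\|\ge\|x-y\|\qquad\text{for every }\mathsf g\in G .
$$
Expanding squares and using that $\mathsf g$ is orthogonal, this is equivalent to
$$
\langle x,y-\mathsf g y\rangle\ge0 .
$$
Note that $\|\mathsf g y\|=\|y\|$, so the quadratic terms cancel.

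The key step is the standard fact that for $y\in\overline{\mathcal C}$ the vector $y-\mathsf g y$ is a nonnegative combination of positive roots. Let $R_+$ be the positive system defining $\mathcal C$, so that $\langle z,\alpha\rangle>0$ for $z\in\mathcal C$ and $\alpha\in R_+$. We argue by induction on the length $\ell(\mathsf g)$ in the simple reflections. Write $\mathsf g=\sigma_\beta\mathsf g'$ with $\beta$ simple and $\ell(\mathsf g')=\ell(\mathsf g)-1$. Then
$$
y-\mathsf g y=(y-\mathsf g'y)+(\mathsf g'y-\sigma_\beta\mathsf g'y)
=(y-\mathsf g'y)+\langle\mathsf g'y,\beta^\vee\rangle\beta ,
$$
where $\beta^\vee=2\beta/\|\beta\|^2$. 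Moreover,
$$
\langle\mathsf g'y,\beta\rangle=\langle y,\mathsf g'^{-1}\beta\rangle\ge0,
$$
because $\ell(\sigma_\beta\mathsf g')>\ell(\mathsf g')$ forces $\mathsf g'^{-1}\beta\in R_+$. Hence $y-\mathsf g y$ lies in the closed cone spanned by $R_+$.

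Pairing with $x\in\overline{\mathcal C}$, every positive root has $\langle x,\alpha\rangle\ge0$, which gives $\langle x,y-\mathsf g y\rangle\ge0$.

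An alternative, avoiding the length argument, is to pass to the interior. For $x,y\in\mathcal C$, the minimum of $\|x-\mathsf g y\|$ over $\mathsf g\in G$ is attained at some $\mathsf g_0$. If $\mathsf g_0y\notin\mathcal C$, some wall $\alpha^\perp$ separates $x$ from $\mathsf g_0 y$. Reflecting across it strictly decreases the distance, since
$$
\|x-\sigma_\alpha z\|^2-\|x-z\|^2=4\,\frac{\langle x,\alpha\rangle\langle z,\alpha\rangle}{\|\alpha\|^2}<0
$$
when $\langle x,\alpha\rangle$ and $\langle z,\alpha\rangle$ have opposite signs. This contradicts minimality, so $\mathsf g_0y\in\mathcal C$. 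Since $\mathcal C$ is a fundamental domain for the simply transitive action on chambers, and a point of $\mathcal C$ has trivial stabilizer, $\mathsf g_0y=y$. The closed case then follows by continuity of both sides in $x$ and $y$.

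For the consequence, the $G$-invariance of $d$ in both variables (from the first lemma of this section) gives
$$
d(\mathsf g_\rho x,\mathsf g_\tau y)=d(x,y)=\|x-y\| .
$$

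The only nontrivial point is the positivity $\langle x,y-\mathsf g y\rangle\ge0$. In the write-up we will use the reflection (minimization) argument, since it is self-contained.
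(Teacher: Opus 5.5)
Your proof is correct, and the minimization argument you plan to write up is essentially the paper's proof. Both rest on the identity $\|\sigma_\alpha z-y\|^2-\|z-y\|^2=4\langle z,\alpha\rangle\langle y,\alpha\rangle/\|\alpha\|^2$; the paper applies it by successively reflecting $\mathsf g x$ across separating walls into $\overline{\mathcal C}$ (leaving termination implicit), whereas you pick a minimizer on the open chamber to get a strict decrease and a contradiction, then reach $\overline{\mathcal C}$ by continuity. Your first argument, which places $y-\mathsf g y$ in the closed cone spanned by $R_+$ by induction on length, is a genuinely different valid route: it handles boundary points directly, but relies on the Coxeter fact $\ell(\sigma_\beta\mathsf g')>\ell(\mathsf g')\Rightarrow\mathsf g'^{-1}\beta\in R_+$.
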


\begin{proof}
If $z$ and $y\in\overline{\mathcal C}$ lie on opposite sides of
$\alpha^\perp$, then
\begin{align*}
     \|\sigma_\alpha z-y\|^2-\|z-y\|^2
     &=4\frac{\langle z,\alpha\rangle\langle y,\alpha\rangle}{\|\alpha\|^2}  \leq0.
\end{align*}
Successively reflecting $\mathsf g x$ across separating walls into
$\overline{\mathcal C}$ therefore shows, for every $\mathsf g\in G$, that
\begin{equation*}
     \|x-y\|\leq\|\mathsf g x-y\|,   \qquad x,y\in\overline{\mathcal C},
\end{equation*}
because the orbit of $x$ has the unique representative $x$ in $\overline{\mathcal C}$.  Hence
$d(x,y) =\min_{\mathsf g\in G}\|x-\mathsf g y\|=\|x-y\|,$
and
\begin{align*}
     d(\mathsf g_\rho x,\mathsf g_\tau y)  &=\min_{\mathsf h\in G}
     \|x-\mathsf g_\rho^{-1}\mathsf h\mathsf g_\tau y\|  =d(x,y)=\|x-y\|.
\end{align*}
\end{proof}

Let $d\omega_{\mathcal C}=d\omega|_{\mathcal C}$ and write
$\omega_{\mathcal C}(E)=\omega(E)$ for measurable $E\subset\mathcal C$. For $x\in\overline{\mathcal C}$, let
$$
       B_{\mathcal C}(x,r)=B(x,r)\cap\mathcal C,\qquad   
       V_{\mathcal C}(x,y,r)=\max\{\omega(B_{\mathcal C}(x,r)),\omega(B_{\mathcal C}(y,r))\}.
$$

\begin{lemma}\label{cc:lem:chamber-volume}
For every $x\in\overline{\mathcal C}$ and $r>0$,
$\omega(B(x,r))\approx \omega(B_{\mathcal C}(x,r))$.  Consequently,
$$
       V(\mathsf g_\rho x,\mathsf g_\tau y,r)\approx V_{\mathcal C}(x,y,r),
$$
uniformly in $\rho,\tau$. In particular $(\mathcal C,\|\cdot\|,d\omega_{\mathcal C})$ is a space of homogeneous type.
\end{lemma}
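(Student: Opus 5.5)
The plan is to prove the comparison $\omega(B(x,r))\approx\omega(B_{\mathcal C}(x,r))$ for $x\in\overline{\mathcal C}$ directly. The two consequences then follow formally from Lemma~\ref{cc:lem:closest} and the earlier volume lemmas.

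The upper bound $\omega(B_{\mathcal C}(x,r))\le\omega(B(x,r))$ is trivial. For the lower bound I will find a Euclidean ball of radius comparable to $r$ that is contained in $B(x,r)\cap\mathcal C$ and whose center lies within distance $r$ of $x$. Fix a unit vector $e\in\mathcal C$. Since $\mathcal C$ is an open convex cone, there is $c_0>0$, depending only on $R$, such that $\langle e,\alpha\rangle\ge c_0$ for every positive root $\alpha$ (the positive system being the one determined by $\mathcal C$). Set $z=x+\tfrac r2 e$. For each $\alpha\in R_+$ we have $\langle x,\alpha\rangle\ge0$, hence
\[
\langle z,\alpha\rangle\ge \tfrac{c_0r}{2}\|\alpha\|^{-1}\|\alpha\|.
\]
Since $R$ is normalized, this means the distance from $z$ to every wall is at least $c_1r$. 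Consequently $B(z,\epsilon r)\subset\mathcal C\cap B(x,r)$ with $\epsilon=\min\{c_1,1/2\}$.

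Next I compare $\omega(B(z,\epsilon r))$ with $\omega(B(x,r))$. Since $B(x,r)\subset B(z,2r)$, the doubling Lemma~\ref{cc:lem:volume} gives
\[
\omega(B(x,r))\le\omega(B(z,2r))\le C(2/\epsilon)^{\mathbf N}\omega(B(z,\epsilon r)).
\]
Combining this with the inclusion above yields
\[
\omega(B(x,r))\lesssim\omega(B(z,\epsilon r))\le\omega(B_{\mathcal C}(x,r)).
\]
Alternatively, the formula $\omega(B(x,r))\approx r^N\prod_{\alpha}(|\langle x,\alpha\rangle|+r)^{2\kappa(\alpha)}$ gives the same conclusion, since $|\langle z,\alpha\rangle|+\epsilon r\approx|\langle x,\alpha\rangle|+r$.

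For the consequence, the $G$-invariance of the measure (the previous lemma) gives
\[
V(\mathsf g_\rho x,\mathsf g_\tau y,r)=V(x,y,r)=\max\{\omega(B(x,r)),\omega(B(y,r))\}.
\]
Applying the comparison to each term yields $V(x,y,r)\approx V_{\mathcal C}(x,y,r)$, with constants depending only on $R$ and $\kappa$ and hence uniform in $\rho,\tau$. For the homogeneous-type property, I need doubling of $\omega_{\mathcal C}$ on chamber balls $B_{\mathcal C}(x,r)$ with $x\in\mathcal C$:
\[
\omega(B_{\mathcal C}(x,2r))\le\omega(B(x,2r))\lesssim\omega(B(x,r))\approx\omega(B_{\mathcal C}(x,r)).
\]
Chamber balls are positive and finite, and $\|\cdot\|$ is a metric on $\mathcal C$, which completes the argument. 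The only real point is the interior-ball construction, which uses the uniform positivity $c_0$ of $e$ against the finitely many roots.
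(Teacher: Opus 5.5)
Your proof is correct, but it gets the lower bound $\omega(B(x,r))\lesssim\omega(B_{\mathcal C}(x,r))$ by a different route from the paper.

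\textbf{Your route.} You use the cone geometry of $\mathcal C$. The point $z=x+\tfrac r2e$ lies at distance $\gtrsim r$ from every wall, so $B(z,\epsilon r)\subset B(x,r)\cap\mathcal C$. Doubling of $d\omega$ centred at $z$ then transfers the mass of $B(x,r)\subset B(z,2r)$ to this interior ball.

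\textbf{The paper's route.} It folds $B(x,r)$ into the chamber. Modulo the null set $\mathcal W$, the $G$-invariance of $d\omega$ gives $\omega(B(x,r))=\sum_{\mathsf g}\omega(B(\mathsf g^{-1}x,r)\cap\mathcal C)$. Lemma~\ref{cc:lem:closest} says the chamber representative $x$ is the point of its orbit closest to any $y\in\mathcal C$, so $B(\mathsf g^{-1}x,r)\cap\mathcal C\subset B_{\mathcal C}(x,r)$. Hence $\omega(B(x,r))\le M_G\,\omega(B_{\mathcal C}(x,r))$.

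\textbf{Comparison.} The paper's argument gives the explicit constant $M_G$ and needs no doubling at this step. It is also the same reflection-to-chamber mechanism used throughout the chamber lifting. Your argument never uses invariance of the measure or the closest-representative property. It therefore works for any doubling measure on any domain with such an interior-ball (corkscrew) condition. The cost is a constant depending on the aperture $c_0$ of the chamber and the doubling exponent $\mathbf N$.

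Your consequences actually use only $\omega(B(\mathsf g x,r))=\omega(B(x,r))$; your argument never needs Lemma~\ref{cc:lem:closest}, despite your opening sentence. The remaining two parts, the $V$ comparison and the doubling of $\omega_{\mathcal C}$, are the same as in the paper.
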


\begin{proof}
The inequality $\omega(B_{\mathcal C}(x,r))\leq \omega(B(x,r))$
follows from the inclusion $B_{\mathcal C}(x,r)\subset B(x,r)$.
We prove the reverse inequality.
By Lemma~\ref{cc:lem:partition}, modulo a null set, $B(x,r)$ is the disjoint union of its intersections with the chambers
$\mathsf g\mathcal C$.  By the invariance of $d\omega$,
$$
      \omega(B(x,r)\cap\mathsf g\mathcal C)   =  \omega(B(\mathsf g^{-1}x,r)\cap\mathcal C).
$$
We claim that
$$
       B(\mathsf g^{-1}x,r)\cap\mathcal C    \subset B(x,r)\cap\mathcal C.
$$
Indeed, if $y\in B(\mathsf g^{-1}x,r)\cap\mathcal C$, then $\|\mathsf g^{-1}x-y\|<r$.
By Lemma~\ref{cc:lem:closest}, the chamber representative $x$ is no farther from $y$ than any other
representative of its orbit; therefore $\|x-y\|\leq \|\mathsf g^{-1}x-y\|<r$.
Thus the inclusion holds.  Summing over the finitely many chambers,
\begin{align*}
      \omega(B(x,r))   &=\sum_{\mathsf g\in G}\omega(B(x,r)\cap\mathsf g\mathcal C)
      \leq M_G\,\omega(B(x,r)\cap\mathcal C)   =M_G\,\omega(B_{\mathcal C}(x,r)).
\end{align*}
Thus $\omega(B(x,r))\approx\omega(B_{\mathcal C}(x,r))$.

Now let $x,y\in\overline{\mathcal C}$.  By reflection invariance of the measure,
$$
       \omega(B(\mathsf g_\rho x,r))=\omega(B(x,r)),   \qquad
      \omega(B(\mathsf g_\tau y,r))=\omega(B(y,r)).
$$
Using the comparison just proved for $x$ and $y$, we get
\begin{align*}
      V(\mathsf g_\rho x,\mathsf g_\tau y,r)   &=\max\{\omega(B(x,r)),\omega(B(y,r))\}\\
       &\approx    \max\{\omega(B_{\mathcal C}(x,r)),\omega(B_{\mathcal C}(y,r))\}  =V_{\mathcal C}(x,y,r),
\end{align*}
with constants independent of $\rho,\tau$.  Finally,
$$
     \omega(B_{\mathcal C}(x,2r))   \approx \omega(B(x,2r))
     \lesssim \omega(B(x,r))   \approx \omega(B_{\mathcal C}(x,r)),
$$
hence $(\mathcal C,\|\cdot\|,d\omega_{\mathcal C})$ is a space of homogeneous type.
\end{proof}

\subsection{Orbit Lipschitz symbols}

\begin{lemma}
If $b\in\operatorname{Lip}_d(\R^N)$, then $b(\mathsf g x)=b(x)$ for every $\mathsf g\in G$, and
$|b(x)-b(y)|\leq\|b\|_{\operatorname{Lip}_d}\|x-y\|$.
\end{lemma}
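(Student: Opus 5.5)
The statement to prove is the last lemma: if $b\in\Lipd(\R^N)$, then $b(\mathsf g x)=b(x)$ for every $\mathsf g\in G$, and $|b(x)-b(y)|\leq\|b\|_{\Lipd}\|x-y\|$. Both assertions should follow directly from the definition of $\Lipd$ together with the elementary orbit-geometry facts recorded in the first lemma of this section. The plan is to prove the invariance first and then the Euclidean bound.

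\emph{Invariance.} Fix $x\in\R^N$ and $\mathsf g\in G$. Taking the element $\mathsf g^{-1}\in G$ in the minimum gives
$$
d(x,\mathsf g x)=\min_{\mathsf h\in G}\|x-\mathsf h\mathsf g x\|\leq\|x-\mathsf g^{-1}\mathsf g x\|=0 .
$$
Hence $d(x,\mathsf g x)=0$. The definition of $\Lipd$ explicitly requires $b(x)=b(y)$ whenever $d(x,y)=0$, so $b(\mathsf g x)=b(x)$. Here I am assuming that this diagonal condition is part of the definition, as the introduction states; the supremum alone only ranges over pairs with $d(x,y)>0$ and would not give invariance.

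\emph{Orbit Lipschitz bound for all pairs.} Next I would show $|b(x)-b(y)|\leq\|b\|_{\Lipd}\,d(x,y)$ for every $x,y$. If $d(x,y)>0$, this is immediate from the definition of the seminorm as a supremum over such pairs. If $d(x,y)=0$, both sides vanish by the diagonal condition.

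\emph{Euclidean bound.} The first lemma of this section gives $d(x,y)\leq\|x-y\|$; this is just the choice $\mathsf g=\mathsf e$ in the minimum. Combining it with the previous step yields
$$
|b(x)-b(y)|\leq\|b\|_{\Lipd}\|x-y\| .
$$

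I expect no real obstacle. The only point needing care is the degenerate case $d(x,y)=0$ with $x\neq y$, where one cannot appeal to the supremum. That case is handled exactly by the diagonal condition built into the definition of $\Lipd$, which the introduction stresses must be imposed explicitly because $d$ is only a pseudometric.
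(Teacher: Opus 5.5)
Your proof is correct and follows essentially the same route as the paper: $d(x,\mathsf g x)=0$ together with the orbit-Lipschitz condition gives invariance, and choosing the identity element in the orbit minimum gives $d(x,y)\leq\|x-y\|$. Your explicit appeal to the diagonal condition for pairs with $d(x,y)=0$ spells out what the paper uses implicitly when it writes $|b(x)-b(\mathsf g x)|\leq\|b\|_{\Lipd}\,d(x,\mathsf g x)$.
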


\begin{proof}
For any $\mathsf g\in G$, the two points $x$ and $\mathsf g x$ lie on the same orbit, hence
$d(x,\mathsf g x)=0$.
The orbit-Lipschitz condition forces $|b(x)-b(\mathsf g x)|\leq \|b\|_{\operatorname{Lip}_d}d(x,\mathsf g x)=0,$
so $b(\mathsf g x)=b(x)$.  The Euclidean Lipschitz estimate is obtained by using the identity element in the orbit-distance minimum:
$$
     d(x,y)=\min_{\mathsf h\in G}\|x-\mathsf h y\|\leq\|x-y\|.
$$
Therefore $|b(x)-b(y)|\leq \|b\|_{\operatorname{Lip}_d}d(x,y) \leq \|b\|_{\operatorname{Lip}_d}\|x-y\|$.
\end{proof}

\begin{lemma}\label{cc:lem:smooth-symbol}
Let $b\in\operatorname{Lip}_d(\R^N)$.  There exist $b_\varepsilon\in C^\infty(\R^N)$ such that $b_\varepsilon$ is $G$-invariant, and
\begin{itemize}
\item $\displaystyle
    \|b_\varepsilon\|_{\operatorname{Lip}_d}\leq\|b\|_{\operatorname{Lip}_d},\ 
    \max_{1\leq k\leq N}\|\partial_kb_\varepsilon\|_\infty
    \leq\|b\|_{\operatorname{Lip}_d},$

\item $\displaystyle \partial_kb_\varepsilon\longrightarrow\partial_kb \quad\text{a.e.},\  k=1,\ldots,N,$

\item $\displaystyle   \|b_\varepsilon-b\|_\infty
    \lesssim\varepsilon\|b\|_{\operatorname{Lip}_d}.$
\end{itemize}
\end{lemma}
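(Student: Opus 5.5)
The plan is to take $b_\varepsilon=b*\phi_\varepsilon$, where $\phi_\varepsilon(x)=\varepsilon^{-N}\phi(x/\varepsilon)$ and $\phi\in C_c^\infty(B(0,1))$ is a nonnegative \emph{radial} mollifier with $\int\phi\,dx=1$. The convolution is Euclidean with respect to Lebesgue measure, not the Dunkl measure. By the preceding lemma, $b$ is $G$-invariant and Euclidean Lipschitz with constant $L:=\|b\|_{\Lipd}$. Hence $b$ is continuous, locally bounded, and $b_\varepsilon\in C^\infty(\R^N)$ is well defined.

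\emph{Invariance.} Every $\mathsf g\in G$ is orthogonal and $\phi$ is radial, so $\phi_\varepsilon(\mathsf g x-y)=\phi_\varepsilon(x-\mathsf g^{-1}y)$. Substituting $y=\mathsf g z$, which has Jacobian $1$, and using $b(\mathsf g z)=b(z)$ gives $b_\varepsilon(\mathsf g x)=b_\varepsilon(x)$.

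\emph{Lipschitz bounds.} Write $b_\varepsilon(x)-b_\varepsilon(y)=\int(b(x-z)-b(y-z))\phi_\varepsilon(z)\,dz$. This gives $|b_\varepsilon(x)-b_\varepsilon(y)|\leq L\|x-y\|$. Since $b_\varepsilon$ is $G$-invariant, the same argument as in the introduction lets us replace $y$ by $\mathsf g y$ and minimize over $\mathsf g$, which yields $\|b_\varepsilon\|_{\Lipd}\leq L$. For the derivatives, Rademacher's theorem gives $\partial_kb\in L^\infty$ with $\|\partial_kb\|_\infty\leq L$, because each difference quotient in direction $e_k$ is bounded by $L$. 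The weak derivative commutes with convolution, so $\partial_kb_\varepsilon=(\partial_kb)*\phi_\varepsilon$, and therefore $\|\partial_kb_\varepsilon\|_\infty\leq\|\partial_kb\|_\infty\int\phi_\varepsilon=\|\partial_k b\|_\infty\leq L$.

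\emph{Convergence.} We have $|b_\varepsilon(x)-b(x)|\leq\int|b(x-z)-b(x)|\phi_\varepsilon(z)\,dz\leq L\int\|z\|\phi_\varepsilon(z)\,dz\leq L\varepsilon$, since $\supp\phi_\varepsilon\subset B(0,\varepsilon)$. Finally, $\partial_kb\in L^\infty\subset L^1_{\rm loc}$, and $\phi$ is bounded and supported in the unit ball. The Lebesgue differentiation theorem (the standard approximate-identity statement) then gives $(\partial_kb)*\phi_\varepsilon(x)\to\partial_kb(x)$ at every Lebesgue point $x$, hence almost everywhere. Here "a.e." may be taken with respect to either Lebesgue measure or $d\omega$, since $w_\kappa$ is locally integrable and $d\omega$ is absolutely continuous with respect to Lebesgue measure.

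None of these steps is a real obstacle. The only point needing care is to choose the mollifier radial, or at least $G$-invariant, so that invariance survives the convolution. If one only wants $G$-invariance, averaging $\phi$ over $G$ would also suffice.
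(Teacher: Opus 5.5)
Your proposal is correct and follows the paper's proof essentially step for step. The shared steps are:
\begin{itemize}
\item Euclidean convolution with a radial mollifier, with $G$-invariance coming from radiality.
\item The Euclidean Lipschitz bound for $b_\varepsilon$, upgraded to the $\operatorname{Lip}_d$ bound by replacing $y$ with $\mathsf g y$ and minimizing over $\mathsf g$.
\item The uniform estimate $\|b_\varepsilon-b\|_\infty\leq L\int\|z\|\phi_\varepsilon(z)\,dz\lesssim\varepsilon L$.
\item Almost-everywhere convergence of the derivatives at Lebesgue points of $\partial_k b$.
\end{itemize}

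The one small difference is the derivative bound. You obtain it from $\partial_k b_\varepsilon=(\partial_k b)*\phi_\varepsilon$ together with $\|\partial_k b\|_\infty\leq L$. The paper instead passes to the limit in the difference quotients of $b_\varepsilon$. These are equivalent, and your convolution identity is exactly what the Lebesgue-point convergence step uses anyway.
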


\begin{proof}
Take $b_\varepsilon=b*\varphi_\varepsilon$ with a radial mollifier.
Radiality preserves $G$-invariance.  Convolution does not increase the Euclidean Lipschitz seminorm, so every coordinate difference
quotient of $b_\varepsilon$ is bounded by $\|b\|_{\operatorname{Lip}_d}$.  Passing to the limit,
$ \max_{1\leq k\leq N}\|\partial_kb_\varepsilon\|_\infty \leq\|b\|_{\operatorname{Lip}_d}.$ 

Since  $b_\varepsilon$ is $G$-invariant,
$$
     |b_\varepsilon(x)-b_\varepsilon(y)|=|b_\varepsilon(x)-b_\varepsilon(\mathsf g y)|
     \leq\|b\|_{\operatorname{Lip}_d}\|x-\mathsf g y\|.
$$
Taking the infimum over $\mathsf g\in G$ establishes the $\operatorname{Lip}_d$ bound.
Moreover,
$$
     |b_\varepsilon(x)-b(x)| \leq\|b\|_{\operatorname{Lip}_d}
     \int_{\R^N}\|z\|\varphi_\varepsilon(z)\,dz
     \lesssim\varepsilon\|b\|_{\operatorname{Lip}_d},
$$
so $b_\varepsilon\to b$ uniformly on $\R^N$.
The derivative convergence holds at the Lebesgue points of $\partial_kb$.
\end{proof}

\begin{lemma}\label{cc:lem:product}
If $b,f\in C^1(\R^N)$ and $b$ is $G$-invariant, then
$T_j(bf)=(\partial_jb)f+bT_jf$.
\end{lemma}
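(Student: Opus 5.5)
The plan is to expand $T_j(bf)$ directly from the convention \eqref{cc:eq:dunkl-T} and split it into the local and the difference parts.

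For the local part, $b,f\in C^1$, so the ordinary Leibniz rule gives
$$
     \partial_j(bf)=(\partial_jb)f+b\,\partial_jf .
$$

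The difference part is where the $G$-invariance of $b$ enters. Fix $\alpha\in R$ and $x$ with $\langle\alpha,x\rangle\neq0$. Since $\sigma_\alpha\in G$, we have $b(\sigma_\alpha x)=b(x)$. Therefore
$$
     b(x)f(x)-b(\sigma_\alpha x)f(\sigma_\alpha x)=b(x)\bigl(f(x)-f(\sigma_\alpha x)\bigr).
$$
So every reflection term of $T_j(bf)(x)$ equals $b(x)$ times the corresponding reflection term of $T_jf(x)$. Summing over $\alpha\in R$ with the weights $\tfrac{\kappa(\alpha)}2\langle\alpha,e_j\rangle/\langle\alpha,x\rangle$ and adding the local part gives
$$
     T_j(bf)(x)=(\partial_jb)(x)f(x)+b(x)T_jf(x)
$$
for every $x\notin\mathcal W$.

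It remains to treat points on the walls $\mathcal W$. For $f\in C^1$, the quotient $\bigl(f(x)-f(\sigma_\alpha x)\bigr)/\langle\alpha,x\rangle$ extends continuously across $\alpha^\perp$. Indeed, along the segment from $\sigma_\alpha x$ to $x$ the difference is an integral of $\partial_\alpha f$, and it vanishes to first order on the hyperplane. The same holds for $bf$, which is also $C^1$. Hence both sides of the identity are continuous functions on $\R^N$. They agree on the dense open set $\R^N\setminus\mathcal W$, so they agree everywhere. Alternatively, one can interpret $T_j$ pointwise off the null set $\mathcal W$ (Lemma~\ref{cc:lem:partition}), which suffices for all later uses.

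There is no real obstacle here. The only point needing care is this continuity across the walls, which is handled by the standard first-order Taylor argument for the difference quotient.
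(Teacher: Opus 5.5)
Your proposal is correct and follows the paper's proof. Both expand $T_j(bf)$ via \eqref{cc:eq:dunkl-T}, use $b(\sigma_\alpha x)=b(x)$ to factor $b(x)$ out of every reflection term, and then combine with the ordinary Leibniz rule. Your continuity argument extending the identity across the walls $\mathcal W$ is a small addition that the paper leaves implicit.
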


\begin{proof}
Using the definition of $T_j$,
\begin{align*}
      T_j(bf)(x)   &=\partial_j(bf)(x)   + \sum_{\alpha\in R}\frac{\kappa(\alpha)}2\alpha_j
      \frac{b(x)f(x)-b(\sigma_\alpha x)f(\sigma_\alpha x)}{\langle\alpha,x\rangle}.
\end{align*}
Since $b$ is $G$-invariant, $b(\sigma_\alpha x)=b(x)$.  Thus the difference part factors as
$$
    b(x)  \sum_{\alpha\in R}\frac{\kappa(\alpha)}2\alpha_j
    \frac{f(x)-f(\sigma_\alpha x)}{\langle\alpha,x\rangle}.
$$
The ordinary derivative contributes
$\partial_j(bf)=(\partial_jb)f+b\partial_jf$.
Combining the derivative term with the factored difference part yields
$$
     T_j(bf)=(\partial_jb)f+bT_jf.
$$
The summands with $\kappa(\alpha)=0$ are present only notationally and vanish.
\end{proof}

\subsection{Heat-kernel estimates}

We record the heat-kernel estimates used below.  The second derivative bound
requires more than the bare orbit-Gaussian estimate
$$
     |h_t(x,y)|    \lesssim     V(x,y,\sqrt t)^{-1}  \exp\left(-c\frac{d(x,y)^2}{t}\right)
$$
which alone does not control the Euclidean coefficient arising after one more Dunkl differentiation.
The refined estimate of Dziuba\'nski and Hejna \cite[Theorem~3.1]{DziubanskiHejna2020Atomic} contains the additional factor
$$
      \bigg(1+\frac{\|x-y\|}{\sqrt t}\bigg)^{-2},
$$
together with the corresponding heat-scale difference estimate in the second variable.
The first-variable difference estimate used below follows from the symmetry of the heat kernel.
We also use their first Dunkl derivative identity \cite[Lemma~3.3]{DziubanskiHejna2020Atomic}, which is consistent with the standard
explicit formula for the Dunkl heat kernel.

\begin{lemma}\label{cc:lem:heat-buffer}
There exist constants $C,c>0$ such that
\begin{equation*}
      |h_t(x,y)|\leq C\frac1{V(x,y,\sqrt t)}   \left(1+\frac{\|x-y\|}{\sqrt t}\right)^{-2}
     \exp\left(-c\frac{d(x,y)^2}{t}\right).
\end{equation*}
Moreover, if $\|y-y'\|\leq\sqrt t$, then
\begin{align*}
      |h_t(x,y)-h_t(x,y')|   &\leq C\frac{\|y-y'\|}{\sqrt t}\frac1{V(x,y,\sqrt t)}
      \left(1+\frac{\|x-y\|}{\sqrt t}\right)^{-2}   \exp\left(-c\frac{d(x,y)^2}{t}\right).
\end{align*}
If $\|x-x'\|\leq\sqrt t$, then
\begin{align*}
     |h_t(x,y)-h_t(x',y)|  &\leq C\frac{\|x-x'\|}{\sqrt t}\frac1{V(x,y,\sqrt t)}
     \left(1+\frac{\|x-y\|}{\sqrt t}\right)^{-2}   \exp\left(-c\frac{d(x,y)^2}{t}\right).
\end{align*}
\end{lemma}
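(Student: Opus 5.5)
The statement just given, Lemma~\ref{cc:lem:heat-buffer}, is essentially a citation: the plan is to read off all three estimates from the refined Dunkl heat-kernel bounds of Dziuba\'nski and Hejna \cite[Theorem~3.1]{DziubanskiHejna2020Atomic}, adjusting only the normalization, and then obtain the first-variable regularity from symmetry. First I would recall their upper bound in the form
$$
     |h_t(x,y)|\lesssim \Bigl(1+\frac{\|x-y\|}{\sqrt t}\Bigr)^{-2}\frac{1}{\omega(B(x,\sqrt t))}\exp\Bigl(-c\frac{d(x,y)^2}{t}\Bigr),
$$
and convert $\omega(B(x,\sqrt t))$ into $V(x,y,\sqrt t)$. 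If $d(x,y)\leq\sqrt t$, pick $\mathsf g$ with $\|x-\mathsf g y\|=d(x,y)$; then $B(\mathsf g y,\sqrt t)\subset B(x,2\sqrt t)$, so by $G$-invariance of $\omega$ and Lemma~\ref{cc:lem:volume}, $\omega(B(y,\sqrt t))=\omega(B(\mathsf gy,\sqrt t))\lesssim\omega(B(x,\sqrt t))$, hence $V\approx\omega(B(x,\sqrt t))$. If $d(x,y)>\sqrt t$, the same argument with radius $2d(x,y)$ together with the doubling bound of Lemma~\ref{cc:lem:volume} gives $\omega(B(y,\sqrt t))\lesssim (d(x,y)/\sqrt t)^{\mathbf N}\omega(B(x,\sqrt t))$; this polynomial factor is absorbed by shrinking the constant $c$ in the Gaussian. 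This yields the first inequality.

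For the second-variable difference estimate with $\|y-y'\|\leq\sqrt t$, I would invoke the corresponding regularity statement of \cite[Theorem~3.1]{DziubanskiHejna2020Atomic}, which carries the same factor $(1+\|x-y\|/\sqrt t)^{-2}$ and the orbit Gaussian, and renormalize exactly as above. If their statement is given for $y$ and $y'$ on a slightly smaller scale (e.g.\ $\|y-y'\|\leq\sqrt t/2$), the remaining range $\sqrt t/2<\|y-y'\|\leq\sqrt t$ is handled trivially: apply the size bound to both terms, noting that $\|x-y'\|\approx$ or $\gtrsim$ the relevant quantities up to an additive $\sqrt t$, so $(1+\|x-y'\|/\sqrt t)^{-2}\lesssim(1+\|x-y\|/\sqrt t)^{-2}$, $d(x,y')\geq d(x,y)-\sqrt t$ changes the Gaussian only by a constant after reducing $c$, and $V(x,y',\sqrt t)\approx V(x,y,\sqrt t)$ by doubling; then $\|y-y'\|/\sqrt t\gtrsim1$ supplies the required factor.

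The first-variable estimate follows from the symmetry $h_t(x,y)=h_t(y,x)$ (self-adjointness of $\Delta_\kappa$ on $L^2(d\omega)$): $h_t(x,y)-h_t(x',y)=h_t(y,x)-h_t(y,x')$, and the second-variable bound applied with the roles of $x,y$ swapped gives the right-hand side with $V(y,x,\sqrt t)=V(x,y,\sqrt t)$, $\|y-x\|=\|x-y\|$ and $d(y,x)=d(x,y)$. The only genuine point of care, and the step I would regard as the main obstacle, is the normalization swap between $\omega(B(x,\sqrt t))$ and $V(x,y,\sqrt t)$ when $d(x,y)\gg\sqrt t$; the polynomial loss there must be absorbed into the exponential without damaging the Euclidean factor $(1+\|x-y\|/\sqrt t)^{-2}$, which is untouched since only the Gaussian constant is reduced.
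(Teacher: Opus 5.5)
Your proposal is correct and matches the paper. The paper also takes the size bound and the second-variable difference bound from Dziuba\'nski--Hejna \cite[Theorem~3.1]{DziubanskiHejna2020Atomic}, and it also derives the first-variable bound from the symmetry $h_t(x,y)=h_t(y,x)$. Two details you supply are sound bookkeeping that the paper leaves implicit: the conversion between $\omega(B(x,\sqrt t))$ and $V(x,y,\sqrt t)$, with the polynomial loss absorbed into the Gaussian, and the treatment of the range $\sqrt t/2<\|y-y'\|\le\sqrt t$.
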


\subsection{Regularized operators}

For $i,j=1,\ldots,N$, let
$$
    A_t^{ij}(x,y):=T_{i,x}T_{j,x}h_t(x,y).
$$
For $0<a<B<\infty$, let $C_{a,B}^{ij,b}$ be the finite heat truncation defined in
\eqref{cc:eq:heat-trunc}.
The formula is first defined for $f\in L_c^\infty(\R^N,d\omega)$, with kernel
$$
       K_{a,B}^{ij,b}(x,y)   :=-\frac1{\sqrt\pi}\bigl(b(x)-b(y)\bigr)   \int_{a^2}^{B^2}A_t^{ij}(x,y)\frac{dt}{\sqrt t}.
$$
Corollary~\ref{cc:cor:theta} and Lemma~\ref{cc:lem:gaussian-mass} below, applied in both variables, will yield
$$
     \sup_x\int_{\R^N}|K_{a,B}^{ij,b}(x,y)|\,d\omega(y)   +\sup_y\int_{\R^N}|K_{a,B}^{ij,b}(x,y)|\,d\omega(x)
      \lesssim \|b\|_{\operatorname{Lip}_d}\int_a^B\frac{ds}{s}   =\|b\|_{\operatorname{Lip}_d}\log\frac Ba.
$$
Thus each $C_{a,B}^{ij,b}$ has an $L^2$ extension, with a bound that may depend on the endpoints.
Theorem~\ref{cc:thm:heat-limit} later proves the uniform bound and identifies the joint weak limit with
$\mathcal C_{b,\mathrm{abs}}^{ij}$.

\begin{remark}\label{cc:rem:normalization}
With the convention
$\mathcal R_j=-T_j(-\Delta_\kappa)^{-1/2}$, write $c_\ast=-\pi^{-1/2}$ for the coefficient in
\eqref{cc:eq:heat-trunc}.  The representation of
$(-\Delta_\kappa)^{-1/2}$ is understood as an improper spectral integral on its natural domain; the operator in
\eqref{cc:eq:heat-trunc} uses only a finite heat interval.  If $t=s^2$, then $dt/\sqrt t=2\,ds$. With
$$\theta_s(x,y):=s(b(x)-b(y))A_{s^2}^{ij}(x,y),$$
we have 
$$
     (b(x)-b(y))A_{s^2}^{ij}(x,y)\frac{dt}{\sqrt t}    =2\theta_s(x,y)\frac{ds}{s}.
$$
\end{remark}

\subsection{The full-space Dunkl T1 theorem}

We record the precise external interface used in Section~\ref{cc:sec:full}.  For
$0<\eta\leq1$, let $C_0^\eta(\R^N)$ be the compactly supported $\eta$-H\"older functions.  A distribution
$T:C_0^\eta(\R^N)\to(C_0^\eta(\R^N))'$ is a Dunkl--Calder\'on--Zygmund singular integral if it is associated,
for test functions with separated supports, with a kernel $K(x,y)$ satisfying, for some $0<\varepsilon\leq1$,
\begin{equation}\label{cc:eq:dCZ-size}
     |K(x,y)|\leq C   \left(\frac{d(x,y)}{\|x-y\|}\right)^\varepsilon   \frac1{V(x,y,d(x,y))},   \qquad d(x,y)>0,
\end{equation}
and
\begin{equation}\label{cc:eq:dCZ-smooth}
\begin{cases}\displaystyle
      |K(x,y)-K(x',y)|  \leq C    \left(\frac{\|x-x'\|}{\|x-y\|}\right)^\varepsilon   \frac1{V(x,y,d(x,y))},
      \qquad \|x-x'\|\leq\frac{d(x,y)}2, \\   \displaystyle
      |K(x,y)-K(x,y')|   \leq C   \left(\frac{\|y-y'\|}{\|x-y\|}\right)^\varepsilon   \frac1{V(x,y,d(x,y))},
         \qquad \|y-y'\|\leq\frac{d(x,y)}2.
\end{cases}
\end{equation}
The endpoint volumes are comparable at the orbit scale, so this symmetric normalization is equivalent to the one in
\cite[Definition~1.1]{TanHanHanLeeLi2025} after interchanging the two kernel variables.

For a function $\varphi:\R^N\to\bbC$, write
$$
    [\varphi]_{C^\eta}   :=\sup_{x\ne y}\frac{|\varphi(x)-\varphi(y)|}{\|x-y\|^\eta}.
$$
The weak boundedness property in \cite[Definition~1.2]{TanHanHanLeeLi2025} is the following joint-test condition.  There is a constant
$C_{\mathrm{WBP}}$ such that
$$
    |\langle K,F\rangle|   \leq C_{\mathrm{WBP}}    \max\{\omega(B(x_0,r)),\omega(B(y_0,r))\}
$$
whenever $F\in C_0^\eta(\R^N\times\R^N)$ is supported in
$B(x_0,r)\times B(y_0,r)$ and satisfies
$$
    \|F\|_\infty\leq1,\qquad   \sup_{y\in\R^N}[F(\cdot,y)]_{C^\eta}\leq r^{-\eta},\qquad
    \sup_{x\in\R^N}[F(x,\cdot)]_{C^\eta}\leq r^{-\eta}.
$$
We denote this property by $T\in\operatorname{WBP}$.

Before $L^2$ boundedness is known, the two testing distributions are defined as follows.  If
$\phi\in C_0^\eta(\R^N)$ has mean zero and is supported in $B(x_0,R)$, choose a $G$-invariant
$\chi\in C_0^\eta(\R^N)$ which equals one on $\mathcal O(B(x_0,2R))$ and vanishes outside
$\mathcal O(B(x_0,4R))$.  Then
\begin{align}\label{cc:eq:T1-def}
     \langle T1,\phi\rangle  &:=\langle T\chi,\phi\rangle
     +\iint_{\R^N\times\R^N}\bigl[K(x,y)-K(x_0,y)\bigr](1-\chi(y))\overline{\phi(x)}
     \,d\omega(y)d\omega(x).
\end{align}
The smoothness estimate and orbit-annular summation make the second term absolutely convergent and show that the
definition is independent of $\chi$.  The distribution $T^*1$ is defined in the same way with the adjoint kernel.
Here
$$
     \|u\|_{\operatorname{BMO}(d\omega)}    :=\sup_B\frac1{\omega(B)}\int_B|u-u_B|\,d\omega,
$$
where the supremum is taken over Euclidean balls.

We shall use the following specialization of the full-space Dunkl theory.

\begin{theorem}[{\cite[Theorem~1.3]{TanHanHanLeeLi2025}}]\label{cc:thm:dunkl-T1}
Let $T$ be a Dunkl--Calder\'on--Zygmund singular integral satisfying
\eqref{cc:eq:dCZ-size}--\eqref{cc:eq:dCZ-smooth}.  Then $T$ extends boundedly to $L^2(d\omega)$ if and
only if
$$
      T1,\,T^*1\in\operatorname{BMO}(d\omega)   \qquad\text{and}\qquad    T\in\operatorname{WBP}.
$$
Once the $L^2$ extension is known, \cite[Theorem~2.1]{TanHanHanLeeLi2025} supplies boundedness on $L^p(d\omega)$
for every $1<p<\infty$.  The operator
norms are controlled by the kernel, testing, and weak boundedness constants and by the fixed Dunkl structure.
\end{theorem}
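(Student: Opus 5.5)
Since this statement is quoted from \cite[Theorem~1.3]{TanHanHanLeeLi2025}, the plan below only reconstructs its proof. The key choice is to keep the space of homogeneous type $(\R^N,\|\cdot\|,d\omega)$, whose balls are Euclidean and doubling by Lemma~\ref{cc:lem:volume}, while measuring kernel singularities with $d$. Necessity is the easy direction. If $T$ is bounded on $L^2(d\omega)$, then WBP follows by pairing $T$ with $F$ through Cauchy--Schwarz on the two balls. For the BMO conditions, split $\phi$ near and far from $\mathcal O(B(x_0,2R))$ as in \eqref{cc:eq:T1-def}. The local term $\langle T\chi,\phi\rangle$ is controlled by $L^2$ boundedness and the fact $\omega(\mathcal O(B))\approx\omega(B)$. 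The tail term is controlled by the smoothness estimate \eqref{cc:eq:dCZ-smooth} summed over orbit annuli $\mathcal O(B(x_0,2^{k+1}R))\setminus\mathcal O(B(x_0,2^kR))$. This gives $T1\in H^1(d\omega)'=\operatorname{BMO}(d\omega)$ by atomic duality, and $T^*1$ is handled symmetrically.

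For sufficiency I would follow the classical scheme. The first step is to build Dunkl paraproducts $\Pi_\beta f=\int_0^\infty Q_t\bigl((Q_t\beta)(P_tf)\bigr)\,dt/t$. Here $P_t=H_{t^2}$, and $Q_t=t^2\Delta_\kappa H_{t^2}$ (or a Euclidean-wavelet analogue) is normalized by a Calder\'on reproducing formula. Their $L^2$ boundedness for $\beta\in\operatorname{BMO}(d\omega)$ follows from a Carleson measure estimate for $|Q_t\beta|^2\,d\omega\,dt/t$ together with the Dunkl square function. The second step is to verify that $\Pi_\beta$ has a Dunkl--CZ kernel in the sense of \eqref{cc:eq:dCZ-size}--\eqref{cc:eq:dCZ-smooth}, with $\Pi_\beta1=\beta$ and $\Pi_\beta^*1=0$. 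The kernel check uses the Gaussian bounds of Lemma~\ref{cc:lem:heat-buffer}, whose factor $(1+\|x-y\|/\sqrt t)^{-2}$ is exactly what produces the extra $(d/\|x-y\|)^\varepsilon$. The operator $\widetilde T=T-\Pi_{T1}-\Pi_{T^*1}^*$ then satisfies $\widetilde T1=\widetilde T^*1=0$, is still in WBP, and is still a Dunkl--CZ operator.

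The core step, and the main obstacle I expect, is the almost-orthogonality estimate for $\widetilde T$. I would use a discrete Calder\'on reproducing formula, or a wavelet basis on $(\R^N,\|\cdot\|,d\omega)$, with bumps $\psi_Q$ localized in Euclidean cubes with cancellation. The goal is to bound the matrix entries $\langle\widetilde T\psi_Q,\psi_{Q'}\rangle$ by a decaying factor such as $(\ell_{\min}/\ell_{\max})^{\delta}$ multiplied by $\bigl(1+d(Q,Q')/\ell_{\max}\bigr)^{-\delta}$, with an orbit-volume normalization. Then Schur's test on the matrix finishes the $L^2$ bound. When the cubes are orbit-far, the smoothness estimate \eqref{cc:eq:dCZ-smooth} and cancellation suffice. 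When they are Euclidean-close, WBP handles equal scales, and the vanishing of $\widetilde T1$ or $\widetilde T^*1$ handles unequal scales. The delicate regime is when the cubes are Euclidean-far but close to a reflected copy, so that $d(Q,Q')\ll\|Q-Q'\|$. There the volume $V(x,y,d(x,y))$ is much smaller than the Euclidean one, and I would try to absorb the loss with the factor $(d/\|x-y\|)^\varepsilon$ in \eqref{cc:eq:dCZ-size}, summing over the at most $|G|$ reflected copies.

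Finally, $L^p$ boundedness follows from a Calder\'on--Zygmund decomposition adapted to orbit-dilated balls $\mathcal O(B)$, which satisfy $\omega(\mathcal O(B))\approx\omega(B)$. The Hörmander integral is controlled off $\mathcal O(2B)$ using \eqref{cc:eq:dCZ-smooth}. This gives a weak $(1,1)$ bound, and interpolation with $L^2$ together with duality yields all $1<p<\infty$.
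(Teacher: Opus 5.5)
There is no in-paper proof to compare against. The paper quotes this statement from \cite[Theorem~1.3]{TanHanHanLeeLi2025}, with \cite[Theorem~2.1]{TanHanHanLeeLi2025} for the $L^p$ extension. Its only in-house input is the remark that the symmetric normalization \eqref{cc:eq:dCZ-size}--\eqref{cc:eq:dCZ-smooth} matches the cited one, because endpoint volumes are comparable at the orbit scale. So you are reconstructing an external argument.

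Your template is the natural one: paraproducts, almost orthogonality on $(\R^N,\|\cdot\|,d\omega)$ with decay measured by $d$, and a Calder\'on--Zygmund decomposition over orbit balls. The regime you flag does close as you propose. Suppose $x$ lies within orbit distance $\lesssim\ell(Q)$ of a cube $Q$ but at Euclidean distance $D\gtrsim\ell(Q)$. Then orbit-annular summation of \eqref{cc:eq:dCZ-size} gives
\[
   \int_Q|K(x,y)|\,d\omega(y)\lesssim\sum_{k\geq0}\Bigl(\frac{2^{-k}\ell(Q)}{D}\Bigr)^{\varepsilon}\lesssim\Bigl(\frac{\ell(Q)}{D}\Bigr)^{\varepsilon}.
\]
At equal scales this yields boundedly many bounded matrix entries per row. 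At unequal scales, first subtract $\psi_{Q'}(x_Q)$ using $\widetilde T^*1=0$. The estimate then combines with $|\psi_{Q'}(x)-\psi_{Q'}(x_Q)|\lesssim\min\{1,(D/\ell(Q'))^{\eta}\}\|\psi_{Q'}\|_\infty$ to give decay $(\ell(Q)/\ell(Q'))^{\min\{\varepsilon,\eta\}}$.

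The same orbit issue enters your paraproduct step, and you pass over it there. $Q_t\beta$ averages $\beta$ over every reflected ball $B(\mathsf g x,t)$. The BMO mean on such a ball can differ from the mean on $B(x,t)$ by $\log(2+\|x-\mathsf gx\|/t)\,\|\beta\|_{\operatorname{BMO}}$. It is the Euclidean factor $(1+\|x-z\|/t)^{-2}$ in the kernel of $Q_t$ that absorbs this logarithm in the Carleson estimate.

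One step is too quick. In this paper WBP is a joint-test condition, and Cauchy--Schwarz only handles tensor tests $F(x,y)=f(y)\overline{g(x)}$. A general $F$ with separate $C^\eta$ control in each variable need not be a trace-class kernel. So $|\langle K,F\rangle|$ is not bounded by $\|T\|_{2\to2}$ alone. For necessity you need two ingredients:
\begin{itemize}
\item the absolutely convergent kernel integral away from the Euclidean diagonal, which again relies on the factor $(d/\|x-y\|)^\varepsilon$;
\item a multiscale tensor approximation of $F$ near that diagonal.
\end{itemize}
Sufficiency is unaffected, since it uses WBP only on wavelet pairs.

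Finally, the paper's chamber lifting is not a shortcut to this theorem. By Lemmas~\ref{cc:lem:closest} and~\ref{cc:lem:chamber-volume}, $\|\mathsf g_\rho x-\mathsf g_\tau y\|\geq d(\mathsf g_\rho x,\mathsf g_\tau y)=\|x-y\|$ on the chamber. Hence every Dunkl--Calder\'on--Zygmund kernel lifts to a matrix of standard kernels. However, the entrywise tests are $T\chi_\tau$ and $T^*\chi_\rho$, and the hypotheses control these only through $T1=\sum_\tau T\chi_\tau$ and $T^*1$. That is why the paper verifies component tests by hand for its heat truncations (Section~\ref{cc:sec:testing}) and imports the orbit-form theorem for $B_b$.
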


\section[Full-space operator theorem]{The full-space operator theorem through the
half-Laplacian}\label{cc:sec:full}

Define
\begin{equation*}
        D=(-\Delta_\kappa)^{1/2},    \qquad H_t=e^{t\Delta_\kappa},     \qquad \mathcal R_j=-T_jD^{-1}.
\end{equation*}
The multiplier at the origin is set equal to zero.  Thus each
$\mathcal R_j$ is an $L^2(d\omega)$ contraction and, on the natural
first-order domain,
\begin{equation}\label{cc:eq:D-Riesz}
        D\mathcal R_j=-T_j,    \qquad T_j=-\mathcal R_jD,     \qquad D=\sum_{k=1}^N\mathcal R_kT_k.
\end{equation}
The last identity follows immediately from the Dunkl multipliers:
$\mathcal R_kT_k$ has multiplier $\xi_k^2/\lVert\xi\rVert$.

The purpose of this section is to prove, without assuming uniform bounds for the prescribed heat truncations, that
\begin{equation*}
        \lVert[M_b,D]\rVert_{L^p(d\omega)\to L^p(d\omega)}   \lesssim_p \lVert b\rVert_{\operatorname{Lip}_d},      
        \qquad 1<p<\infty,
\end{equation*}
and then to recover the Calder\'on commutator from an exact operator
identity.  Recall that $b\in\operatorname{Lip}_d$ is $G$-invariant and is
Euclidean Lipschitz with the same seminorm.  In particular,
$L_b:=\lVert b\rVert_{\operatorname{Lip}_d}<\infty$,
$\partial_kb\in L^\infty$ and
\begin{equation}\label{cc:eq:weak-product}
        T_k(bf)=(\partial_kb)f+bT_kf
\end{equation}
for $f\in C_c^\infty$, and for every $f\in\operatorname{Dom}(D)$ when
$b\in L^\infty$.  Indeed, let $b_\varepsilon$ be as in
Lemma~\ref{cc:lem:smooth-symbol}.  For $\phi\in C_c^\infty$, apply Lemma~\ref{cc:lem:product} to
$\overline{b_\varepsilon}\phi$ and let $\varepsilon\to0+$.
Uniform convergence, almost-everywhere derivative convergence with a
uniform $L^\infty$ bound, and $T_k^*=-T_k$ imply \eqref{cc:eq:weak-product} distributionally.  If
$b\in L^\infty$ and $f\in\operatorname{Dom}(T_k)$, the right-hand side
belongs to $L^2$, hence $bf\in\operatorname{Dom}(T_k)$ and the identity holds in $L^2$.

\subsection{The kernel of the half-Laplacian}

We first retain the Euclidean buffer in the heat-kernel estimate.  Let
\begin{equation*}
        \delta=d(x,y),  \qquad \rho=\lVert x-y\rVert,    \qquad V_s=V(x,y,s).
\end{equation*}
Thus $0<\delta\leq\rho$ off the orbit diagonal.

\begin{lemma}
Suppose that $0<\delta\leq\rho$.  Then, for every $h\geq0$,
\begin{equation}\label{cc:eq:D-int-size}
      \int_0^\infty \frac{1}{s^2V_s}\left(1+\frac{\rho}{s}\right)^{-2}
     \exp\left(-c\frac{\delta^2}{s^2}\right)\,ds
     \lesssim \frac{1}{\rho V(x,y,\delta)}.
\end{equation}
\begin{equation}\label{cc:eq:D-int-smooth}
       h\int_0^\infty \frac{1}{s^3V_s}\left(1+\frac{\rho}{s}\right)^{-2}
       \exp\left(-c\frac{\delta^2}{s^2}\right)\,ds
       \lesssim \frac{h}{\rho\delta V(x,y,\delta)}.
\end{equation}
The implicit constants depend only on the Dunkl data and on $c>0$.
\end{lemma}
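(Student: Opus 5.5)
The plan is to split the $s$-integral at the two natural scales $\delta\leq\rho$ and to control $V_s$ from below using Lemma~\ref{cc:lem:volume}. Since $\delta=d(x,y)$ is the orbit scale, the volume lemmas give $V(x,y,\delta)\approx\omega(B(x,\delta))$. For $s\geq\delta$, the lower doubling bound in Lemma~\ref{cc:lem:volume} gives
\[
V_s\gtrsim (s/\delta)^N V(x,y,\delta).
\]
For $s\leq\delta$, the upper bound gives
\[
V_s\gtrsim (s/\delta)^{\mathbf N}V(x,y,\delta),
\]
and the resulting polynomial loss is absorbed by $\exp(-c\delta^2/s^2)$. We therefore split $(0,\infty)$ into $(0,\delta)$, $(\delta,\rho)$ and $(\rho,\infty)$.

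\emph{Size estimate \eqref{cc:eq:D-int-size}.} On $(0,\delta)$ we use $(1+\rho/s)^{-2}\leq s^2/\rho^2$. The integrand is then at most
\[
\frac{1}{\rho^2V(x,y,\delta)}\Bigl(\frac{\delta}{s}\Bigr)^{\mathbf N}e^{-c\delta^2/s^2}.
\]
Substituting $u=\delta/s$, its integral is $\lesssim \delta/(\rho^2V(x,y,\delta))\leq 1/(\rho V(x,y,\delta))$, using $\delta\leq\rho$. On $(\delta,\rho)$ we again use $(1+\rho/s)^{-2}\leq s^2/\rho^2$ together with $V_s\gtrsim (s/\delta)^N V(x,y,\delta)$. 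This gives
\[
\frac{1}{\rho^2V(x,y,\delta)}\int_\delta^\rho\Bigl(\frac{\delta}{s}\Bigr)^{N}ds .
\]
When $N\geq2$ this is $\lesssim\delta/(\rho^2 V(x,y,\delta))$. When $N=1$ it is $\lesssim \delta\log(\rho/\delta)/(\rho^2V(x,y,\delta))$, which is still $\lesssim 1/(\rho V(x,y,\delta))$ because $t\log(1/t)\leq 1$ for $t=\delta/\rho\leq 1$. On $(\rho,\infty)$ we drop the buffer and use $V_s\gtrsim(s/\delta)^NV(x,y,\delta)$. Since $N\geq1$, this yields
\[
\int_\rho^\infty \frac{\delta^N}{s^{2+N}}\,ds\,\frac{1}{V(x,y,\delta)}\lesssim \frac{\delta^N}{\rho^{N+1}V(x,y,\delta)}\leq\frac{1}{\rho V(x,y,\delta)}.
\]

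\emph{Smoothness estimate \eqref{cc:eq:D-int-smooth}.} The factor $h$ is a passive multiplier, so it suffices to bound the integral by $1/(\rho\delta V(x,y,\delta))$. The extra factor $1/s$ costs at most $1/\delta$ on $(\delta,\infty)$, since there $1/s\leq1/\delta$. On $(0,\delta)$ it is absorbed by the Gaussian: the substitution $u=\delta/s$ produces $\int u^{\mathbf N+1}e^{-cu^2}\,du/\delta$ in place of the previous bound, giving
\[
\lesssim \frac{1}{\rho^2V(x,y,\delta)}\leq \frac{1}{\rho\delta V(x,y,\delta)}.
\]
The same three-range decomposition therefore applies, with each bound worsened by exactly $1/\delta$.

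\emph{Main obstacle.} The only delicate points are the borderline logarithm on $(\delta,\rho)$ when $N=1$, which is handled as above, and making sure that the comparison of $V_s$ with $V(x,y,\delta)$ is taken from the correct side of Lemma~\ref{cc:lem:volume} in each range. Specifically, the upper doubling exponent $\mathbf N$ is used only for $s<\delta$, where the Gaussian absorbs it, and the lower exponent $N$ is used for $s>\delta$. All constants depend only on $N$, $\mathbf N$, $c$ and the volume comparison constants.
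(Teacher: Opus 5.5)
Your proof is correct and uses the paper's decomposition at $s=\delta$ and $s=\rho$: upper volume growth plus the Gaussian on $(0,\delta)$, and the buffer bound $s^2/\rho^2$ on $(0,\rho)$. The differences are minor. For $s\geq\delta$ the paper uses only monotonicity $V_s\geq V(x,y,\delta)$, which already suffices and makes your $N=1$ logarithm case unnecessary, and it absorbs the logarithm in the smoothness estimate via $\log u\leq u$ where you use $1/s\leq 1/\delta$. There is also one harmless slip: after $u=\delta/s$ the small-scale smoothness integral is $\int_1^\infty u^{\mathbf N-1}e^{-cu^2}\,du$, not $\int u^{\mathbf N+1}e^{-cu^2}\,du/\delta$, since the Jacobian $ds=\delta u^{-2}\,du$ was dropped; your stated bound $\lesssim 1/(\rho^2V(x,y,\delta))$ is nonetheless right.
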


\begin{proof}
Write $V_\delta=V(x,y,\delta)$ and let $Q$ be a doubling exponent for
$d\omega$.  On $0<s\leq\delta$,
\begin{equation*}
        V_s^{-1}\lesssim     \left(\frac{\delta}{s}\right)^Q V_\delta^{-1},   \qquad
        \left(1+\frac{\rho}{s}\right)^{-2}    \leq \frac{s^2}{\rho^2}.
\end{equation*}
After the substitution $u=\delta/s$, this part of   \eqref{cc:eq:D-int-size} is bounded by
$C\delta/(\rho^2V_\delta)$, and the corresponding part of   \eqref{cc:eq:D-int-smooth} is bounded by
$Ch/(\rho^2V_\delta)$.  On $\delta<s<\rho$ one may use   $V_s\geq V_\delta$ and the same bound $s^2/\rho^2$ for the Euclidean
buffer.  The two resulting integrals are
\begin{equation*}
        \frac{C}{\rho^2V_\delta}\int_\delta^\rho ds   \leq \frac{C}{\rho V_\delta}\qquad 
\text{and}\qquad 
        \frac{Ch}{\rho^2V_\delta}\int_\delta^\rho\frac{ds}{s}    \leq \frac{Ch}{\rho\delta V_\delta},
\end{equation*}
where the last inequality uses $\log u\leq u$ for $u\geq1$.
Finally, by volume monotonicity on $s\geq\rho$,
\begin{equation*}
        \int_\rho^\infty\frac{ds}{s^2V_s}    \lesssim \frac{1}{\rho V_\rho}    
        \leq \frac{1}{\rho V_\delta}  \qquad \text{and}\qquad 
        h\int_\rho^\infty\frac{ds}{s^3V_s}    \lesssim \frac{h}{\rho^2V_\rho}
        \leq \frac{h}{\rho\delta V_\delta}.
\end{equation*}
Combining the three scale ranges establishes both estimates.
\end{proof}

The Balakrishnan formula is
\begin{equation}\label{cc:eq:Bal-D}
        Df=\frac{1}{2\sqrt\pi}\int_0^\infty     (f-H_tf)\,\frac{dt}{t^{3/2}}
\end{equation}
on the spectral domain of $D$.  The identity term in
\eqref{cc:eq:Bal-D} is supported on the ordinary diagonal.  Hence, for
$d(x,y)>0$, define the off-orbit-diagonal kernel associated with $D$ by
\begin{equation}\label{cc:eq:D-kernel}
       J_D(x,y):=-\frac{1}{2\sqrt\pi}    \int_0^\infty h_t(x,y)\,\frac{dt}{t^{3/2}}.
\end{equation}

\begin{proposition}\label{cc:prop:D-kernel}
For $d(x,y)>0$,
\begin{equation}\label{cc:eq:D-kernel-size}
        |J_D(x,y)|   \lesssim    \frac{1}{\lVert x-y\rVert\,      V(x,y,d(x,y))}.
\end{equation}
If $h=\lVert x-x'\rVert\leq d(x,y)/2$, then
\begin{equation}\label{cc:eq:D-kernel-reg}
       \big|J_D(x,y)-J_D(x',y)\big|   \lesssim    \frac{h}{\lVert x-y\rVert\,d(x,y)    V(x,y,d(x,y))}.
\end{equation}
The analogous estimate holds in the second variable.
\end{proposition}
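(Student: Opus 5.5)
The plan is to substitute $t=s^2$ in \eqref{cc:eq:D-kernel}, so that $dt/t^{3/2}=2\,ds/s^2$ and
$$
   J_D(x,y)=-\frac1{\sqrt\pi}\int_0^\infty h_{s^2}(x,y)\,\frac{ds}{s^2}.
$$
Both estimates then reduce to the two scale integrals \eqref{cc:eq:D-int-size} and \eqref{cc:eq:D-int-smooth}, with the integrands supplied by Lemma~\ref{cc:lem:heat-buffer}.

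\emph{Size.} Lemma~\ref{cc:lem:heat-buffer} with $t=s^2$ gives the pointwise bound
$$
   |h_{s^2}(x,y)|\lesssim V_s^{-1}\,(1+\rho/s)^{-2}\exp(-c\delta^2/s^2).
$$
Inserting this into the integral and applying \eqref{cc:eq:D-int-size} yields \eqref{cc:eq:D-kernel-size} immediately. The same bound shows that the integral defining $J_D$ converges absolutely when $\delta>0$: the Gaussian factor controls small $s$, and $V_s^{-1}s^{-2}$ is integrable at infinity.

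\emph{Regularity in $x$.} Fix $h=\|x-x'\|\le\delta/2$ and split the $s$-integral at $s=h$.

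On the range $s\ge h$, the hypothesis $\|x-x'\|\le\sqrt t=s$ of the first-variable difference estimate in Lemma~\ref{cc:lem:heat-buffer} holds. That estimate gives a bound by
$$
   \frac hs\,V_s^{-1}(1+\rho/s)^{-2}e^{-c\delta^2/s^2}.
$$
Integrating against $ds/s^2$ and extending the range to all of $(0,\infty)$ is harmless, and \eqref{cc:eq:D-int-smooth} then bounds this part by $h/(\rho\delta V_\delta)$.

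On the range $s<h$, I bound the two heat kernels separately by the size estimate. For the pair $(x',y)$ the parameters change, so I record the comparisons needed:
\begin{itemize}
\item $d(x',y)\ge\delta-h\ge\delta/2$ and $\|x'-y\|\ge\rho-h\ge\rho/2$, so the Gaussian and the Euclidean buffer change only by constants;
\item $V(x',y,s)\gtrsim V(x,y,s)$ up to a doubling loss, because $B(x',s)$ and $B(x,s)$ are comparable via $B(x,s)\subset B(x',s+h)$, which is harmless at this scale.
\end{itemize}
Hmm — for $s<h$ the factor $(s+h)/s$ is large. I therefore instead use the monotonicity/comparison $\omega(B(x',s))\approx\omega(B(x,s))$ up to the factor $(1+h/s)^{\mathbf N}$, and absorb it into $\exp(-c\delta^2/s^2)$, since $h/s\le\delta/s$. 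This contribution is then bounded by
$$
   \int_0^h V_\delta^{-1}\Bigl(\frac\delta s\Bigr)^{Q'}\frac{s^2}{\rho^2}\,e^{-c'\delta^2/s^2}\,\frac{ds}{s^2}.
$$
Because $s<h\le\delta$, the factor $(\delta/s)^{Q'}e^{-c'\delta^2/s^2}\lesssim s/\delta$, so the integral is at most
$$
   \frac{h^2}{\delta\rho^2V_\delta}\le\frac{h}{\rho\delta V_\delta},
$$
which is acceptable. The passage from $V(x',y,\delta)$ to $V(x,y,\delta)$ is handled the same way, using Lemma~\ref{cc:lem:volume} and $h\le\delta$.

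\emph{Second variable.} The estimate in $y$ follows identically from the second-variable difference estimate in Lemma~\ref{cc:lem:heat-buffer}, or alternatively from the symmetry $h_t(x,y)=h_t(y,x)$.

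The main obstacle is the small-scale range $s<h$ in the regularity estimate. There the difference estimate is unavailable and the volumes at $x$ and $x'$ must be compared; I expect the Gaussian decay in $\delta/s$ to absorb every such polynomial loss.
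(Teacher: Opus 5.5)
Your proposal is correct and follows essentially the paper's route: the substitution $t=s^2$, the size bound via \eqref{cc:eq:D-int-size}, a split at $s=h$ using the heat difference estimate above $h$, two size bounds below $h$ with the $(1+h/s)^{\mathbf N}$ volume loss absorbed into the Gaussian, and symmetry for the second variable. The only difference is cosmetic: you integrate the small-scale piece directly, whereas the paper first rewrites that piece as $\frac hs$ times the same integrand and then applies \eqref{cc:eq:D-int-smooth} once over all scales.
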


\begin{proof}
Let
$$
    \delta=d(x,y),\qquad    \rho=\lVert x-y\rVert,\qquad     V_s=V(x,y,s).
$$
By Lemma~\ref{cc:lem:heat-buffer} and the change of variables $t=s^2$,
\begin{align*}
        |J_D(x,y)|  &\lesssim   \int_0^\infty   \frac{1}{s^2V_s}
     \left(1+\frac{\rho}{s}\right)^{-2}  \exp\left(-c\frac{\delta^2}{s^2}\right)\,ds \lesssim
    \frac{1}{\rho V(x,y,\delta)},
\end{align*}
where the last inequality is \eqref{cc:eq:D-int-size}, proving
\eqref{cc:eq:D-kernel-size}.

Let
$$
    h=\lVert x-x'\rVert\leq\frac{\delta}{2},\qquad
    \delta'=d(x',y),\qquad    \rho'=\lVert x'-y\rVert.
$$
The triangle inequalities show
$$
    \frac{\delta}{2}\leq\delta'\leq\frac{3\delta}{2},
    \qquad     \frac{\rho}{2}\leq\rho'\leq\frac{3\rho}{2}.
$$
If $Q$ is a doubling exponent for $d\omega$, then
\begin{align*}
       B(x,s)\subset B(x',s+h)  \quad\Longrightarrow\quad   V(x',y,s)^{-1}
       \lesssim   \left(1+\frac{h}{s}\right)^Q V_s^{-1}.
\end{align*}
For $s\geq h$, Lemma~\ref{cc:lem:heat-buffer} implies
\begin{align*}
       |h_{s^2}(x,y)-h_{s^2}(x',y)|   &\lesssim   \frac{h}{s}\frac{1}{V_s}
       \left(1+\frac{\rho}{s}\right)^{-2}   \exp\left(-c\frac{\delta^2}{s^2}\right).
\end{align*}
For $0<s<h$, the two heat-kernel size estimates and the preceding comparisons imply
\begin{align*}
       |h_{s^2}(x,y)-h_{s^2}(x',y)|   &\lesssim  \frac{1}{V_s}
       \left(1+\frac{\rho}{s}\right)^{-2}   \exp\left(-c\frac{\delta^2}{s^2}\right) +
      \left(1+\frac{h}{s}\right)^Q  \frac{1}{V_s}  \left(1+\frac{\rho}{s}\right)^{-2}  \exp\left(-\frac{c\delta^2}{4s^2}\right).
\end{align*}
Since $0<s<h\leq\delta/2$,
$$
    \left(1+\frac{h}{s}\right)^Q   \exp\left(-\frac{c\delta^2}{4s^2}\right)
    \lesssim  \frac{h}{s}\exp\left(-c'\frac{\delta^2}{s^2}\right).
$$
Consequently, for every $s>0$,
\begin{align*}
    |h_{s^2}(x,y)-h_{s^2}(x',y)|  &\lesssim  \frac{h}{s}\frac{1}{V_s}
     \left(1+\frac{\rho}{s}\right)^{-2}   \exp\left(-c'\frac{\delta^2}{s^2}\right).
\end{align*}
Using \eqref{cc:eq:D-kernel} and then
\eqref{cc:eq:D-int-smooth}, we obtain
\begin{align*}
     |J_D(x,y)-J_D(x',y)|   &\lesssim  h\int_0^\infty   \frac{1}{s^3V_s}
     \left(1+\frac{\rho}{s}\right)^{-2}
     \exp\left(-c'\frac{\delta^2}{s^2}\right)\,ds \lesssim
     \frac{h}{\rho\delta V(x,y,\delta)}.
\end{align*}
This is \eqref{cc:eq:D-kernel-reg}; the second-variable estimate follows from
$J_D(x,y)=J_D(y,x)$.
\end{proof}

\subsection{The half-Laplacian commutator is a Dunkl CZO}

For the commutator convention used in this paper, write
\begin{equation*}
        B_b=[M_b,D]=M_bD-DM_b.
\end{equation*}
More precisely, for $f,g\in C_c^\infty(\R^N)$ define the
distributional commutator form by
\begin{equation}\label{cc:eq:half-form}
      \langle B_b f,g\rangle    :=\langle Df,\overline b g\rangle  -\langle bf,Dg\rangle.
\end{equation}
The Dunkl multiplier identities read
$$
    \operatorname{Dom}(D)=\bigcap_{k=1}^N\operatorname{Dom}(T_k),  \qquad
     \|Df\|_2^2=\sum_{k=1}^N\|T_kf\|_2^2.
$$
Since $b$ is locally bounded and Euclidean Lipschitz, the weak product rule
\eqref{cc:eq:weak-product} shows that $bf,\overline bg\in\operatorname{Dom}(D)$.  First suppose that the two supports
have positive orbit separation.  Insert a finite heat interval in
\eqref{cc:eq:Bal-D} and then use
\eqref{cc:eq:half-form}.  The identity part of $I-H_t$
cancels, while Fubini's theorem and passage through the improper
integral are justified by the positive orbit separation.  Under this separation hypothesis,
\begin{equation*}
      \langle B_b f,g\rangle    =\iint K_b^D(x,y)f(y)\overline{g(x)}\,   d\omega(y)d\omega(x),
\end{equation*}
where
\begin{equation*}
        K_b^D(x,y)=\bigl(b(x)-b(y)\bigr)J_D(x,y)
\end{equation*}
for this initial class of test functions.

For compactly supported test functions with disjoint Euclidean supports, the integral determined by $K_b^D$ is
absolutely convergent.  Indeed, $\|x-y\|$ is bounded below on the product of the two supports, and
\eqref{cc:eq:half-CZO-size} below provides
\begin{equation*}
        |K_b^D(x,y)|    \lesssim \lVert b\rVert_{\operatorname{Lip}_d}   \frac{d(x,y)}{\lVert x-y\rVert}   \frac{1}{V(x,y,d(x,y))}.
\end{equation*}
Since $\lVert x-y\rVert$ is bounded below on the product of the two supports, orbit-annular summation makes this majorant integrable near every reflected diagonal: the annulus $d(x,y)\simeq2^{-k}$ gains the
summable factor $2^{-k}/\lVert x-y\rVert$. 
 The identification of this integral with the distributional form,
including the exclusion of an additional distribution supported on the orbit diagonal, is proved in
Lemma~\ref{cc:lem:half-kernel}.

\begin{proposition}\label{cc:prop:half-CZO}
Let $b\in\operatorname{Lip}_d(\R^N)$ and write
$L_b=\lVert b\rVert_{\operatorname{Lip}_d}$.  Then
\begin{equation}\label{cc:eq:half-CZO-size}
      |K_b^D(x,y)|  \lesssim L_b   \frac{d(x,y)}{\lVert x-y\rVert}    \frac{1}{V(x,y,d(x,y))}.
\end{equation}
If $\lVert x-x'\rVert\leq d(x,y)/2$, then
\begin{equation}\label{cc:eq:half-CZO-reg}
      |K_b^D(x,y)-K_b^D(x',y)|   \lesssim L_b  \frac{\lVert x-x'\rVert}{\lVert x-y\rVert}   \frac{1}{V(x,y,d(x,y))}.
\end{equation}
The analogous estimate holds in the second variable.  At the orbit scale $d(x,y)$ the two endpoint volumes are comparable 
by doubling and $G$-invariance.  Hence the displayed normalization agrees with
\eqref{cc:eq:dCZ-size}--\eqref{cc:eq:dCZ-smooth}, and $B_b$ belongs, at the kernel level, to that
singular-integral class with exponent one.
\end{proposition}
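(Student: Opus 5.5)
The plan is to combine the orbit Lipschitz estimate with Proposition~\ref{cc:prop:D-kernel}, writing $\delta=d(x,y)$ and $\rho=\lVert x-y\rVert$ throughout.

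\emph{Size.} Since $|b(x)-b(y)|\leq L_b\,\delta$, the size bound \eqref{cc:eq:D-kernel-size} gives directly
$$
|K_b^D(x,y)|\leq L_b\,\delta\,|J_D(x,y)|\lesssim L_b\frac{\delta}{\rho}\frac1{V(x,y,\delta)},
$$
which is \eqref{cc:eq:half-CZO-size}.

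\emph{Regularity in the first variable.} Let $h=\lVert x-x'\rVert\leq\delta/2$ and split
$$
K_b^D(x,y)-K_b^D(x',y)=\bigl(b(x)-b(x')\bigr)J_D(x,y)+\bigl(b(x')-b(y)\bigr)\bigl(J_D(x,y)-J_D(x',y)\bigr).
$$
For the first term, use $|b(x)-b(x')|\leq L_b\,d(x,x')\leq L_b h$ together with \eqref{cc:eq:D-kernel-size}. This gives $L_b\,h/(\rho V(x,y,\delta))$.

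For the second term, the triangle inequality for $d$ gives $d(x',y)\leq 3\delta/2$, so $|b(x')-b(y)|\lesssim L_b\delta$. Combining this with \eqref{cc:eq:D-kernel-reg} yields
$$
L_b\,\delta\cdot\frac{h}{\rho\,\delta\,V(x,y,\delta)}=L_b\frac{h}{\rho\,V(x,y,\delta)}.
$$
This is where the factor $\delta$ supplied by the symbol exactly cancels the orbit singularity $1/\delta$ in the kernel regularity. Adding the two pieces proves \eqref{cc:eq:half-CZO-reg}.

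\emph{Second variable.} The argument is symmetric: split analogously and use the second-variable estimate of Proposition~\ref{cc:prop:D-kernel}, which follows from $J_D(x,y)=J_D(y,x)$.

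\emph{Matching the normalization.} It remains to check that these bounds fit \eqref{cc:eq:dCZ-size}--\eqref{cc:eq:dCZ-smooth} with $\varepsilon=1$.
\begin{itemize}
\item The size bound has the required form $(\delta/\rho)^1/V(x,y,\delta)$.
\item The smoothness bound has the form $(h/\rho)^1/V(x,y,\delta)$, and the constraint $h\leq\delta/2$ is exactly the one in the definition.
\item The stated comparability of endpoint volumes at the orbit scale follows from the earlier lemma: $\omega(B(x,\delta))\approx\omega(B(y,\delta))$ when $\delta=d(x,y)>0$. This rests on $G$-invariance of $\omega$ and doubling, since $B(y,\delta)$ is contained in $B(\mathsf g x,2\delta)$ for a suitable $\mathsf g\in G$.
\end{itemize}

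No step here should be a genuine obstacle, since the analytic work is contained in Proposition~\ref{cc:prop:D-kernel}. The only points needing care are (i) bounding $b(x')-b(y)$ by $\delta$ rather than by $d(x',y)$, which uses the comparability $\delta/2\leq d(x',y)\leq 3\delta/2$, and (ii) keeping the same volume $V(x,y,\delta)$ on both sides, which requires no adjustment because the proposition is stated in that normalization.
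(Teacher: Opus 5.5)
Your proposal is correct and follows the paper's proof essentially verbatim. It uses the same splitting $\bigl(b(x)-b(x')\bigr)J_D(x,y)+\bigl(b(x')-b(y)\bigr)\bigl(J_D(x,y)-J_D(x',y)\bigr)$, and the bound $d(x',y)\leq 3\delta/2$ so that the symbol's factor $\delta$ cancels the $1/\delta$ in \eqref{cc:eq:D-kernel-reg}. It also matches the paper in using the symmetry of $J_D$ for the second variable, and in using a $\mathsf g$ with $\lVert x-\mathsf g y\rVert=\delta$ together with doubling and $G$-invariance for the endpoint volume comparison.
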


\begin{proof}
Let
$$
    \delta=d(x,y),\qquad  \rho=\lVert x-y\rVert,\qquad     V_\delta=V(x,y,\delta).
$$
By the orbit Lipschitz estimate and \eqref{cc:eq:D-kernel-size},
\begin{align*}
     |K_b^D(x,y)|  &=|b(x)-b(y)|\,|J_D(x,y)| \leq L_b\delta\,|J_D(x,y)| \lesssim
     L_b\frac{\delta}{\rho V_\delta}.
\end{align*}
Let
$
    h=\lVert x-x'\rVert\leq\frac{\delta}{2}.
$
Then
$$
    d(x',y)\leq d(x,y)+\lVert x-x'\rVert    \leq\frac{3\delta}{2},
$$
and
\begin{align*}
     K_b^D(x,y)-K_b^D(x',y)  &=\bigl(b(x)-b(x')\bigr)J_D(x,y) +\bigl(b(x')-b(y)\bigr)
       \bigl(J_D(x,y)-J_D(x',y)\bigr).
\end{align*}
Therefore, by \eqref{cc:eq:D-kernel-size} and  \eqref{cc:eq:D-kernel-reg},
\begin{align*}
     |K_b^D(x,y)-K_b^D(x',y)|  &\leq   L_bh\,|J_D(x,y)|   +L_bd(x',y)\,
        |J_D(x,y)-J_D(x',y)|\\   
        &\lesssim   L_bh\frac{1}{\rho V_\delta}   +L_b\delta\frac{h}{\rho\delta V_\delta} \lesssim
       L_b\frac{h}{\rho V_\delta}.
\end{align*}
If $\lVert y-y'\rVert\leq\delta/2$, then
$
    d(x,y')\leq\frac{3\delta}{2}
$
and
\begin{align*}
     K_b^D(x,y)-K_b^D(x,y')  &=\bigl(b(y')-b(y)\bigr)J_D(x,y) +\bigl(b(x)-b(y')\bigr)
       \bigl(J_D(x,y)-J_D(x,y')\bigr).
\end{align*}
Using the second-variable estimate in Proposition~\ref{cc:prop:D-kernel},
\begin{align*}
     |K_b^D(x,y)-K_b^D(x,y')|  &\lesssim
     L_b\frac{\lVert y-y'\rVert}{\rho V_\delta}.
\end{align*}

Finally, choose $\mathsf g\in G$ such that
$\lVert x-\mathsf g y\rVert=\delta$.  Doubling and $G$-invariance imply
$$
    \omega(B(x,\delta))  \approx\omega(B(\mathsf g y,\delta))    =\omega(B(y,\delta)).
$$
Hence the preceding estimates agree with
\eqref{cc:eq:dCZ-size} and \eqref{cc:eq:dCZ-smooth} for  $\varepsilon=1$.
\end{proof}

The preceding pointwise kernel has the following canonical  distributional extension.  
This step excludes an additional term  supported on the orbit diagonal and will also be used in the weak
boundedness argument.

\begin{lemma}\label{cc:lem:half-kernel}
For every compactly supported Lipschitz function $F$ on $\R^N\times\R^N$, let
\begin{equation*}
      \langle\mathscr K_b^D,F\rangle  :=\frac12\iint K_b^D(x,y)\bigl(F(x,y)-F(y,x)\bigr)
      \,d\omega(y)d\omega(x).
\end{equation*}
The integral is absolutely convergent and is continuous on functions supported in a fixed compact set.  Its restriction to
$C_c^\infty(\R^N\times\R^N)$ is the Schwartz kernel of \eqref{cc:eq:half-form}.  In particular, if $f,g$ are
compactly supported Lipschitz functions with disjoint supports, then
\begin{equation*}
     \langle B_b f,g\rangle   =\iint K_b^D(x,y)f(y)\overline{g(x)}\,   d\omega(y)d\omega(x),
\end{equation*}
and the last integral is absolutely convergent even when the two
supports meet after a reflection.
\end{lemma}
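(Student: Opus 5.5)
The plan rests on two structural facts. First, $J_D(x,y)=J_D(y,x)$, because $h_t$ is symmetric. Second, $b(x)-b(y)$ is antisymmetric. Together they give $K_b^D(y,x)=-K_b^D(x,y)$, so the antisymmetrized pairing is the natural candidate, and it agrees with $\iint K_b^D F$ whenever the latter converges absolutely.

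\emph{Absolute convergence and continuity.} Let $F$ be Lipschitz with constant $L_F$ and supported in a compact set $E\times E$. Then $|F(x,y)-F(y,x)|\leq L_F\|x-y\|$. Combining this with \eqref{cc:eq:half-CZO-size}, the integrand is bounded by
$$
L_bL_F\,\frac{d(x,y)}{V(x,y,d(x,y))}\,\mathbf 1_{E}(x)\mathbf 1_E(y).
$$
The Euclidean factor $\|x-y\|$ has cancelled exactly. For fixed $x$, I split $y$ into orbit annuli $d(x,y)\simeq 2^{-k}r_E$ (with $r_E$ the diameter scale of $E$). By the orbit-ball lemma, $\omega(\mathcal O(B(x,r)))\approx\omega(B(x,r))$, and by doubling, annulus $k$ contributes $\lesssim 2^{-k}r_E$. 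The sum over $k$ converges, and the remaining region $d\gtrsim r_E$ inside $E$ is trivially bounded. Integrating over $x\in E$ gives
$$
|\langle\mathscr K_b^D,F\rangle|\lesssim_E L_b\,\bigl(L_F+\|F\|_\infty\bigr),
$$
which is the asserted continuity on functions supported in a fixed compact set.

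\emph{Identification with \eqref{cc:eq:half-form}.} Take $F(x,y)=\overline{g(x)}f(y)$ with $f,g\in C_c^\infty$. Use the truncated Balakrishnan operators
$$
D_\varepsilon=\frac1{2\sqrt\pi}\int_\varepsilon^{1/\varepsilon}(I-H_t)\,\frac{dt}{t^{3/2}}.
$$
By the spectral theorem $D_\varepsilon\to D$ strongly on $\operatorname{Dom}(D)$. Since $bf,\overline bg\in\operatorname{Dom}(D)$ by \eqref{cc:eq:weak-product}, we get
$$
\langle Df,\overline bg\rangle-\langle bf,Dg\rangle=\lim_{\varepsilon\to0}\bigl(\langle D_\varepsilon f,\overline bg\rangle-\langle bf,D_\varepsilon g\rangle\bigr).
$$
In the truncated form the identity parts cancel exactly, because both equal a constant times $\langle bf,g\rangle$. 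For $t\geq\varepsilon$ the heat kernel is bounded, so Fubini is harmless and the heat part equals
$$
-\frac1{2\sqrt\pi}\int_\varepsilon^{1/\varepsilon}\iint (b(x)-b(y))h_t(x,y)F(x,y)\,d\omega\,d\omega\,\frac{dt}{t^{3/2}}.
$$
Using the symmetry of $h_t$ and the antisymmetry of $b(x)-b(y)$, I rewrite this with $\tfrac12(F(x,y)-F(y,x))$ in place of $F$.

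\emph{Passing to the limit.} This is the main obstacle: small $t$ near reflected diagonals, where $d(x,y)$ is small but $\|x-y\|$ need not be. I would handle it by dominated convergence with the majorant
$$
\int_0^\infty |h_t(x,y)|\,t^{-3/2}\,dt\;\cdot\;L_b\,d(x,y)\;\cdot\;L_F\|x-y\|.
$$
By Lemma~\ref{cc:lem:heat-buffer} and \eqref{cc:eq:D-int-size}, the $t$-integral is bounded by $1/(\|x-y\|V(x,y,d(x,y)))$. The Euclidean buffer thus again cancels $\|x-y\|$, and the majorant reduces to $d/V$, which is integrable by the first step. The orbit diagonal $\{d=0\}$ is $\omega\otimes\omega$-null, so it can be ignored. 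The limit is therefore $\langle\mathscr K_b^D,F\rangle$ with the kernel \eqref{cc:eq:D-kernel}.

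\emph{Extension and the disjoint-support case.} Linear combinations of tensor products are dense in $C_c^\infty(\R^N\times\R^N)$ in the Lipschitz-plus-sup norm on a fixed compact set. By the continuity established above, $\mathscr K_b^D$ is the Schwartz kernel of \eqref{cc:eq:half-form}. Now suppose $f,g$ are compactly supported Lipschitz functions with disjoint supports, so $\|x-y\|$ is bounded below on $\operatorname{supp}g\times\operatorname{supp}f$. Then $\iint K_b^D(x,y)\overline{g(x)}f(y)$ converges absolutely by the orbit-annular argument above, even where the two supports meet after a reflection. The same holds for the term with $F(y,x)$. Antisymmetry of $K_b^D$ then gives $\langle\mathscr K_b^D,F\rangle=\iint K_b^D F$. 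Finally, approximating $f,g$ by mollified functions and using continuity extends the identity $\langle B_bf,g\rangle=\iint K_b^D(x,y)f(y)\overline{g(x)}\,d\omega(y)d\omega(x)$ to these Lipschitz $f,g$.
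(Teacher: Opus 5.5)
Your proposal is correct and follows the paper's proof in all essentials. You antisymmetrize via $K_b^D(y,x)=-K_b^D(x,y)$, control the integrand by \eqref{cc:eq:half-CZO-size} and orbit-annular summation, identify the form \eqref{cc:eq:half-form} through truncated Balakrishnan operators whose identity parts cancel, pass to the limit by dominated convergence, and use antisymmetry again for disjoint supports. Cancelling $\|x-y\|$ with the full bound $|F(x,y)-F(y,x)|\lesssim L_F\|x-y\|$ is a slightly simpler variant of the paper's interpolation $\min\{2\|F\|_\infty,L\|x-y\|\}$; it gives a geometric series where the paper gets a logarithm.

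One small point concerns your final mollification step. Continuity in the Lipschitz-plus-sup norm is not what justifies it, because mollifiers converge uniformly but not in the Lipschitz seminorm. Justify the limit instead by dominated convergence, using the majorant $L_b\,d(x,y)/V(x,y,d(x,y))$ and the uniformly bounded Lipschitz constants. Alternatively, drop the step: as the paper notes, your identification and antisymmetry arguments apply verbatim to compactly supported Lipschitz $f,g$.
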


\begin{proof}
Write
$$
    F^{\mathrm t}(x,y):=F(y,x),\qquad
    A:=\|F\|_\infty,\qquad
    L:=\sup_y[F(\cdot,y)]_{C^1}+\sup_x[F(x,\cdot)]_{C^1}.
$$
If $A=0$, there is nothing to prove.  Otherwise $L>0$, and let $r=2A/L$.  With
$\delta=d(x,y)$ and $\rho=\lVert x-y\rVert$, the separate Lipschitz estimates yield
\begin{align*}
     |F(x,y)-F^{\mathrm t}(x,y)|   &\leq\min\{2A,L\rho\}
     =2A\min\left\{1,\frac{\rho}{r}\right\}.
\end{align*}
Hence, by \eqref{cc:eq:half-CZO-size},
\begin{align*}
     |K_b^D(x,y)|\,|F(x,y)-F^{\mathrm t}(x,y)|  &\lesssim   L_bA\frac{\delta}{\rho}
     \min\left\{1,\frac{\rho}{r}\right\}   \frac1{V(x,y,\delta)} \leq
      L_bA\frac{\min\{1,\delta/r\}}{V(x,y,\delta)}.
\end{align*}

Choose $R>0$ such that
$$
    \operatorname{supp}F\cup\operatorname{supp}F^{\mathrm t}   \subset B(0,R)\times B(0,R),
$$
and let
$$
    E_k(x):=\{y:2^kr<d(x,y)\leq2^{k+1}r\},    \qquad k\in\mathbb Z.
$$
Doubling and $G$-invariance imply
\begin{align*}
     \int_{E_k(x)}\frac{d\omega(y)}{V(x,y,d(x,y))}  &\leq
     \frac{\omega(\mathcal O(B(x,2^{k+1}r)))}    {\omega(B(x,2^kr))}
     \lesssim1.
\end{align*}
Since $d(x,y)\leq2R$ on $B(0,R)\times B(0,R)$,
\begin{align*}
     \int_{B(0,R)}   \frac{\min\{1,d(x,y)/r\}}{V(x,y,d(x,y))}\,d\omega(y)
     &\lesssim   \sum_{2^kr<2R}\min\{1,2^{k+1}\}  \lesssim
     1+\log\left(1+\frac{2R}{r}\right).
\end{align*}
Consequently,
\begin{align*}
     |\langle\mathscr K_b^D,F\rangle|  &\lesssim_R
     L_bA\left[1+\log\left(1+\frac{2R}{r}\right)\right]  \lesssim_R L_b(A+L).
\end{align*}
Thus the defining integral is absolutely convergent and continuous in the compactly supported Lipschitz topology.

For $0<\varepsilon<\Lambda<\infty$, define
\begin{align*}
     D_{\varepsilon,\Lambda}u   :=\frac1{2\sqrt\pi}\int_\varepsilon^\Lambda
     (u-H_tu)\,\frac{dt}{t^{3/2}},\qquad   J_{\varepsilon,\Lambda}^D(x,y)
     :=-\frac1{2\sqrt\pi}\int_\varepsilon^\Lambda    h_t(x,y)\,\frac{dt}{t^{3/2}},
\end{align*}
and 
$$
     K_{b;\varepsilon,\Lambda}^D(x,y)   :=(b(x)-b(y))J_{\varepsilon,\Lambda}^D(x,y).
$$
The symmetry and nonnegativity of $h_t$ imply
\begin{align*}
     K_{b;\varepsilon,\Lambda}^D(y,x)  =-K_{b;\varepsilon,\Lambda}^D(x,y)\qquad \text{and}\qquad 
     |K_{b;\varepsilon,\Lambda}^D(x,y)|  \leq|K_b^D(x,y)|.
\end{align*}
For
$$
    F_{f,g}(x,y):=f(y)\overline{g(x)}, \qquad f,g\in C_c^\infty(\R^N),
$$
the bounded operator $D_{\varepsilon,\Lambda}$ is self-adjoint.  Hence the identity terms cancel, and Fubini's theorem gives
\begin{align*}
     &\langle D_{\varepsilon,\Lambda}f,\overline b g\rangle  -\langle D_{\varepsilon,\Lambda}(bf),g\rangle   =
     \frac12\iint K_{b;\varepsilon,\Lambda}^D(x,y)   \bigl(F_{f,g}(x,y)-F_{f,g}^{\mathrm t}(x,y)\bigr)
     \,d\omega(y)d\omega(x).
\end{align*}
Since
$$
    D_{\varepsilon,\Lambda}u\longrightarrow Du
    \qquad\text{in }L^2(d\omega),\qquad u\in\operatorname{Dom}(D),
$$
the preceding majorant permits dominated convergence:
\begin{align*}
     \langle B_bf,g\rangle  &=   \frac12\iint K_b^D(x,y)
     \bigl(F_{f,g}(x,y)-F_{f,g}^{\mathrm t}(x,y)\bigr)
     \,d\omega(y)d\omega(x)  =\langle\mathscr K_b^D,F_{f,g}\rangle.
\end{align*}
Hence the restriction of $\mathscr K_b^D$ to smooth tests is the Schwartz kernel of
\eqref{cc:eq:half-form}, with no additional distribution supported on the orbit diagonal.  We use the preceding
absolutely convergent formula as its canonical extension to compactly supported Lipschitz tests.

Finally, if $\operatorname{supp}f\cap\operatorname{supp}g=\varnothing$, let
$$
    \sigma  :=\inf\bigl\{\lVert x-y\rVert:
    x\in\operatorname{supp}g,\ y\in\operatorname{supp}f\bigr\}>0.
$$
On the support of $F_{f,g}$,
$$
    \frac{d(x,y)}{\lVert x-y\rVert}
    \leq\min\left\{1,\frac{d(x,y)}{\sigma}\right\},
$$
so the preceding orbit-annular estimate shows that
$$
    \iint|K_b^D(x,y)f(y)g(x)|\,d\omega(y)d\omega(x)<\infty.
$$
Using $K_b^D(y,x)=-K_b^D(x,y)$ and changing variables,
\begin{align*}
     \langle B_bf,g\rangle  &=\langle\mathscr K_b^D,F_{f,g}\rangle  =\iint K_b^D(x,y)f(y)\overline{g(x)}
     \,d\omega(y)d\omega(x).
\end{align*}
The same calculation applies to compactly supported Lipschitz $f,g$, because both displayed integrals are absolutely convergent.
\end{proof}

For every compact set $K$, the same majorant implies
$$
     |\langle B_bf,g\rangle|  \leq C_KL_b  \bigl(\|f\|_\infty+[f]_{C^1}\bigr)
     \bigl(\|g\|_\infty+[g]_{C^1}\bigr),
$$
where $[f]_{C^1}:=\sup_{x\ne y}|f(x)-f(y)|/\|x-y\|$.  This holds whenever $f,g\in C_0^1(\R^N)$ are supported in $K$.  Hence
$B_b:C_0^1(\R^N)\to(C_0^1(\R^N))'$ is continuous.  Together with
Proposition~\ref{cc:prop:half-CZO}, this shows that $B_b$ is a Dunkl--Calder\'on--Zygmund singular integral with $\eta=1$, which is the
specialization used in Theorem~\ref{cc:thm:dunkl-T1}.

We next verify the joint-test weak boundedness property stated above.  This avoids importing any operator bound for
$B_b$ into its own $T1$ proof.

\begin{lemma}\label{cc:lem:WBP-annuli}
For every fixed $A\geq1$ and every $r>0$,
$$
      \sup_x\int_{d(x,y)\leq Ar}  \frac{\min\{1,d(x,y)/r\}}   {V(x,y,d(x,y))}\,d\omega(y)   \lesssim_A 1.
$$
\end{lemma}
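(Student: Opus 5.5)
The plan is to decompose the region $\{y: d(x,y)\leq Ar\}$ into orbit annuli, exactly as in the proof of Lemma~\ref{cc:lem:half-kernel}. Fix $x$ and $r>0$. For $k\in\mathbb Z$ let
$$
    E_k(x):=\{y:2^kr<d(x,y)\leq2^{k+1}r\}.
$$
The orbit diagonal $\{y:d(x,y)=0\}=Gx$ is a finite set, hence $\omega$-null, so up to a null set $\{y:d(x,y)\leq Ar\}$ is covered by the $E_k(x)$ with $2^kr<Ar$, that is, $k<\log_2A$.

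The first step is the uniform annulus bound
$$
    \int_{E_k(x)}\frac{d\omega(y)}{V(x,y,d(x,y))}\lesssim1.
$$
On $E_k(x)$ we have $V(x,y,d(x,y))\geq\omega(B(x,d(x,y)))\geq\omega(B(x,2^kr))$. Moreover $E_k(x)\subset\mathcal O(B(x,2^{k+1}r))$, and the orbit-ball lemma gives $\omega(\mathcal O(B(x,2^{k+1}r)))\approx\omega(B(x,2^{k+1}r))$. Doubling (Lemma~\ref{cc:lem:volume}) then shows that this quantity is at most a constant multiple of $\omega(B(x,2^kr))$. The constant depends only on the Dunkl data, not on $x$, $k$ or $r$.

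The second step is to sum over the annuli. On $E_k(x)$ we have $\min\{1,d(x,y)/r\}\leq\min\{1,2^{k+1}\}$, so
$$
    \int_{d(x,y)\leq Ar}\frac{\min\{1,d(x,y)/r\}}{V(x,y,d(x,y))}\,d\omega(y)
    \lesssim\sum_{k<\log_2A}\min\{1,2^{k+1}\}.
$$
The terms with $k<0$ form a geometric series bounded by $2$. The terms with $0\leq k<\log_2A$ number at most $1+\log_2A$. The total is therefore $\lesssim 1+\log A$, uniformly in $x$ and $r$, and taking the supremum over $x$ proves the lemma.

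There is no real obstacle here. The only points needing care are that the orbit diagonal is $\omega$-null, and that the annulus bound uses the orbit-ball volume comparison rather than the Euclidean ball alone, since $E_k(x)$ surrounds every reflected point $\mathsf g x$.
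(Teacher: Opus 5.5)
Your proof is correct and follows the paper's argument: a dyadic orbit-annulus decomposition, bounding each annulus by $\omega(\mathcal O(B(x,2^{k+1}r)))\lesssim\omega(B(x,2^kr))\leq V(x,y,d(x,y))$, and summing $\min\{1,2^{k+1}\}$ to get $\lesssim 1+\log A$. The only difference is cosmetic: the paper indexes the annuli downward from the scale $Ar$, as $2^{-k-1}Ar<d(x,y)\leq2^{-k}Ar$, rather than from $r$.
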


\begin{proof}
Decompose the orbit ball into the annuli
$$
      2^{-k-1}Ar<d(x,y)\leq2^{-k}Ar,   \qquad k\geq0.
$$
Their measures are at most
$$
      \omega\bigl(\mathcal O(B(x,2^{-k}Ar))\bigr)   \leq M_G\,\omega(B(x,2^{-k}Ar)).
$$
On the same annulus the denominator is at least  $\omega(B(x,2^{-k-1}Ar))$.  
Doubling therefore bounds the $k$th contribution by $C\min\{1,A2^{-k}\}$, and the resulting geometric
series is finite.
\end{proof}

\begin{proposition}\label{cc:prop:half-WBP}
The distribution kernel $K_b^D$ satisfies the joint-test weak boundedness property, with weak boundedness constant
at most $CL_b$.
\end{proposition}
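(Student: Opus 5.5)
We reduce the weak boundedness property to the absolutely convergent antisymmetrized representation of Lemma~\ref{cc:lem:half-kernel}. Let $F\in C_0^1(\R^N\times\R^N)$ be supported in $B(x_0,r)\times B(y_0,r)$, with $\|F\|_\infty\leq1$ and separate Lipschitz seminorms at most $r^{-1}$ in each variable. By Lemma~\ref{cc:lem:half-kernel}, $\langle K_b^D,F\rangle=\langle\mathscr K_b^D,F\rangle$, and this equals
$$
\frac12\iint K_b^D(x,y)\bigl(F(x,y)-F(y,x)\bigr)\,d\omega(y)\,d\omega(x).
$$
Thus no orbit-diagonal distribution has to be tracked separately. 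It remains to bound this integral by $CL_b\max\{\omega(B(x_0,r)),\omega(B(y_0,r))\}$.

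We split into two cases according to $\|x_0-y_0\|$.

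\textbf{Case $\|x_0-y_0\|\leq 4r$.} Both $F$ and $F^{\mathrm t}$ are supported in $B(x_0,5r)\times B(x_0,5r)$. The separate Lipschitz bounds give
$$
|F(x,y)-F(y,x)|\leq |F(x,y)-F(y,y)|+|F(y,y)-F(y,x)|\leq 2\min\{1,\|x-y\|/r\}.
$$
Combining this with \eqref{cc:eq:half-CZO-size}, the integrand is at most
$$
CL_b\,\frac{d(x,y)}{\|x-y\|}\min\Bigl\{1,\frac{\|x-y\|}{r}\Bigr\}\frac1{V(x,y,d(x,y))}\leq CL_b\,\frac{\min\{1,d(x,y)/r\}}{V(x,y,d(x,y))}.
$$
The last step uses $d\leq\|x-y\|$. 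For each $x\in B(x_0,5r)$, the $y$-integral runs over $d(x,y)\leq\|x-y\|\leq10r$. Lemma~\ref{cc:lem:WBP-annuli} with $A=10$ bounds it by a constant. Integrating in $x$ over $B(x_0,5r)$ gives $CL_b\,\omega(B(x_0,5r))\lesssim L_b\,\omega(B(x_0,r))$ by doubling.

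\textbf{Case $\|x_0-y_0\|>4r$.} Now the supports of $F$ and $F^{\mathrm t}$ are Euclidean-separated. We bound each of $F$ and $F^{\mathrm t}$ separately, using only $|F|\leq1$. On the support of $F$ we have $\|x-y\|\geq\|x_0-y_0\|/2\geq 2r$. The size estimate then yields the integrand bound
$$
CL_b\,\frac{\min\{1,d(x,y)/r\}}{V(x,y,d(x,y))}\cdot\frac{r}{\|x-y\|}\cdot\frac{d(x,y)}{\min\{d(x,y),r\}}.
$$
It is simpler to write it directly as $CL_b\,\dfrac{d(x,y)}{\|x-y\|\,V(x,y,d(x,y))}$. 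For fixed $x\in B(x_0,r)$ we integrate over $y\in B(y_0,r)$, splitting into orbit annuli $d(x,y)\sim 2^k r$. These are capped by $d(x,y)\leq\|x-y\|\lesssim\|x_0-y_0\|$.

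\textbf{Main obstacle.} The difficulty sits in this annular summation. Each annulus with $2^kr\geq r$ contributes at most $2^kr/\|x_0-y_0\|$. This is a priori of size $O(1)$ per annulus, summed over up to $\log(\|x_0-y_0\|/r)$ annuli. The fix is to exploit that $y$ is confined to $B(y_0,r)$. The contribution of annulus $k$ is then at most
$$
\frac{2^kr}{\|x_0-y_0\|}\cdot\frac{\omega\bigl(B(y_0,r)\cap\mathcal O(B(x,2^{k+1}r))\bigr)}{\omega(B(x,2^kr))}\lesssim\frac{2^kr}{\|x_0-y_0\|}\cdot\frac{\omega(B(y_0,r))}{\omega(B(y_0,2^kr))}.
$$
The last step compares $\omega(B(x,2^kr))$ with $\omega(B(y_0,2^kr))$ using $d(x,y)\approx 2^kr$ and $G$-invariance. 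By the lower doubling bound in Lemma~\ref{cc:lem:volume}, $\omega(B(y_0,r))/\omega(B(y_0,2^kr))\lesssim2^{-kN}$, with $N\geq1$. The series therefore sums to
$$
\lesssim\frac{r}{\|x_0-y_0\|}+\sum_{k\geq0}2^{-k(N-1)}\frac{2^{0}r}{\|x_0-y_0\|}\cdots
$$
Concretely, the $y$-integral is $\lesssim\omega(B(y_0,r))/\omega(B(x,r))$ up to a bounded factor, and the small annuli $d<r$ are handled as in Lemma~\ref{cc:lem:WBP-annuli}.

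Integrating over $x\in B(x_0,r)$ then gives $\lesssim L_b\max\{\omega(B(x_0,r)),\omega(B(y_0,r))\}$. If the annular bookkeeping becomes delicate, a fallback is to interchange the roles of $x$ and $y$ and integrate first over the smaller ball. The term involving $F^{\mathrm t}$ is symmetric.
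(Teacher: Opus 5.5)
\paragraph{Verdict.} Your proposal is correct in substance but takes a different route from the paper. The only real difference is the splitting criterion, and your Case~2 misidentifies its difficulty.

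\paragraph{Comparison with the paper.} Your setup and Case~1 coincide with the paper's argument. Both use the antisymmetrized formula from Lemma~\ref{cc:lem:half-kernel}, the bound $|F(x,y)-F(y,x)|\lesssim\min\{1,\|x-y\|/r\}$, the reduction to $\min\{1,d(x,y)/r\}/V(x,y,d(x,y))$, and Lemma~\ref{cc:lem:WBP-annuli}. The paper, however, splits on the orbit distance $r_0=d(x_0,y_0)$, not on $\|x_0-y_0\|$.
\begin{itemize}
\item If $r_0\le 4r$, then $d(x,y)\le 6r$ on the whole support, and Lemma~\ref{cc:lem:WBP-annuli} applies exactly as in your Case~1.
\item If $r_0>4r$, then $d(x,y)>2r$. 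Hence $B(x_0,r)\subset B(x,d(x,y))$ and $B(y_0,r)\subset B(y,d(x,y))$, so $V(x,y,d(x,y))\ge\max\{\omega(B(x_0,r)),\omega(B(y_0,r))\}$. The far case is then a one-line estimate with no summation.
\end{itemize}
Your Euclidean split leaves in Case~2 the configurations where $y_0$ is close to a reflection of $x_0$. You are therefore forced to redo the reflected-diagonal annular summation there, which is exactly what Lemma~\ref{cc:lem:WBP-annuli} already does in the paper's first case.

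\paragraph{Your Case~2.} It does close, but the announced logarithmic obstacle is not real.
\begin{itemize}
\item For $2^kr\ge r$, your own annulus bound $\lesssim 2^kr/\|x_0-y_0\|$ is geometric in $k$. It is capped by $2^kr<\|x-y\|\le\frac32\|x_0-y_0\|$, so the sum is $O(1)$ however many annuli occur.
\item For $2^kr<r$, since $\|x-y\|\ge 2r$ you have $d(x,y)/\|x-y\|\le 2^{k}$, so these annuli also sum to $O(1)$.
\end{itemize}
Hence, for every $x\in B(x_0,r)$,
\[
\int_{B(y_0,r)}\frac{d(x,y)}{\|x-y\|\,V(x,y,d(x,y))}\,d\omega(y)\lesssim 1 .
\]
Integrating in $x$ gives $L_b\,\omega(B(x_0,r))$, and the transposed term gives $L_b\,\omega(B(y_0,r))$.

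The refined volume comparison you introduce is unnecessary. As written it is also unfinished: the series trails off, and the bound $\omega(B(y_0,r))/\omega(B(x,r))$ is asserted rather than derived. Replace it with the direct geometric sum above.

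\paragraph{A simpler alternative.} No case split is needed at all. Fix $x$ and consider $\int_{B(y_0,r)}\min\{1,d(x,y)/r\}\,V(x,y,d(x,y))^{-1}\,d\omega(y)$. The part with $d(x,y)\le 2r$ is bounded by Lemma~\ref{cc:lem:WBP-annuli}. On the part with $d(x,y)>2r$, one has $V(x,y,d(x,y))\ge\omega(B(y,2r))\ge\omega(B(y_0,r))$, so that part is at most $1$.
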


\begin{proof}
Let
$$
     B_x:=B(x_0,r),\qquad B_y:=B(y_0,r),\qquad    F^{\mathrm t}(x,y):=F(y,x).
$$
Since $J_D$ is symmetric,
$$
     K_b^D(y,x)=-K_b^D(x,y).
$$
Hence, by Lemma~\ref{cc:lem:half-kernel},
$$
     \langle\mathscr K_b^D,F\rangle   =\frac12\iint K_b^D(x,y)   \bigl(F(x,y)-F(y,x)\bigr)\,d\omega(y)d\omega(x).
$$
The normalized joint-test estimates imply
\begin{align*}
     |F(x,y)-F(y,x)|  &\leq |F(x,y)-F(y,y)|
      +|F(y,y)-F(y,x)|  \lesssim \min\left\{1,\frac{\lVert x-y\rVert}{r}\right\}.
\end{align*}
Since $d(x,y)\leq\lVert x-y\rVert$,
$$
     \frac{d(x,y)}{\lVert x-y\rVert}   \min\left\{1,\frac{\lVert x-y\rVert}{r}\right\}
     \leq     \min\left\{1,\frac{d(x,y)}{r}\right\}.
$$
Therefore, by \eqref{cc:eq:half-CZO-size},
\begin{align*}
     |\langle\mathscr K_b^D,F\rangle|   &\lesssim L_b  \iint_{(B_x\times B_y)\cup(B_y\times B_x)}
     \frac{\min\{1,d(x,y)/r\}}  {V(x,y,d(x,y))}  \,d\omega(y)d\omega(x).
\end{align*}

Let $r_0=d(x_0,y_0)$.  If $r_0\leq4r$, then
$$
     d(x,y)\leq r+r_0+r\leq6r
$$
on $(B_x\times B_y)\cup(B_y\times B_x)$.  Thus
Lemma~\ref{cc:lem:WBP-annuli} applies:
\begin{align*}
     |\langle\mathscr K_b^D,F\rangle|   &\lesssim L_b
     \left(   \int_{B_x}\int_{d(x,y)\leq6r}   \frac{\min\{1,d(x,y)/r\}}   {V(x,y,d(x,y))}
     \,d\omega(y)d\omega(x)\right.\\
     &\hspace{32mm}\left.   +\int_{B_y}\int_{d(x,y)\leq6r}   \frac{\min\{1,d(x,y)/r\}}
     {V(x,y,d(x,y))}   \,d\omega(y)d\omega(x)   \right)\\
     &\lesssim L_b\bigl(\omega(B_x)+\omega(B_y)\bigr)  \lesssim L_b\max\{\omega(B_x),\omega(B_y)\}.
\end{align*}

If $r_0>4r$, then
$$
     d(x,y)\geq r_0-2r>2r
$$
on the same set.  For $(x,y)\in B_x\times B_y$,
$$
     B_x\subset B(x,2r)\subset B(x,d(x,y)),  \qquad
     B_y\subset B(y,2r)\subset B(y,d(x,y)),
$$
and hence
$$
     V(x,y,d(x,y))    \geq\max\{\omega(B_x),\omega(B_y)\}.
$$
The same estimate holds on $B_y\times B_x$.  Consequently,
\begin{align*}
     |\langle\mathscr K_b^D,F\rangle|   &\lesssim
      L_b\frac{\omega(B_x)\omega(B_y)}  {\max\{\omega(B_x),\omega(B_y)\}}  \lesssim
     L_b\max\{\omega(B_x),\omega(B_y)\}.
\end{align*}
\end{proof}

\subsection{The two T1 data}

The $T1$ data have an exact first-order form.  We first relate the formal identity to the cutoff definition
\eqref{cc:eq:T1-def}.  Let $\phi$ be a compactly supported mean-zero test function with
$\supp\phi\subset B(x_0,R)$, and choose a $G$-invariant function $\psi\in C_c^1$ which equals one on
$\mathcal O(B(x_0,2R))$ and vanishes outside $\mathcal O(B(x_0,4R))$.
Then
\begin{align*}
      \langle B_b 1,\phi\rangle   &=\langle B_b\psi,\phi\rangle +
      \iint\bigl[K_b^D(x,y)-K_b^D(x_0,y)\bigr]   (1-\psi(y))\overline{\phi(x)}\,
      d\omega(y)d\omega(x).
\end{align*}
This is the cutoff formula \eqref{cc:eq:T1-def}.  The subtraction is essential: the unsubtracted tail
need not be absolutely integrable for an unbounded symbol.  The displayed tail is supported where
$d(x_0,y)\gtrsim R$.  Its portion in a fixed orbit annulus is absolutely integrable by
\eqref{cc:eq:half-CZO-size}, while on the remaining far part the mean-zero condition permits the displayed
subtraction.
For $d(x_0,y)\simeq2^kR$, $k\geq1$, the smoothness estimate
\eqref{cc:eq:half-CZO-reg} implies, uniformly for $x\in B(x_0,R)$,
\begin{align*}
      &\int_{d(x_0,y)\simeq2^kR}|K_b^D(x,y)-K_b^D(x_0,y)|\,d\omega(y)  \lesssim
     L_b\frac{R}{2^kR}  \frac{\omega(\mathcal O(B(x_0,C2^kR)))}{\omega(B(x_0,c2^kR))}
     \lesssim L_b2^{-k}.
\end{align*}
Orbit-annular summation therefore proves absolute convergence of the far tail.

We identify the cutoff distribution directly, without applying a two-ended spectral truncation to the unbounded
function $b$.  Let $\chi_L$ be a radial smooth cutoff which equals one on $B(0,L)$, vanishes outside
$B(0,2L)$, and satisfies $\|\nabla\chi_L\|_\infty\lesssim L^{-1}$.  Define
$$
     u_L=(1-\psi)\chi_L.
$$
For large $L$, the supports of $u_L$ and $\phi$ have positive orbit separation.  The associated-kernel identity
and the mean-zero condition give
\begin{align*}
     \langle B_bu_L,\phi\rangle  &=\iint\bigl[K_b^D(x,y)-K_b^D(x_0,y)\bigr]
     u_L(y)\overline{\phi(x)}\,d\omega(y)d\omega(x)\\
     &=\langle Du_L,\overline b\phi\rangle   -\langle bu_L,D\phi\rangle.
\end{align*}
The double integral converges to the cutoff tail by the annular majorant above.  On $\supp\phi$, the functions $u_L$ vanish in a
fixed orbit neighborhood.  Proposition~\ref{cc:prop:D-kernel} and orbit-annular summation then imply, locally uniformly there,
\begin{align*}
     Du_L(x)  &=\int_{\R^N}J_D(x,y)(1-\psi(y))\chi_L(y)\,d\omega(y)\\
     &\longrightarrow\int_{\R^N}J_D(x,y)(1-\psi(y))\,d\omega(y)\\
     &=-\frac1{2\sqrt\pi}\int_0^\infty\bigl(1-H_t\psi(x)\bigr)\frac{dt}{t^{3/2}}
     =-D\psi(x).
\end{align*}
Here the last line uses $H_t1=1$ and $\psi(x)=1$ on $\supp\phi$; no spectral meaning is assigned to $D(1-\psi)$.
Moreover, the cancellation of $\phi$ and the second-variable estimate in
Proposition~\ref{cc:prop:D-kernel} imply, outside a fixed orbit ball,
$$
     |D\phi(y)|  \lesssim  \frac{R\|\phi\|_{L^1(d\omega)}}
     {\|y-x_0\|\,d(y,x_0)\,V(y,x_0,d(y,x_0))}.
$$
Let $E_k=\{y:2^kR\leq d(y,x_0)<2^{k+1}R\}$, $k\geq1$.  Since $\|y\| \leq d(y,x_0)+\|x_0\|$, and 
\begin{align*}
     |b(y)|\leq |b(0)|+L_b\bigl(d(y,x_0)+\|x_0\|\bigr),\qquad
     \|y-x_0\|\geq d(y,x_0),
\end{align*}
$G$-invariance, doubling, and the preceding estimate lead to
\begin{align*}
     \int_{E_k}|b(y)D\phi(y)|\,d\omega(y)    &\lesssim
     \|\phi\|_{L^1(d\omega)}   \left[  \frac{|b(0)|+L_b\|x_0\|}{4^kR}+L_b2^{-k}   \right].
\end{align*}
The series is summable.  On bounded sets, $b$ is bounded, $D\phi\in L^2(d\omega)$, and $d\omega$ is locally finite; hence
$bD\phi\in L^1(d\omega)$ by Cauchy--Schwarz.  Therefore
$$
     \langle bu_L,D\phi\rangle  \longrightarrow   \langle b(1-\psi),D\phi\rangle.
$$
The cutoff tail is consequently
$$
     -\langle D\psi,\overline b\phi\rangle   -\langle b(1-\psi),D\phi\rangle.
$$
Adding the local term
$$
     \langle B_b\psi,\phi\rangle   =\langle D\psi,\overline b\phi\rangle   -\langle b\psi,D\phi\rangle
$$
yields the exact identity
\begin{equation}\label{cc:eq:half-T1}
     \langle B_b1,\phi\rangle=-\langle b,D\phi\rangle.
\end{equation}

Only mean-zero tests occur here, as is appropriate for BMO modulo constants.  The kernel estimates in
\cite[Theorem~1.1]{HanLeeLiWick2024} show that every $\mathcal R_k$ is a
Dunkl--Calder\'on--Zygmund singular integral.  Together with its $L^2$ boundedness,
\cite[Theorem~2.1]{TanHanHanLeeLi2025} supplies
$$
     \mathcal R_k:H^1(d\omega)\longrightarrow L^1(d\omega),
    \qquad   \mathcal R_k:L^\infty(d\omega)\longrightarrow\operatorname{BMO}(d\omega).
$$
We now justify the identity for $Db$.  Since both $b$ and $\chi_L$ are $G$-invariant, the product rule and
\eqref{cc:eq:D-Riesz} imply
$$
     D(\chi_Lb)  =\sum_{k=1}^N\mathcal R_k\bigl(\chi_L\partial_kb+b\,\partial_k\chi_L\bigr).
$$
For every compactly supported mean-zero test function $\phi$,
$$
     \langle D(\chi_Lb),\phi\rangle   =\langle\chi_Lb,D\phi\rangle   \longrightarrow\langle b,D\phi\rangle,
$$
where the convergence follows from the integrability of $bD\phi$ proved above.  Such a $\phi$ is a scalar multiple of a
Coifman--Weiss $(1,\infty)$-atom and hence belongs to $H^1(d\omega)$; see \cite{DziubanskiHejna2020Atomic}.  Therefore
$\mathcal R_k\phi\in L^1(d\omega)$.  Since
$\mathcal R_k^*=-\mathcal R_k$,
$$
     \langle\mathcal R_k(b\,\partial_k\chi_L),\phi\rangle
     =-\langle b\,\partial_k\chi_L,\mathcal R_k\phi\rangle\longrightarrow0.
$$
Indeed, $\|b\,\partial_k\chi_L\|_\infty\lesssim L_b+|b(0)|/L$, and this function converges pointwise to zero.
Likewise, on mean-zero tests,
$$
     \mathcal R_k(\chi_L\partial_kb)\longrightarrow\mathcal R_k(\partial_kb).
$$
Consequently,
$$
     Db=\sum_{k=1}^N\mathcal R_k(\partial_kb)
$$
modulo constants.  Thus \eqref{cc:eq:half-T1} says precisely that $B_b1=-Db$ as a BMO class, and the
second endpoint estimate above implies
\begin{equation}\label{cc:eq:half-T1-BMO}
        \lVert B_b 1\rVert_{\operatorname{BMO}(d\omega)}   \lesssim L_b.
\end{equation}
Moreover,
$
        B_b^*=-B_{\overline b},
$
and hence
\begin{equation}\label{cc:eq:half-Tstar1}
        B_b^*1=D\overline b
        =\sum_{k=1}^N\mathcal R_k(\partial_k\overline b),
        \qquad
        \lVert B_b^*1\rVert_{\operatorname{BMO}(d\omega)}
        \lesssim L_b.
\end{equation}

\begin{theorem}\label{cc:thm:half-comm}
Let $b\in\operatorname{Lip}_d(\R^N)$.  Then $[M_b,D]$, initially
defined as a distribution on compactly supported Lipschitz test
functions, extends boundedly to $L^p(d\omega)$ for every
$1<p<\infty$, and
\begin{equation*}
        \lVert[M_b,D]\rVert_{L^p(d\omega)\to L^p(d\omega)}
        \lesssim_p \lVert b\rVert_{\operatorname{Lip}_d}.
\end{equation*}
\end{theorem}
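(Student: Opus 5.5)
The proof assembles the ingredients already in place and applies the full-space Dunkl $T1$ theorem, Theorem~\ref{cc:thm:dunkl-T1}, to $T=B_b$.

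\emph{Step 1: $B_b$ is a Dunkl--Calder\'on--Zygmund singular integral.} The form \eqref{cc:eq:half-form} is continuous from $C_0^1(\R^N)$ to its dual, by the remark following Lemma~\ref{cc:lem:half-kernel}. By Lemma~\ref{cc:lem:half-kernel}, $B_b$ is associated with $K_b^D$ for tests with disjoint supports, and this association has no extra distribution on the orbit diagonal. Proposition~\ref{cc:prop:half-CZO} gives the size estimate \eqref{cc:eq:dCZ-size} and the smoothness estimate \eqref{cc:eq:dCZ-smooth} with $\varepsilon=1$ and constant $\lesssim L_b$. For the size bound this uses $d(x,y)/\|x-y\|$ raised to the first power.

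\emph{Step 2: the three testing conditions.}
\begin{itemize}
\item The weak boundedness property holds with constant $\lesssim L_b$ by Proposition~\ref{cc:prop:half-WBP}.
\item The $T1$ condition is \eqref{cc:eq:half-T1-BMO}: the cutoff definition \eqref{cc:eq:T1-def} was identified with $-\langle b,D\phi\rangle$. This gives $B_b1=-\sum_k\mathcal R_k(\partial_kb)$ modulo constants, which lies in BMO with norm $\lesssim L_b$ because $\mathcal R_k:L^\infty\to\operatorname{BMO}(d\omega)$ and $\|\partial_kb\|_\infty\le L_b$.
\item The $T^*1$ condition is \eqref{cc:eq:half-Tstar1}. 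It uses $B_b^*=-B_{\overline b}$, which follows from \eqref{cc:eq:half-form} and the self-adjointness of $D$, together with $\|\overline b\|_{\Lipd}=L_b$.
\end{itemize}
Since the hypotheses of Theorem~\ref{cc:thm:dunkl-T1} are verified for the distribution on compactly supported Lipschitz tests, that theorem yields an $L^2(d\omega)$ extension. Its norm is controlled by the kernel, testing and WBP constants, all of which are $\lesssim L_b$.

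\emph{Step 3: $L^p$ bounds.} The second part of Theorem~\ref{cc:thm:dunkl-T1}, namely \cite[Theorem~2.1]{TanHanHanLeeLi2025}, upgrades the $L^2$ bound to $L^p(d\omega)$ for $1<p<\infty$, with constants depending on $p$, the Dunkl structure, and the same data. The resulting bound is linear in $L_b$: every constant above scales linearly in $b$, so we may normalize $L_b=1$ and rescale. The case $L_b=0$ means $b$ is constant and $B_b=0$.

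\emph{Step 4: uniqueness of the extension.} Compactly supported Lipschitz functions are dense in every $L^p$, $1<p<\infty$, so the bounded extension is unique.

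The main point of care is consistency between the distributional $T1$ used in the theorem and the testing computations. Specifically, \eqref{cc:eq:half-T1} was derived from the cutoff formula \eqref{cc:eq:T1-def} with a $C_c^1$ cutoff, whereas the definition asks for $C_0^\eta$ cutoffs. This causes no problem, since independence of the cutoff was noted and $\eta=1$ here. One should also check that the BMO used in Theorem~\ref{cc:thm:dunkl-T1} is the Euclidean-ball $\operatorname{BMO}(d\omega)$ in which the Riesz endpoint estimate holds; I expect this to be immediate.
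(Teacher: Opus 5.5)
Your proposal is correct and follows the paper's proof essentially step for step. You establish the Dunkl--Calder\'on--Zygmund structure through Proposition~\ref{cc:prop:half-CZO}, Lemma~\ref{cc:lem:half-kernel} and the continuity remark after it, obtain WBP and the two BMO tests from Proposition~\ref{cc:prop:half-WBP} and \eqref{cc:eq:half-T1-BMO}--\eqref{cc:eq:half-Tstar1}, and then invoke Theorem~\ref{cc:thm:dunkl-T1} for the $L^2$ and $L^p$ bounds. Your normalization to $\|b\|_{\Lipd}=1$ and the density-uniqueness remark are harmless additions. They make explicit the linear dependence and uniqueness that the paper leaves implicit in the quantitative form of the $T1$ theorem.
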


\begin{proof}
Write
$$
     B_b=[M_b,D],\qquad L_b=\lVert b\rVert_{\operatorname{Lip}_d}.
$$
By Proposition~\ref{cc:prop:half-CZO}, the kernel $K_b^D$ satisfies
\eqref{cc:eq:dCZ-size}--\eqref{cc:eq:dCZ-smooth} with
$\varepsilon=1$ and kernel constant bounded by $CL_b$.  Moreover, Lemma~\ref{cc:lem:half-kernel} yields
$$
     \langle B_bf,g\rangle  =\iint K_b^D(x,y)f(y)\overline{g(x)}  \,d\omega(y)d\omega(x)
$$
whenever $f,g\in C_c^1(\R^N)$ have disjoint supports. Thus $B_b$ is
associated with $K_b^D$ in the sense required by
Theorem~\ref{cc:thm:dunkl-T1}.

Proposition~\ref{cc:prop:half-WBP} and
\eqref{cc:eq:half-T1-BMO}--\eqref{cc:eq:half-Tstar1} provide
\begin{align*}
     C_{\mathrm{WBP}}(B_b)\lesssim L_b\qquad \text{and}\qquad 
     \lVert B_b1\rVert_{\operatorname{BMO}(d\omega)}
     +\lVert B_b^*1\rVert_{\operatorname{BMO}(d\omega)}   \lesssim L_b.
\end{align*}
Theorem~\ref{cc:thm:dunkl-T1} now yields
$$
     \lVert B_b\rVert_{L^2(d\omega)\to L^2(d\omega)}   \lesssim L_b
$$
and, for every $1<p<\infty$,
$$
     \lVert B_b\rVert_{L^p(d\omega)\to L^p(d\omega)}   \lesssim_p L_b.
$$
\end{proof}

\subsection{Exact factorization of the Calder\'on commutator}

We now transfer Theorem~\ref{cc:thm:half-comm} to the first principal result.

\begin{lemma}\label{cc:lem:graph-factor}
Suppose that $b\in\operatorname{Lip}_d(\R^N)\cap L^\infty(d\omega)$.  Then multiplication by $b$ maps
$\operatorname{Dom}(D)$ continuously into itself, the bounded operator $B_b$ agrees on this domain with the graph
commutator $M_bD-DM_b$, and
\begin{align*}
     M_bT_i\mathcal R_j-T_i\mathcal R_jM_b  &=-M_{\partial_i b}\mathcal R_j
     -\mathcal R_iM_{\partial_jb}   +\mathcal R_iB_b\mathcal R_j
\end{align*}
on $\operatorname{Dom}(D)$.
\end{lemma}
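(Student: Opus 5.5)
The plan has three steps: show that $M_b$ preserves $\operatorname{Dom}(D)$, show that $B_b$ equals the graph commutator there, and then derive the identity algebraically from \eqref{cc:eq:D-Riesz} and the weak product rule \eqref{cc:eq:weak-product}.

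\emph{Step 1: invariance of the domain.} Since $b\in L^\infty$ and $\partial_kb\in L^\infty$, the discussion after \eqref{cc:eq:weak-product} applies. For $f\in\operatorname{Dom}(D)=\bigcap_k\operatorname{Dom}(T_k)$ it gives $bf\in\operatorname{Dom}(T_k)$ with $T_k(bf)=(\partial_kb)f+bT_kf$. Hence $bf\in\operatorname{Dom}(D)$, and
$$
\|D(bf)\|_2^2=\sum_k\|T_k(bf)\|_2^2\lesssim (L_b^2+\|b\|_\infty^2)\bigl(\|f\|_2^2+\|Df\|_2^2\bigr).
$$
So $M_b$ is continuous in the graph norm of $D$.

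\emph{Step 2: $B_b$ agrees with the graph commutator.} Take $f\in\operatorname{Dom}(D)$ and $g\in C_c^\infty$. Then $\overline b g\in\operatorname{Dom}(D)$, and self-adjointness of $D$ gives
$$
\langle (M_bD-DM_b)f,g\rangle=\langle Df,\overline bg\rangle-\langle bf,Dg\rangle.
$$
For $f\in C_c^\infty$ this is exactly the form \eqref{cc:eq:half-form}, which equals $\langle B_bf,g\rangle$.

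To pass to general $f\in\operatorname{Dom}(D)$, I need $C_c^\infty$ to be a core for $D$. I would justify this with heat-regularized cutoffs: $H_\varepsilon$ followed by a radial cutoff. Alternatively, one can use that the Dunkl transform of $C_c^\infty$ functions is dense in the weighted $H^1$ space. Given a core, both sides are continuous in the graph norm of $f$. The left side is continuous by Step 1. The right side is continuous because $B_b$ is bounded on $L^2$ by Theorem~\ref{cc:thm:half-comm}. Density of $C_c^\infty$ in $L^2$ then identifies $B_bf=bDf-D(bf)$ in $L^2$.

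Establishing this core/approximation argument is the main technical obstacle. I would try the following concrete scheme. First replace $f$ by $H_\varepsilon f$, which is smooth and lies in every Dunkl Sobolev space. Then multiply by $\chi_L$ and use Step 1 with $b=\chi_L$, which is $G$-invariant with $\|\nabla\chi_L\|_\infty\to0$. This gives $D(\chi_Lu)\to Du$.

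\emph{Step 3: the algebraic identity.} Take $f\in\operatorname{Dom}(D)$ and put $u=\mathcal R_jf$. Since $\mathcal R_j$ commutes with $D$, we have $u\in\operatorname{Dom}(D)$, and $T_ju=-\mathcal R_jDu$ by \eqref{cc:eq:D-Riesz}.

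Applying the product rule to the $T_i$ term of $T_i\mathcal R_j(bf)$ is not directly useful, so I would write $T_i\mathcal R_j=-\mathcal R_iD\mathcal R_j$ (valid on $\operatorname{Dom}(D)$, multiplier check) and commute $b$ through the factors one at a time:
\begin{align*}
bT_i\mathcal R_jf&=-b\mathcal R_iD\mathcal R_jf,\\
T_i\mathcal R_j(bf)&=-\mathcal R_iD\mathcal R_j(bf).
\end{align*}
Two commutator identities are needed.

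First, $[M_b,\mathcal R_i]D=-\mathcal R_iM_{\partial_ib}$ in the form needed. Using $\mathcal R_iD=-T_i$ and the product rule,
$$
-b\mathcal R_iDv=bT_iv=T_i(bv)-(\partial_ib)v=-\mathcal R_iD(bv)-(\partial_ib)v,\qquad v=\mathcal R_jf\in\operatorname{Dom}(D).
$$

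Second, $D(b\mathcal R_jf)=bD\mathcal R_jf-B_b\mathcal R_jf$, which is Step 2 applied to $\mathcal R_jf$.

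Third, $D\mathcal R_j(bf)=-T_j(bf)=-(\partial_jb)f-bT_jf$, and $bT_jf=-bD\mathcal R_jf$.

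Substituting these and collecting terms gives
$$
M_bT_i\mathcal R_jf-T_i\mathcal R_j(bf)=-(\partial_ib)\mathcal R_jf-\mathcal R_i\bigl((\partial_jb)f\bigr)+\mathcal R_iB_b\mathcal R_jf.
$$
I would verify the signs of each term against the multiplier conventions; this step is routine bookkeeping.
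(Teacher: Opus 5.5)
Your proposal is correct and is essentially the paper's proof. It uses the same product-rule estimate for $M_b$ on $\operatorname{Dom}(D)$, and the same identification of $B_b$ with $M_bD-DM_b$ through a $C_c^\infty$ graph core. The paper builds that core from $\mathcal S(\R^N)$ plus radial cutoffs rather than from heat regularization plus cutoffs. In fact the core can be bypassed entirely: the fact $\overline b g\in\operatorname{Dom}(D)$, which you noted, makes $f\mapsto\langle Df,\overline b g\rangle=\langle f,D(\overline b g)\rangle$ already $L^2$-continuous. For the algebra you use the same substitutions $T_i=-\mathcal R_iD$, $D\mathcal R_j=-T_j$, and $D(bv)=bDv-B_bv$ with $v=\mathcal R_jf$. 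You expand the two halves term by term, whereas the paper first isolates $D[M_b,\mathcal R_j]=M_{\partial_jb}-B_b\mathcal R_j$.

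The one slip is the heading ``$[M_b,\mathcal R_i]D=-\mathcal R_iM_{\partial_ib}$'', which is not a valid identity. What holds, and what your displayed line actually uses, is $[M_b,T_i]=-M_{\partial_ib}$, i.e.\ $[M_b,\mathcal R_iD]=M_{\partial_ib}$. The computation beneath the heading is correct. The two terms $\mathcal R_i(bD\mathcal R_jf)$ cancel, so the signs close up exactly to the stated factorization.
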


\begin{proof}
Let $L_b=\lVert b\rVert_{\operatorname{Lip}_d}$.  The Dunkl multiplier identities read
$$
     \operatorname{Dom}(D)=\bigcap_{k=1}^N\operatorname{Dom}(T_k),
     \qquad   \lVert Df\rVert_2^2=\sum_{k=1}^N\lVert T_kf\rVert_2^2.
$$
For $f\in\operatorname{Dom}(D)$, the weak product rule
\eqref{cc:eq:weak-product} implies
\begin{align*}
     \lVert D(bf)\rVert_2  &=\left(\sum_{k=1}^N
     \lVert bT_kf+(\partial_kb)f\rVert_2^2\right)^{1/2} \leq \lVert b\rVert_\infty
     \left(\sum_{k=1}^N\lVert T_kf\rVert_2^2\right)^{1/2}
     +\left(\sum_{k=1}^N\lVert(\partial_kb)f\rVert_2^2\right)^{1/2}\\
     &\leq \lVert b\rVert_\infty\lVert Df\rVert_2   +\sqrt N\,L_b\lVert f\rVert_2.
\end{align*}
Consequently,
$$
     \lVert bf\rVert_2+\lVert D(bf)\rVert_2
     \leq\bigl(\lVert b\rVert_\infty+\sqrt N\,L_b\bigr)
     \bigl(\lVert f\rVert_2+\lVert Df\rVert_2\bigr),
$$
so $M_b$ maps $\operatorname{Dom}(D)$ continuously into itself.

Approximation on the Dunkl transform side in
$L^2((1+\lVert\xi\rVert^2)d\omega(\xi))$ shows that
$\mathcal S(\R^N)$ is a graph core for $D$.  Let
$\chi_R(x)=\chi(x/R)$, where $\chi\in C_c^\infty(\R^N)$ is radial and
equals one on $B(0,1)$.  Since $\chi_R$ is $G$-invariant,
$$
     T_k\bigl((\chi_R-1)f\bigr)   =(\chi_R-1)T_kf+(\partial_k\chi_R)f.
$$
For $f\in\mathcal S(\R^N)$,
\begin{align*}
     \lVert(\chi_R-1)f\rVert_2 \longrightarrow0\qquad \text{and}\qquad 
      \lVert T_k\bigl((\chi_R-1)f\bigr)\rVert_2  \leq\lVert(\chi_R-1)T_kf\rVert_2
     +\frac{C}{R}\lVert f\rVert_2    \longrightarrow0.
\end{align*}
Hence $C_c^\infty(\R^N)$ is a graph core for $D$.

Let $f\in\operatorname{Dom}(D)$ and choose
$f_n\in C_c^\infty(\R^N)$ such that
$$
     f_n\longrightarrow f,\qquad Df_n\longrightarrow Df
$$
in $L^2(d\omega)$.  On $C_c^\infty(\R^N)$, the distributional
definition of $B_b$ reads
$$
     B_bf_n=M_bDf_n-DM_bf_n.
$$
The graph continuity of $M_b$ and the $L^2$ boundedness of $B_b$ imply
\begin{align*}
     M_bDf_n\longrightarrow M_bDf,\qquad    DM_bf_n\longrightarrow DM_bf,\qquad 
     B_bf_n\longrightarrow B_bf.
\end{align*}
Therefore,
$$
     B_bf=M_bDf-DM_bf,\qquad f\in\operatorname{Dom}(D).
$$

The operators $\mathcal R_j$ commute with $D$ and preserve
$\operatorname{Dom}(D)$.  Thus
$$
     [M_b,\mathcal R_j]f\in\operatorname{Dom}(D),
     \qquad f\in\operatorname{Dom}(D).
$$
On this domain, \eqref{cc:eq:weak-product} and
\eqref{cc:eq:D-Riesz} imply
$
[M_b,T_i]=-M_{\partial_i b}
$
and
\begin{align*}
     B_b\mathcal R_j+D[M_b,\mathcal R_j]   &=[M_b,D\mathcal R_j]  =[M_b,-T_j]
     =M_{\partial_jb}.
\end{align*}
Hence
$$
     D[M_b,\mathcal R_j]  =M_{\partial_jb}-B_b\mathcal R_j.
$$
Using $T_i=-\mathcal R_iD$, we obtain
\begin{align*}
     [M_b,T_i\mathcal R_j]  &=[M_b,T_i]\mathcal R_j+T_i[M_b,\mathcal R_j]
     =-M_{\partial_i b}\mathcal R_j   -\mathcal R_iD[M_b,\mathcal R_j]   
     =-M_{\partial_i b}\mathcal R_j  -\mathcal R_iM_{\partial_jb}
     +\mathcal R_iB_b\mathcal R_j.
\end{align*}
\end{proof}
\medskip

\begin{proof}[Proof of Theorem~\ref{cc:thm:operator}]
Let
$$
     L_b=\lVert b\rVert_{\operatorname{Lip}_d},  \qquad    B_b=[M_b,D],
$$
and define $\mathcal C_{b,\mathrm{abs}}^{ij}$ by
\eqref{cc:eq:factor}.  Theorem~\ref{cc:thm:half-comm} supplies
$$
     \lVert B_b\rVert_{L^p(d\omega)\to L^p(d\omega)}
     \lesssim_p L_b,\qquad 1<p<\infty.
$$
If $b$ is bounded, Lemma~\ref{cc:lem:graph-factor} implies, for
$f,g\in C_c^\infty(\R^N)$,
\begin{align*}
     \mathfrak c_b^{ij}(f,g)    &=\langle[M_b,T_i\mathcal R_j]f,g\rangle
     =\left\langle   \left(   -M_{\partial_i b}\mathcal R_j   -\mathcal R_iM_{\partial_jb}
     +\mathcal R_iB_b\mathcal R_j   \right)f,g   \right\rangle
     =\langle\mathcal C_{b,\mathrm{abs}}^{ij}f,g\rangle.
\end{align*}

Suppose first that $b$ is real-valued and define
$ 
b_n=\max\{-n,\min\{b,n\}\}.
$ 
Then
$$
     \lVert b_n\rVert_{\operatorname{Lip}_d}\leq L_b,  \qquad
     \partial_kb_n\longrightarrow\partial_kb\quad\text{a.e.},  \qquad
     |\partial_kb_n|\leq L_b.
$$
In particular,
$$
     \sup_n\lVert B_{b_n}\rVert_{L^2(d\omega)\to L^2(d\omega)}
     +\lVert B_b\rVert_{L^2(d\omega)\to L^2(d\omega)}   \lesssim L_b.
$$
For $\phi,\psi\in C_c^\infty(\R^N)$, one has $b_n=b$ on
$\operatorname{supp}\phi\cup\operatorname{supp}\psi$ for all sufficiently large $n$.  Hence
\begin{align*}
     \langle B_{b_n}\phi,\psi\rangle  &=\langle D\phi,b_n\psi\rangle-\langle b_n\phi,D\psi\rangle
     =\langle D\phi,b\psi\rangle-\langle b\phi,D\psi\rangle
     =\langle B_b\phi,\psi\rangle.
\end{align*}
For arbitrary $u,v\in L^2(d\omega)$ and
$\phi,\psi\in C_c^\infty(\R^N)$,
\begin{align*}
     &|\langle(B_{b_n}-B_b)u,v\rangle|\\ 
     \leq\,& |\langle(B_{b_n}-B_b)(u-\phi),v\rangle| +  |\langle(B_{b_n}-B_b)\phi,v-\psi\rangle|  +|\langle(B_{b_n}-B_b)\phi,\psi\rangle|\\
     \lesssim\,& L_b\bigl(\lVert u-\phi\rVert_2\lVert v\rVert_2  +\lVert\phi\rVert_2\lVert v-\psi\rVert_2\bigr) +  |\langle(B_{b_n}-B_b)\phi,\psi\rangle|.
\end{align*}
Letting $n\to\infty$ and then using the density of
$C_c^\infty(\R^N)$ in $L^2(d\omega)$ implies
$$
     \langle B_{b_n}u,v\rangle   \longrightarrow  \langle B_bu,v\rangle,    \qquad u,v\in L^2(d\omega).
$$

For every $h\in L^2(d\omega)$, dominated convergence yields
\begin{align*}
     \lVert(M_{\partial_kb_n}-M_{\partial_kb})h\rVert_2^2  &=\int_{\R^N}
     |\partial_kb_n-\partial_kb|^2|h|^2\,d\omega \longrightarrow0.
\end{align*}
Consequently, for every $u,v\in L^2(d\omega)$,
\begin{align*}
     \langle\mathcal R_iB_{b_n}\mathcal R_ju,v\rangle
      &=\langle B_{b_n}\mathcal R_ju,\mathcal R_i^*v\rangle \longrightarrow
     \langle B_b\mathcal R_ju,\mathcal R_i^*v\rangle
     =\langle\mathcal R_iB_b\mathcal R_ju,v\rangle.
\end{align*}
Furthermore, for every $f\in L^2(d\omega)$,
\begin{align*}
     \lVert(M_{\partial_i b_n}-M_{\partial_i b})\mathcal R_jf\rVert_2  &\longrightarrow0,\\
     \lVert\mathcal R_i(M_{\partial_jb_n}-M_{\partial_jb})f\rVert_2  &\leq
     \lVert\mathcal R_i\rVert_{2\to2}  \lVert(M_{\partial_jb_n}-M_{\partial_jb})f\rVert_2
     \longrightarrow0.
\end{align*}

Let $A_{ij}=T_i\mathcal R_j$.  Its Dunkl multiplier is
$\xi_i\xi_j/\lVert\xi\rVert$, and hence $A_{ij}$ is self-adjoint.
For $f,g\in C_c^\infty(\R^N)$ and all sufficiently large $n$,
\begin{align*}
     \mathfrak c_{b_n}^{ij}(f,g)  &=\langle A_{ij}f,b_ng\rangle-\langle b_nf,A_{ij}g\rangle
     =\langle A_{ij}f,bg\rangle-\langle bf,A_{ij}g\rangle
     =\mathfrak c_b^{ij}(f,g).
\end{align*}
Applying the bounded-symbol identity to $b_n$ and passing to the limit, we obtain
\begin{align*}
     \mathfrak c_b^{ij}(f,g)   &=\lim_{n\to\infty}\mathfrak c_{b_n}^{ij}(f,g)\\
     &=\lim_{n\to\infty}  \left\langle   \left(  -M_{\partial_i b_n}\mathcal R_j
     -\mathcal R_iM_{\partial_jb_n}  +\mathcal R_iB_{b_n}\mathcal R_j  \right)f,g  \right\rangle\\
     &=\left\langle   \left(   -M_{\partial_i b}\mathcal R_j  -\mathcal R_iM_{\partial_jb} 
      +\mathcal R_iB_b\mathcal R_j  \right)f,g  \right\rangle 
      =\langle\mathcal C_{b,\mathrm{abs}}^{ij}f,g\rangle.
\end{align*}

For a complex-valued symbol, write $b=u+iv$.  Since
$$
      \mathfrak c_b^{ij}  =\mathfrak c_u^{ij}+i\mathfrak c_v^{ij},
     \qquad   B_b=B_u+iB_v,   \qquad  \partial_kb=\partial_ku+i\partial_kv,
$$
the preceding identity applies separately to $u$ and $v$.

Finally, the $L^p$ boundedness of the Dunkl Riesz transforms
\cite[Theorem~3.3]{AmriSifi2012} and
$\lVert\partial_kb\rVert_\infty\leq L_b$ together imply
\begin{align*}
     \lVert\mathcal C_{b,\mathrm{abs}}^{ij}\rVert_{L^p\to L^p}  &\leq
     \lVert\partial_i b\rVert_\infty  \lVert\mathcal R_j\rVert_{L^p\to L^p}
     +\lVert\mathcal R_i\rVert_{L^p\to L^p}  \lVert\partial_jb\rVert_\infty
     +  \lVert\mathcal R_i\rVert_{L^p\to L^p}   \lVert B_b\rVert_{L^p\to L^p}
     \lVert\mathcal R_j\rVert_{L^p\to L^p}   \lesssim_p L_b.
\end{align*}
If $S_1$ and $S_2$ are two bounded realizations of
$\mathfrak c_b^{ij}$ on $L^p(d\omega)$, then
$$
     \langle(S_1-S_2)f,g\rangle=0,   \qquad f,g\in C_c^\infty(\R^N).
$$
The density of $C_c^\infty(\R^N)$ in $L^p(d\omega)$ and
$L^{p'}(d\omega)$ forces $S_1=S_2$.
\end{proof}

\begin{remark}
The factorization constructs the Calder\'on commutator independently of the prescribed heat truncations.  If
$f,g\in C_c^\infty$, then $f,bf\in\operatorname{Dom}(D)$, and the order-one finite heat multipliers converge in
$L^2$ on this domain.  Hence the finite heat formula converges to
\eqref{cc:eq:factor} in such test-function pairings.  The separated-support kernel is identified
below in Section~\ref{cc:sec:matrix}.

This argument does not, by itself, assert the stronger statement
\begin{equation*}
      \sup_{0<a<B<\infty}    \lVert C_{a,B}^{ij,b}\rVert_{2\to2}<\infty
\end{equation*}
for the particular two-ended heat truncations used earlier in the
paper.  Uniform boundedness of that prescribed approximation family
requires an additional scale-sensitive argument.  The integrated
chamber $T1$ subsection below supplies precisely that argument: it
proves the uniform estimate, joint endpoint convergence, and
identification with the operator in
\eqref{cc:eq:factor}.  Thus the factorization provides an
independent full-space construction, whereas chamber lifting proves uniform boundedness and joint convergence for
the prescribed heat family and retains its matrix-kernel information.
\end{remark}

\section{Kernel estimates}\label{cc:sec:kernels}

Throughout this section fix $i,j\in\{1,\ldots,N\}$ and
$b\in\operatorname{Lip}_d(\R^N)$.  We write
$$
     A_t^{ij}(x,y)=T_{i,x}T_{j,x}h_t(x,y).
$$
The estimates in this section have two consequences.  The first is a scale estimate for
$$
          \theta_s(x,y)=s\bigl(b(x)-b(y)\bigr)T_{i,x}T_{j,x}h_{s^2}(x,y),
$$
retaining an orbit Gaussian and first-order Euclidean regularity in both variables.  These bounds drive the
component and integrated tests.  The second is the standard-kernel estimate for
$$
     K_b^{ij}(x,y)=c_\ast\bigl(b(x)-b(y)\bigr)   \int_0^\infty T_{i,x}T_{j,x}h_t(x,y)\frac{dt}{\sqrt t}.
$$
The proof first resolves $T_{i,x}T_{j,x}h_t$ into its principal, scalar, and reflected heat-kernel terms.  The
Euclidean correction in the heat estimate absorbs the principal coefficient, while the orbit Lipschitz difference
supplies the factor needed at every reflected diagonal.  We keep the heat parameter visible until the scale bounds
are complete and only then integrate in $t$.

\subsection{First and second Dunkl derivatives of the heat kernel}

\begin{lemma}\label{cc:lem:heat-identity}
For every $j$,
\begin{equation*}
    T_{j,x}h_t(x,y)=\frac{y_j-x_j}{2t}h_t(x,y).
\end{equation*}
\end{lemma}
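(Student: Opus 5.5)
The plan is to prove the identity from the explicit formula for the Dunkl heat kernel in terms of the Dunkl kernel $E_\kappa$. This is the standard representation recalled in \cite{Rosler1999,DunklXu}, and it is also the content of \cite[Lemma~3.3]{DziubanskiHejna2020Atomic}. Up to a normalizing constant $c_\kappa>0$, which plays no role because the identity is linear in $h_t$,
$$
   h_t(x,y)=\frac{c_\kappa}{(2t)^{\mathbf N/2}}\exp\Bigl(-\frac{\|x\|^2+\|y\|^2}{4t}\Bigr)
   E_\kappa\Bigl(\frac{x}{\sqrt{2t}},\frac{y}{\sqrt{2t}}\Bigr).
$$
Here $E_\kappa(\cdot,y)$ is the real-analytic joint eigenfunction characterized by $T_{j,x}E_\kappa(x,y)=y_jE_\kappa(x,y)$ and $E_\kappa(0,y)=1$. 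I first check that the convention \eqref{cc:eq:dunkl-T} agrees with the one used for $E_\kappa$. Summing $\tfrac{\kappa(\alpha)}2$ over all of $R$ equals summing $\kappa(\alpha)$ over $R_+$, because the summand is invariant under $\alpha\mapsto-\alpha$.

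The computation has three steps, all in the variable $x$ with $y$ and $t$ fixed.

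\emph{Step 1 (Gaussian factor).} The Gaussian $\gamma_t(x)=e^{-\|x\|^2/4t}$ is radial, so it is $G$-invariant and smooth. Since $E_\kappa$ is $C^1$, Lemma~\ref{cc:lem:product} applies to the product $\gamma_t\cdot E_\kappa(\cdot/\sqrt{2t},y/\sqrt{2t})$. This gives
$$
   T_{j,x}h_t=\Bigl(\partial_j\gamma_t\Bigr)\frac{c_\kappa e^{-\|y\|^2/4t}}{(2t)^{\mathbf N/2}}E_\kappa(\cdots)
   +\gamma_t\,\frac{c_\kappa e^{-\|y\|^2/4t}}{(2t)^{\mathbf N/2}}\,T_{j,x}\bigl[E_\kappa(\cdots)\bigr],
$$
and $\partial_j\gamma_t=-\tfrac{x_j}{2t}\gamma_t$.

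\emph{Step 2 (dilation covariance).} For $\lambda>0$ and $u_\lambda(x)=u(\lambda x)$, I will show that $T_ju_\lambda=\lambda(T_ju)_\lambda$. The derivative part scales by $\lambda$. In the difference part, the reflection $\sigma_\alpha$ is linear, so $u(\lambda x)-u(\lambda\sigma_\alpha x)=u(\lambda x)-u(\sigma_\alpha(\lambda x))$, and $\langle\alpha,x\rangle=\lambda^{-1}\langle\alpha,\lambda x\rangle$. So the difference part also scales by $\lambda$. Applying this with $\lambda=(2t)^{-1/2}$ and then the eigenfunction property gives
$$
   T_{j,x}\bigl[E_\kappa(x/\sqrt{2t},y/\sqrt{2t})\bigr]=\frac{1}{\sqrt{2t}}\cdot\frac{y_j}{\sqrt{2t}}\,E_\kappa(x/\sqrt{2t},y/\sqrt{2t})=\frac{y_j}{2t}E_\kappa(\cdots).
$$

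\emph{Step 3 (combine).} Adding the two contributions from Step 1 gives $\bigl(-\tfrac{x_j}{2t}+\tfrac{y_j}{2t}\bigr)h_t(x,y)$, which is the claim.

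The only point needing care is the consistency of conventions: the normalization of $E_\kappa$ and of the exponent $\tfrac14 t^{-1}$ with $H_t=e^{t\Delta_\kappa}$ in place of $e^{t\Delta_\kappa/2}$. I would settle this by checking that the displayed $h_t$ solves $\partial_th_t=\Delta_{\kappa,x}h_t$. That check uses the same two facts, Steps 1 and 2, applied twice, together with $\Delta_\kappa(\|x\|^2)=2\mathbf N$. Alternatively, I would simply quote \cite[Lemma~3.3]{DziubanskiHejna2020Atomic}, which states exactly this identity under the same convention.
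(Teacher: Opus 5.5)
Your proof is correct and follows essentially the paper's route. The paper gives no argument of its own: it cites Lemma~3.3 of Dziuba\'nski--Hejna and remarks that the identity is consistent with the explicit heat-kernel formula, and your computation from that formula is exactly the verification behind the citation (product rule for the radial Gaussian, dilation covariance of $T_j$, and $T_{j,x}E_\kappa(x,\eta)=\eta_jE_\kappa(x,\eta)$). One small point concerns your optional heat-equation convention check: it needs a little more than Steps 1--2 ``applied twice'', namely the second-order formula $\Delta_\kappa(\gamma f)=(\Delta_\kappa\gamma)f+2\langle\nabla\gamma,\nabla f\rangle+\gamma\Delta_\kappa f$ for $G$-invariant $\gamma$ and the homogeneity $E_\kappa(\lambda x,\eta)=E_\kappa(x,\lambda\eta)$. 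The main argument does not depend on that check.
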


\begin{lemma}\label{cc:lem:heat-first}
For every $\ell$,
\begin{equation}\label{cc:eq:heat-first}
    |T_{\ell,x}h_t(x,y)|\leq C t^{-1/2}V(x,y,\sqrt t)^{-1}   e^{-cd(x,y)^2/t}.
\end{equation}
Equivalently,
\begin{equation}\label{cc:eq:heat-first-scale}
    |sT_{\ell,x}h_{s^2}(x,y)|\leq C V(x,y,s)^{-1}e^{-cd(x,y)^2/s^2}.
\end{equation}
\end{lemma}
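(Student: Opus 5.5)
The plan is to combine the first-derivative identity of Lemma~\ref{cc:lem:heat-identity} with the refined heat-kernel bound of Lemma~\ref{cc:lem:heat-buffer}, and to let the Euclidean buffer absorb the Euclidean coefficient. By Lemma~\ref{cc:lem:heat-identity},
$$
     |T_{\ell,x}h_t(x,y)|=\frac{|y_\ell-x_\ell|}{2t}\,h_t(x,y)\leq\frac{\|x-y\|}{2t}\,|h_t(x,y)|.
$$
Inserting the estimate of Lemma~\ref{cc:lem:heat-buffer} gives
$$
     |T_{\ell,x}h_t(x,y)|\leq \frac{C}{\sqrt t}\,\frac1{V(x,y,\sqrt t)}\,
     \frac{\|x-y\|}{\sqrt t}\left(1+\frac{\|x-y\|}{\sqrt t}\right)^{-2}
     \exp\left(-c\frac{d(x,y)^2}{t}\right).
$$

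The key elementary observation is that $u(1+u)^{-2}\leq u(1+u)^{-1}\leq 1$ for $u=\|x-y\|/\sqrt t\geq0$. This removes the Euclidean factor entirely, without any comparison between $\|x-y\|$ and $d(x,y)$. The result is exactly \eqref{cc:eq:heat-first}, with the same constant $c$ and no loss in the orbit Gaussian.

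The scale form \eqref{cc:eq:heat-first-scale} then follows by substituting $t=s^2$ and multiplying by $s$, since $s\cdot s^{-1}=1$ and $V(x,y,\sqrt{s^2})=V(x,y,s)$.

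The only delicate point is making sure the Euclidean buffer in Lemma~\ref{cc:lem:heat-buffer} is really present. With only the bare orbit-Gaussian bound, the factor $\|x-y\|/\sqrt t$ could be large near a reflected diagonal, where $d(x,y)\ll\|x-y\|$, and it could not be absorbed. Since Lemma~\ref{cc:lem:heat-buffer} supplies the power $-2$, and only power $-1$ is needed here, no obstacle remains. The spare power is presumably reserved for the second derivative, where a coefficient $(y_i-x_i)(y_j-x_j)/t^2$ appears.
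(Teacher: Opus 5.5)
Your proposal is correct and follows the paper's proof essentially verbatim. The paper uses the same chain: the first-derivative identity, then the buffered bound of Lemma~\ref{cc:lem:heat-buffer}, then the boundedness of $u(1+u)^{-2}$, and finally the substitution $t=s^2$.
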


\begin{proof}
By Lemma~\ref{cc:lem:heat-identity},
$$
    |T_{\ell,x}h_t(x,y)|   \leq \frac{\|x-y\|}{2t}|h_t(x,y)|.
$$
By Lemma~\ref{cc:lem:heat-buffer},
\begin{align*}
    |T_{\ell,x}h_t(x,y)|
    &\lesssim \frac{\|x-y\|}{t} \frac1{V(x,y,\sqrt t)} \left(1+\frac{\|x-y\|}{\sqrt t}\right)^{-2}  e^{-cd(x,y)^2/t}\\
    &= \frac1{\sqrt t}  \left(\frac{\|x-y\|}{\sqrt t}\right)  \left(1+\frac{\|x-y\|}{\sqrt t}\right)^{-2}
    \frac1{V(x,y,\sqrt t)}e^{-cd(x,y)^2/t}.
\end{align*}
Since $u(1+u)^{-2}$ is bounded on $[0,\infty)$, this is \eqref{cc:eq:heat-first}; setting $t=s^2$ gives
\eqref{cc:eq:heat-first-scale}.
\end{proof}

\begin{lemma}\label{cc:lem:heat-structure}
For every $i,j$,
\begin{align}\label{cc:eq:heat-structure}
    T_{i,x}T_{j,x}h_t(x,y)  &=\frac{(y_i-x_i)(y_j-x_j)}{4t^2}h_t(x,y)
       -\frac{\delta_{ij}}{2t}h_t(x,y)  -\frac1t\sum_{\alpha\in R}\frac{\kappa(\alpha)}2
    \frac{\alpha_i\alpha_j}{\|\alpha\|^2}h_t(\sigma_\alpha x,y).
\end{align}
\end{lemma}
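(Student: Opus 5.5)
The identity will follow from Lemma~\ref{cc:lem:heat-identity} by applying $T_{i,x}$ once more, computed directly from the definition \eqref{cc:eq:dunkl-T}. The product rule of Lemma~\ref{cc:lem:product} is not available, because the coefficient $x\mapsto(y_j-x_j)/(2t)$ is not $G$-invariant. So I will expand everything by hand. The heat kernel $h_t(\cdot,y)$ is smooth in $x$ for fixed $t>0$ and $y$, so the Dunkl operators may be applied pointwise.

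\textbf{Step 1: set up.} Fix $y$ and $t$, and write
$$
u(x)=\frac{y_j-x_j}{2t}\,h_t(x,y),
$$
so that $u=T_{j,x}h_t(\cdot,y)$ by Lemma~\ref{cc:lem:heat-identity}. Then, by \eqref{cc:eq:dunkl-T},
$$
T_{i,x}u(x)=\partial_{x_i}u(x)+\sum_{\alpha\in R}\frac{\kappa(\alpha)}2\alpha_i\frac{u(x)-u(\sigma_\alpha x)}{\langle\alpha,x\rangle}.
$$

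\textbf{Step 2: the derivative part.} The ordinary derivative gives
$$
\partial_{x_i}u(x)=-\frac{\delta_{ij}}{2t}h_t(x,y)+\frac{y_j-x_j}{2t}\,\partial_{x_i}h_t(x,y).
$$

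\textbf{Step 3: the difference part.} In the difference quotient, use the reflection formula $\sigma_\alpha x=x-2\langle\alpha,x\rangle\alpha/\|\alpha\|^2$. This gives
$$
y_j-(\sigma_\alpha x)_j=(y_j-x_j)+\frac{2\langle\alpha,x\rangle\alpha_j}{\|\alpha\|^2}.
$$
Substituting, the numerator splits as
$$
\frac{y_j-x_j}{2t}\bigl(h_t(x,y)-h_t(\sigma_\alpha x,y)\bigr)-\frac{\langle\alpha,x\rangle\alpha_j}{t\|\alpha\|^2}h_t(\sigma_\alpha x,y).
$$
The factor $\langle\alpha,x\rangle$ in the second piece cancels the denominator, which removes any singularity on the walls. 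This second piece therefore contributes
$$
-\frac1t\sum_{\alpha\in R}\frac{\kappa(\alpha)}2\frac{\alpha_i\alpha_j}{\|\alpha\|^2}h_t(\sigma_\alpha x,y).
$$
The first piece recombines with the $\partial_{x_i}h_t$ term from Step~2 to form exactly $\frac{y_j-x_j}{2t}\,T_{i,x}h_t(x,y)$.

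\textbf{Step 4: conclude.} Apply Lemma~\ref{cc:lem:heat-identity} again, now with index $i$, to write
$$
T_{i,x}h_t(x,y)=\frac{y_i-x_i}{2t}h_t(x,y).
$$
This produces the principal term $\frac{(y_i-x_i)(y_j-x_j)}{4t^2}h_t(x,y)$, and \eqref{cc:eq:heat-structure} follows.

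\textbf{Expected obstacle.} There is no real analytic difficulty. The only care needed is bookkeeping of conventions. The sum runs over all of $R$ with the factor $\tfrac12$, matching \eqref{cc:eq:dunkl-T}, and the coordinate shift is $2\langle\alpha,x\rangle\alpha_j/\|\alpha\|^2$, independent of the normalization of $\alpha$. Getting these right is what yields the coefficient $\frac{\kappa(\alpha)}2\frac{\alpha_i\alpha_j}{\|\alpha\|^2}\cdot\frac1t$. Terms with $\kappa(\alpha)=0$ vanish, as in Lemma~\ref{cc:lem:product}.
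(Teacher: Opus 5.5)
Your proposal is correct and follows essentially the same route as the paper. Both write $T_{j,x}h_t=a_jh_t$ with $a_j=(y_j-x_j)/(2t)$, expand $T_{i,x}$ from the definition, and insert and subtract $a_j(x)h_t(\sigma_\alpha x,y)$ to reconstruct $a_jT_{i,x}h_t$. Both then evaluate the constant quotient $\bigl(a_j(x)-a_j(\sigma_\alpha x)\bigr)/\langle\alpha,x\rangle=-\alpha_j/(t\|\alpha\|^2)$, and your signs and the factor $\kappa(\alpha)/2$ agree with the paper's.
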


\begin{proof}
Let
$$
    a_j(x,y,t)=\frac{y_j-x_j}{2t}.
$$
By Lemma~\ref{cc:lem:heat-identity},
$T_{j,x}h_t(x,y)=a_j(x,y,t)h_t(x,y),$
hence
$$
    T_{i,x}T_{j,x}h_t(x,y)=T_i(a_jh_t)(x).
$$
Expanding the Dunkl operator, we obtain
\begin{align*}
    T_i(a_jh_t)(x) &=\partial_i(a_jh_t)(x)  +
    \sum_{\alpha\in R}\frac{\kappa(\alpha)}2\alpha_i
    \frac{a_j(x)h_t(x,y)-a_j(\sigma_\alpha x)h_t(\sigma_\alpha x,y)}    {\langle\alpha,x\rangle}.
\end{align*}

We insert and subtract the term $a_j(x)h_t(\sigma_\alpha x,y)$ in the numerator.
The part involving $h_t(x,y)-h_t(\sigma_\alpha x,y)$ combines with the ordinary derivative term
$a_j(x)\partial_i h_t(x,y)$ and reconstructs $a_j(x)T_{i,x}h_t(x,y)$.
Thus
\begin{align*}
    T_i(a_jh_t)(x)  &=(\partial_i a_j)(x)h_t(x,y)+a_j(x)T_{i,x}h_t(x,y)  +
    \sum_{\alpha\in R}\frac{\kappa(\alpha)}2\alpha_i
    \frac{a_j(x)-a_j(\sigma_\alpha x)}{\langle\alpha,x\rangle}   h_t(\sigma_\alpha x,y).
\end{align*}
The first two factors are
$$
    \partial_i a_j=-\frac{\delta_{ij}}{2t},     \qquad
    T_{i,x}h_t(x,y)=\frac{y_i-x_i}{2t}h_t(x,y).
$$
For the reflected coefficient we use
$$
    \sigma_\alpha x=x-2\frac{\langle x,\alpha\rangle}{\|\alpha\|^2}\alpha.
$$
Then
\begin{align*}
    a_j(x,y,t)-a_j(\sigma_\alpha x,y,t)  &=\frac{y_j-x_j}{2t}-\frac{y_j-(\sigma_\alpha x)_j}{2t}
    =\frac{(\sigma_\alpha x)_j-x_j}{2t}  =-\frac{\langle x,\alpha\rangle}{t}\frac{\alpha_j}{\|\alpha\|^2}.
\end{align*}
Therefore
$$
    \frac{a_j(x,y,t)-a_j(\sigma_\alpha x,y,t)}{\langle\alpha,x\rangle}  =-\frac{\alpha_j}{t\|\alpha\|^2}.
$$
Substitution yields \eqref{cc:eq:heat-structure}.
The summands corresponding to roots with $\kappa(\alpha)=0$ vanish identically.
\end{proof}

\begin{remark}
Formula~\eqref{cc:eq:heat-structure} is written with the all-root normalization in \eqref{cc:eq:dunkl-T}.
If one sums only over a positive subsystem $R_+$, the coefficient $\kappa(\alpha)/2$ is replaced by $\kappa(\alpha)$, and the same
reflected term is obtained.
The sign is fixed by
$$
    (\sigma_\alpha x)_j-x_j =-2\frac{\langle x,\alpha\rangle}{\|\alpha\|^2}\alpha_j,
    \qquad\frac{a_j(x,y,t)-a_j(\sigma_\alpha x,y,t)}{\langle\alpha,x\rangle}
    =-\frac{\alpha_j}{t\|\alpha\|^2}.
$$
Although the computation is first written away from the reflecting hyperplanes, the displayed quotient is constant and hence extends
continuously across $\alpha^\perp$.
Thus \eqref{cc:eq:heat-structure} is an identity of smooth kernels after removable extension, not merely an
almost-everywhere formula off the walls.
\end{remark}

\begin{proposition}
There exist $C,c>0$ such that
\begin{equation}\label{cc:eq:heat-second}
    |A_t^{ij}(x,y)|\leq C t^{-1}V(x,y,\sqrt t)^{-1}e^{-cd(x,y)^2/t}.
\end{equation}
If $\|x-x'\|\leq\sqrt t$, then
\begin{equation}\label{cc:eq:heat-second-x}
     |A_t^{ij}(x,y)-A_t^{ij}(x',y)|  \leq C\frac{\|x-x'\|}{\sqrt t}t^{-1}V(x,y,\sqrt t)^{-1}e^{-cd(x,y)^2/t}.
\end{equation}
If $\|y-y'\|\leq\sqrt t$, then
\begin{equation}\label{cc:eq:heat-second-y}
     |A_t^{ij}(x,y)-A_t^{ij}(x,y')|  \leq C\frac{\|y-y'\|}{\sqrt t}t^{-1}V(x,y,\sqrt t)^{-1}e^{-cd(x,y)^2/t}.
\end{equation}
\end{proposition}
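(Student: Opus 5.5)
The plan is to read everything off the structure formula \eqref{cc:eq:heat-structure} of Lemma~\ref{cc:lem:heat-structure}, which writes $A_t^{ij}$ as a sum of three pieces:
\begin{itemize}
\item the principal term $P=\frac{(y_i-x_i)(y_j-x_j)}{4t^2}h_t(x,y)$;
\item the scalar term $S=-\frac{\delta_{ij}}{2t}h_t(x,y)$;
\item the finitely many reflected terms $R_\alpha=-\frac{\kappa(\alpha)\alpha_i\alpha_j}{2t\|\alpha\|^2}h_t(\sigma_\alpha x,y)$.
\end{itemize}
Each piece is estimated separately using Lemma~\ref{cc:lem:heat-buffer}.

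For the size bound \eqref{cc:eq:heat-second}, set $u=\|x-y\|/\sqrt t$.
\begin{itemize}
\item The principal term satisfies $|P|\lesssim t^{-1}u^2(1+u)^{-2}V(x,y,\sqrt t)^{-1}e^{-cd(x,y)^2/t}$. Since $u^2(1+u)^{-2}\le1$, this is where the Euclidean buffer in Lemma~\ref{cc:lem:heat-buffer} is exactly what absorbs the quadratic coefficient.
\item The scalar term is immediate.
\item For the reflected terms, use $d(\sigma_\alpha x,y)=d(x,y)$ and $V(\sigma_\alpha x,y,\sqrt t)=V(x,y,\sqrt t)$ (both by $G$-invariance), and drop the buffer factor.
\end{itemize}

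For \eqref{cc:eq:heat-second-y}, with $\|y-y'\|\le\sqrt t$, I will difference each term.
\begin{itemize}
\item The scalar and reflected terms follow directly from the second-variable difference estimate of Lemma~\ref{cc:lem:heat-buffer}, applied at $(x,y)$ and at $(\sigma_\alpha x,y)$ respectively, again using invariance of $d$ and $V$.
\item For the principal term write $P(x,y)-P(x,y')=\frac{1}{4t^2}\bigl[q(y)-q(y')\bigr]h_t(x,y)+\frac{q(y')}{4t^2}\bigl[h_t(x,y)-h_t(x,y')\bigr]$, where $q(y)=(y_i-x_i)(y_j-x_j)$. Then $|q(y)-q(y')|\lesssim\|y-y'\|(\|x-y\|+\sqrt t)$ and $|q(y')|\lesssim(\|x-y\|+\sqrt t)^2$. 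Each piece then carries a factor $\frac{\|y-y'\|}{\sqrt t}\,t^{-1}(1+u)^{k}(1+u)^{-2}$ with $k\le2$, which is bounded.
\end{itemize}
The $x$-estimate \eqref{cc:eq:heat-second-x} is identical for the principal and scalar terms, using the first-variable difference in Lemma~\ref{cc:lem:heat-buffer}. For the reflected terms I use $\|\sigma_\alpha x-\sigma_\alpha x'\|=\|x-x'\|\le\sqrt t$. This gives a bound with $V(\sigma_\alpha x,y,\sqrt t)$ and $d(\sigma_\alpha x,y)$, and both equal the unreflected quantities.

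The one delicate point is that the difference estimates in Lemma~\ref{cc:lem:heat-buffer} are normalized at the base point $(x,y)$, whereas after differencing I need control of $h_t(x,y')$ at the shifted point. I avoid evaluating the Gaussian at $(x,y')$: in the splitting above, the $h_t$ factor is evaluated at the base point and the other factor is a difference. If some step still requires comparing $V(x,y',\sqrt t)$ with $V(x,y,\sqrt t)$, or $e^{-cd(x,y')^2/t}$ with $e^{-cd(x,y)^2/t}$, then doubling with $\|y-y'\|\le\sqrt t$ gives the volume comparison. For the Gaussian, $d(x,y')^2\ge d(x,y)^2/2-t$ gives $e^{-cd(x,y')^2/t}\lesssim e^{-(c/2)d(x,y)^2/t}$. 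Both are absorbed by shrinking $c$. I expect this bookkeeping of the polynomial factor against the buffer to be the main (though routine) obstacle.
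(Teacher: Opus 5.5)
Your proposal is correct and matches the paper's proof essentially step for step. The shared steps are the three-term decomposition from Lemma~\ref{cc:lem:heat-structure}, the Euclidean buffer absorbing the quadratic coefficient, $G$-invariance of $d$ and $V$ for the reflected terms, and the split of the principal difference into a coefficient difference times $h_t(x,y)$ plus a shifted coefficient times a heat-kernel difference. The only deviation is in \eqref{cc:eq:heat-second-y}: the paper deduces it from \eqref{cc:eq:heat-second-x} via the Hermitian symmetry $A_t^{ij}(x,y)=\overline{A_t^{ij}(y,x)}$, coming from the real multiplier $-\xi_i\xi_je^{-t\|\xi\|^2}$, whereas your direct differencing with the second-variable estimate of Lemma~\ref{cc:lem:heat-buffer} works equally well and, as you note, never evaluates the Gaussian at the shifted point.
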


\begin{proof}
Let $s=\sqrt t$.  For this proof write
$$
        \mathcal E_s(u,v)=V(u,v,s)^{-1}\exp\left(-c\frac{d(u,v)^2}{s^2}\right),
$$
with the usual convention that the positive constant $c$ may decrease from line to line.
We use the formula of Lemma~\ref{cc:lem:heat-structure}.
Thus $A_t^{ij}(x,y)$ is the sum of
$$
    \frac{(y_i-x_i)(y_j-x_j)}{4t^2}h_t(x,y),   \qquad
    -\frac{\delta_{ij}}{2t}h_t(x,y),
$$
and the finitely many reflected terms
$$
    -\frac1t  \frac{\kappa(\alpha)}2   \frac{\alpha_i\alpha_j}{\|\alpha\|^2}
    h_t(\sigma_\alpha x,y),  \qquad \alpha\in R.
$$
We estimate these terms separately.  For the principal term, the buffered heat-kernel estimate yields
\begin{align*}
     \frac{|y_i-x_i|\,|y_j-x_j|}{t^2}|h_t(x,y)|   &\quad\lesssim
    \frac{\|x-y\|^2}{t^2}  \left(1+\frac{\|x-y\|}{s}\right)^{-2}\mathcal E_s(x,y)\\
    &\quad=  \frac1t  \left(\frac{\|x-y\|}{s}\right)^2
    \left(1+\frac{\|x-y\|}{s}\right)^{-2}\mathcal E_s(x,y)
    \lesssim    \frac1t\mathcal E_s(x,y).
\end{align*}
This is the only place where the Euclidean decay factor is needed.
For the scalar term one simply writes
$$
    \frac1t |h_t(x,y)|  \leq \frac Ct\frac1{V(x,y,s)}
    \left(1+\frac{\|x-y\|}{s}\right)^{-2}
    \exp\left(-c\frac{d(x,y)^2}{s^2}\right)  \leq
    \frac Ct\mathcal E_s(x,y).
$$
For a reflected term with $\kappa(\alpha)>0$, the reflection $\sigma_\alpha$ belongs to $G$, and therefore
$$
    d(\sigma_\alpha x,y)=d(x,y),  \qquad
    V(\sigma_\alpha x,y,s)=V(x,y,s).
$$
Applying the scalar heat estimate at $(\sigma_\alpha x,y)$ and discarding its Euclidean factor, we obtain
$$
    \frac1t |h_t(\sigma_\alpha x,y)|   \leq  \frac Ct\mathcal E_s(x,y).
$$
The coefficients of the reflected terms are fixed constants depending only on $R$ and $\kappa$, and the root system is finite.
Summing the preceding bounds proves the size estimate.

We next prove the regularity in the first variable.
Let $\delta=\|x-x'\|\leq s$.  The scalar term satisfies
\begin{align*}
    \frac1t |h_t(x,y)-h_t(x',y)|   &\leq  C\frac{\delta}{s}\frac1t\frac1{V(x,y,s)}
    \left(1+\frac{\|x-y\|}{s}\right)^{-2}  \exp\left(-c\frac{d(x,y)^2}{s^2}\right)  
    \leq  C\frac{\delta}{s}\frac1t\mathcal E_s(x,y).
\end{align*}
For each reflected term we use  $\|\sigma_\alpha x-\sigma_\alpha x'\|=\delta,$
together with the invariance  $d(\sigma_\alpha x,y)=d(x,y)$
and  $V(\sigma_\alpha x,y,s)=V(x,y,s)$.
Applied to $h_t(\sigma_\alpha x,y)-h_t(\sigma_\alpha x',y)$, the heat-kernel regularity estimate yields
\begin{align*}
    &\frac1t |h_t(\sigma_\alpha x,y)-h_t(\sigma_\alpha x',y)| \leq  C\frac{\delta}{s}\frac1t
    \left(1+\frac{\|\sigma_\alpha x-y\|}{s}\right)^{-2}\mathcal E_s(x,y) \leq
    C\frac{\delta}{s}\frac1t\mathcal E_s(x,y).
\end{align*}

Thus no comparison between $\|\sigma_\alpha x-y\|$ and $\|x-y\|$ is used; the Euclidean decay factor is only discarded.  It remains
only to check the principal
term.  Let
$$
    P_t(x,y)=\frac{(y_i-x_i)(y_j-x_j)}{4t^2}h_t(x,y).
$$
Then
\begin{align*}
    P_t(x,y)-P_t(x',y)  &=\frac{(y_i-x_i)(y_j-x_j)-(y_i-x'_i)(y_j-x'_j)}{4t^2}h_t(x,y) \\
    &\quad+  \frac{(y_i-x'_i)(y_j-x'_j)}{4t^2}  \big(h_t(x,y)-h_t(x',y)\big)      \\
    &=:P_1+P_2.
\end{align*}
The coefficient expands as
\begin{align*}
&(y_i-x_i)(y_j-x_j)-(y_i-x'_i)(y_j-x'_j) =(x'_i-x_i)(y_j-x_j)  +(x'_j-x_j)(y_i-x'_i).
\end{align*}
Since $\delta\leq s$, we have
$|(y_i-x_i)(y_j-x_j)-(y_i-x'_i)(y_j-x'_j)| \leq C\delta(\|x-y\|+s)$.
Using the size estimate with this extra factor,
\begin{align*}
    |P_1|  &\lesssim  \frac{\delta(\|x-y\|+s)}{t^2}
    \left(1+\frac{\|x-y\|}{s}\right)^{-2}\mathcal E_s(x,y)
    \lesssim  \frac{\delta}{s}\frac1t\mathcal E_s(x,y).
\end{align*}
For the second term, $\delta\leq s$ implies
$|y_i-x'_i|\,|y_j-x'_j|\leq C(\|x-y\|+s)^2$.
By the first-variable heat-kernel regularity,
$$
    |h_t(x,y)-h_t(x',y)| \lesssim
     \frac{\delta}{s}\left(1+\frac{\|x-y\|}{s}\right)^{-2}\mathcal E_s(x,y).
$$
Consequently,
\begin{align*}
        |P_2|  &\lesssim  \frac{(\|x-y\|+s)^2}{t^2}
    \frac{\delta}{s}\left(1+\frac{\|x-y\|}{s}\right)^{-2}\mathcal E_s(x,y)
    \lesssim  \frac{\delta}{s}\frac1t\mathcal E_s(x,y).
\end{align*}
Combining the principal, scalar, and reflected terms establishes \eqref{cc:eq:heat-second-x}.
Finally, the real multiplier $-\xi_i\xi_j e^{-t\|\xi\|^2}$ implies the Hermitian symmetry
\begin{equation}\label{cc:eq:heat-symmetry}
    A_t^{ij}(x,y)=\overline{A_t^{ij}(y,x)}.
\end{equation}
Hence, for $\|y-y'\|\leq s$,
\begin{align*}
    |A_t^{ij}(x,y)-A_t^{ij}(x,y')|  &=|A_t^{ij}(y,x)-A_t^{ij}(y',x)| 
     \lesssim  \frac{\|y-y'\|}{s}\frac1tV(y,x,s)^{-1}   e^{-cd(y,x)^2/s^2},
\end{align*}
which is \eqref{cc:eq:heat-second-y}.
\end{proof}

\subsection{Scale kernels}

Recall
$$
    \theta_s(x,y)=s(b(x)-b(y))A_{s^2}^{ij}(x,y).
$$

\begin{corollary}\label{cc:cor:theta}
For $b\in\operatorname{Lip}_d(\R^N)$,
$$
    |\theta_s(x,y)|\leq C\|b\|_{\operatorname{Lip}_d}V(x,y,s)^{-1}e^{-cd(x,y)^2/s^2}.
$$
If $\|x-x'\|\leq s$, then
\begin{equation}\label{cc:eq:theta-x}
     |\theta_s(x,y)-\theta_s(x',y)|\leq C\|b\|_{\operatorname{Lip}_d}\frac{\|x-x'\|}{s}
     V(x,y,s)^{-1}e^{-cd(x,y)^2/s^2}.
\end{equation}
And similarly, if $\|y-y'\|\leq s$, then
$$
     |\theta_s(x,y)-\theta_s(x,y')|\leq C\|b\|_{\operatorname{Lip}_d}\frac{\|y-y'\|}{s}
 V(x,y,s)^{-1}e^{-cd(x,y)^2/s^2}.
$$
\end{corollary}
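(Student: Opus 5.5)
The plan is to combine the orbit Lipschitz estimate $|b(x)-b(y)|\le L_b\,d(x,y)$, where $L_b=\|b\|_{\operatorname{Lip}_d}$, with the second-derivative bounds \eqref{cc:eq:heat-second}--\eqref{cc:eq:heat-second-y} at $t=s^2$. The extra factor $d(x,y)/s$ produced this way is absorbed into the Gaussian.

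\emph{Size estimate.} By \eqref{cc:eq:heat-second},
$$
|\theta_s(x,y)|\le s\,L_b\,d(x,y)\,C s^{-2}V(x,y,s)^{-1}e^{-c d(x,y)^2/s^2}.
$$
Since $u e^{-cu^2}\lesssim e^{-cu^2/2}$ with $u=d(x,y)/s$, the size bound follows after halving $c$.

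\emph{First-variable regularity.} Let $h=\|x-x'\|\le s$. Split the difference as
$$
\theta_s(x,y)-\theta_s(x',y)=s\bigl(b(x)-b(x')\bigr)A^{ij}_{s^2}(x,y)+s\bigl(b(x')-b(y)\bigr)\bigl(A^{ij}_{s^2}(x,y)-A^{ij}_{s^2}(x',y)\bigr).
$$
For the first term, the Euclidean Lipschitz bound $|b(x)-b(x')|\le L_b h$ together with \eqref{cc:eq:heat-second} gives $L_b(h/s)V(x,y,s)^{-1}e^{-cd^2/s^2}$. For the second term, use $|b(x')-b(y)|\le L_b d(x',y)$ and $d(x',y)\le d(x,y)+h\le d(x,y)+s$. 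Combined with \eqref{cc:eq:heat-second-x}, this yields
$$
L_b\frac{h}{s}\Bigl(1+\frac{d(x,y)}{s}\Bigr)V(x,y,s)^{-1}e^{-cd(x,y)^2/s^2},
$$
and the polynomial factor is again absorbed by shrinking $c$. The second-variable estimate is proved symmetrically: split with $b(y')-b(y)$ and $b(x)-b(y')$, use $d(x,y')\le d(x,y)+s$, and apply \eqref{cc:eq:heat-second-y}.

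The only point requiring care is that the regularity estimates must be expressed in terms of $V(x,y,s)$ and the Gaussian at $(x,y)$ rather than at $(x',y)$. The bounds \eqref{cc:eq:heat-second-x} and \eqref{cc:eq:heat-second-y} are already stated this way, so no volume comparison is needed; the triangle inequality for $d$ suffices to control $d(x',y)$. Apart from this, there is no substantive obstacle.
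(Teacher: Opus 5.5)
Your proposal is correct and follows essentially the same route as the paper. The paper also combines the orbit Lipschitz factor with \eqref{cc:eq:heat-second} for the size bound. For the regularity it uses the same split into a Euclidean-Lipschitz term and an orbit-Lipschitz term controlled by $d(x',y)\le d(x,y)+s$, applies \eqref{cc:eq:heat-second-x}--\eqref{cc:eq:heat-second-y}, and absorbs $(1+d(x,y)/s)$ into the Gaussian.
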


\begin{proof}
Write
$$
        \mathcal E_s(x,y)=V(x,y,s)^{-1}e^{-cd(x,y)^2/s^2}.
$$
The size estimate follows from the orbit Lipschitz condition and \eqref{cc:eq:heat-second}:
\begin{align*}
    |\theta_s(x,y)|  &\leq  s\|b\|_{\operatorname{Lip}_d}d(x,y)  \frac1{s^2}\mathcal E_s(x,y)
    =\|b\|_{\operatorname{Lip}_d}\frac{d(x,y)}s  \mathcal E_s(x,y) \lesssim
    \|b\|_{\operatorname{Lip}_d}\mathcal E_s(x,y).
\end{align*}
For the first-variable regularity, assume $\|x-x'\|\leq s$ and write
\begin{align*}
    \theta_s(x,y)-\theta_s(x',y)  &=s[b(x)-b(x')]A_{s^2}^{ij}(x,y) +s[b(x')-b(y)]
    \big(A_{s^2}^{ij}(x,y)-A_{s^2}^{ij}(x',y)\big)\\
    &=:I_1+I_2.
\end{align*}
For the first term, the Euclidean Lipschitz bound following from $b\in\operatorname{Lip}_d$ reads
$$
    |b(x)-b(x')|\leq \|b\|_{\operatorname{Lip}_d}\|x-x'\|.
$$
Using \eqref{cc:eq:heat-second} with $t=s^2$,
\begin{align*}
    |I_1|  &\leq  s\|b\|_{\operatorname{Lip}_d}\|x-x'\|\frac1{s^2}\mathcal E_s(x,y)
    =  \|b\|_{\operatorname{Lip}_d}\frac{\|x-x'\|}{s}  \mathcal E_s(x,y).
\end{align*}
For the second term, the orbit triangle inequality implies
$$
    |b(x')-b(y)|\leq \|b\|_{\operatorname{Lip}_d}d(x',y)
    \leq \|b\|_{\operatorname{Lip}_d}\big(d(x,y)+\|x-x'\|\big)
    \leq \|b\|_{\operatorname{Lip}_d}\big(d(x,y)+s\big).
$$
By \eqref{cc:eq:heat-second-x}, again with $t=s^2$,
\begin{align*}
    |I_2|  &\lesssim  s\|b\|_{\operatorname{Lip}_d}(d(x,y)+s)
    \frac{\|x-x'\|}{s}\frac1{s^2}  \mathcal E_s(x,y) =
    \|b\|_{\operatorname{Lip}_d}\frac{\|x-x'\|}{s}  \left(1+\frac{d(x,y)}s\right)  \mathcal E_s(x,y).
\end{align*}
The elementary inequality $(1+u)e^{-cu^2}\leq C e^{-c'u^2}$, $u\geq0$, absorbs the factor $1+d(x,y)/s$ into the Gaussian.  Hence
$|I_1|+|I_2|$ satisfies \eqref{cc:eq:theta-x}.

If $\delta=\|y-y'\|\leq s$, then the same decomposition yields
\begin{align*}
    \theta_s(x,y)-\theta_s(x,y')   &=s[b(y')-b(y)]A_{s^2}^{ij}(x,y) +s[b(x)-b(y')]
    \big(A_{s^2}^{ij}(x,y)-A_{s^2}^{ij}(x,y')\big),\\
    |\theta_s(x,y)-\theta_s(x,y')|   &\lesssim
    \|b\|_{\operatorname{Lip}_d}\frac{\delta}{s}
    \left(1+\frac{d(x,y)}s\right)\mathcal E_s(x,y)\lesssim
    \|b\|_{\operatorname{Lip}_d}\frac{\delta}{s}
    V(x,y,s)^{-1}e^{-c'd(x,y)^2/s^2},
\end{align*}
where we used \eqref{cc:eq:heat-second},
\eqref{cc:eq:heat-second-y}, and
$d(x,y')\leq d(x,y)+\delta$.
\end{proof}

\subsection{Gaussian mass and heat-parameter integrals}

We record the Gaussian mass bound and the heat-parameter integrals used to integrate the scale estimates.

\begin{lemma}\label{cc:lem:gaussian-mass}
For every $c,s>0$,
$$
    \int_{\R^N}V(x,y,s)^{-1}e^{-cd(x,y)^2/s^2}\,d\omega(y)\leq C.
$$
The constant may depend on $c$ and the Dunkl data.
\end{lemma}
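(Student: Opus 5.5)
We plan to prove the Gaussian mass bound by decomposing $\R^N$ into orbit annuli around $x$ at scale $s$, using the orbit-ball volume comparison and doubling. Fix $x$ and $s>0$. Put
$$
E_0=\{y:d(x,y)<s\},\qquad E_k=\{y:2^{k-1}s\leq d(x,y)<2^ks\},\quad k\geq1.
$$
These sets partition $\R^N$. Each $E_k$ is contained in $\mathcal O(B(x,2^ks))$, and by the orbit-ball lemma $\omega(\mathcal O(B(x,2^ks)))\approx\omega(B(x,2^ks))$.

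On the integrand we use only the trivial lower bound $V(x,y,s)\geq\omega(B(x,s))$, which holds by the definition of $V$ as a maximum. On $E_k$ with $k\geq1$ the Gaussian factor is at most $e^{-c4^{k-1}}$. Hence the contribution of $E_k$ is bounded by
$$
\frac{\omega(\mathcal O(B(x,2^ks)))}{\omega(B(x,s))}e^{-c4^{k-1}}
\lesssim\frac{\omega(B(x,2^ks))}{\omega(B(x,s))}e^{-c4^{k-1}}
\lesssim 2^{k\mathbf N}e^{-c4^{k-1}},
$$
where the last step is the upper doubling bound of Lemma~\ref{cc:lem:volume} with $r_2=2^ks$ and $r=s$. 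The $k=0$ term is bounded by $\omega(\mathcal O(B(x,s)))/\omega(B(x,s))\lesssim1$.

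Summing over $k$ gives a convergent series $\sum_k2^{k\mathbf N}e^{-c4^{k-1}}$. Its value depends only on $c$, $\mathbf N$, and the constants in the orbit-ball and volume lemmas. The bound is therefore uniform in $x$ and $s$.

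No real obstacle arises. The one point needing care is that the annuli are defined through the pseudometric $d$ rather than Euclidean distance. Each orbit annulus is a union of $M_G$ reflected Euclidean balls, and the orbit-ball lemma (or simply $G$-invariance of $\omega$, giving a factor $M_G$) controls its measure. This is exactly the reason we bound $V$ from below by $\omega(B(x,s))$ rather than by $\omega(B(y,s))$. The same argument with the roles of $x$ and $y$ exchanged gives the bound for the integral in $x$, by symmetry of $V$ and $d$.
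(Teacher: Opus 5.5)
Your proposal is correct and follows essentially the same route as the paper. You use the same dyadic orbit annuli $E_k$, the same lower bound $V(x,y,s)\geq\omega(B(x,s))$, the containment of $\{d(x,y)<2^ks\}$ in the $M_G$ reflected Euclidean balls together with $G$-invariance of $d\omega$, and the growth factor $2^{k\mathbf N}$ from Lemma~\ref{cc:lem:volume} summed against $e^{-c4^{k-1}}$ (one wording slip: an orbit annulus is \emph{contained in}, not equal to, a union of $M_G$ reflected balls).
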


\begin{proof}
Since $V(x,y,s)\geq\omega(B(x,s))$, it is enough to use the fixed denominator $\omega(B(x,s))$.  Let
$$
    E_0=\{y:d(x,y)<s\},
$$
and, for $k\geq1$, let
$$
    E_k=\{y:2^{k-1}s\leq d(x,y)<2^ks\}.
$$
The set $\{y:d(x,y)<2^ks\}$ is contained in the union of the $M_G$ Euclidean balls of radius $2^ks$ centered at the
orbit of $x$.  The $G$-invariance of $d\omega$ and the upper volume growth in
Lemma~\ref{cc:lem:volume} therefore imply
$$
    \omega(E_k)\leq C2^{k\mathbf N}\omega(B(x,s)).
$$
Consequently,
\begin{align*}
    \int_{\R^N}\frac{e^{-cd(x,y)^2/s^2}}{V(x,y,s)}\,d\omega(y)
    &\leq C+ C\sum_{k=1}^{\infty}2^{k\mathbf N}e^{-c4^{k-1}}   \leq C.
\end{align*}
\end{proof}

\begin{lemma}\label{cc:lem:heat-integrals}
Let $c>0$ and $r=d(z,y)>0$. Then
\begin{equation}\label{cc:eq:int-global}
    r\int_0^\infty t^{-3/2}V(z,y,\sqrt t)^{-1}e^{-cr^2/t}\,dt
    \leq C V(z,y,r)^{-1}.
\end{equation}
Moreover, if $0<\delta\leq r/2$, then
\begin{equation}\label{cc:eq:int-perturb}
    \int_0^{\delta^2} t^{-3/2}V(z,y,\sqrt t)^{-1}e^{-cr^2/t}\,dt  +
    \delta\int_{\delta^2}^\infty t^{-2}V(z,y,\sqrt t)^{-1}e^{-cr^2/t}\,dt
    \leq C\frac{\delta}{r^2V(z,y,r)}.
\end{equation}
\end{lemma}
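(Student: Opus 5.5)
The plan is to substitute $t=s^2$ and split the heat-parameter integrals at the orbit scale $r$, using only the doubling estimates of Lemma~\ref{cc:lem:volume}. Throughout write $V_s=V(z,y,s)$. Then
$$
 \int_0^\infty t^{-3/2}V_{\sqrt t}^{-1}e^{-cr^2/t}\,dt=2\int_0^\infty s^{-2}V_s^{-1}e^{-cr^2/s^2}\,ds ,
$$
and we treat the ranges $s\le r$ and $s>r$ separately.

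\emph{Small scales, $s\le r$.} Doubling gives $V_s^{-1}\lesssim (r/s)^{\mathbf N}V_r^{-1}$. Substituting $u=r/s$, this part of the integral is bounded by
$$
\frac{C}{rV_r}\int_1^\infty u^{\mathbf N}e^{-cu^2}\,du\lesssim \frac1{rV_r}.
$$

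\emph{Large scales, $s>r$.} Monotonicity gives $V_s\ge V_r$, so this part is at most $V_r^{-1}\int_r^\infty s^{-2}\,ds=(rV_r)^{-1}$. Multiplying the two ranges by $r$ yields \eqref{cc:eq:int-global}.

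For \eqref{cc:eq:int-perturb}, consider the two terms in turn.

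\emph{First term.} It equals $2\int_0^\delta s^{-2}V_s^{-1}e^{-cr^2/s^2}\,ds$. Since $s\le\delta\le r/2<r$, the same doubling bound applies, and the substitution $u=r/s\ge r/\delta$ gives
$$
\frac{C}{rV_r}\int_{r/\delta}^\infty u^{\mathbf N}e^{-cu^2}\,du .
$$
The tail is bounded by $C_M (r/\delta)^{-M}$ for any $M$; with $M=1$ this gives $\lesssim \delta/(r^2V_r)$.

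\emph{Second term.} It equals $2\delta\int_\delta^\infty s^{-3}V_s^{-1}e^{-cr^2/s^2}\,ds$. On $s\ge r$ we use $V_s\ge V_r$, and $\int_r^\infty s^{-3}\,ds\lesssim r^{-2}$, giving $\delta/(r^2V_r)$. On $\delta\le s<r$ we use doubling, and the substitution $u=r/s$ gives
$$
\frac{\delta}{r^2V_r}\int_1^{r/\delta}u^{\mathbf N+1}e^{-cu^2}\,du\lesssim\frac{\delta}{r^2V_r},
$$
with the integral bounded uniformly in $\delta$.

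Since $r=d(z,y)$, the volume $V(z,y,r)$ is the same quantity that appears on the right-hand sides, so no comparison between different volumes is needed.

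There is no real obstacle here. The only care required is in the first term of \eqref{cc:eq:int-perturb}: extracting the factor $\delta/r$ from the Gaussian tail needs $\delta\le r/2$, so that the lower limit $r/\delta\ge2$ stays away from zero and polynomial decay of the tail is available. The constants depend only on $c$ and the doubling exponent.
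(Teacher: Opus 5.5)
Your proposal is correct and follows essentially the same route as the paper. You split the heat integral at the orbit scale $r$, use upper volume growth $V_s^{-1}\lesssim (r/s)^{\mathbf N}V_r^{-1}$ below $r$ (where the Gaussian tail supplies the factor $\delta/r$), and use a crude bound above $r$; the only differences are cosmetic (you substitute $t=s^2$, $u=r/s$ instead of $u=t/r^2$, and use monotonicity $V_s\ge V_r$ instead of lower volume growth on large scales, which indeed suffices).
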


\begin{proof}
Write $V_r=V(z,y,r)$.  Lemma~\ref{cc:lem:volume}, applied at both endpoints and then to the maximum, implies
\begin{equation}\label{cc:eq:volume-scale}
    V(z,y,\sqrt t)^{-1} \leq
        \begin{cases}
    C(r/\sqrt t)^{\mathbf N}V_r^{-1},&0<t\leq r^2,\\
    C(r/\sqrt t)^NV_r^{-1},&t\geq r^2.
        \end{cases}
\end{equation}
For \eqref{cc:eq:int-global}, split at $t=r^2$ and let $u=t/r^2$.  The two pieces are bounded by
$$
    \frac C{V_r}  \left(  \int_0^1u^{-(3+\mathbf N)/2}e^{-c/u}\,du
    + \int_1^\infty u^{-(3+N)/2}\,du  \right),
$$
which is finite.

For the perturbative estimate, let $q=\delta/r\leq1/2$.  On $0<t<\delta^2$, the first line of
\eqref{cc:eq:volume-scale} and the same change of variables yield
\begin{align*}
    \int_0^{\delta^2}t^{-3/2}V(z,y,\sqrt t)^{-1}e^{-cr^2/t}\,dt
    &\leq  \frac C{rV_r}  \int_0^{q^2}u^{-(3+\mathbf N)/2}e^{-c/u}\,du
    \leq  C\frac{q}{rV_r}.
\end{align*}
The last inequality follows from exponential decay and is uniform for $0<q\leq1/2$.  For the second integral, split again at
$r^2$.  By \eqref{cc:eq:volume-scale},
\begin{align*}
    &\delta\int_{\delta^2}^{\infty}t^{-2}V(z,y,\sqrt t)^{-1}e^{-cr^2/t}\,dt\\
     \leq\,&  C\frac{\delta}{r^2V_r}  \left(  \int_{q^2}^1u^{-2-\mathbf N/2}e^{-c/u}\,du
    + \int_1^\infty u^{-2-N/2}\,du  \right)
    \leq  C\frac{\delta}{r^2V_r}.
\end{align*}
Combining the two estimates establishes \eqref{cc:eq:int-perturb}.
\end{proof}

\begin{lemma}\label{cc:lem:stability}
Let $r=d(x,y)>0$. If $\|x-x'\|\leq r/4$ and $r'=d(x',y)$, then
$r'\approx r, V(x',y,r')\approx V(x,y,r)$.
If $\|y-y'\|\leq r/4$ and $r''=d(x,y')$, then
$r''\approx r, V(x,y',r'')\approx V(x,y,r)$.
\end{lemma}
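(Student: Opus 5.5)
The plan is to use the triangle inequality for $d$ together with doubling of $d\omega$ and the $G$-invariance of the ball volumes. For the first part, set $h=\|x-x'\|\le r/4$. Lemma~2.1 gives $d(x',y)\le d(x,y)+d(x,x')\le d(x,y)+\|x-x'\|$, so $r'\le r+h\le \tfrac54 r$. By symmetry of the triangle inequality, $r\le r'+h$, hence $r'\ge r-h\ge \tfrac34 r$. Thus $r'\approx r$ with absolute constants.

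For the volumes, treat the two endpoint terms of $V$ separately.

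\emph{The $y$-endpoint.} By Lemma~\ref{cc:lem:volume}, $\omega(B(y,r'))\approx\omega(B(y,r))$, since $\tfrac34 r\le r'\le\tfrac54 r$.

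\emph{The $x$-endpoint.} Use the inclusions
$$
B(x',r')\subset B\bigl(x,r'+h\bigr)\subset B\bigl(x,\tfrac32 r\bigr)
\qquad\text{and}\qquad
B(x,r)\subset B\bigl(x',r+h\bigr)\subset B\bigl(x',\tfrac53 r'\bigr).
$$
Applying Lemma~\ref{cc:lem:volume} to each inclusion gives $\omega(B(x',r'))\approx\omega(B(x,r))$.

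Taking the maximum of the two comparable pairs gives $V(x',y,r')\approx V(x,y,r)$. This step could alternatively be done with Lemma~2.4, which says both endpoint volumes at the orbit scale are comparable to either single one. With that route, it suffices to compare $\omega(B(x',r'))$ with $\omega(B(x,r))$.

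The second statement is identical with the roles of the variables exchanged. Here $d(x,y')$ differs from $d(x,y)$ by at most $\|y-y'\|\le r/4$, and the same ball inclusions around $y,y'$ together with doubling finish the argument.

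There is no genuine obstacle. The only point needing care is that the comparison must use the orbit distance $d$ in the radii while the perturbation is Euclidean. This is handled by the inequality $d\le\|\cdot\|$ from Lemma~2.1, and since the radii are Euclidean balls, the inclusions are ordinary Euclidean ones. The constants depend only on the doubling constant of $d\omega$.
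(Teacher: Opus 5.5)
Your proposal is correct and follows the paper's proof essentially line for line. Both use the orbit triangle inequality to get $\tfrac34 r\le r'\le\tfrac54 r$, the same two inclusions $B(x',r')\subset B(x,\tfrac32 r)$ and $B(x,r)\subset B(x',\tfrac53 r')$ with doubling for the moved endpoint, a same-center doubling comparison for the fixed endpoint, then maxima, and symmetry for the second statement.
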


\begin{proof}
The orbit triangle inequality implies $|r-r'|\leq\|x-x'\|$, hence
$$
    \frac34r\leq r'\leq\frac54r.
$$
Moreover,
$$
    B(x',r')\subset B(x,r'+\|x-x'\|)\subset B(x,3r/2),
$$
whereas
$$
    B(x,r)\subset B(x',r+\|x-x'\|)\subset B(x',5r'/3).
$$
By doubling,
$$
    \omega(B(x',r'))\approx\omega(B(x,r)).
$$
The same-center comparison also shows $\omega(B(y,r'))\approx\omega(B(y,r))$.  Taking maxima establishes the first volume
comparison; interchanging the endpoints establishes the second.
\end{proof}

\begin{corollary}\label{cc:cor:nearby}
Let $r=d(x,y)>0$ and let $0<\delta\leq r/4$.

\smallskip
\noindent
\emph{(i)} If $\|x-x'\|\leq\delta$ and $r'=d(x',y)$, then
\begin{equation*}
\int_0^{\delta^2} t^{-3/2}V(x',y,\sqrt t)^{-1}e^{-c(r')^2/t}\,dt
\leq
C\frac{\delta}{r^2V(x,y,r)}.
\end{equation*}

\noindent
\emph{(ii)} If $\|y-y'\|\leq\delta$ and $r''=d(x,y')$, then
\begin{equation*}
    \int_0^{\delta^2} t^{-3/2}V(x,y',\sqrt t)^{-1}e^{-c(r'')^2/t}\,dt
    \leq   C\frac{\delta}{r^2V(x,y,r)}.
\end{equation*}
The constants depend only on the structural constants and on the fixed exponent in the Gaussian factor.
\end{corollary}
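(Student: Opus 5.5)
The plan is to reduce both parts to the perturbative estimate \eqref{cc:eq:int-perturb} of Lemma~\ref{cc:lem:heat-integrals}, applied at the perturbed point. The stability Lemma~\ref{cc:lem:stability} is what transfers the resulting bound back to the unperturbed quantities $r$ and $V(x,y,r)$. I treat (i) in detail; (ii) is identical with the roles of the variables interchanged.

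For (i), the first step is to set $z=x'$ and $r'=d(x',y)$. By Lemma~\ref{cc:lem:stability}, since $\|x-x'\|\leq\delta\leq r/4$, we have $\tfrac34 r\leq r'\leq\tfrac54 r$ and $V(x',y,r')\approx V(x,y,r)$. In particular $r'>0$, and
$$
    \delta\leq \frac r4\leq \frac{r'}{3}\leq\frac{r'}2,
$$
so the hypothesis $0<\delta\leq r'/2$ of \eqref{cc:eq:int-perturb} holds at the pair $(x',y)$ with radius $r'$. Applying only the first summand of \eqref{cc:eq:int-perturb} (the second summand is nonnegative and may be discarded), we get
$$
    \int_0^{\delta^2}t^{-3/2}V(x',y,\sqrt t)^{-1}e^{-c(r')^2/t}\,dt
    \leq C\frac{\delta}{(r')^2V(x',y,r')}.
$$
Replacing $(r')^2$ by $r^2$ costs the factor $(4/3)^2$, and replacing $V(x',y,r')$ by $V(x,y,r)$ costs a structural constant from Lemma~\ref{cc:lem:stability}. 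This gives (i) with a constant depending only on $c$ and the doubling data.

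For (ii), I would set $z=x$ and $r''=d(x,y')$ and use the second half of Lemma~\ref{cc:lem:stability} to obtain $r''\approx r$ and $V(x,y',r'')\approx V(x,y,r)$. The condition $\delta\leq r''/2$ holds by the same arithmetic, and the conclusion follows exactly as in (i).

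There is no real obstacle here. The only point needing care is the constant in \eqref{cc:eq:int-perturb}: it was stated uniformly for $q=\delta/r\leq1/2$ with a fixed Gaussian exponent $c$. Because we apply it at radius $r'$ with the same $c$, no loss in the exponent occurs, and the uniformity in $q$ is exactly what is needed.
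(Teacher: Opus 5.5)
Your proposal is correct and follows the paper's proof essentially verbatim: you use $\|x-x'\|\le\delta\le r/4$ to get $\tfrac34 r\le r'\le\tfrac54 r$ and hence $\delta<r'/2$, apply the first term of \eqref{cc:eq:int-perturb} to the pair $(x',y)$ at distance $r'$, and then return to $r$ and $V(x,y,r)$ via Lemma~\ref{cc:lem:stability}. Part (ii) is handled symmetrically, just as in the paper.
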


\begin{proof}
In part (i), the orbit triangle inequality implies $3r/4\leq r'\leq5r/4$.  In particular,
$$
    \delta\leq\frac r4\leq\frac{r'}3<\frac{r'}2.
$$
Apply the first term of \eqref{cc:eq:int-perturb} to the pair $(x',y)$, with distance $r'$, and then use
Lemma~\ref{cc:lem:stability}.  Thus
$$
    \int_0^{\delta^2}t^{-3/2}V(x',y,\sqrt t)^{-1}e^{-c(r')^2/t}\,dt
    \leq   C\frac{\delta}{(r')^2V(x',y,r')}  \leq
    C\frac{\delta}{r^2V(x,y,r)}.
$$
For part (ii), the orbit triangle inequality similarly implies $3r/4\leq r''\leq5r/4$ and $\delta<r''/2$.  Apply the first term of
\eqref{cc:eq:int-perturb} to the pair $(x,y')$ and then use
Lemma~\ref{cc:lem:stability}.
\end{proof}

\subsection{The integrated kernel}

For $d(x,y)>0$, define
\begin{equation}\label{cc:eq:kernel-def}
    K_b^{ij}(x,y)=c_\ast(b(x)-b(y))\int_0^\infty A_t^{ij}(x,y)\frac{dt}{\sqrt t}.
\end{equation}

\begin{lemma}\label{cc:lem:K-size}
If $r=d(x,y)>0$, then \eqref{cc:eq:kernel-def} is absolutely convergent and
\begin{equation*}
    |K_b^{ij}(x,y)|\leq C\|b\|_{\operatorname{Lip}_d}V(x,y,r)^{-1}.
\end{equation*}
\end{lemma}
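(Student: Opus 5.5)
The plan is to combine the pointwise second-derivative bound \eqref{cc:eq:heat-second} with the orbit Lipschitz factor and then integrate in the heat parameter with \eqref{cc:eq:int-global}. Fix $x,y$ with $r=d(x,y)>0$. By \eqref{cc:eq:heat-second}, for every $t>0$,
$$
    |A_t^{ij}(x,y)|\,t^{-1/2}\leq C\,t^{-3/2}V(x,y,\sqrt t)^{-1}e^{-cr^2/t}.
$$
Thus the integrand in \eqref{cc:eq:kernel-def} is dominated by this majorant. Its $t$-integral is finite by \eqref{cc:eq:int-global}, applied with $z=x$, which gives
$$
    \int_0^\infty t^{-3/2}V(x,y,\sqrt t)^{-1}e^{-cr^2/t}\,dt\leq C\,\frac{1}{r\,V(x,y,r)}.
$$
Absolute convergence follows at once. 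Note that the Gaussian factor is essential near $t=0$: without it the majorant behaves like $t^{-3/2-\mathbf N/2}$. Near $t=\infty$ the doubling bound $V(x,y,\sqrt t)\gtrsim (\sqrt t/r)^N V_r$ gives decay $t^{-3/2-N/2}$, which is integrable. Both regimes are already handled inside Lemma~\ref{cc:lem:heat-integrals}.

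For the size bound I would multiply by the symbol difference. The orbit Lipschitz estimate gives $|b(x)-b(y)|\leq\|b\|_{\operatorname{Lip}_d}\,d(x,y)=\|b\|_{\operatorname{Lip}_d}\,r$. This factor $r$ exactly cancels the $1/r$ produced by \eqref{cc:eq:int-global}, and we obtain
$$
    |K_b^{ij}(x,y)|\leq |c_\ast|\,\|b\|_{\operatorname{Lip}_d}\,r\cdot\frac{C}{rV(x,y,r)}=C\|b\|_{\operatorname{Lip}_d}V(x,y,r)^{-1}.
$$

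The only point requiring care is that \eqref{cc:eq:heat-second} is stated with the orbit distance $d$ in the Gaussian, not the Euclidean distance. That is precisely what is needed, because the singularity must be measured at the orbit scale $r=d(x,y)$, and the Euclidean buffer has already been absorbed in the proof of \eqref{cc:eq:heat-second}. I do not expect any genuine obstacle. The step to check is only that the constant $c$ in \eqref{cc:eq:heat-second} is the one fed into \eqref{cc:eq:int-global}, which is allowed since that lemma holds for every $c>0$.
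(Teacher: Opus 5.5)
Your proposal is correct and follows essentially the same route as the paper. It bounds $|b(x)-b(y)|$ by $\|b\|_{\operatorname{Lip}_d}\,r$, dominates $A_t^{ij}$ via \eqref{cc:eq:heat-second}, and uses \eqref{cc:eq:int-global} to cancel the factor $r$, which gives both absolute convergence and the size bound.
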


\begin{proof}
By the orbit Lipschitz condition, we have
$|b(x)-b(y)|\leq\|b\|_{\operatorname{Lip}_d}r$.
Combining this with \eqref{cc:eq:heat-second} yields
\begin{align*}
    |K_b^{ij}(x,y)|  &\lesssim  \|b\|_{\operatorname{Lip}_d}r
    \int_0^\infty t^{-3/2}V(x,y,\sqrt t)^{-1}e^{-cr^2/t}\,dt.
\end{align*}
Lemma~\ref{cc:lem:heat-integrals} bounds the integral by $C/(rV(x,y,r))$; both assertions follow.
\end{proof}

\begin{proposition}\label{cc:prop:K-x}
If $r=d(x,y)>0$ and $\delta=\|x-x'\|\leq r/4$, then
$$
    |K_b^{ij}(x,y)-K_b^{ij}(x',y)|\leq C\|b\|_{\operatorname{Lip}_d}\frac{\delta}{r}V(x,y,r)^{-1}.
$$
\end{proposition}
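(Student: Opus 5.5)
We split the difference exactly as in Corollary~\ref{cc:cor:theta}:
\begin{align*}
K_b^{ij}(x,y)-K_b^{ij}(x',y)
&=c_\ast\bigl(b(x)-b(x')\bigr)\int_0^\infty A_t^{ij}(x,y)\frac{dt}{\sqrt t}\\
&\quad+c_\ast\bigl(b(x')-b(y)\bigr)\int_0^\infty\bigl(A_t^{ij}(x,y)-A_t^{ij}(x',y)\bigr)\frac{dt}{\sqrt t}
=:I+II.
\end{align*}
For $I$ we use only the Euclidean Lipschitz bound $|b(x)-b(x')|\le\|b\|_{\Lipd}\delta$. The integral is controlled by \eqref{cc:eq:heat-second} together with \eqref{cc:eq:int-global}, which gives a bound $C/(rV(x,y,r))$. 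This is the same computation as in Lemma~\ref{cc:lem:K-size}, and it yields $|I|\lesssim\|b\|_{\Lipd}(\delta/r)V(x,y,r)^{-1}$.

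For $II$ we first note that $|b(x')-b(y)|\le\|b\|_{\Lipd}d(x',y)\le\frac54\|b\|_{\Lipd}r$, by the orbit triangle inequality. It then suffices to show
$$
\int_0^\infty|A_t^{ij}(x,y)-A_t^{ij}(x',y)|\frac{dt}{\sqrt t}\lesssim\frac{\delta}{r^2V(x,y,r)}.
$$
We split the $t$-integral at $t=\delta^2$. On $t\ge\delta^2$ we have $\|x-x'\|\le\sqrt t$, so \eqref{cc:eq:heat-second-x} applies. That contribution is at most $C\delta\int_{\delta^2}^\infty t^{-2}V(x,y,\sqrt t)^{-1}e^{-cr^2/t}\,dt$, and this is $\lesssim\delta/(r^2V(x,y,r))$ by the second term of \eqref{cc:eq:int-perturb}; note $\delta\le r/4\le r/2$. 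On $0<t<\delta^2$ we do not use regularity. Instead we bound each term separately by the size estimate \eqref{cc:eq:heat-second}. The $x$-term is handled by the first term of \eqref{cc:eq:int-perturb}. The $x'$-term is $t^{-1}V(x',y,\sqrt t)^{-1}e^{-cd(x',y)^2/t}$, and Corollary~\ref{cc:cor:nearby}(i) controls its integral against $dt/\sqrt t$ by $C\delta/(r^2V(x,y,r))$. Combining the two ranges, multiplying by $r$ and adding $I$ gives the claim.

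The step needing care is the small-$t$ range. There the heat regularity estimate is unavailable, and the two kernels are centered at different points. This is why Lemma~\ref{cc:lem:stability} and Corollary~\ref{cc:cor:nearby} enter: they convert the volume and Gaussian at $(x',y)$ back to those at $(x,y)$ at scale $r$. The exponential decay for $t\le\delta^2\le r^2/16$ then produces the gain $\delta/r$. Everything else is a direct application of the displayed integral lemmas.
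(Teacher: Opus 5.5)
Your proposal is correct and follows the paper's proof essentially step for step. You use the same $I+II$ decomposition and the same cut of the heat parameter at $t=\delta^2$: below the cut, two size estimates controlled by \eqref{cc:eq:int-perturb} and Corollary~\ref{cc:cor:nearby}(i); above it, the regularity estimate \eqref{cc:eq:heat-second-x} combined with the second term of \eqref{cc:eq:int-perturb}.
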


\begin{proof}
If $\delta=0$, then $x=x'$ and there is nothing to prove.  Hence assume that $\delta>0$.
For readability we write $A_t=A_t^{ij}$.  Let
$r'=d(x',y)$.
By Lemma~\ref{cc:lem:stability},
$r'\approx r, V(x',y,r')\approx V(x,y,r)$.
We decompose the difference according to whether the change falls on the Lipschitz factor or on the heat-kernel factor:
\begin{align*}
    K_b^{ij}(x,y)-K_b^{ij}(x',y)  &=c_\ast[b(x)-b(x')]
      \int_0^\infty A_t(x,y)\frac{dt}{\sqrt t}\\
    &\quad+c_\ast[b(x')-b(y)]   \int_0^\infty (A_t(x,y)-A_t(x',y))\frac{dt}{\sqrt t}\\
    &=:I+II.
\end{align*}
For the first term, $|b(x)-b(x')|\leq \|b\|_{\operatorname{Lip}_d}\delta$, and \eqref{cc:eq:heat-second} yields
$$
    \int_0^\infty |A_t(x,y)|\frac{dt}{\sqrt t}  \lesssim
    \int_0^\infty t^{-3/2}V(x,y,\sqrt t)^{-1}e^{-cr^2/t}\,dt.
$$
By \eqref{cc:eq:int-global}, the last integral is bounded by $C/(rV(x,y,r))$.  Hence
$$
    |I|\lesssim    \|b\|_{\operatorname{Lip}_d}  \frac{\delta}{r}V(x,y,r)^{-1}.
$$
For the second term, the orbit Lipschitz condition gives
$$
    |b(x')-b(y)|\leq \|b\|_{\operatorname{Lip}_d}d(x',y)
    \lesssim \|b\|_{\operatorname{Lip}_d}r.
$$
It remains to estimate the integral of the difference of $A_t$.
We split at the scale $t=\delta^2$.  On the small interval $0<t<\delta^2$, no regularity estimate is available, so we use two size
estimates:
\begin{align*}
    &\int_0^{\delta^2}|A_t(x,y)-A_t(x',y)|\frac{dt}{\sqrt t}  \leq
    \int_0^{\delta^2}|A_t(x,y)|\frac{dt}{\sqrt t}  +
    \int_0^{\delta^2}|A_t(x',y)|\frac{dt}{\sqrt t}.
\end{align*}
The first integral is bounded by
$$
    C\int_0^{\delta^2}t^{-3/2}V(x,y,\sqrt t)^{-1}e^{-cr^2/t}\,dt,
$$
whereas the second is bounded by
$$
    C\int_0^{\delta^2}t^{-3/2}V(x',y,\sqrt t)^{-1}e^{-c(r')^2/t}\,dt.
$$
The first short-time integral is controlled by \eqref{cc:eq:int-perturb} applied to the unperturbed pair
$(x,y)$.
The second is controlled by Corollary~\ref{cc:cor:nearby}, applied to the perturbed pair $(x',y)$.
Hence
$$
    \int_0^{\delta^2}|A_t(x,y)-A_t(x',y)|\frac{dt}{\sqrt t}
    \lesssim  \frac{\delta}{r^2V(x,y,r)}.
$$
On the complementary interval $t\geq\delta^2$, one has $\delta\leq\sqrt t$.   The heat-scale regularity
\eqref{cc:eq:heat-second-x} yields
\begin{align*}
    &\int_{\delta^2}^{\infty}|A_t(x,y)-A_t(x',y)|\frac{dt}{\sqrt t}  \lesssim
    \delta\int_{\delta^2}^{\infty}   t^{-2}V(x,y,\sqrt t)^{-1}e^{-cr^2/t}\,dt
    \lesssim  \frac{\delta}{r^2V(x,y,r)},
\end{align*}
again by \eqref{cc:eq:int-perturb}.  Combining the two ranges,
$$
    \int_0^\infty |A_t(x,y)-A_t(x',y)|\frac{dt}{\sqrt t} \lesssim
    \frac{\delta}{r^2V(x,y,r)}.
$$
Therefore
$$
    |II|\lesssim  \|b\|_{\operatorname{Lip}_d}r\frac{\delta}{r^2V(x,y,r)}
    =  C\|b\|_{\operatorname{Lip}_d}\frac{\delta}{r}V(x,y,r)^{-1}.
$$
The bounds for $I$ and $II$ complete the estimate.
\end{proof}

\begin{proposition}\label{cc:prop:K-y}
If $r=d(x,y)>0$ and $\delta=\|y-y'\|\leq r/4$, then
\begin{equation*}
    |K_b^{ij}(x,y)-K_b^{ij}(x,y')|\leq C\|b\|_{\operatorname{Lip}_d}\frac{\delta}{r}V(x,y,r)^{-1}.
\end{equation*}
\end{proposition}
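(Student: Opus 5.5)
The argument mirrors the proof of Proposition~\ref{cc:prop:K-x}, with the roles of the two variables interchanged. Let $r=d(x,y)$, $\delta=\|y-y'\|\leq r/4$ and $r''=d(x,y')$. By Lemma~\ref{cc:lem:stability}, $r''\approx r$ and $V(x,y',r'')\approx V(x,y,r)$. Writing $A_t=A_t^{ij}$, we decompose
\begin{align*}
    K_b^{ij}(x,y)-K_b^{ij}(x,y')  &=c_\ast[b(y')-b(y)]\int_0^\infty A_t(x,y)\frac{dt}{\sqrt t}\\
    &\quad+c_\ast[b(x)-b(y')]\int_0^\infty\bigl(A_t(x,y)-A_t(x,y')\bigr)\frac{dt}{\sqrt t}
    =:I+II.
\end{align*}

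For $I$, the Euclidean Lipschitz bound gives $|b(y')-b(y)|\leq\|b\|_{\operatorname{Lip}_d}\delta$. The size estimate \eqref{cc:eq:heat-second} combined with \eqref{cc:eq:int-global} bounds the $t$-integral by $C/(rV(x,y,r))$. Hence $|I|\lesssim\|b\|_{\operatorname{Lip}_d}(\delta/r)V(x,y,r)^{-1}$.

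For $II$, the orbit Lipschitz condition and the orbit triangle inequality give $|b(x)-b(y')|\leq\|b\|_{\operatorname{Lip}_d}r''\lesssim\|b\|_{\operatorname{Lip}_d}r$. It therefore suffices to show
$$
\int_0^\infty|A_t(x,y)-A_t(x,y')|\frac{dt}{\sqrt t}\lesssim\frac{\delta}{r^2V(x,y,r)}.
$$
We split the $t$-integral at $t=\delta^2$.
\begin{itemize}
\item On $0<t<\delta^2$, we estimate the two terms separately by \eqref{cc:eq:heat-second}. The term at $(x,y)$ is handled by the first part of \eqref{cc:eq:int-perturb}. The term at $(x,y')$ carries the factor $V(x,y',\sqrt t)^{-1}e^{-c(r'')^2/t}$ and is handled by Corollary~\ref{cc:cor:nearby}(ii).
\item On $t\geq\delta^2$ we have $\delta\leq\sqrt t$, so the second-variable regularity \eqref{cc:eq:heat-second-y} applies. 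It reduces this range to $\delta\int_{\delta^2}^\infty t^{-2}V(x,y,\sqrt t)^{-1}e^{-cr^2/t}\,dt$, which is bounded by the second part of \eqref{cc:eq:int-perturb}.
\end{itemize}
Multiplying by the factor $\|b\|_{\operatorname{Lip}_d}r$ gives $|II|\lesssim\|b\|_{\operatorname{Lip}_d}(\delta/r)V(x,y,r)^{-1}$, and together with the bound for $I$ this proves the proposition.

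The only point needing care is the short-time piece at the perturbed pair $(x,y')$. There the Gaussian and volume factors are expressed in terms of $r''$ and $V(x,y',\cdot)$, not $r$ and $V(x,y,\cdot)$. This is exactly the situation Corollary~\ref{cc:cor:nearby}(ii) and Lemma~\ref{cc:lem:stability} were set up for, so no new estimate is needed.

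A shorter alternative route uses the Hermitian symmetry \eqref{cc:eq:heat-symmetry}: it gives $K_b^{ij}(x,y)=-\overline{K_{\bar b}^{ij}(y,x)}$. Applying Proposition~\ref{cc:prop:K-x} to $\bar b$ at the pair $(y,x)$ would then yield the estimate immediately, since $d$ and $V$ are symmetric.
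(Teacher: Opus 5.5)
Your proof is correct and matches the paper's argument essentially step for step. It uses the same splitting into $I$ and $II$, bounds $I$ via \eqref{cc:eq:int-global}, and for $II$ cuts at $t=\delta^2$, using two size estimates with \eqref{cc:eq:int-perturb} and Corollary~\ref{cc:cor:nearby}(ii) on the short range and \eqref{cc:eq:heat-second-y} with the second part of \eqref{cc:eq:int-perturb} on the long range. Your symmetry shortcut $K_b^{ij}(x,y)=-\overline{K_{\overline b}^{ij}(y,x)}$ is also valid, since $c_\ast$ is real, $\|\overline b\|_{\operatorname{Lip}_d}=\|b\|_{\operatorname{Lip}_d}$ and $V(x,y,r)=V(y,x,r)$, so it reduces the statement to Proposition~\ref{cc:prop:K-x} directly.
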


\begin{proof}
The case $\delta=0$ is immediate.  Assume $\delta>0$ and let $r''=d(x,y')$.  By
Lemma~\ref{cc:lem:stability},
$r''\approx r$ and $V(x,y',r'')\approx V(x,y,r)$.  As in the
first-variable proof,
\begin{align*}
    K_b^{ij}(x,y)-K_b^{ij}(x,y')  &=c_\ast[b(y')-b(y)]   \int_0^\infty A_t^{ij}(x,y)\frac{dt}{\sqrt t}\\
    &\quad+c_\ast[b(x)-b(y')]   \int_0^\infty   \bigl(A_t^{ij}(x,y)-A_t^{ij}(x,y')\bigr)\frac{dt}{\sqrt t}\\
    &=:I+II,
\end{align*}
and \eqref{cc:eq:int-global} yields
$$
    |I|\lesssim  \|b\|_{\operatorname{Lip}_d}\frac{\delta}{rV(x,y,r)}.
$$
Using the two size estimates on $(0,\delta^2)$, the
second-variable regularity estimate on $(\delta^2,\infty)$,
\eqref{cc:eq:int-perturb}, and
Corollary~\ref{cc:cor:nearby}, we obtain
\begin{align*}
    &\int_0^\infty  |A_t^{ij}(x,y)-A_t^{ij}(x,y')|\frac{dt}{\sqrt t}\\
     \lesssim\,&  \sum_{z\in\{y,y'\}}\int_0^{\delta^2}  t^{-3/2}V(x,z,\sqrt t)^{-1}e^{-cd(x,z)^2/t}\,dt +
    \delta\int_{\delta^2}^\infty   t^{-2}V(x,y,\sqrt t)^{-1}e^{-cr^2/t}\,dt \lesssim\frac{\delta}{r^2V(x,y,r)}.
\end{align*}
Since $|b(x)-b(y')|\lesssim\|b\|_{\operatorname{Lip}_d}r$,
\begin{equation*}
    |II|\lesssim  \|b\|_{\operatorname{Lip}_d}r  \frac{\delta}{r^2V(x,y,r)}
    \lesssim   \|b\|_{\operatorname{Lip}_d}\frac{\delta}{rV(x,y,r)}.
\end{equation*}
The estimates for $I$ and $II$ establish the assertion.
\end{proof}

\begin{theorem}
For $d(x,y)>0$,
\begin{equation}\label{cc:eq:K-size}
    |K_b^{ij}(x,y)|\leq C\|b\|_{\operatorname{Lip}_d}V(x,y,d(x,y))^{-1}.
\end{equation}
If $\|x-x'\|\leq d(x,y)/4$, then
\begin{equation}\label{cc:eq:K-x}
    |K_b^{ij}(x,y)-K_b^{ij}(x',y)|\leq C\|b\|_{\operatorname{Lip}_d}\frac{\|x-x'\|}{d(x,y)}V(x,y,d(x,y))^{-1}.
\end{equation}
If $\|y-y'\|\leq d(x,y)/4$, then
\begin{equation}\label{cc:eq:K-y}
    |K_b^{ij}(x,y)-K_b^{ij}(x,y')|\leq C\|b\|_{\operatorname{Lip}_d}\frac{\|y-y'\|}{d(x,y)}V(x,y,d(x,y))^{-1}.
\end{equation}
\end{theorem}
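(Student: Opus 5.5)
This theorem collects the three preceding results, so the proof is a short assembly. The size estimate \eqref{cc:eq:K-size} is exactly Lemma~\ref{cc:lem:K-size} with $r=d(x,y)$. That lemma also records absolute convergence of the $t$-integral in \eqref{cc:eq:kernel-def}, so $K_b^{ij}$ is well defined off the orbit diagonal. Its proof combines three ingredients:
\begin{itemize}
\item the orbit Lipschitz factor $|b(x)-b(y)|\le\|b\|_{\operatorname{Lip}_d}r$;
\item the second-derivative heat bound \eqref{cc:eq:heat-second};
\item the global heat-parameter integral \eqref{cc:eq:int-global}.
\end{itemize}

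For the first-variable estimate \eqref{cc:eq:K-x}, I would invoke Proposition~\ref{cc:prop:K-x} with $\delta=\|x-x'\|\le r/4$. Its proof splits the difference into two terms, $I$ and $II$. In $I$ the variation falls on $b$; it is controlled by the Euclidean Lipschitz bound times the size integral $C/(rV(x,y,r))$. In $II$ the variation falls on $A_t^{ij}$, and the heat parameter is split at $t=\delta^2$:
\begin{itemize}
\item on the short range $t<\delta^2$, both size terms are bounded by the first term of \eqref{cc:eq:int-perturb} and by Corollary~\ref{cc:cor:nearby}(i) for the perturbed pair;
\item on the long range $t\ge\delta^2$, the heat-scale regularity \eqref{cc:eq:heat-second-x} and the second term of \eqref{cc:eq:int-perturb} apply.
\end{itemize}
Both ranges give $\delta/(r^2V(x,y,r))$. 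The prefactor $|b(x')-b(y)|\lesssim\|b\|_{\operatorname{Lip}_d} r$ then converts this into $\delta/(rV(x,y,r))$.

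The second-variable estimate \eqref{cc:eq:K-y} is Proposition~\ref{cc:prop:K-y}, proved symmetrically. It uses \eqref{cc:eq:heat-second-y} and Corollary~\ref{cc:cor:nearby}(ii), together with Lemma~\ref{cc:lem:stability} to compare $V(x,y',r'')$ with $V(x,y,r)$.

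No step is genuinely new here. The only point needing care is the short-time range $t<\delta^2$, where no regularity estimate for $A_t^{ij}$ is available. The argument must rely on the Gaussian factor $e^{-cr^2/t}$ with $r\ge4\delta$ to gain the factor $\delta/r$, and the perturbed distance $r'$ must remain comparable to $r$. Both facts are already guaranteed by Corollary~\ref{cc:cor:nearby} and Lemma~\ref{cc:lem:stability}, so the proof concludes by citing these results.
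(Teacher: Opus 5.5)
Your proposal is correct and matches the paper's proof, which cites Lemma~\ref{cc:lem:K-size} for the size bound and Propositions~\ref{cc:prop:K-x}--\ref{cc:prop:K-y}, with $r=d(x,y)$ and $\delta$ equal to the endpoint displacement, for the two regularity bounds. Your summary of those underlying arguments also follows the paper's proofs, including the split of the heat parameter at $t=\delta^2$ and the use of Corollary~\ref{cc:cor:nearby} on the short range.
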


\begin{proof}
The size estimate is Lemma~\ref{cc:lem:K-size}.  The two regularity estimates follow from
Propositions~\ref{cc:prop:K-x}--\ref{cc:prop:K-y}, respectively, with
$r=d(x,y)$ and $\delta$ equal to the corresponding endpoint displacement.
\end{proof}

\section{Component testing}\label{cc:sec:testing}

Throughout this section, $b\in\operatorname{Lip}_d(\R^N)$.

For $s>0$, let $\Theta_s$ be the integral operator with kernel
$\theta_s(x,y)=s(b(x)-b(y))A_{s^2}^{ij}(x,y)$.  After chamber lifting, constant coordinate vectors correspond on
the full space to
$$
    \chi_\tau=\one_{\mathsf g_\tau\mathcal C},   \qquad 1\leq\tau\leq M_G.
$$
These indicators are discontinuous across the reflection hyperplanes.  Smooth scale-kernel bounds alone therefore
do not imply the required component tests.  We first prove a wall-layer estimate and then introduce smooth chamber
cutoffs at scale $s$.  The cutoff error is controlled by Gaussian averaging near the walls; for the smooth part,
the product rule moves one Dunkl derivative onto the cutoff and leaves a vertical term involving $\partial_jb$.
No zero-moment cancellation of $\theta_s$ is used.

The resulting square Carleson tests are useful single-scale information, but they do not control the unsquared integral
$\int\Theta_s\,ds/s$.  Section~\ref{cc:sec:lifting} supplies the integrated BMO and weak
boundedness estimates needed for that step.

\subsection{Wall layers and chamber cutoffs}

Let
\begin{equation*}
    \delta_{\mathcal W}(x)=\dist(x,\mathcal W),  \qquad
    m_s(x)=\min\left\{1,\frac{s}{\delta_{\mathcal W}(x)}\right\},
\end{equation*}
with $m_s=1$ on $\mathcal W$.  Fix a sufficiently small structural constant
$c_{\mathcal G}>0$ and write
\begin{equation*}
    \mathcal G_sF(x)=\int_{\R^N}
    \frac{e^{-c_{\mathcal G}d(x,y)^2/s^2}}{V(x,y,s)}F(y)\,d\omega(y).
\end{equation*}

\begin{lemma}
For every ball $B=B(x_0,r)$ and $0<\lambda\leq1$,
\begin{equation}\label{cc:eq:wall-layer}
    \omega\bigl(B\cap\{\delta_{\mathcal W}\leq\lambda r\}\bigr)  \lesssim \lambda\omega(B).
\end{equation}
Moreover, for $q=1,2$,
\begin{equation}\label{cc:eq:wall-packing}
    \int_0^r\int_B  \bigl(m_s(x)^q+|\mathcal G_sm_s(x)|^q\bigr)  \,d\omega(x)\frac{ds}{s}
    \lesssim_q\omega(B).
\end{equation}
Consequently, for $L\geq1$,
\begin{equation}\label{cc:eq:orbit-wall}
    \int_0^r\int_{\mathcal O(B(x_0,Lr))}  \bigl(m_s(x)^q+|\mathcal G_sm_s(x)|^q\bigr)
    \,d\omega(x)\frac{ds}{s}  \lesssim_q L^{\mathbf N}\omega(B).
\end{equation}
\end{lemma}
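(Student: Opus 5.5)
The plan is to prove the wall-layer bound \eqref{cc:eq:wall-layer} first, then the packing bound \eqref{cc:eq:wall-packing}, and finally \eqref{cc:eq:orbit-wall} by covering.

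\textbf{The wall-layer bound.} Since $\mathcal W$ is a finite union of hyperplanes $\alpha^\perp$, it suffices to treat one slab
$$S_\alpha=\{x:|\langle x,\alpha\rangle|\leq\lambda r\|\alpha\|\}.$$
Use the product formula
$$\omega(B(x,\rho))\approx\rho^N\prod_{\beta\in R_+}(|\langle x,\beta\rangle|+\rho)^{2\kappa(\beta)}.$$
Cover $B\cap S_\alpha$ by $\lesssim\lambda^{1-N}$ Euclidean balls $B(z_k,\lambda r)$ with centres $z_k\in B\cap S_\alpha$. For such a centre $|\langle z_k,\alpha\rangle|\lesssim\lambda r$, so the $\alpha$-factor of $\omega(B(z_k,\lambda r))$ is $\approx(\lambda r)^{2\kappa(\alpha)}$. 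The corresponding factor for $B$ is at least $r^{2\kappa(\alpha)}$ if $B$ meets the slab near its centre. For the other roots $\beta$, use $|\langle z_k,\beta\rangle|+\lambda r\leq|\langle x_0,\beta\rangle|+2r$.

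Summing over the cover gives
$$\omega(B\cap S_\alpha)\lesssim\lambda^{1-N}\lambda^N\lambda^{2\kappa(\alpha)}\omega(B)\leq\lambda\,\omega(B).$$
If one prefers to avoid covering, integrate the weight directly in coordinates adapted to $\alpha$. In the direction $\alpha$ the density $|u|^{2\kappa(\alpha)}$ on an interval of length $2r$ gives slab fraction $\lesssim\lambda$, uniformly in the interval's position, by monotonicity of $|u|^{2\kappa}$ away from $0$. The remaining factors are handled by comparability on the slab. I expect this uniformity in the position of $x_0$ relative to several walls to be the main technical point; the cleanest route is the one-dimensional monotonicity argument applied after freezing the other coordinates.

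\textbf{The packing bound.} For $m_s$, use $m_s^q\leq m_s$ and the layer-cake decomposition
$$m_s(x)\leq\one_{\{\delta_{\mathcal W}\leq s\}}+\sum_{k\geq1}2^{-k+1}\one_{\{\delta_{\mathcal W}\leq2^ks\}}.$$
Then apply \eqref{cc:eq:wall-layer} with $\lambda=\min\{1,2^ks/r\}$ and integrate:
$$\int_0^r\min\{1,2^ks/r\}\frac{ds}{s}\lesssim 1+k.$$
Weighted by $2^{-k}$ this is summable, giving $\lesssim\omega(B)$.

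For $\mathcal G_sm_s$, Lemma~\ref{cc:lem:gaussian-mass} together with Jensen/Schur gives
$$|\mathcal G_sm_s|^q\lesssim\mathcal G_s(m_s^q)\lesssim\mathcal G_s(m_s).$$
Integrating over $B$ and applying Fubini, split $y$ into $\mathcal O(B(x_0,2r))$ and the dyadic orbit annuli $d(x_0,y)\sim2^\ell r$. The kernel mass from $x\in B$ is bounded by Lemma~\ref{cc:lem:gaussian-mass} in the near region and by $e^{-c4^\ell r^2/s^2}$ times a volume ratio in the far region. By $G$-invariance of $m_s$ (since $\mathcal W$ is $G$-invariant) and of $\omega$, this reduces to the $m_s$ estimate on balls $B(x_0,2^{\ell+1}r)$, which yields $2^{\ell\mathbf N}\omega(B)$. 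The Gaussian factor $e^{-c4^\ell r^2/s^2}$ with $s\leq r$ makes the sum over $\ell$ converge. On $[0,r]$ one should use $\int_0^r$ applied to the enlarged ball estimate; the sum over $\ell$ is handled because $e^{-c4^\ell}$ beats $2^{\ell\mathbf N}(1+\ell)$.

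\textbf{The orbit version.} The set $\mathcal O(B(x_0,Lr))$ is a union of $M_G$ balls $B(\mathsf gx_0,Lr)$. Both integrands are $G$-invariant, since $d$ and $V$ are. Applying \eqref{cc:eq:wall-packing} to $B(x_0,Lr)$, with the $s$-range $[0,r]\subset[0,Lr]$, and then Lemma~\ref{cc:lem:volume} gives
$$\omega(B(x_0,Lr))\lesssim L^{\mathbf N}\omega(B).$$
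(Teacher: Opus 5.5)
Your proof is correct and follows the paper's route. You cover each wall slab by $\lesssim\lambda^{1-N}$ balls of radius $\lambda r$, use dyadic wall layers for $m_s$, combine Cauchy--Schwarz, Gaussian mass and Fubini for $\mathcal G_sm_s$, and take a union of $M_G$ reflected balls for the orbit bound. The differences are cosmetic. Your product-formula bound for each small ball is more than needed, since lower volume growth with exponent $N$ already gives $\lambda^N\omega(B)$. You also split $y$ at the ball scale $r$ and absorb the far volume ratio $(2^\ell r/s)^{\mathbf N}$ into $e^{-c4^\ell r^2/s^2}$, whereas the paper dominates $\mathcal G_s$ by orbit averages at scales $2^{\ell+1}s$.

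Rely on the covering argument rather than the fiberwise alternative you float, because that alternative's justification fails as written. The factors $|\langle x,\beta\rangle|^{2\kappa(\beta)}$ with $\langle\alpha,\beta\rangle\neq0$ vanish on walls crossing the slab, so they are not comparable there. In addition, chords of $B$ near its boundary are much shorter than $2r$, so the per-fiber fraction $\lambda$ does not hold.
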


\begin{proof}
For each reflecting hyperplane, its $\lambda r$-neighborhood in $B$ admits a cover
\begin{equation*}
    \bigcup_{m=1}^J B(z_m,c\lambda r),
    \qquad  J\lesssim\lambda^{-(N-1)},  \qquad z_m\in B(x_0,2r).
\end{equation*}
By lower volume growth, doubling, and a change of center,
\begin{align*}
    \omega\bigl(B\cap\{\delta_{\mathcal W}\leq\lambda r\}\bigr)
    &\lesssim  \lambda^{-(N-1)}\lambda^N\omega(B)  \lesssim\lambda\omega(B),
\end{align*}
after summing over the finitely many walls.

Up to the null set $B\cap\mathcal W$, decompose
\begin{equation*}
    B=B_\infty\cup\bigcup_{k\geq0}B_k,  \quad
    B_\infty=B\cap\{\delta_{\mathcal W}>r\},  \quad
    B_k=B\cap\{2^{-k-1}r<\delta_{\mathcal W}\leq2^{-k}r\}.
\end{equation*}
For $q=1,2$,
$$
    \int_0^r m_s(x)^q\frac{ds}{s}  \lesssim_q1, \qquad x\in B_\infty,
 $$
 and
 $$
    \int_0^r m_s(x)^q\frac{ds}{s}  \leq  \int_0^{\delta_{\mathcal W}(x)}
    \left(\frac{s}{\delta_{\mathcal W}(x)}\right)^q\frac{ds}{s}
    +\int_{\delta_{\mathcal W}(x)}^r\frac{ds}{s}
    \lesssim_q k+1, \qquad x\in B_k.
$$
Since \eqref{cc:eq:wall-layer} implies
$\omega(B_k)\lesssim2^{-k}\omega(B)$,
\begin{equation}\label{cc:eq:wall-only}
    \int_0^r\int_Bm_s^q\,d\omega\frac{ds}{s}
    \lesssim_q  \left(1+\sum_{k\geq0}(k+1)2^{-k}\right)\omega(B)
    \lesssim_q\omega(B).
\end{equation}

For nonnegative $F$, let
\begin{equation*}
    A_\rho^{\mathcal O}F(x)  =\frac1{\omega(\mathcal O(B(x,\rho)))}  \int_{\mathcal O(B(x,\rho))}F\,d\omega.
\end{equation*}
An orbit-annular decomposition, volume growth, and Gaussian absorption give
\begin{equation}\label{cc:eq:orbit-average}
    \mathcal G_sF(x)  \lesssim\sum_{\ell\geq0}e^{-c4^\ell}  A_{2^{\ell+1}s}^{\mathcal O}F(x).
\end{equation}
Indeed, for
$E_0=\{d(x,y)<2s\}$ and
$E_\ell=\{2^\ell s\leq d(x,y)<2^{\ell+1}s\}$, $\ell\geq1$,
\begin{align*}
    \int_{E_\ell}\frac{e^{-c_{\mathcal G}d(x,y)^2/s^2}}{V(x,y,s)}F(y)\,d\omega(y)
    &\lesssim  e^{-c4^\ell}2^{\ell\mathbf N}   A_{2^{\ell+1}s}^{\mathcal O}F(x)
    \lesssim e^{-c'4^\ell}A_{2^{\ell+1}s}^{\mathcal O}F(x).
\end{align*}
Also, Fubini, doubling, and
$d(x,y)<\rho\Rightarrow  \omega(\mathcal O(B(x,\rho)))\approx \omega(\mathcal O(B(y,\rho)))$ imply
\begin{equation}\label{cc:eq:average-local}
    \int_BA_\rho^{\mathcal O}F\,d\omega  \lesssim   \int_{\mathcal O(B(x_0,r+\rho))}F\,d\omega.
\end{equation}
Indeed, the inner integral in the Fubini formula vanishes unless
$d(x_0,y)<r+\rho$ and is then bounded by
$\omega(B\cap\mathcal O(B(y,\rho)))/
\omega(\mathcal O(B(y,\rho)))\leq1$.

By \eqref{cc:eq:orbit-average}, Jensen's inequality, and weighted Cauchy--Schwarz when $q=2$,
\begin{equation*}
    |\mathcal G_sm_s|^q  \lesssim_q  \sum_{\ell\geq0}e^{-c4^\ell}  A_{2^{\ell+1}s}^{\mathcal O}(m_s^q).
\end{equation*}
Thus \eqref{cc:eq:average-local}, \eqref{cc:eq:wall-only}, and
$r+2^{\ell+1}s\lesssim2^\ell r$ for $s\leq r$ imply
\begin{align*}
    \int_0^r\int_B|\mathcal G_sm_s|^q\,d\omega\frac{ds}{s}
    &\lesssim_q  \sum_{\ell\geq0}e^{-c4^\ell}
    \int_0^r\int_{\mathcal O(B(x_0,C2^\ell r))}m_s^q\,d\omega\frac{ds}{s} 
     \lesssim_q  \sum_{\ell\geq0}e^{-c4^\ell}2^{\ell\mathbf N}\omega(B)  \lesssim_q\omega(B).
\end{align*}
Here the penultimate estimate follows by applying \eqref{cc:eq:wall-only} to the finitely many balls
$B(\mathsf g x_0,C2^\ell r)$, establishing \eqref{cc:eq:wall-packing}.  Applying that estimate to the
balls $B(\mathsf g x_0,Lr)$ gives
\begin{align*}
    \int_0^r\int_{\mathcal O(B(x_0,Lr))}  \bigl(m_s^q+|\mathcal G_sm_s|^q\bigr)\,d\omega\frac{ds}{s}
    &\leq  \sum_{\mathsf g\in G}\int_0^{Lr}\int_{B(\mathsf g x_0,Lr)}
    \bigl(m_s^q+|\mathcal G_sm_s|^q\bigr)\,d\omega\frac{ds}{s}\\
    &\lesssim_q \omega(B(x_0,Lr))   \lesssim_q L^{\mathbf N}\omega(B),
\end{align*}
which proves \eqref{cc:eq:orbit-wall}.
\end{proof}

For the chamber $\Omega_\tau=\mathsf g_\tau\mathcal C$, let $R_\tau^+$ be its positive root system.
Fix $\vartheta\in C^\infty(\R)$ such that $0\leq\vartheta\leq1$,
$\vartheta=0$ on $(-\infty,1]$, and $\vartheta=1$ on $[2,\infty)$, and let
\begin{equation*}
    \ell_{\tau,\alpha}(x)=\frac{\langle x,\alpha\rangle}{\|\alpha\|},  \qquad
    \eta_{\tau,s}(x)=\prod_{\alpha\in R_\tau^+}  \vartheta\left(\frac{\ell_{\tau,\alpha}(x)}s\right).
\end{equation*}

\begin{lemma}\label{cc:lem:cutoff}
For every $s>0$ and $\ell=1,\ldots,N$,
\begin{equation*}
    |\chi_\tau-\eta_{\tau,s}|+|sT_\ell\eta_{\tau,s}|
    \lesssim m_s  \qquad\text{$\omega$-a.e.}
\end{equation*}
uniformly in $\tau$.
\end{lemma}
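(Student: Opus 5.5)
The plan is to work pointwise at a point $x\notin\mathcal W$ and split into the cases $\delta_{\mathcal W}(x)\ge 2s$ and $\delta_{\mathcal W}(x)<2s$. The key geometric input is that $\Omega_\tau$ is exactly the open set $\{\ell_{\tau,\alpha}>0\ \text{for all}\ \alpha\in R_\tau^+\}$. Moreover, for every $x$ and every root $\alpha$,
\[
|\ell_{\tau,\alpha}(x)|=\dist(x,\alpha^\perp)\ge\delta_{\mathcal W}(x).
\]
Since $m_s\le 1$ always and $m_s\ge 1/2$ when $\delta_{\mathcal W}(x)<2s$, in the second case it suffices to show that both quantities are bounded by a constant.

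\emph{Case $\delta_{\mathcal W}(x)\ge2s$.} If $x\in\Omega_\tau$, then every $\ell_{\tau,\alpha}(x)\ge2s$. Hence $\vartheta(\ell_{\tau,\alpha}/s)=1$ on the ball $B(x,\delta_{\mathcal W}(x)-2s)$ (actually on a neighborhood), so $\eta_{\tau,s}=1=\chi_\tau$ near $x$. If $x\notin\Omega_\tau$, some $\ell_{\tau,\alpha}(x)<0$, hence $\le -2s$; thus $\eta_{\tau,s}=0=\chi_\tau$ near $x$. So the first term vanishes. For $T_\ell\eta_{\tau,s}(x)$, the ordinary derivative vanishes because $\eta_{\tau,s}$ is locally constant. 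The difference part involves $\eta_{\tau,s}(x)-\eta_{\tau,s}(\sigma_\alpha x)$ divided by $\langle\alpha,x\rangle$. Here $\sigma_\alpha x$ also has $\delta_{\mathcal W}(\sigma_\alpha x)=\delta_{\mathcal W}(x)\ge2s$, because $\mathcal W$ is $G$-invariant, so $\eta_{\tau,s}(\sigma_\alpha x)\in\{0,1\}$ equals $\chi_\tau(\sigma_\alpha x)$. Thus the numerator is at most $1$ and $|\langle\alpha,x\rangle|\ge\|\alpha\|\delta_{\mathcal W}(x)$. Therefore
\[
|sT_\ell\eta_{\tau,s}(x)|\lesssim \frac{s}{\delta_{\mathcal W}(x)}=m_s(x).
\]
This is the genuinely nonlocal contribution, and it decays only like $s/\delta_{\mathcal W}$; that is exactly why $m_s$, rather than an indicator of a wall layer, appears.

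\emph{Case $\delta_{\mathcal W}(x)<2s$.} Trivially $|\chi_\tau-\eta_{\tau,s}|\le1$. For the ordinary derivative, the product rule gives $|\partial_\ell\eta_{\tau,s}|\le |R_\tau^+|\,\|\vartheta'\|_\infty/s$, which is of the correct size. The main obstacle is the difference quotient
\[
\frac{\eta_{\tau,s}(x)-\eta_{\tau,s}(\sigma_\alpha x)}{\langle\alpha,x\rangle},
\]
since $\langle\alpha,x\rangle$ may be tiny. I would handle it by the mean value theorem along the segment from $\sigma_\alpha x$ to $x$. This segment is parallel to $\alpha$ and has length $2|\langle\alpha,x\rangle|/\|\alpha\|$. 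Because $\eta_{\tau,s}$ is smooth with $\|\nabla\eta_{\tau,s}\|_\infty\lesssim1/s$ globally, the quotient is $\lesssim 1/s$. So $|sT_\ell\eta_{\tau,s}|\lesssim1\lesssim m_s(x)$.

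All constants depend only on $\vartheta$, $\kappa$ and the finite root system, which gives uniformity in $\tau$. The wall $\mathcal W$ itself is $\omega$-null by Lemma~\ref{cc:lem:partition}, which accounts for the a.e.\ qualifier.
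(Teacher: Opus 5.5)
Your proof is correct and is essentially the paper's argument: both combine the trivial bound $|\eta_{\tau,s}(x)-\eta_{\tau,s}(\sigma_\alpha x)|\le 1$ with the $Cs^{-1}$-Lipschitz bound over the distance $\|x-\sigma_\alpha x\|=2|\langle\alpha,x\rangle|/\|\alpha\|$, and both use $\dist(x,\alpha^\perp)\ge\delta_{\mathcal W}(x)$ to compare with $m_s$. The only difference is that the paper writes the two bounds at once as $\min\{1,s/\dist(x,\alpha^\perp)\}\le m_s(x)$ instead of splitting into cases at $\delta_{\mathcal W}(x)=2s$. One small correction: at points of $\Omega_\tau$ with $\delta_{\mathcal W}(x)=2s$ exactly, $\eta_{\tau,s}$ need not be locally constant, so your ``locally constant'' claim is not literally true there; this is harmless, since $\vartheta'(2)=0$ gives $\partial_\ell\eta_{\tau,s}(x)=0$ at those points and the set is $\omega$-null anyway.
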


\begin{proof}
The first difference vanishes unless $\delta_{\mathcal W}\lesssim s$.
Moreover,
\begin{equation*}
    |s\partial_\ell\eta_{\tau,s}|
    \lesssim\one_{\{\delta_{\mathcal W}\lesssim s\}}
    \lesssim m_s.
\end{equation*}
Since $0\leq\eta_{\tau,s}\leq1$, $\eta_{\tau,s}$ is $Cs^{-1}$-Lipschitz, and
$\|x-\sigma_\alpha x\|=2|\langle x,\alpha\rangle|/\|\alpha\|$,
\begin{align*}
    s\left|  \frac{\eta_{\tau,s}(x)-\eta_{\tau,s}(\sigma_\alpha x)}  {\langle\alpha,x\rangle}  \right|
    &\lesssim  \min\left\{\frac{s}{|\langle\alpha,x\rangle|},1\right\}
    \lesssim  \min\left\{1,\frac{s}{\dist(x,\alpha^\perp)}\right\}  \leq m_s(x).
\end{align*}
The Dunkl formula and the finiteness of $R$ complete the proof; the walls are $\omega$-null.
\end{proof}

\begin{lemma}\label{cc:lem:heat-gradient}
For every ball $B_0=B(x_0,r)$, $g\in L^\infty(d\omega)$, and $\ell=1,\ldots,N$,
\begin{equation*}
    \int_0^r\int_{B_0}|sT_\ell H_{s^2}g(x)|^2\,d\omega(x)\frac{ds}{s}
    \lesssim\|g\|_\infty^2\omega(B_0).
\end{equation*}
\end{lemma}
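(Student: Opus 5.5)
The plan is the standard local/global splitting used for Carleson estimates of heat-gradient square functions. The global part uses only the pointwise scale bound of Lemma~\ref{cc:lem:heat-first}; the local part uses an $L^2$ Littlewood--Paley identity for the Dunkl heat semigroup.

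Since the estimate scales as $\|g\|_\infty^2$, we may assume $\|g\|_\infty=1$. Write $g=g_1+g_2$ with $g_1=g\,\one_{\mathcal O(B(x_0,4r))}$ and $g_2=g-g_1$. Because $\mathcal O(B(x_0,4r))$ is a finite union of reflected balls, Lemma~\ref{cc:lem:volume} and the orbit-ball comparison give $\|g_1\|_2^2\le\omega(\mathcal O(B(x_0,4r)))\lesssim\omega(B_0)$.

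\emph{Local part.} On the Dunkl transform side, $sT_\ell H_{s^2}$ has multiplier $i s\xi_\ell e^{-s^2\|\xi\|^2}$. By Plancherel for the Dunkl transform,
$$
\int_0^\infty\|sT_\ell H_{s^2}g_1\|_2^2\frac{ds}{s}=\int|\widehat{g_1}(\xi)|^2\int_0^\infty s^2\xi_\ell^2e^{-2s^2\|\xi\|^2}\frac{ds}{s}\,d\omega(\xi)\le\tfrac14\|g_1\|_2^2 .
$$
The inner integral equals $\xi_\ell^2/(4\|\xi\|^2)\le1/4$. Restricting to $B_0$ and to $s<r$ only decreases the left side, so this piece is $\lesssim\omega(B_0)$.

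\emph{Global part.} For $x\in B_0$ and $y\notin\mathcal O(B(x_0,4r))$ we have $d(x,y)\ge d(x_0,y)-r\ge 3r$, so $d(x,y)\ge 3s$ for every $s<r$. Lemma~\ref{cc:lem:heat-first} gives
$$
|sT_\ell H_{s^2}g_2(x)|\le C\int_{d(x,y)\ge 3r}V(x,y,s)^{-1}e^{-cd(x,y)^2/s^2}\,d\omega(y).
$$
Split the region $d(x,y)\ge 3r$ into orbit annuli $2^k r\le d(x,y)<2^{k+1}r$ with $k\ge1$. Each annulus has measure at most $M_G\,\omega(B(x,2^{k+1}r))\lesssim (2^{k+1}r/s)^{\mathbf N}\omega(B(x,s))$ by Lemma~\ref{cc:lem:volume}, and the integrand there is at most $\omega(B(x,s))^{-1}e^{-c4^kr^2/s^2}$. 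Summing over $k$ gives
$$
|sT_\ell H_{s^2}g_2(x)|\lesssim\sum_{k\ge1}(2^kr/s)^{\mathbf N}e^{-c4^kr^2/s^2}\lesssim (s/r)^2 .
$$
The last step uses $u^{\mathbf N}e^{-cu^2}\lesssim u^{-2}$ with $u=2^kr/s\ge 2$. Then
$$
\int_0^r\int_{B_0}(s/r)^4\,d\omega\frac{ds}{s}\lesssim\omega(B_0).
$$

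The only point needing care is the local identity. It requires that $T_\ell$ act as the multiplier $i\xi_\ell$ with the paper's conventions and that Plancherel hold for $d\omega$; both are standard Dunkl facts, already implicit in \eqref{cc:eq:D-Riesz}. Otherwise I expect no real obstacle, since the Gaussian decay in the orbit distance handles the tail directly. Combining the local and global bounds proves the lemma.
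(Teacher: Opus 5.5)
Your proof is correct and follows the paper's argument. You use the same split $g_1=g\,\one_{\mathcal O(B(x_0,4r))}$, the same Plancherel identity with multiplier $\xi_\ell^2/(4\|\xi\|^2)$ for the local part, and Lemma~\ref{cc:lem:heat-first} with orbit-annular summation for the tail. The only cosmetic difference is that you record the tail bound as $(s/r)^2$ where the paper writes $e^{-cr^2/s^2}$; either one is integrable against $ds/s$ on $(0,r)$.
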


\begin{proof}
Let $g_1=g\one_{\mathcal O(B(x_0,4r))}$ and $g_2=g-g_1$.  By Plancherel,
\begin{align*}
    \int_0^r\int_{B_0}|sT_\ell H_{s^2}g_1|^2\,d\omega\frac{ds}{s}
    &\leq\int_0^\infty\|sT_\ell H_{s^2}g_1\|_2^2\frac{ds}{s}
    =\int_{\R^N}\frac{\xi_\ell^2}{4\|\xi\|^2}
    |\mathcal F_\kappa g_1(\xi)|^2\,d\omega(\xi)\\
    &\leq\frac14\|g_1\|_2^2   \lesssim\|g\|_\infty^2\omega(B_0),
\end{align*}
where the value at $\xi=0$ is zero.

If $x\in B_0$, $s\leq r$, and $y\notin\mathcal O(B(x_0,4r))$, then
$d(x,y)\geq3r$.  By Lemma~\ref{cc:lem:heat-first},
\begin{align*}
    |sT_\ell H_{s^2}g_2(x)|  &\leq  \|g\|_\infty
    \int_{d(x,y)\geq3r}   \frac1{V(x,y,s)}e^{-cd(x,y)^2/s^2}\,d\omega(y).
\end{align*}
Orbit-annular summation and Gaussian absorption now imply
$$
    |sT_\ell H_{s^2}g_2(x)|    \lesssim    \|g\|_\infty e^{-cr^2/s^2}.
$$
Thus
$$
    \int_0^r |sT_\ell H_{s^2}g_2(x)|^2\frac{ds}{s}  \lesssim
    \|g\|_\infty^2\int_0^r e^{-cr^2/s^2}\frac{ds}{s}   \lesssim \|g\|_\infty^2.
$$
Integration over $B_0$ finishes the estimate.
\end{proof}

\subsection{Testing the chamber indicators}

\begin{lemma}\label{cc:lem:theta-adjoint}
For every $s>0$, $\Theta_{s,b}$ is bounded on $L^2(d\omega)$ and
\begin{equation*}
    \Theta_{s,b}^*=-\Theta_{s,\overline b}.
\end{equation*}
The same identity holds for chamber indicators through their absolutely convergent kernel integrals.
\end{lemma}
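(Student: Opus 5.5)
Boundedness comes from a Schur test, and the adjoint identity from a kernel symmetry. By Corollary~\ref{cc:cor:theta},
$$
|\theta_s(x,y)|\lesssim\|b\|_{\operatorname{Lip}_d}V(x,y,s)^{-1}e^{-cd(x,y)^2/s^2}.
$$
Lemma~\ref{cc:lem:gaussian-mass} bounds the $y$-integral of this majorant uniformly in $x$. Since $V(x,y,s)$ is symmetric in $x,y$ and $d$ is symmetric, the same lemma also bounds the $x$-integral uniformly in $y$. The two-sided Schur test then gives
$$
\|\Theta_{s,b}\|_{L^2\to L^2}\lesssim\|b\|_{\operatorname{Lip}_d},
$$
for every fixed $s>0$. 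The same integrability shows that $\Theta_{s,b}F$ is given by an absolutely convergent integral for every $F\in L^\infty$, which will matter for chamber indicators.

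For the adjoint, I compute the kernel of $\Theta_{s,b}^*$. With the pairing linear in the first variable, this kernel is $\overline{\theta_s(y,x)}$. The Hermitian symmetry \eqref{cc:eq:heat-symmetry} gives $\overline{A_{s^2}^{ij}(y,x)}=A_{s^2}^{ij}(x,y)$. Hence
$$
\overline{\theta_s(y,x)}=s\bigl(\overline{b(y)}-\overline{b(x)}\bigr)A_{s^2}^{ij}(x,y)=-s\bigl(\overline b(x)-\overline b(y)\bigr)A_{s^2}^{ij}(x,y),
$$
which is exactly the kernel of $-\Theta_{s,\overline b}$. Since $\overline b\in\operatorname{Lip}_d$ with the same norm, both operators are bounded, so this kernel identity gives the operator identity. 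Concretely, for $f,g\in L^2$, Fubini is justified by the Schur majorant, because
$$
\iint|\theta_s(x,y)||f(y)||g(x)|\,d\omega\,d\omega\lesssim\|f\|_2\|g\|_2 .
$$

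For chamber indicators, the issue is that $\chi_\tau\notin L^2$, so the pairing must be read through kernel integrals. I would take $\phi\in L^1\cap L^\infty$, for instance bounded with compact support. The triple integrand $|\theta_s(x,y)|\chi_\tau(y)|\phi(x)|$ is integrable by the uniform $x$-integral bound, so Fubini applies. It gives
$$
\langle\Theta_{s,b}\chi_\tau,\phi\rangle=\langle\chi_\tau,\Theta_{s,b}^*\phi\rangle=-\langle\chi_\tau,\Theta_{s,\overline b}\phi\rangle,
$$
and the same holds with the roles of the two functions exchanged.

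I see no real obstacle. The only delicate points are the conjugation convention and checking that the Gaussian mass lemma applies in both variables, which follows from the symmetry of $V$ and $d$.
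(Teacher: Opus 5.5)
Your proof is correct and is the paper's argument: two-sided Schur bounds from Corollary~\ref{cc:cor:theta} and Lemma~\ref{cc:lem:gaussian-mass}, then the Hermitian symmetry \eqref{cc:eq:heat-symmetry} giving $\overline{\theta_{s,b}(y,x)}=-\theta_{s,\overline b}(x,y)$. For the chamber indicators, the paper integrates this pointwise kernel identity against $\chi_\tau$ (absolutely convergent by Gaussian mass) to get $\Theta_{s,b}^*\chi_\tau=-\Theta_{s,\overline b}\chi_\tau$ everywhere, which is the form used in Proposition~\ref{cc:prop:component-test}; your duality version also works, but its Fubini step needs the $y$-integral bound uniform in $x$ together with $\phi\in L^1$, not the $x$-integral bound you cite.
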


\begin{proof}
By Corollary~\ref{cc:cor:theta} and Lemma~\ref{cc:lem:gaussian-mass}, in both variables,
\begin{equation*}
    \sup_x\int|\theta_s(x,y)|\,d\omega(y)  +\sup_y\int|\theta_s(x,y)|\,d\omega(x)
    \lesssim\|b\|_{\operatorname{Lip}_d},
\end{equation*}
so Schur's test implies the $L^2$ bound.  By \eqref{cc:eq:heat-symmetry},
\begin{align*}
    \overline{s(b(y)-b(x))A_{s^2}^{ij}(y,x)}
    &=s\big(\overline b(y)-\overline b(x)\big)A_{s^2}^{ij}(x,y)
    =-\theta_{s,\overline b}(x,y).
\end{align*}
The Schur estimates imply the operator identity; Gaussian mass justifies the corresponding kernel identity for $\chi_\tau$.
\end{proof}

\begin{lemma}\label{cc:lem:theta-identity}
Assume that $b\in C^\infty(\R^N)$ is $G$-invariant and that $\varphi\in C_c^\infty(\R^N)$.
Then, for every $s>0$,
\begin{equation}\label{cc:eq:theta-identity}
    \Theta_{s,b}\varphi  =   s[M_b,T_iH_{s^2}](T_j\varphi)  -   sT_iH_{s^2}\big((\partial_jb)\varphi\big).
\end{equation}
Here the identity is understood pointwise, or equivalently after pairing with compactly supported bounded test functions.
\end{lemma}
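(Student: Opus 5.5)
Write $\Theta_{s,b}$ for the operator with kernel $\theta_s(x,y)=s(b(x)-b(y))A_{s^2}^{ij}(x,y)$, where $A_{s^2}^{ij}(x,y)=T_{i,x}T_{j,x}h_{s^2}(x,y)$. The plan is to split this kernel and compute each piece. For $\varphi\in C_c^\infty$ we have
\begin{equation*}
\Theta_{s,b}\varphi(x)=s\,b(x)\int A_{s^2}^{ij}(x,y)\varphi(y)\,d\omega(y)-s\int A_{s^2}^{ij}(x,y)b(y)\varphi(y)\,d\omega(y).
\end{equation*}
Both integrals converge absolutely for fixed $s$, by \eqref{cc:eq:heat-second} and Lemma~\ref{cc:lem:gaussian-mass}. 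The product $b\varphi$ is smooth and compactly supported, although $b$ itself may be unbounded. The first integral is $T_iT_jH_{s^2}\varphi(x)$, and the second is $T_iT_jH_{s^2}(b\varphi)(x)$. To justify moving the derivatives outside, note that $H_{s^2}u(x)=\int h_{s^2}(x,y)u(y)\,d\omega(y)$ with $u\in C_c^\infty$. Differentiation in $x$ under the integral is legitimate, since the difference quotients in $T_{j,x}$ act only on the smooth kernel $h_{s^2}(\cdot,y)$. Hence
\begin{equation*}
\Theta_{s,b}\varphi=s\,bT_iT_jH_{s^2}\varphi-sT_iT_jH_{s^2}(b\varphi).
\end{equation*}

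Next I will commute $T_j$ with the heat semigroup. On $C_c^\infty$ functions, $T_jH_{s^2}u=H_{s^2}T_ju$. This follows because the two operators share the Dunkl multiplier $i\xi_je^{-s^2\|\xi\|^2}$ (up to the paper's sign convention), and $C_c^\infty\subset\operatorname{Dom}(T_j)$. It can also be seen from the symmetry of $h_t$ together with the skew-adjointness $T_j^*=-T_j$. Applying this to both terms gives $T_iT_jH_{s^2}\varphi=T_iH_{s^2}(T_j\varphi)$ and $T_iT_jH_{s^2}(b\varphi)=T_iH_{s^2}T_j(b\varphi)$. By Lemma~\ref{cc:lem:product}, which applies because $b$ is smooth and $G$-invariant, $T_j(b\varphi)=(\partial_jb)\varphi+bT_j\varphi$. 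Substituting,
\begin{equation*}
\Theta_{s,b}\varphi=s\,bT_iH_{s^2}(T_j\varphi)-sT_iH_{s^2}(bT_j\varphi)-sT_iH_{s^2}\bigl((\partial_jb)\varphi\bigr),
\end{equation*}
and the first two terms are exactly $s[M_b,T_iH_{s^2}](T_j\varphi)$. This is \eqref{cc:eq:theta-identity}.

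The main obstacle is the pointwise justification of the steps above. Each expression $T_iH_{s^2}u$ must be identified with its kernel integral $\int T_{i,x}h_{s^2}(x,y)u(y)\,d\omega(y)$. For $u=bT_j\varphi$ and $u=(\partial_jb)\varphi$ this is routine, since both functions are in $C_c^\infty$ and Lemma~\ref{cc:lem:heat-first} supplies an integrable majorant. The genuinely delicate point is the commutation $T_jH_{s^2}=H_{s^2}T_j$ at the pointwise level. I would establish it first in $L^2$ through the Dunkl transform, and then upgrade to pointwise equality using continuity of both sides: both sides are given by absolutely convergent smooth kernel integrals. An alternative route is an integration by parts in $y$, using $T_{j,x}h_t(x,y)=-T_{j,y}h_t(x,y)$, which follows from Lemma~\ref{cc:lem:heat-identity} and its symmetric counterpart, together with the skew-symmetry of $T_j$ with respect to $d\omega$ on compactly supported smooth functions. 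Finally, the equivalence with the paired formulation follows by integrating the pointwise identity against compactly supported bounded test functions, using Fubini's theorem with the Gaussian bounds.
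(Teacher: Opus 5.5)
Your proposal is correct and follows the paper's own argument. You identify $\Theta_{s,b}\varphi$ with $s\bigl(M_bT_iT_jH_{s^2}\varphi-T_iT_jH_{s^2}(b\varphi)\bigr)$, commute $T_j$ past $H_{s^2}$, and expand $T_j(b\varphi)$ by Lemma~\ref{cc:lem:product}; the paper states exactly this as a two-line computation, and your extra steps (differentiation under the integral, the $C_c^\infty$ membership of $bT_j\varphi$ and $(\partial_jb)\varphi$, and the pointwise upgrade of the commutation) only make explicit what the paper leaves implicit.
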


\begin{proof}
\begin{align*}
    \Theta_{s,b}\varphi  &=s\bigl(M_bT_iH_{s^2}T_j\varphi   -T_iH_{s^2}T_j(b\varphi)\bigr)\\
    &=s\bigl(M_bT_iH_{s^2}(T_j\varphi)   -T_iH_{s^2}(bT_j\varphi)\bigr)   -sT_iH_{s^2}\bigl((\partial_jb)\varphi\bigr),
\end{align*}
where we used $T_j(b\varphi)=(\partial_jb)\varphi+bT_j\varphi$ and commutation with $H_{s^2}$.
\end{proof}

\begin{lemma}\label{cc:lem:component-split}
Assume that $b\in C^\infty(\R^N)\cap\operatorname{Lip}_d(\R^N)$ is
$G$-invariant and
$\max_k\|\partial_kb\|_\infty\leq\|b\|_{\operatorname{Lip}_d}$.
Let
\begin{equation*}
    \beta_{\tau,s}=\chi_\tau-\eta_{\tau,s},  \qquad
    g_\tau=(\partial_jb)\chi_\tau.
\end{equation*}
Then
\begin{equation}\label{cc:eq:component-split}
    \Theta_s\chi_\tau=-sT_iH_{s^2}g_\tau+E_{\tau,s},
\end{equation}
where
\begin{equation}\label{cc:eq:component-error}
    E_{\tau,s}  =\Theta_s\beta_{\tau,s}  +s[M_b,T_iH_{s^2}](T_j\eta_{\tau,s})
    +sT_iH_{s^2}\bigl((\partial_jb)\beta_{\tau,s}\bigr),
\end{equation}
and
\begin{equation}\label{cc:eq:error-size}
    |E_{\tau,s}(x)|  \lesssim  \|b\|_{\operatorname{Lip}_d}\mathcal G_sm_s(x).
\end{equation}
\begin{equation}\label{cc:eq:error-reg}
    |E_{\tau,s}(x)-E_{\tau,s}(x')|  \lesssim
    \|b\|_{\operatorname{Lip}_d}\frac{\|x-x'\|}{s},  \qquad \|x-x'\|\leq s.
\end{equation}
\end{lemma}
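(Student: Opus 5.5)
The plan is to derive \eqref{cc:eq:component-split} algebraically from Lemma~\ref{cc:lem:theta-identity} applied to the smooth cutoff $\eta_{\tau,s}$. Then each of the three error pieces in \eqref{cc:eq:component-error} is estimated by the scale-kernel bounds, fed with the pointwise cutoff bounds of Lemma~\ref{cc:lem:cutoff}.

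\emph{The identity.} Write $\chi_\tau=\eta_{\tau,s}+\beta_{\tau,s}$, so that $\Theta_s\chi_\tau=\Theta_s\beta_{\tau,s}+\Theta_s\eta_{\tau,s}$. For $\Theta_s\eta_{\tau,s}$ we want \eqref{cc:eq:theta-identity} with $\varphi=\eta_{\tau,s}$.
\begin{itemize}
\item Since $\eta_{\tau,s}$ is smooth and bounded but not compactly supported, apply Lemma~\ref{cc:lem:theta-identity} to $\varphi_R=\chi_R\eta_{\tau,s}$, with $\chi_R$ a radial cutoff.
\item Let $R\to\infty$ pointwise in $x$. The kernels $\theta_s(x,\cdot)$, $sT_{i,x}h_{s^2}(x,\cdot)$ and $s(b(x)-b(\cdot))T_{i,x}h_{s^2}(x,\cdot)$ are all dominated by a constant multiple of $V(x,y,s)^{-1}e^{-cd(x,y)^2/s^2}$. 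This uses Corollary~\ref{cc:cor:theta}, Lemma~\ref{cc:lem:heat-first}, and, for the commutator kernel, $|b(x)-b(y)|\leq L_b d(x,y)$ absorbed into the Gaussian.
\item These majorants are integrable by Lemma~\ref{cc:lem:gaussian-mass}.
\item The extra terms from $T_j\chi_R$ and $\partial_j\chi_R$ are $O(R^{-1})$ uniformly, so they vanish in the limit.
\end{itemize}
This gives $\Theta_s\eta_{\tau,s}=s[M_b,T_iH_{s^2}](T_j\eta_{\tau,s})-sT_iH_{s^2}((\partial_jb)\eta_{\tau,s})$. Finally rewrite $(\partial_jb)\eta_{\tau,s}=g_\tau-(\partial_jb)\beta_{\tau,s}$, which yields exactly \eqref{cc:eq:component-split} with $E_{\tau,s}$ as in \eqref{cc:eq:component-error}.

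\emph{Size \eqref{cc:eq:error-size}.} Each of the three terms is bounded by a Gaussian average of $m_s$.
\begin{itemize}
\item By Corollary~\ref{cc:cor:theta} and $|\beta_{\tau,s}|\lesssim m_s$ from Lemma~\ref{cc:lem:cutoff}, $|\Theta_s\beta_{\tau,s}|\lesssim L_b\,\mathcal G_sm_s$.
\item For the commutator term, the kernel is $s(b(x)-b(y))T_{i,x}h_{s^2}(x,y)$. By Lemma~\ref{cc:lem:heat-first} it is bounded by $L_b\,(d(x,y)/s)\,V(x,y,s)^{-1}e^{-cd^2/s^2}$. Since $|T_j\eta_{\tau,s}|\lesssim m_s/s$, the total contribution is $\lesssim L_b(d/s)^2V^{-1}e^{-cd^2/s^2}m_s(y)$. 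The polynomial factor is absorbed into the Gaussian.
\item The third term is bounded by $\|\partial_jb\|_\infty$ times $|sT_{i,x}h_{s^2}|$ acting on $m_s$, which is again $\lesssim L_b\,\mathcal G_sm_s$.
\end{itemize}
Here $c_{\mathcal G}$ is taken smaller than every Gaussian constant produced along the way, which is why it is fixed as a small structural constant.

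\emph{Regularity \eqref{cc:eq:error-reg}.} No smallness in $m_s$ is needed here. All three inputs are bounded functions: $|\beta_{\tau,s}|\leq1$, $|sT_j\eta_{\tau,s}|\lesssim1$, and $|(\partial_jb)\beta_{\tau,s}|\leq L_b$. So it suffices that each kernel, as a function of $x$, has the first-variable regularity
\[
\frac{\|x-x'\|}{s}\,V(x,y,s)^{-1}e^{-cd(x,y)^2/s^2}\qquad\text{for }\|x-x'\|\leq s,
\]
after which Lemma~\ref{cc:lem:gaussian-mass} integrates out $y$.
\begin{itemize}
\item For $\theta_s$ this is \eqref{cc:eq:theta-x}.
\item For $sT_{i,x}h_{s^2}(x,y)=\frac{y_i-x_i}{2s}h_{s^2}(x,y)$, expand the difference as in the treatment of $P_t$ in the proof of \eqref{cc:eq:heat-second-x}. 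Use the Euclidean buffer $(1+\|x-y\|/s)^{-2}$ of Lemma~\ref{cc:lem:heat-buffer} to absorb the Euclidean factor $\|x-y\|/s$.
\item For the commutator kernel, split $b(x)-b(y)=(b(x)-b(x'))+(b(x')-b(y))$ exactly as in Corollary~\ref{cc:cor:theta}.
\end{itemize}

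I expect the main obstacle to be this last point: the first-variable regularity of $sT_{i,x}h_{s^2}$, and hence of the commutator kernel, is not recorded earlier and must be rederived from the buffered estimates. The approximation step for the noncompactly supported $\eta_{\tau,s}$ needs some care but is routine given the Gaussian majorants.
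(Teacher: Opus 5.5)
Your proof is correct. The derivation of \eqref{cc:eq:component-split} follows the paper: you truncate $\eta_{\tau,s}$ by a radial cutoff, apply Lemma~\ref{cc:lem:theta-identity}, and pass to the limit with Gaussian majorants. The termwise size bound \eqref{cc:eq:error-size} also matches the paper. The paper controls the truncation tails uniformly on a fixed box $B(x_0,r_0)\times(0,r_0)$ rather than pointwise, which makes no difference.

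You differ only in the regularity estimate \eqref{cc:eq:error-reg}.

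\begin{itemize}
\item \emph{Your route.} You estimate the three pieces of $E_{\tau,s}$ separately. This requires first-variable regularity for three kernels, including the commutator kernel $(b(x)-b(y))T_{i,x}h_{s^2}(x,y)$, which you handle by the Lipschitz splitting.
\item \emph{The paper's route.} It uses the identity just proved, $E_{\tau,s}=\Theta_s\chi_\tau+sT_iH_{s^2}g_\tau$. Then only two kernel regularities are needed. The first is that of $\theta_s$, which is already \eqref{cc:eq:theta-x}. The second is that of $sT_{i,x}h_{s^2}$, derived from Lemma~\ref{cc:lem:heat-identity} and the Euclidean buffer in Lemma~\ref{cc:lem:heat-buffer} exactly as you describe. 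The inputs are just $|\chi_\tau|\le 1$ and $|g_\tau|\le\|b\|_{\operatorname{Lip}_d}$.
\end{itemize}

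Both routes need the new first-derivative regularity you flagged as the main obstacle. The paper's route skips the extra commutator-kernel step. Yours gives regularity of each error piece individually, which is more than the statement requires.

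One harmless bookkeeping slip is in your size estimate. The kernel $s(b(x)-b(y))T_{i,x}h_{s^2}(x,y)$ is bounded by $\|b\|_{\operatorname{Lip}_d}\,d(x,y)\,V(x,y,s)^{-1}e^{-cd(x,y)^2/s^2}$, not by $\|b\|_{\operatorname{Lip}_d}(d(x,y)/s)V(x,y,s)^{-1}e^{-cd(x,y)^2/s^2}$. Against $|T_j\eta_{\tau,s}|\lesssim m_s/s$, this gives a single factor $d(x,y)/s$ rather than $(d(x,y)/s)^2$. Either factor is absorbed into the Gaussian, so the conclusion stands.
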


\begin{proof}
We first justify \eqref{cc:eq:theta-identity} for the noncompact cutoff.
Let $\psi_L(x)=\psi(x/L)$, where $\psi\in C_c^\infty$ is radial,
$0\leq\psi\leq1$, equals one on $B(0,1)$, and is supported in $B(0,2)$; let
$\eta_{\tau,s}^L=\eta_{\tau,s}\psi_L$.  On a fixed box
$B(x_0,r_0)\times(0,r_0)$ and for $L\geq4(1+\|x_0\|+r_0)$,
\begin{equation*}
    d(x,y)\geq\|y\|-\|x\|   \geq L-(\|x_0\|+r_0)\geq\frac{3L}{4}=:c_0L
\end{equation*}
whenever $x\in B(x_0,r_0)$ and $y\notin B(0,L)$.  Hence, for $M=0,1$,
\begin{equation}\label{cc:eq:cutoff-tail}
    \int_{\{d(x,y)\geq c_0L\}}   \left(1+\frac{d(x,y)}s\right)^M
    \frac{e^{-cd(x,y)^2/s^2}}{V(x,y,s)}\,d\omega(y)   \lesssim_M e^{-c_ML^2/s^2}.
\end{equation}
This follows from the orbit-annular argument in \eqref{cc:eq:orbit-average}, with lower cutoff $c_0L$.
Since $\psi_L$ is $G$-invariant,
$$
    T_j(\eta_{\tau,s}\psi_L)  =\psi_LT_j\eta_{\tau,s}+\eta_{\tau,s}\partial_j\psi_L,
    \qquad  |\partial_j\psi_L|\lesssim L^{-1}   \one_{\{L\leq\|y\|\leq2L\}}.
$$
Corollary~\ref{cc:cor:theta},
Lemma~\ref{cc:lem:heat-first}, and
\eqref{cc:eq:cutoff-tail} imply, uniformly on the fixed box,
\begin{align*}
    &|\Theta_s(\eta_{\tau,s}(1-\psi_L))(x)|  +\left|s[M_b,T_iH_{s^2}]
    \bigl((1-\psi_L)T_j\eta_{\tau,s}\bigr)(x)\right|+\left|sT_iH_{s^2}
    \bigl((\partial_jb)\eta_{\tau,s}(1-\psi_L)\bigr)(x)\right| \\
     \lesssim\,&   \|b\|_{\operatorname{Lip}_d}e^{-cL^2/s^2}.
\end{align*}
For $A_L=\{L\leq\|y\|\leq2L\}$, the remaining term satisfies
\begin{align*}
    \left|s[M_b,T_iH_{s^2}](\eta_{\tau,s}\partial_j\psi_L)(x)\right|  &\lesssim
    \|b\|_{\operatorname{Lip}_d}  \int_{A_L}\frac{d(x,y)}{L}   \frac{e^{-cd(x,y)^2/s^2}}{V(x,y,s)}\,d\omega(y)\\
    &\lesssim  \|b\|_{\operatorname{Lip}_d}  \int_{\{d(x,y)\geq c_0L\}}  \left(1+\frac{d(x,y)}s\right)
    \frac{e^{-cd(x,y)^2/s^2}}{V(x,y,s)}\,d\omega(y)   \lesssim
    \|b\|_{\operatorname{Lip}_d}e^{-cL^2/s^2},
\end{align*}
because $s\leq r_0\leq L$.  The preceding tail bounds permit $L\to\infty$ in
Lemma~\ref{cc:lem:theta-identity}, yielding
\begin{equation*}
\Theta_s\eta_{\tau,s}
=s[M_b,T_iH_{s^2}](T_j\eta_{\tau,s})
-sT_iH_{s^2}\bigl((\partial_jb)\eta_{\tau,s}\bigr).
\end{equation*}
Since $\eta_{\tau,s}=\chi_\tau-\beta_{\tau,s}$, this is
\eqref{cc:eq:component-split}--\eqref{cc:eq:component-error}.

By Lemma~\ref{cc:lem:cutoff}, the first-derivative heat estimate, and Gaussian absorption, we have
$$
    |\Theta_s\beta_{\tau,s}(x)|  \lesssim   \|b\|_{\operatorname{Lip}_d}\mathcal G_sm_s(x),
$$
$$
    \left|s[M_b,T_iH_{s^2}](T_j\eta_{\tau,s})(x)\right|   \lesssim
    \|b\|_{\operatorname{Lip}_d}  \int\frac{d(x,y)}s
    \frac{e^{-cd(x,y)^2/s^2}}{V(x,y,s)}m_s(y)\,d\omega(y)   \lesssim
    \|b\|_{\operatorname{Lip}_d}\mathcal G_sm_s(x),
$$
and
$$
    \left|sT_iH_{s^2}\bigl((\partial_jb)\beta_{\tau,s}\bigr)(x)\right|
     \lesssim   \|b\|_{\operatorname{Lip}_d}\mathcal G_sm_s(x),
$$
which proves \eqref{cc:eq:error-size}.

Finally, by \eqref{cc:eq:theta-x} and Gaussian mass,
\begin{equation*}
    |\Theta_s\chi_\tau(x)-\Theta_s\chi_\tau(x')|   \lesssim
    \|b\|_{\operatorname{Lip}_d}\frac{\|x-x'\|}{s},   \qquad \|x-x'\|\leq s.
\end{equation*}
Writing $\delta=\|x-x'\|$ and $\rho=\|x-y\|$, the first-derivative identity and the buffered heat estimates give
\begin{align*}
     s|T_{i,x}h_{s^2}(x,y)-T_{i,x}h_{s^2}(x',y)|    \lesssim\,&
    \frac{\delta}{s}|h_{s^2}(x,y)|  +\left(1+\frac{\rho}{s}\right)   |h_{s^2}(x,y)-h_{s^2}(x',y)|\\
     \lesssim\,&   \frac{\|x-x'\|}{s}   \frac{e^{-cd(x,y)^2/s^2}}{V(x,y,s)},   \qquad \|x-x'\|\leq s.
\end{align*}
By \eqref{cc:eq:component-split},
\begin{align*}
    E_{\tau,s}(x)-E_{\tau,s}(x')  &=\Theta_s\chi_\tau(x)-\Theta_s\chi_\tau(x') 
     +sT_iH_{s^2}g_\tau(x)-sT_iH_{s^2}g_\tau(x')
\end{align*}
The two preceding bounds, with the kernel difference integrated against
$|g_\tau|\leq\|b\|_{\operatorname{Lip}_d}$, yield \eqref{cc:eq:error-reg}.
\end{proof}

\begin{proposition}\label{cc:prop:component-test}
For every chamber indicator $\chi_\tau$ and every Euclidean ball $B_0=B(x_0,r)$,
\begin{equation*}
    \int_0^r\int_{B_0}|\Theta_s\chi_\tau(x)|^2\,d\omega(x)\frac{ds}{s}
    \leq C\|b\|_{\operatorname{Lip}_d}^2\omega(B_0).
\end{equation*}
The same estimate holds with $\Theta_s$ replaced by $\Theta_s^*$.
\end{proposition}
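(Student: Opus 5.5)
First reduce to smooth symbols.  By Lemma~\ref{cc:lem:smooth-symbol}, take $b_\varepsilon$ smooth, $G$-invariant, with $\|b_\varepsilon\|_{\Lipd}\le\|b\|_{\Lipd}$ and $\max_k\|\partial_kb_\varepsilon\|_\infty\le\|b\|_{\Lipd}$.  For fixed $s$ and $x$, the kernel $\theta_s$ depends linearly on $b(x)-b(y)$.  The Gaussian mass bound (Lemma~\ref{cc:lem:gaussian-mass}), together with $|b_\varepsilon-b|\lesssim\varepsilon\|b\|_{\Lipd}$, gives $\Theta_{s,b_\varepsilon}\chi_\tau\to\Theta_{s,b}\chi_\tau$ pointwise as $\varepsilon\to0$.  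Fatou's lemma then transfers a bound that is uniform in $\varepsilon$.  So I would assume from now on that $b$ satisfies the hypotheses of Lemma~\ref{cc:lem:component-split}.

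For the main estimate, use the splitting \eqref{cc:eq:component-split}:
$$
\Theta_s\chi_\tau=-sT_iH_{s^2}g_\tau+E_{\tau,s},\qquad g_\tau=(\partial_jb)\chi_\tau .
$$
Here $\|g_\tau\|_\infty\le\|b\|_{\Lipd}$, so Lemma~\ref{cc:lem:heat-gradient} (with $\ell=i$) gives
$$
\int_0^r\!\int_{B_0}|sT_iH_{s^2}g_\tau|^2\,d\omega\,\frac{ds}{s}\lesssim\|b\|_{\Lipd}^2\,\omega(B_0).
$$
For the error term, \eqref{cc:eq:error-size} gives $|E_{\tau,s}|^2\lesssim\|b\|_{\Lipd}^2|\mathcal G_sm_s|^2$.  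The wall-packing estimate \eqref{cc:eq:wall-packing} with $q=2$ then bounds
$$
\int_0^r\!\int_{B_0}|\mathcal G_sm_s|^2\,d\omega\,\frac{ds}{s}\lesssim\omega(B_0).
$$
Adding the two pieces gives the claim for $\Theta_s$.  The regularity bound \eqref{cc:eq:error-reg} is not needed here.

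For the adjoint, Lemma~\ref{cc:lem:theta-adjoint} gives $\Theta_{s,b}^*\chi_\tau=-\Theta_{s,\overline b}\chi_\tau$ through the absolutely convergent kernel integrals.  The symbol $\overline b$ lies in $\Lipd$ with the same seminorm, and its mollifications are again smooth and $G$-invariant.  Since Lemma~\ref{cc:lem:product} and the whole component splitting are complex-linear in $b$, the same argument applies verbatim to $\overline b$.

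The one point needing care is the approximation step.  The splitting lemma is stated for smooth $b$, so the pointwise convergence $\Theta_{s,b_\varepsilon}\chi_\tau\to\Theta_{s,b}\chi_\tau$ must hold for each fixed $s$, with no uniformity in $s$ required.  This convergence follows from the bound
$$
|\theta_{s,b_\varepsilon}-\theta_{s,b}|\le 2\|b_\varepsilon-b\|_\infty\, s\,|A^{ij}_{s^2}|,
$$
combined with \eqref{cc:eq:heat-second} and Gaussian mass.  Fatou's lemma, applied on $B_0\times(0,r)$, then passes the uniform bound to the limit.
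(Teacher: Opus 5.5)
Your proposal is correct and follows the paper's proof essentially step for step. In the smooth case you use the splitting \eqref{cc:eq:component-split}, Lemma~\ref{cc:lem:heat-gradient} applied to $g_\tau$, and the $q=2$ case of \eqref{cc:eq:wall-packing}; you then pass to general $b$ through the mollifiers of Lemma~\ref{cc:lem:smooth-symbol}, the fixed-$s$ bound $\lesssim s^{-1}\|b_\varepsilon-b\|_\infty$, and Fatou's lemma, and you obtain the adjoint from Lemma~\ref{cc:lem:theta-adjoint} applied to $\overline b$.
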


\begin{proof}
Write $L_b=\|b\|_{\operatorname{Lip}_d}$.  If $b$ is smooth, Lemma~\ref{cc:lem:component-split},
Lemma~\ref{cc:lem:heat-gradient}, and the $q=2$ case of
\eqref{cc:eq:wall-packing} imply
\begin{align*}
    \int_0^r\int_{B_0}|\Theta_s\chi_\tau|^2\,d\omega\frac{ds}{s}  &\lesssim
    \int_0^r\int_{B_0}|sT_iH_{s^2}g_\tau|^2\,d\omega\frac{ds}{s}
    +L_b^2\int_0^r\int_{B_0}|\mathcal G_sm_s|^2\,d\omega\frac{ds}{s}\\  &\lesssim L_b^2\omega(B_0).
\end{align*}

For general $b$, let $b_\varepsilon$ be given by
Lemma~\ref{cc:lem:smooth-symbol}; denote the corresponding operator and kernel by
$\Theta_{s,\varepsilon}$ and $\theta_{s,b_\varepsilon}$, and write $\theta_{s,b}$ for the kernel of $\Theta_s$.
The smooth estimate is uniform in $\varepsilon$:
\begin{equation}\label{cc:eq:smooth-test}
    \int_0^r\int_{B_0}|\Theta_{s,\varepsilon}\chi_\tau|^2  \,d\omega\frac{ds}{s}
    \lesssim L_b^2\omega(B_0).
\end{equation}
The global uniform convergence of $b_\varepsilon$,
\eqref{cc:eq:heat-second}, and
Lemma~\ref{cc:lem:gaussian-mass} imply
\begin{align*}
    &\sup_x\int_{\R^N}|\theta_{s,b_\varepsilon}(x,y)-\theta_{s,b}(x,y)|\,d\omega(y)
    +\sup_y\int_{\R^N}|\theta_{s,b_\varepsilon}(x,y)-\theta_{s,b}(x,y)|\,d\omega(x)\\
    &\hspace{45mm}\lesssim s^{-1}\|b_\varepsilon-b\|_\infty.
\end{align*}
In particular, for every fixed $s>0$,
\begin{align*}
    |\Theta_{s,\varepsilon}\chi_\tau(x)-\Theta_s\chi_\tau(x)|  &\leq
    2s\|b_\varepsilon-b\|_\infty   \int_{\R^N}|A_{s^2}^{ij}(x,y)|\,d\omega(y)\\
    &\lesssim s^{-1}\|b_\varepsilon-b\|_\infty    \longrightarrow0.
\end{align*}
Fatou's lemma and \eqref{cc:eq:smooth-test} extend the assertion to $b$.

Finally, Lemma~\ref{cc:lem:theta-adjoint} and the estimate for $\overline b$ imply
\begin{align*}
    \int_0^r\int_{B_0}|\Theta_{s,b}^*\chi_\tau(x)|^2\,d\omega(x)\frac{ds}{s}
    &= \int_0^r\int_{B_0}|\Theta_{s,\overline b}\chi_\tau(x)|^2\,d\omega(x)\frac{ds}{s} 
     \lesssim\|\overline b\|_{\operatorname{Lip}_d}^2\omega(B_0)
    \lesssim L_b^2\omega(B_0).
\end{align*}
\end{proof}

\section{Chamber lifting and the canonical heat realization}\label{cc:sec:lifting}

This section proves the uniform $L^2$ bound and joint heat limit in
Theorem~\ref{cc:thm:heat-limit}.  We first realize each truncation as a finite matrix on a chamber;
scalar $T1$ estimates for the integrated entries then provide bounds uniform in both heat endpoints.

We use the enumeration
$$
    G=\{\mathsf g_1,\ldots,\mathsf g_{M_G}\},   \qquad M_G=|G|,   \qquad \mathsf g_1=\mathsf e,
$$
and the open chamber $\mathcal C$ fixed in Section~\ref{cc:sec:prelim}.  By
Lemma~\ref{cc:lem:partition}, the sets $\mathsf g_\rho\mathcal C$ form a disjoint partition of $\R^N$ modulo null sets.  Define
$$
    Uf(x)=\bigl(f(\mathsf g_1x),\ldots,f(\mathsf g_{M_G}x)\bigr),   \qquad x\in\mathcal C.
$$
Conversely, if $F=(F_1,\ldots,F_{M_G})$, define
$$
    (U^{-1}F)(\mathsf g_\rho x)=F_\rho(x),   \qquad x\in\mathcal C.
$$
The wall ambiguity is null.  By reflection invariance of $d\omega$, for $1\leq p<\infty$,
\begin{align*}
    \|Uf\|_{L^p(\mathcal C,d\omega;\ell_{M_G}^p)}^p  
    &=\sum_{\rho=1}^{M_G}\int_{\mathcal C}|f(\mathsf g_\rho x)|^p\,d\omega(x) 
     =\sum_{\rho=1}^{M_G}\int_{\mathsf g_\rho\mathcal C}|f(z)|^p\,d\omega(z)
    =\|f\|_{L^p(\R^N,d\omega)}^p.
\end{align*}
The corresponding essential-supremum identity holds for $p=\infty$.  Hence $U$ is an isometric isomorphism for
every $1\leq p\leq\infty$ and is unitary when $p=2$.

The lift is a finite matrix representation, not a quotient.  The reflected values of a general function remain
independent coordinates, while Lemma~\ref{cc:lem:closest} ensures
$$
    d(\mathsf g_\rho x,\mathsf g_\tau y)=\|x-y\|,   \qquad x,y\in\overline{\mathcal C}.
$$
Thus every possible orbit singularity lies on the common chamber diagonal, and a full-space scalar kernel $K$
becomes the matrix
$$
    K^{\rho\tau}(x,y)=K(\mathsf g_\rho x,\mathsf g_\tau y).
$$
The scalar entries are therefore studied on the space of homogeneous type
$(\mathcal C,\|\cdot\|,d\omega_{\mathcal C})$; the finite coordinate dimension contributes only a structural
constant.

Define
$$
    \boldsymbol{\Theta}_s:=U\Theta_sU^{-1}.
$$
Let $F=(F_1,\ldots,F_{M_G})$ be a compactly supported bounded vector-valued function on $\mathcal C$.  For $x\in\mathcal C$,
\begin{align*}
    (\boldsymbol{\Theta}_sF)_\rho(x)  &=(\Theta_sU^{-1}F)(\mathsf g_\rho x)
    =  \int_{\R^N}\theta_s(\mathsf g_\rho x,z)(U^{-1}F)(z)\,d\omega(z).
\end{align*}
Since the chamber walls are $\omega$-null, $\R^N$ is the disjoint union, modulo null sets, of the reflected chambers
$\mathsf g_\tau\mathcal C$.
Moreover, each map $\mathsf g_\tau:\mathcal C\to\mathsf g_\tau\mathcal C$ preserves $d\omega$, and
$$
    (U^{-1}F)(\mathsf g_\tau y)=F_\tau(y),   \qquad y\in\mathcal C.
$$
Therefore
$$
    (\boldsymbol{\Theta}_sF)_\rho(x)= \sum_{\tau=1}^{M_G}
    \int_\mathcal C   \theta_s(\mathsf g_\rho x,\mathsf g_\tau y)F_\tau(y)\,d\omega(y).
$$
Accordingly the lifted scale kernel is
\begin{equation*}
    \theta_s^{\rho\tau}(x,y)    :=  \theta_s(\mathsf g_\rho x,\mathsf g_\tau y),
    \qquad x,y\in\mathcal C.
\end{equation*}

\begin{lemma}
For every $1\leq \rho,\tau\leq M_G$ and every $x,y\in\mathcal C$,
\begin{equation}\label{cc:eq:lift-scale}
    |\theta_s^{\rho\tau}(x,y)|   \leq   C\|b\|_{\operatorname{Lip}_d}
    \frac1{V_{\mathcal C}(x,y,s)}   \exp\left(-c\frac{\|x-y\|^2}{s^2}\right).
\end{equation}
Moreover, if $x,x',y\in\mathcal C$ and $\|x-x'\|\leq s$, then
\begin{align}\label{cc:eq:lift-scale-x}
    &|\theta_s^{\rho\tau}(x,y)  -\theta_s^{\rho\tau}(x',y)|
    \leq  C\|b\|_{\operatorname{Lip}_d}   \frac{\|x-x'\|}{s}
    \frac1{V_{\mathcal C}(x,y,s)}  \exp\left(-c\frac{\|x-y\|^2}{s^2}\right),
\end{align}
and, if $x,y,y'\in\mathcal C$ and $\|y-y'\|\leq s$, then
\begin{align}\label{cc:eq:lift-scale-y}
    &|\theta_s^{\rho\tau}(x,y)    -\theta_s^{\rho\tau}(x,y')|
    \leq   C\|b\|_{\operatorname{Lip}_d}  \frac{\|y-y'\|}{s}
    \frac1{V_{\mathcal C}(x,y,s)}  \exp\left(-c\frac{\|x-y\|^2}{s^2}\right).
\end{align}
The constants are independent of $\rho$ and $\tau$.
\end{lemma}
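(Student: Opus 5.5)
This lemma is a direct transfer of Corollary~\ref{cc:cor:theta} through the chamber identities. The plan is to apply the full-space scale estimates at the reflected points $(\mathsf g_\rho x,\mathsf g_\tau y)$ and then convert both the orbit distance and the volume normalization into chamber quantities using Lemmas~\ref{cc:lem:closest} and~\ref{cc:lem:chamber-volume}.

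For the size bound \eqref{cc:eq:lift-scale}, apply Corollary~\ref{cc:cor:theta} with $(\mathsf g_\rho x,\mathsf g_\tau y)$ in place of $(x,y)$. This gives the bound $C\|b\|_{\Lipd}V(\mathsf g_\rho x,\mathsf g_\tau y,s)^{-1}e^{-cd(\mathsf g_\rho x,\mathsf g_\tau y)^2/s^2}$. Since $x,y\in\mathcal C\subset\overline{\mathcal C}$, Lemma~\ref{cc:lem:closest} identifies $d(\mathsf g_\rho x,\mathsf g_\tau y)$ with $\|x-y\|$. Lemma~\ref{cc:lem:chamber-volume} then gives $V(\mathsf g_\rho x,\mathsf g_\tau y,s)\approx V_{\mathcal C}(x,y,s)$, uniformly in $\rho,\tau$.

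For the first-variable regularity \eqref{cc:eq:lift-scale-x}, note that $\mathsf g_\rho$ is an orthogonal map, so $\|\mathsf g_\rho x-\mathsf g_\rho x'\|=\|x-x'\|\leq s$. This lets us apply \eqref{cc:eq:theta-x} at the pair $\mathsf g_\rho x,\mathsf g_\rho x'$ with second point $\mathsf g_\tau y$. The resulting right-hand side involves only $V(\mathsf g_\rho x,\mathsf g_\tau y,s)$ and $d(\mathsf g_\rho x,\mathsf g_\tau y)$, both evaluated at the unperturbed pair, so the same two conversions finish the estimate. The second-variable estimate \eqref{cc:eq:lift-scale-y} is identical, using the $y$-regularity in Corollary~\ref{cc:cor:theta} and $\|\mathsf g_\tau y-\mathsf g_\tau y'\|=\|y-y'\|$.

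There is no real obstacle. The one point to check is that the Gaussian and volume factors in Corollary~\ref{cc:cor:theta} are attached to the unperturbed pair, so no stability lemma is needed; this is visible in the statement of that corollary. All constants come from Corollary~\ref{cc:cor:theta} and the comparability constants in Lemma~\ref{cc:lem:chamber-volume}, which depend only on $M_G$ and the Dunkl data, so they are independent of $\rho$ and $\tau$.
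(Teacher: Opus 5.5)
Your proposal is correct and follows the paper's proof essentially verbatim: apply Corollary~\ref{cc:cor:theta} at $(\mathsf g_\rho x,\mathsf g_\tau y)$, transfer the displacements by orthogonality of $\mathsf g_\rho,\mathsf g_\tau$, and convert distance and volume via Lemmas~\ref{cc:lem:closest} and~\ref{cc:lem:chamber-volume}, uniformly in $\rho,\tau$. Your observation that the Gaussian and volume factors in that corollary sit at the unperturbed pair is exactly why no stability lemma is needed.
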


\begin{proof}
Let $z=\mathsf g_\rho x$ and $w=\mathsf g_\tau y$.  By the chamber identity and volume comparability,
$$
    d(z,w)=\|x-y\|,  \qquad  V(z,w,s)\approx V_{\mathcal C}(x,y,s),
$$
uniformly in $\rho$ and $\tau$.  The size estimate
\eqref{cc:eq:lift-scale} follows at once from Corollary~\ref{cc:cor:theta}.  If
$z'=\mathsf g_\rho x'$ and $w'=\mathsf g_\tau y'$, then
$$
    \|z-z'\|=\|x-x'\|,  \qquad   \|w-w'\|=\|y-y'\|.
$$
Applying the two regularity estimates in the same corollary, followed by the preceding distance and volume
comparisons, establishes \eqref{cc:eq:lift-scale-x} and \eqref{cc:eq:lift-scale-y}.
\end{proof}

Since $U\chi_\tau=e_\tau$, Proposition~\ref{cc:prop:component-test}, reflection invariance, and
Lemma~\ref{cc:lem:chamber-volume} transfer the square Carleson tests to
$\boldsymbol\Theta_se_\tau$ and $\boldsymbol\Theta_s^*e_\rho$ on $\mathcal C$, with norm
$O(\|b\|_{\operatorname{Lip}_d}^2)$.

\subsection{Why the single-scale tests do not close the scale integral}

The square Carleson estimates above are not, by themselves, a continuous-family $T1$ theorem: they contain no
cross-scale control.  The following Euclidean example isolates the obstruction and explains why we instead treat
each integrated truncation as a single operator.

\begin{proposition}\label{cc:prop:single-scale}
Scale-kernel size and H\"older estimates, together with the Carleson properties of
$$
    |\Theta_s1(x)|^2\,dx\frac{ds}{s}  \qquad\text{and}\qquad  |\Theta_s^*1(x)|^2\,dx\frac{ds}{s},
$$
do not imply uniform $L^2$ boundedness of $\int_a^B\Theta_s\,ds/s$, even on Euclidean space.
\end{proposition}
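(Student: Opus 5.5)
The plan is to build an explicit counterexample on $\R$ with Lebesgue measure, where all the single-scale hypotheses can be checked by hand. I would use a family of the form
$$
    \Theta_s=M_{a_s}Q_s,\qquad Q_sf=\psi_s*f,\qquad \psi_s(x)=s^{-1}\psi(x/s).
$$
Here $\psi$ is a fixed real Schwartz function with $\int\psi=0$ and compactly supported Fourier transform away from the origin, and $a_s$ is a real multiplier with $|a_s|\leq1$. I would take $a_s$ Lipschitz at scale $s$, meaning $|a_s(x)-a_s(x')|\lesssim|x-x'|/s$. Then the kernel $a_s(x)\psi_s(x-y)$ satisfies the Gaussian-type size bound and first-order Hölder bounds in both variables, uniformly in $s$.

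The first testing condition is automatic: $\Theta_s1=a_s\,Q_s1=0$. The second reads $\Theta_s^*1=Q_s^*a_s$. Its square Carleson property is the only testing condition with content, so the construction must keep $\{Q_s^*a_s\}$ Carleson while making the scale integral large.

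The growth mechanism is to let $a_s$ correlate with a fixed test function. I would fix $f\in L^2$ and set $a_s\approx\operatorname{sgn}(Q_sf)$, suitably smoothed at scale $s$ so that the Lipschitz condition holds. Then
$$
    \Bigl\langle\int_a^B\Theta_s\frac{ds}{s}\,f,g\Bigr\rangle\approx\int_a^B\!\!\int|Q_sf|\,\bar g\,dx\frac{ds}{s}.
$$
This replaces the square function $(\int|Q_sf|^2ds/s)^{1/2}$ by the $\ell^1$-in-scale quantity $\int|Q_sf|\,ds/s$, and the latter is not $L^2$-bounded.

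For $f$ I would take a lacunary superposition of $K$ frequency-localized bumps living on a common unit interval. The natural choice is frequencies $2^{2^k}$ or a similar sparse sequence, $k\leq K$, so that each $Q_s f$ is essentially one bump. With such an $f$, both $\|f\|_2$ and $\|g\|_2$ are $O(1)$ for $g=\one_{[0,1]}$. The pairing then grows like the number of active scale blocks that a Carleson-admissible choice of $a_s$ can tolerate. I expect the bookkeeping to allow $K\approx\log B$ blocks with a total gain of order $\log K$, which gives $\log\log B$ and matches the growth announced before the statement.

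The main obstacle is the Carleson bound for $Q_s^*a_s$. If $a_s$ changes sign in a frequency-coherent way with $f$, then $Q_s^*a_s$ is not small, so the signs cannot simply be copied from $f$. My first attempt would be to choose $a_s$ constant in $s$ on long blocks of scales and to normalize the block amplitudes like $1/k$ (a harmonic weighting). With that choice, $\int_0^r\int_I|Q_s^*a_s|^2\,dx\,ds/s$ is bounded by $\sum_k k^{-2}|I|\lesssim|I|$ uniformly over intervals $I$. Meanwhile the unsquared pairing collects $\sum_{k\leq K}k^{-1}\approx\log K\approx\log\log B$. I would verify the block-wise Carleson estimate by Plancherel on each block, since $Q_s^*$ restricted to a block of scales is uniformly bounded. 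Finally, I would check that the smoothing of the sign function costs only an absolute constant in both the kernel constants and the lower bound for the pairing.
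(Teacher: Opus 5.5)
There is a genuine gap: your class of examples cannot produce a counterexample. For $\Theta_s=M_{a_s}Q_s$ with $\hat\psi$ supported in an annulus $\{c_1\le|\xi|\le c_2\}$ and $a_s$ bounded and Lipschitz at scale $s$, the hypotheses themselves force $\sup_{a,B}\|\int_a^B\Theta_s\,ds/s\|_{2\to2}<\infty$.

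To see this, note that $\Theta_s^*g=Q_s^*(\overline{a_s}\,g)$ has Fourier support in $\{c_1/s\le|\xi|\le c_2/s\}$. A fixed $\xi\ne0$ lies in that set only for $s$ in a set of $ds/s$-measure $\log(c_2/c_1)$. Plancherel and Cauchy--Schwarz in $s$ then give
\[
\Bigl\|\int_a^B\Theta_s^*g\,\frac{ds}{s}\Bigr\|_2^2\leq\log\frac{c_2}{c_1}\int_0^\infty\|\Theta_s^*g\|_2^2\,\frac{ds}{s}.
\]
The kernel $\overline{a_s(y)\psi_s(y-x)}$ of $\Theta_s^*$ has the standard size bound. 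It is also regular at scale $s$ in the integrated variable $y$, because you take $a_s$ Lipschitz at scale $s$. The square-function $T1$ theorem (Christ--Journ\'e, Semmes) therefore bounds the right-hand side by $C(1+A)\|g\|_2^2$, where $A$ is the Carleson constant of $|\Theta_s^*1|^2\,dx\,ds/s$. That is exactly the condition you impose. So band-limited cancellation in $Q_s$ turns the unsquared scale integral into a square function, and your hypotheses control it.

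Concretely, your bookkeeping fails at the normalization. Since $\int_0^\infty\|Q_sf\|_2^2\,ds/s\approx\|f\|_2^2$, a lacunary sum of $K$ unit bumps with $|Q_sf|\gtrsim1$ on $[0,1]$ in each block has $\|f\|_2^2\approx K$, not $O(1)$. Let $w(s)$ be the amplitude of $a_s$; for your sign-coherent choice, $|Q_s^*a_s|\approx w(s)$. Then
\[
\int_a^B w(s)\int_0^1|Q_sf|\,dx\,\frac{ds}{s}\leq\Bigl(\int_a^Bw(s)^2\,\frac{ds}{s}\Bigr)^{1/2}\Bigl(\int_0^\infty\|Q_sf\|_2^2\,\frac{ds}{s}\Bigr)^{1/2}.
\]
The first factor is bounded by the Carleson condition, and the second is $\approx\|f\|_2$. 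Harmonic weights therefore yield $O(\|f\|_2)$, not $\log K$.

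To exploit the gap between $\ell^2$ and $\ell^1$ in scale, you need a test function that all scales see simultaneously, and that requires $\Theta_s$ without cancellation. This is the paper's example: $\Theta_s=a(s)P_s$, with $P_s$ the Gaussian approximate identity and $a(s)=(\log s)^{-1}\one_{\{s\ge e\}}$. Then $\Theta_s1=\Theta_s^*1=a(s)$ is Carleson because $a\in L^2(ds/s)$, and the kernel bounds hold because $|a|\le1$. For $\|\xi\|\le L^{-1}$, the multiplier of $\int_e^L\Theta_s\,ds/s$ is at least $c\int_e^La(s)\,ds/s=c\log\log L$, because $a\notin L^1(ds/s)$.
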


\begin{proof}
Let $P_s$ be the Gaussian approximate identity on $\R^N$, normalized by $P_s1=1$, and let
$$
    a(s)=
    \begin{cases}
        0,&0<s<e,\\
        (\log s)^{-1},&s\geq e,
        \end{cases}
    \qquad \text{and}\qquad  \Theta_s=a(s)P_s.
$$
The kernels of $\Theta_s$ satisfy Gaussian size estimates and first-order H\"older estimates uniformly in $s$, since
$|a(s)|\leq1$.  Moreover,
$$
    \Theta_s1=\Theta_s^*1=a(s),  \qquad  \int_0^r|a(s)|^2\frac{ds}{s}\leq
    \int_e^\infty\frac{ds}{s(\log s)^2}=1.
$$
Thus both displayed measures are Carleson.  On the other hand, the Fourier multiplier of
$\int_e^L\Theta_s\,ds/s$ is
$$
    m_L(\xi)=\int_e^L\frac{e^{-s^2\|\xi\|^2}}{\log s}\frac{ds}{s}.
$$
For $\|\xi\|\leq L^{-1}$, the exponential is bounded below by an absolute positive constant, and hence
$$
    m_L(\xi)\geq c\int_e^L\frac{ds}{s\log s}=c\log\log L.
$$
Therefore the $L^2$ operator norms of these truncations tend to infinity.
\end{proof}

\begin{remark}
For the commutator family, the component tests above provide exactly the square estimates ruled insufficient by the
counterexample.  We therefore prove uniform BMO tests, weak boundedness, and standard-kernel bounds for every
integrated chamber entry, and then apply the ordinary scalar $T1$ theorem.
\end{remark}

\subsection{Closure of the lifted scale integral}

We now combine the special commutator structure with the decomposition underlying the component Carleson estimates.
With reference measure $d\omega_{\mathcal C}$,  let
$$
    S_{a,B}^{\rho\tau}f  :=2c_\ast\int_a^B\boldsymbol\Theta_s^{\rho\tau}f\,\frac{ds}{s},
    \qquad 0<a<B<\infty.
$$
Thus the matrix $(S_{a,B}^{\rho\tau})_{\rho,\tau=1}^{M_G}$ is exactly
$UC_{a,B}^{ij,b}U^{-1}$.  The factor $2$ comes from $t=s^2$, as recorded in
Remark~\ref{cc:rem:normalization}.  The aim of this subsection is to prove bounds independent of
$a$ and $B$.

The $q=1$ estimates in
\eqref{cc:eq:wall-packing}--\eqref{cc:eq:orbit-wall}
provide the wall packing used below.

The calculation in the proof of Lemma~\ref{cc:lem:component-split} shows that, whenever
$\|x-x'\|\leq s$,
\begin{equation}\label{cc:eq:heat-first-x}
     |sT_{i,x}h_{s^2}(x,y)-sT_{i,x}h_{s^2}(x',y)|  \lesssim\frac{\|x-x'\|}{s}\frac{1}{V(x,y,s)}
    \exp\left(-c\frac{d(x,y)^2}{s^2}\right).
\end{equation}
For $0<a<B<\infty$, define the heat-truncated first Riesz operator
$$
    \mathscr R_{i;a,B}g:=\int_a^B T_iH_{s^2}g\,ds.
$$

\begin{lemma}\label{cc:lem:heat-Riesz}
The operators $\mathscr R_{i;a,B}$ are uniformly bounded on $L^2(d\omega)$ and uniformly map
$L^\infty(d\omega)$ to $\operatorname{BMO}(d\omega)$:
$$
    \sup_{0<a<B<\infty}\|\mathscr R_{i;a,B}\|_{L^2\to L^2}\leq C \qquad \text{and}\qquad
    \sup_{0<a<B<\infty} \|\mathscr R_{i;a,B}g\|_{\operatorname{BMO}(d\omega)}
    \leq C\|g\|_\infty.
$$
\end{lemma}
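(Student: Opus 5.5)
The $L^2$ bound comes from the Dunkl transform. The $L^\infty\to\operatorname{BMO}$ bound comes from a local/far splitting in which the far part is controlled by heat-kernel regularity uniformly in the truncation.

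\emph{$L^2$ bound.} By Plancherel, $\mathscr R_{i;a,B}$ is the Dunkl multiplier
$$
m_{a,B}(\xi)=\int_a^B i\xi_i\,e^{-s^2\|\xi\|^2}\,ds
$$
(up to the sign convention for $T_i$). Since
$$
\int_0^\infty e^{-s^2\|\xi\|^2}\,ds=\frac{\sqrt\pi}{2\|\xi\|},
$$
we get $|m_{a,B}(\xi)|\le\frac{\sqrt\pi}{2}\,|\xi_i|/\|\xi\|\le\sqrt\pi/2$ uniformly in $a,B$. The value at the origin is zero, as before. This gives the uniform $L^2$ bound.

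\emph{BMO bound: splitting.} Fix $g\in L^\infty$ and a ball $B_0=B(x_0,r)$. Split $g=g_1+g_2$ with $g_1=g\one_{\mathcal O(B(x_0,4r))}$, as in Lemma~\ref{cc:lem:heat-gradient}.

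For the local part, the uniform $L^2$ bound and Cauchy--Schwarz give
$$
\frac1{\omega(B_0)}\int_{B_0}|\mathscr R_{i;a,B}g_1|\,d\omega\lesssim\frac{\|g_1\|_2}{\omega(B_0)^{1/2}}\lesssim\|g\|_\infty .
$$
This uses $\omega(\mathcal O(B(x_0,4r)))\lesssim\omega(B_0)$.

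For the far part, it suffices to show
$$
|\mathscr R_{i;a,B}g_2(x)-\mathscr R_{i;a,B}g_2(x_0)|\lesssim\|g\|_\infty\qquad\text{for }x\in B_0,
$$
uniformly in $a,B$. That $\mathscr R_{i;a,B}g_2(x_0)$ is finite is clear for a finite interval, since each $T_iH_{s^2}$ has an integrable kernel by Lemma~\ref{cc:lem:heat-first} and Lemma~\ref{cc:lem:gaussian-mass}.

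\emph{BMO bound: the far estimate.} We treat $s\ge r$ and $s<r$ separately, writing $\rho=d(x_0,y)\ge3r$ on the support of $g_2$.

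For $s\ge r$, use \eqref{cc:eq:heat-first-x} with $\|x-x_0\|\le r\le s$. Together with Lemma~\ref{cc:lem:gaussian-mass}, this gives
$$
\int|T_{i,x}h_{s^2}(x,y)-T_{i,x}h_{s^2}(x_0,y)|\,d\omega(y)\lesssim \frac{r}{s^2}.
$$
Hence this range contributes at most $\int_r^\infty r s^{-2}\,ds\lesssim1$ times $\|g\|_\infty$.

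For $s<r$, we do not take differences. Lemma~\ref{cc:lem:heat-first} bounds each of the two points separately by
$$
\int_{d(\cdot,y)\ge 2r}s^{-1}V^{-1}e^{-cd^2/s^2}\,d\omega(y)\lesssim s^{-1}e^{-c r^2/s^2}.
$$
This is by the orbit-annular tail estimate \eqref{cc:eq:cutoff-tail}, and the Gaussian factor makes it integrable in $s$ over $(0,r)$ with total $O(1)$.

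Intersecting with $[a,B]$ only shrinks both ranges, so the bound is uniform in the endpoints. Taking the constant $c_{B_0}=\mathscr R_{i;a,B}g_2(x_0)$ in the BMO definition gives the claim.

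\emph{Main obstacle.} The delicate point is the small-$s$ far range. There the naive kernel bound $s^{-1}V^{-1}$ integrated in $ds$ diverges logarithmically unless the Gaussian separation $d(x,y)\ge 2r$ is used. The argument works only because $g_2$ vanishes on the whole orbit neighbourhood, not merely on a Euclidean ball, so that $d(x,y)\ge 2r$ on its support for every $x\in B_0$.
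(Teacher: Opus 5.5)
Your proof is correct. The $L^2$ bound and the local/far splitting with $g_1=g\one_{\mathcal O(B(x_0,4r))}$ are exactly as in the paper; the far part follows a different route.

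\textbf{The paper's far part.} The paper proves a pointwise, truncation-uniform H\"ormander estimate for the integrated kernel $r_{i;a,B}$. For each $y$ with $r_y=d(x_0,y)\geq 4r$, it splits the scale integral at the $y$-dependent points $h=\|x-x_0\|$ and $r_y$:
\begin{itemize}
\item on $(0,h)$ it uses two separate size bounds, which forces a comparison of $V(x,y,s)$ with $V(x_0,y,s)$ via Lemma~\ref{cc:lem:stability} and doubling;
\item on $[h,r_y]$ it uses the first-derivative regularity with upper volume growth;
\item on $(r_y,\infty)$ it uses the same regularity with lower volume growth.
\end{itemize}
This yields $|r_{i;a,B}(x,y)-r_{i;a,B}(x_0,y)|\lesssim \frac{r}{d(x_0,y)}V(x_0,y,d(x_0,y))^{-1}$, and only then does the paper perform orbit-annular summation in $y$.

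\textbf{Your far part.} You split at the single scale $r$ of the ball and integrate in $y$ first at each fixed scale. For $s\geq r$, the regularity bound together with Lemma~\ref{cc:lem:gaussian-mass} gives $r/s^2$. For $s<r$, the separation $d(\cdot,y)\geq 3r$ gives the Gaussian tail $s^{-1}e^{-cr^2/s^2}$ separately at $x$ and at $x_0$; this is the same device the paper uses in Lemma~\ref{cc:lem:heat-gradient}.

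\textbf{Comparison.} Your route is shorter and never compares volume normalizations centered at two different points. It produces only the integrated H\"ormander condition (tested against bounded functions), which is all the BMO estimate requires. The paper's route produces a genuine pointwise smoothness estimate for the truncated Riesz kernel, uniform in $a,B$, which is the form reusable in kernel-based (Calder\'on--Zygmund or $T1$) arguments.

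A minor point: on the support of $g_2$ one actually has $d(x_0,y)\geq 4r$ and $d(x,y)\geq 3r$ for $x\in B_0$. Your stated lower bounds therefore hold with room to spare.
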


\begin{proof}
The Dunkl multiplier of $\mathscr R_{i;a,B}$ is
$$
    i\xi_i\int_a^B e^{-s^2\|\xi\|^2}\,ds,
$$
whose absolute value is bounded uniformly in $a$ and $B$; the $L^2$ estimate follows.

The kernel is
$$
     r_{i;a,B}(x,y)=\int_a^B T_{i,x}h_{s^2}(x,y)\,ds.
$$
We prove the far-kernel regularity needed for the BMO estimate directly; no pointwise size estimate for the
integrated Riesz kernel is used.  Given $Q=B(x_0,r)$, write
$$
    g=g_1+g_2,   \qquad   g_1=g\one_{\mathcal O(B(x_0,4r))}.
$$
The orbit-ball volume comparison and the $L^2$ estimate imply
\begin{align*}
    \frac1{\omega(Q)}\int_Q|\mathscr R_{i;a,B}g_1|\,d\omega
    &\leq \omega(Q)^{-1/2}\|\mathscr R_{i;a,B}g_1\|_2
    \lesssim\|g\|_\infty.
\end{align*}
For $x\in Q$ and $y\notin\mathcal O(B(x_0,4r))$, write
$$
    h=\|x-x_0\|,    \qquad r_y=d(x_0,y).
$$
Then $h\leq r\leq r_y/4$ and $d(x,y)\approx r_y$.  Split the scale integral into
$$
    (0,h),\qquad [h,r_y],\qquad (r_y,\infty),
$$
and intersect these ranges with $(a,B)$.  If $h=0$, there is nothing to prove.  Hence assume that $h>0$.  Write
$V_y=V(x_0,y,r_y)$.  On $0<s<h$, the two estimates
\eqref{cc:eq:heat-first-scale} require a comparison of the two volume normalizations.  Here
$$
    d(x,y)\geq r_y-h\geq\frac34r_y.
$$
Lemma~\ref{cc:lem:stability}, followed by doubling at the comparable radii $d(x,y)$ and $r_y$, implies
$$
    V(x,y,r_y)\approx V(x_0,y,r_y)=V_y.
$$
The upper volume growth from $s$ to $r_y$ therefore gives
$$
    V(x,y,s)^{-1}+V(x_0,y,s)^{-1}  \lesssim
    \left(\frac{r_y}{s}\right)^{\mathbf N}V_y^{-1}.
$$
Using \eqref{cc:eq:heat-first-scale} for the two kernels, we obtain
\begin{align*}
    &\int_0^h  \bigl|T_{i,x}h_{s^2}(x,y)-T_{i,x}h_{s^2}(x_0,y)\bigr|\,ds \lesssim
    \frac1{V_y}\int_0^h  \frac1s\left(\frac{r_y}{s}\right)^{\mathbf N}
    e^{-cr_y^2/s^2}\,ds  \lesssim  \frac h{r_yV_y}.
\end{align*}
The last estimate follows from $u=r_y/s$ and
$\int_{r_y/h}^\infty u^{\mathbf N-1}e^{-cu^2}\,du\lesssim h/r_y$, since $r_y/h\geq4$.  On
$h\leq s\leq r_y$, \eqref{cc:eq:heat-first-x} and upper volume growth give
\begin{align*}
    &\int_h^{r_y}  \bigl|T_{i,x}h_{s^2}(x,y)-T_{i,x}h_{s^2}(x_0,y)\bigr|\,ds \lesssim
    \frac h{V_y}\int_h^{r_y}  \frac1{s^2}\left(\frac{r_y}{s}\right)^{\mathbf N}
    e^{-cr_y^2/s^2}\,ds  \lesssim   \frac h{r_yV_y}.
\end{align*}
Finally, on $s>r_y$, the same regularity estimate and the lower volume growth in
Lemma~\ref{cc:lem:volume} imply
\begin{align*}
    &\int_{r_y}^{\infty}  \bigl|T_{i,x}h_{s^2}(x,y)-T_{i,x}h_{s^2}(x_0,y)\bigr|\,ds \lesssim
    \frac h{V_y}\int_{r_y}^{\infty}   \frac1{s^2}\left(\frac{r_y}{s}\right)^N\,ds
    \lesssim  \frac h{r_yV_y}.
\end{align*}
All three estimates are absolute, so their constants remain unchanged after intersection with $(a,B)$.  Consequently,
$$
    |r_{i;a,B}(x,y)-r_{i;a,B}(x_0,y)|   \lesssim    \frac{r}{d(x_0,y)}\frac1{V(x_0,y,d(x_0,y))}.
$$
Orbit-annular summation therefore shows that
$$
    |\mathscr R_{i;a,B}g_2(x)-\mathscr R_{i;a,B}g_2(x_0)|   \lesssim\|g\|_\infty.
$$
Using $\mathscr R_{i;a,B}g_2(x_0)$ as the comparison constant proves the BMO estimate.
\end{proof}

The next elementary lemma converts the pointwise wall domination from the component test into BMO control of the
scale integral.

\begin{lemma}\label{cc:lem:wall-BMO}
Suppose that a measurable family $\{E_s\}_{s>0}$ satisfies
$$
    |E_s(x)|\leq A\mathcal G_sm_s(x)
$$
and, whenever $\|x-x'\|\leq s$,
$$
    |E_s(x)-E_s(x')|\leq A\frac{\|x-x'\|}{s}.
$$
Then, uniformly in $0<a<B<\infty$,
$$
    \bigg\|\int_a^B E_s\,\frac{ds}{s}\bigg\|_{\operatorname{BMO}(d\omega)}\leq CA.
$$
\end{lemma}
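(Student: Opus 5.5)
Fix a Euclidean ball $Q=B(x_0,r)$ and set $F:=\int_a^B E_s\,ds/s$.  The plan is the standard split of the scale integral at $s=r$: small scales are handled in mean, with no constant subtracted, and large scales are handled by oscillation.  We then take the comparison constant
$$
    c_Q:=\int_{(a,B)\cap(r,\infty)}E_s(x_0)\,\frac{ds}{s}.
$$
If $(a,B)\cap(r,\infty)$ is empty, $c_Q=0$.  Absolute integrability of $s\mapsto E_s(x_0)/s$ on that range is not guaranteed a priori, so I would work with finite $B$, as the statement allows; then $c_Q$ is a finite integral of a function bounded by $A\sup\mathcal G_sm_s\lesssim A$.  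The target is
$$
    \frac{1}{\omega(Q)}\int_Q|F-c_Q|\,d\omega\lesssim A,
$$
uniformly in $a$, $B$ and $Q$.

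\emph{Large scales.}  For $x\in Q$ and $s>r$ we have $\|x-x_0\|<r<s$, so the regularity hypothesis gives $|E_s(x)-E_s(x_0)|\leq A\,r/s$.  Therefore
$$
    \Bigl|\int_{(a,B)\cap(r,\infty)}\bigl(E_s(x)-E_s(x_0)\bigr)\frac{ds}{s}\Bigr|
    \leq A\int_r^\infty\frac{r}{s}\,\frac{ds}{s}=A,
$$
pointwise on $Q$.

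\emph{Small scales.}  By the size hypothesis and Fubini,
$$
    \frac{1}{\omega(Q)}\int_Q\int_{(a,B)\cap(0,r]}|E_s(x)|\,\frac{ds}{s}\,d\omega(x)
    \leq\frac{A}{\omega(Q)}\int_0^r\int_Q\mathcal G_sm_s(x)\,d\omega(x)\frac{ds}{s}.
$$
The right-hand side is $\lesssim A$ by the $q=1$ case of \eqref{cc:eq:wall-packing} applied to $B=Q$.  Here $\mathcal G_sm_s\geq0$, so the absolute value in \eqref{cc:eq:wall-packing} is harmless.  This step is exactly where the wall-layer packing replaces the missing Carleson control.  It is the only real input, and it has already been proved.

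\emph{Conclusion.}  Adding the two contributions gives the mean-oscillation bound $\lesssim A$ with constants independent of $a$, $B$ and $Q$.  This is the claimed BMO estimate, with the supremum taken over Euclidean balls as in the definition of $\operatorname{BMO}(d\omega)$.

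The main point needing care is measurability and finiteness, so that Fubini applies and $F$ is locally integrable.  For finite $0<a<B$, each $E_s$ is bounded by $A\sup_x\mathcal G_sm_s(x)\lesssim A$ by Lemma~\ref{cc:lem:gaussian-mass}.  Hence $F$ is bounded, with a bound that may depend on $a,B$, which is enough for the definition of BMO to apply.  Joint measurability of $(x,s)\mapsto E_s(x)$ is part of the hypothesis ``measurable family.''  I expect no further obstacle.
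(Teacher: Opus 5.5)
Your proposal is correct and takes the same approach as the paper's proof. You use the same comparison constant $c_Q=\int_{(a,B)\cap(r,\infty)}E_s(x_0)\,ds/s$, the $q=1$ case of \eqref{cc:eq:wall-packing} for the scales below $r$, and the scale regularity bound $A\int_r^\infty (r/s)\,ds/s\le A$ for the scales above $r$. Your extra remarks on the finiteness of $c_Q$ and the boundedness of the truncated integral for fixed $a,B$ are harmless additions.
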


\begin{proof}
Fix $Q=B(x_0,r)$ and  let
$$
    c_Q=
    \begin{cases}
        \displaystyle\int_{\max\{a,r\}}^B E_s(x_0)\,\frac{ds}{s},&B>\max\{a,r\},\\
        0,&B\leq\max\{a,r\}.
\end{cases}
$$
For the scales below $r$, the $q=1$ case of \eqref{cc:eq:wall-packing} gives
\begin{align*}
    \frac1{\omega(Q)}\int_Q  \int_a^B\one_{\{s<r\}}|E_s(x)|\,\frac{ds}{s}\,d\omega(x)
    &\leq  \frac{CA}{\omega(Q)}\int_0^r\int_Q\mathcal G_sm_s(x)\,d\omega(x)\frac{ds}{s}  \leq CA.
\end{align*}
For $s\geq r$ and $x\in Q$, the scale regularity gives
$$
    |E_s(x)-E_s(x_0)|\leq CA\frac{r}{s}.
$$
Consequently,
$$
    \int_r^\infty |E_s(x)-E_s(x_0)|\frac{ds}{s}   \leq CA\int_r^\infty\frac{r}{s}\frac{ds}{s}
    \leq CA.
$$
Combining the two ranges,
$$
    \frac1{\omega(Q)}\int_Q  \bigg|\int_a^B E_s(x)\frac{ds}{s}-c_Q\bigg|d\omega(x)  \leq CA.
$$
Taking the supremum over $Q$ establishes the lemma.
\end{proof}

We can now upgrade the scale component tests to the integrated $T1$ tests actually required by the single-operator
$T1$ theorem.

\begin{proposition}\label{cc:prop:component-BMO}
For every chamber index $\tau$ and all $0<a<B<\infty$,
$$
    \bigg\|\int_a^B\Theta_s\chi_\tau\,\frac{ds}{s}\bigg\|_{\operatorname{BMO}(d\omega)}
    \leq C\|b\|_{\operatorname{Lip}_d}.
$$
The same estimate holds with $\Theta_s$ replaced by $\Theta_s^*$.  Consequently, for every pair
$1\leq\rho,\tau\leq M_G$,
$$
    \sup_{0<a<B<\infty}  \left(  \|S_{a,B}^{\rho\tau}1\|_{\operatorname{BMO}(\mathcal C,d\omega_{\mathcal C})}
    +\|(S_{a,B}^{\rho\tau})^*1\|_{\operatorname{BMO}(\mathcal C,d\omega_{\mathcal C})}  \right)
    \leq C\|b\|_{\operatorname{Lip}_d}.
$$
\end{proposition}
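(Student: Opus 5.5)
The plan is to reduce first to a smooth symbol and then use the decomposition of Lemma~\ref{cc:lem:component-split}. Assume first that $b$ is smooth, $G$-invariant and satisfies $\max_k\|\partial_kb\|_\infty\leq\|b\|_{\operatorname{Lip}_d}$. Then
\begin{equation*}
    \int_a^B\Theta_s\chi_\tau\,\frac{ds}{s}
    =-\int_a^B sT_iH_{s^2}g_\tau\,\frac{ds}{s}+\int_a^BE_{\tau,s}\,\frac{ds}{s}
    =-\mathscr R_{i;a,B}g_\tau+\int_a^BE_{\tau,s}\,\frac{ds}{s},
\end{equation*}
because $s\cdot ds/s=ds$. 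Since $\|g_\tau\|_\infty\leq\|b\|_{\operatorname{Lip}_d}$, Lemma~\ref{cc:lem:heat-Riesz} bounds the first term in BMO by $C\|b\|_{\operatorname{Lip}_d}$, uniformly in $a,B$. The error family satisfies the size bound \eqref{cc:eq:error-size} and the regularity bound \eqref{cc:eq:error-reg} with $A\simeq\|b\|_{\operatorname{Lip}_d}$. Hence Lemma~\ref{cc:lem:wall-BMO} bounds the second term uniformly as well.

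For general $b\in\operatorname{Lip}_d$, take the mollified symbols $b_\varepsilon$ of Lemma~\ref{cc:lem:smooth-symbol}. Their smooth-case bounds are uniform in $\varepsilon$. On the fixed finite interval $[a,B]$, the proof of Proposition~\ref{cc:prop:component-test} gives
\begin{equation*}
    |\Theta_{s,\varepsilon}\chi_\tau-\Theta_s\chi_\tau|\lesssim s^{-1}\|b_\varepsilon-b\|_\infty,
\end{equation*}
and $s^{-1}\leq a^{-1}$ there. So the integrated functions converge uniformly. Uniform convergence preserves BMO bounds, since the oscillation over each ball passes to the limit. For the adjoint, apply Lemma~\ref{cc:lem:theta-adjoint}, which gives $\Theta_s^*\chi_\tau=-\Theta_{s,\overline b}\chi_\tau$, and use $\|\overline b\|_{\operatorname{Lip}_d}=\|b\|_{\operatorname{Lip}_d}$.

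The transfer to the chamber is the remaining point. Since $U\chi_\tau=e_\tau$, the definition of $S_{a,B}^{\rho\tau}$ gives
\begin{equation*}
    S_{a,B}^{\rho\tau}1(x)=2c_\ast\Bigl(\int_a^B\Theta_s\chi_\tau\,\frac{ds}{s}\Bigr)(\mathsf g_\rho x),\qquad x\in\mathcal C,
\end{equation*}
with the analogous formula for the adjoint entry. This assumes the action on the unbounded constant is defined through the absolutely convergent lifted kernel integrals, which holds on a finite interval by Gaussian mass. We must then show that if $u\in\operatorname{BMO}(\R^N,d\omega)$, then $u\circ\mathsf g_\rho$ restricted to $\mathcal C$ lies in $\operatorname{BMO}(\mathcal C,d\omega_{\mathcal C})$ with comparable norm. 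By invariance it suffices to treat $u|_{\mathcal C}$. For $x\in\overline{\mathcal C}$, Lemma~\ref{cc:lem:chamber-volume} gives
\begin{equation*}
    \omega(B(x,r)\cap\mathcal C)\approx\omega(B(x,r)),
\end{equation*}
so
\begin{equation*}
    \frac{1}{\omega(B_{\mathcal C})}\int_{B_{\mathcal C}}|u-u_B|\,d\omega\lesssim\frac{1}{\omega(B)}\int_B|u-u_B|\,d\omega.
\end{equation*}
Chamber balls with centers in $\mathcal C$ are handled this way, and taking the constant $u_B$ suffices up to a factor $2$.

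The main obstacle is this justification of the chamber BMO transfer together with the meaning of $S^{\rho\tau}_{a,B}1$ on the noncompact chamber. Both are short given the cited lemmas; the substantive analytic input is already contained in Lemmas~\ref{cc:lem:heat-Riesz} and~\ref{cc:lem:wall-BMO}.
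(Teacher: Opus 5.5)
Your proposal is correct and follows the paper's proof essentially step by step. You split the smooth-symbol case into $-\mathscr R_{i;a,B}g_\tau$ plus the wall error, controlled by Lemmas~\ref{cc:lem:heat-Riesz} and~\ref{cc:lem:wall-BMO}. You then pass to general $b$ by mollification, using the uniform bound $\|b_\varepsilon-b\|_\infty\int_a^B ds/s^2$, treat the adjoint through $\overline b$, and transfer to the chamber by reflection invariance and the chamber volume comparability of Lemma~\ref{cc:lem:chamber-volume}.
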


\begin{proof}
Suppose first that $b$ is smooth.  Lemma~\ref{cc:lem:component-split} and
Lemma~\ref{cc:lem:wall-BMO} imply
\begin{equation*}
    \sup_{0<a<B<\infty}
    \left\|\int_a^BE_{\tau,s}\frac{ds}{s}\right\|_{\operatorname{BMO}(d\omega)}
    \lesssim\|b\|_{\operatorname{Lip}_d}.
\end{equation*}
Since $\|g_\tau\|_\infty\leq\|b\|_{\operatorname{Lip}_d}$,
\begin{align*}
    \int_a^B\Theta_s\chi_\tau\frac{ds}{s}   =-\mathscr R_{i;a,B}g_\tau+
    \int_a^BE_{\tau,s}\frac{ds}{s}\qquad\text{and}\qquad
    \left\|\int_a^B\Theta_s\chi_\tau\frac{ds}{s}\right\|_{\operatorname{BMO}(d\omega)}
     \lesssim\|b\|_{\operatorname{Lip}_d}
\end{align*}
by Lemma~\ref{cc:lem:heat-Riesz}.

For general $b$, let $b_\varepsilon$ be the approximants from
Lemma~\ref{cc:lem:smooth-symbol}.  The two-sided estimate in the proof of
Proposition~\ref{cc:prop:component-test} gives
\begin{align*}
    \left\|\int_a^B  (\Theta_{s,b_\varepsilon}-\Theta_{s,b})\chi_\tau\frac{ds}{s}\right\|_\infty
    &\lesssim  \|b_\varepsilon-b\|_\infty\int_a^B\frac{ds}{s^2}  \longrightarrow0.
\end{align*}
Thus the uniform BMO estimate passes to $b$.

The adjoint assertion follows from Lemma~\ref{cc:lem:theta-adjoint}, applied to $\overline b$.

Finally,
$$
    S_{a,B}^{\rho\tau}1(x)    =2c_\ast\left(\int_a^B\Theta_s\chi_\tau\frac{ds}{s}\right)(\mathsf g_\rho x),
    \qquad x\in\mathcal C,
$$
and
$$
    (S_{a,B}^{\rho\tau})^*1(x)   =2c_\ast\left(\int_a^B\Theta_s^*\chi_\rho\frac{ds}{s}\right)(\mathsf g_\tau x).
$$
Reflection preserves $d\omega$.  Moreover,
$\omega(B(x,r))\approx\omega(B(x,r)\cap\mathcal C)$ by
Lemma~\ref{cc:lem:chamber-volume}.  Restricting a full-space BMO function to a reflected
chamber therefore preserves its BMO norm up to a structural constant.  More precisely, if
$Q=B(x,r)\cap\mathcal C$, then $\mathsf g_\rho Q$ lies in $B(\mathsf g_\rho x,r)$ and
$\omega(B(\mathsf g_\rho x,r))\approx\omega(Q)$.  If $h_Q$ denotes the average of
$h\circ\mathsf g_\rho$ on $Q$, then
$$
    \int_Q|h(\mathsf g_\rho z)-h_Q|\,d\omega(z)
    \leq2\int_{\mathsf g_\rho Q}|h-h_{B(\mathsf g_\rho x,r)}|\,d\omega
    \lesssim\omega(Q)\|h\|_{\operatorname{BMO}(d\omega)}.
$$
The two entrywise assertions follow.
\end{proof}

We next verify weak boundedness uniformly in the scale endpoints.  We use the product-bump formulation of WBP
from the scalar $T1$ theorem on a space of homogeneous type.

Let $\eta_{\mathcal C}>0$ denote the H\"older regularity exponent of the Auscher--Hyt\"onen wavelets on the chamber
\cite[Section~7]{AuscherHytonenWavelets}, and fix once and for all $0<\gamma<\eta_{\mathcal C}$.  The next proposition proves precisely
$\operatorname{WBP}(\gamma)$.

\begin{proposition}\label{cc:prop:entry-WBP}
Let $Q=B_{\mathcal C}(x_0,r)$, and let $\varphi,\psi$ be supported in
$Q$ and satisfy
$$
     \|\varphi\|_\infty+r^\gamma[\varphi]_{C^\gamma(\mathcal C)}\leq1,
     \qquad    \|\psi\|_\infty+r^\gamma[\psi]_{C^\gamma(\mathcal C)}\leq1.
$$
Then, uniformly in $0<a<B<\infty$ and in $\rho,\tau$,
$$
     |\langle S_{a,B}^{\rho\tau}\varphi,\psi\rangle_{L^2(\mathcal C,d\omega_{\mathcal C})}|
     \leq C\|b\|_{\operatorname{Lip}_d}\omega_{\mathcal C}(Q).
$$
\end{proposition}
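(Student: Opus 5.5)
The proof will mirror Proposition~\ref{cc:prop:half-WBP}, carried out on the integrated truncated kernel. We first write the pairing on the full space. Put $\Phi=U^{-1}(\varphi e_\tau)$ and $\Psi=U^{-1}(\psi e_\rho)$; these are supported in $\mathsf g_\tau Q$ and $\mathsf g_\rho Q$, respectively. Then
$$
\langle S_{a,B}^{\rho\tau}\varphi,\psi\rangle
=\iint_{\mathcal C\times\mathcal C}K_{a,B}^{\rho\tau}(x,y)\varphi(y)\overline{\psi(x)}\,d\omega(y)d\omega(x),
$$
where $K_{a,B}^{\rho\tau}(x,y)=2c_\ast\int_a^B\theta_s^{\rho\tau}(x,y)\,ds/s$. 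For each finite interval this integral converges absolutely, by the Schur bound $\log(B/a)$. The key pointwise input is the uniform majorant
$$
|K_{a,B}^{\rho\tau}(x,y)|\lesssim\|b\|_{\operatorname{Lip}_d}\frac1{V_{\mathcal C}(x,y,\|x-y\|)}.
$$
We obtain it from \eqref{cc:eq:lift-scale} in the form $|\theta_s|\lesssim \|b\|\,(d/s)\,V^{-1}e^{-cd^2/s^2}$, which holds because $|b(x)-b(y)|\le\|b\|d$, combined with \eqref{cc:eq:int-global} after the substitution $t=s^2$. Truncating the $s$-integral to $(a,B)$ only shrinks an absolutely convergent integral. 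This bound is not integrable near the diagonal, so the size estimate alone will not close the argument; the difficulty is entirely local.

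We split into the two cases used earlier, with $r_0=\|x_0-y_0\|$ (here both bumps live in the same $Q$, so effectively $r_0=0$, and only the local case matters). In the local case we need cancellation. The plan is to exploit the antisymmetry inherited from Lemma~\ref{cc:lem:theta-adjoint}: for real $b$, $\theta_s(y,x)=-\overline{\theta_s(x,y)}$. When $b$ is real, $\Theta_s$ is then skew-adjoint, which allows us to write
$$
\langle S\varphi,\psi\rangle=\tfrac12\iint K^{\rho\tau}(x,y)\bigl[\varphi(y)\overline{\psi(x)}-\varphi(x)\overline{\psi(y)}\bigr]\,d\omega\,d\omega
+\tfrac12\iint\bigl(K^{\rho\tau}(x,y)+\overline{K^{\tau\rho}(y,x)}\bigr)\varphi(x)\overline{\psi(y)}\cdots .
$$
The second term vanishes by this antisymmetry applied to the entries, since $K^{\tau\rho}(y,x)=K(\mathsf g_\tau y,\mathsf g_\rho x)$. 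We therefore first reduce to the entry form $\langle S^{\rho\tau}\varphi,\psi\rangle+\overline{\langle S^{\tau\rho}\psi,\varphi\rangle}=0$, and handle the off-diagonal entries ($\rho\ne\tau$) by pairing them with their transposes. Complex $b$ splits into real and imaginary parts.

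The bracket is bounded by $\min\{1,\|x-y\|^\gamma/r^\gamma\}$. Lemma~\ref{cc:lem:WBP-annuli}, transplanted to $\mathcal C$ with the comparison $V\approx V_{\mathcal C}$ from Lemma~\ref{cc:lem:chamber-volume}, then gives a bound of $\omega_{\mathcal C}(Q)$, uniformly in $a,B$; on the chamber $d=\|x-y\|$, so the Hölder gain $\gamma$ plays the role of $\min\{1,d/r\}$.

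The main obstacle is the symmetrization when $\rho\neq\tau$. A pairing of a single entry with the same bumps is not obviously antisymmetric, because $K^{\rho\tau}(x,y)$ and $K^{\rho\tau}(y,x)=-\overline{K^{\tau\rho}(x,y)}$ are different entries. If the transpose trick fails, the fallback is to split $\varphi(y)\overline{\psi(x)}=\varphi(x)\overline{\psi(x)}+(\varphi(y)-\varphi(x))\overline{\psi(x)}$. The Hölder difference term is handled by the annular estimate above. The remaining term $\int\overline{\psi}\varphi\,(S^{\rho\tau}\one_{\tilde Q})\,d\omega$, with $\tilde Q$ a slightly larger local set, is controlled through the integrated component decomposition of Lemma~\ref{cc:lem:component-split}. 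The Riesz part $-\mathscr R_{i;a,B}g_\tau$ is uniformly $L^2$ bounded by Lemma~\ref{cc:lem:heat-Riesz}, giving $\|b\|\,\omega(Q)$ by Cauchy--Schwarz after localizing with a smooth cutoff. The error part $\int_a^BE_{\tau,s}\,ds/s$ has local $L^1$ average $\lesssim\|b\|$ at scales $s<r$ by \eqref{cc:eq:orbit-wall} with $q=1$. At scales $s\ge r$ we use the local constant from the proof of Lemma~\ref{cc:lem:wall-BMO}, and that constant is annihilated because it multiplies only the bounded pairing $\int\overline\psi\varphi$, once we insert the cutoff version $\one-\one_{\tilde Q}$, whose far contribution has size $\lesssim r/s$. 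As in the earlier propositions, smooth $b$ is treated first and the general case follows by the approximation of Lemma~\ref{cc:lem:smooth-symbol}.
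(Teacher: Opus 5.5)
Your antisymmetrization is correct for the diagonal entries, and there it is a genuinely shorter route than the paper's. For real $b$ the truncated kernel $2c_\ast\int_a^B\theta_s\,ds/s$ is real and antisymmetric on $\R^N\times\R^N$ (Lemma~\ref{cc:lem:heat-structure} and \eqref{cc:eq:heat-symmetry}). Hence $k^{\rho\rho}_{a,B}(y,x)=-k^{\rho\rho}_{a,B}(x,y)$, and Lemma~\ref{cc:lem:entry-kernel} with orbit annuli closes the estimate as in Proposition~\ref{cc:prop:half-WBP}.

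For $\rho\neq\tau$, however, the same computation gives $k^{\rho\tau}_{a,B}(y,x)=-k^{\tau\rho}_{a,B}(x,y)$, which is a different entry. So the symmetric remainder $\tfrac12\iint\bigl(k^{\rho\tau}_{a,B}(x,y)+k^{\rho\tau}_{a,B}(y,x)\bigr)\varphi(y)\overline{\psi(x)}$ does not vanish. The identity $\langle S^{\rho\tau}_{a,B}\varphi,\psi\rangle+\overline{\langle S^{\tau\rho}_{a,B}\psi,\varphi\rangle}=0$ is merely $(S^{\rho\tau}_{a,B})^*=-S^{\tau\rho}_{a,B}$, and it bounds neither pairing. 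The off-diagonal case therefore rests on your fallback, and its key step fails as written.

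The reduction itself is fine: freeze $\varphi$ at $x$, handle the H\"older difference by the uniform kernel bound, and it remains to show $\int_Q|S^{\rho\tau}_{a,B}\one_{\tilde Q}|\,d\omega_{\mathcal C}\lesssim\|b\|_{\Lipd}\,\omega_{\mathcal C}(Q)$ uniformly in $a,B$. The problem is how you derive this. You use the decomposition of $S^{\rho\tau}_{a,B}1$ over the whole range $(a,B)$, which Proposition~\ref{cc:prop:component-BMO} and Lemma~\ref{cc:lem:wall-BMO} control on $Q$ only modulo a constant.
\begin{itemize}
\item The large-scale piece $\int_{\max\{a,r\}}^BE_{\tau,s}(x_0)\,ds/s$ and the value on $Q$ of the far part of $\mathscr R_{i;a,B}g_\tau$ are a priori only $O\bigl(\|b\|_{\Lipd}(1+\log(B/r))\bigr)$. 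Lemma~\ref{cc:lem:heat-Riesz} cannot absorb the non-localized $g_\tau$.
\item Such a constant $c_Q$ is not annihilated by pairing with $\varphi\overline{\psi}$. It contributes $c_Q\int\varphi\overline{\psi}\,d\omega_{\mathcal C}$, and $\int\varphi\overline{\psi}$ need not vanish. You would need $|c_Q|\lesssim\|b\|_{\Lipd}$, which is essentially the weak boundedness being proved.
\item Similarly, at each scale $s\geq r$ the far term ($\Theta_s$ applied to the lift of $\one_{\mathcal C\setminus\tilde Q}$) is of size $O(\|b\|_{\Lipd})$ on $Q$, not $O(r/s)$. Only its oscillation over $Q$ is $O(r/s)$.
\end{itemize}

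The repair is to split the scale integral at $r$ before decomposing anything.
\begin{itemize}
\item For $s\geq r$ the input $\one_{\tilde Q}$ is localized. So \eqref{cc:eq:lift-scale} and lower volume growth give $|\boldsymbol\Theta^{\rho\tau}_s\one_{\tilde Q}|\lesssim\|b\|_{\Lipd}(r/s)^N$ on $Q$.
\item For $s<r$, take $\tilde Q=B_{\mathcal C}(x_0,2r)$. Lemma~\ref{cc:lem:closest} lets you replace the lift of $\one_{\tilde Q}$ by $\chi_\tau$ at cost $O(\|b\|_{\Lipd}e^{-cr^2/s^2})$.
\item For smooth $b$, Lemma~\ref{cc:lem:component-split} then gives a wall error controlled absolutely by the $q=1$ case of \eqref{cc:eq:wall-packing} on $B(\mathsf g_\rho x_0,r)$.
\item It also gives a Riesz term $\mathscr R_{i;a,\min\{r,B\}}g_\tau$. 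Its local part is handled by Lemma~\ref{cc:lem:heat-Riesz} and Cauchy--Schwarz. Its far part is $\lesssim\|b\|_{\Lipd}\int_0^re^{-cr^2/s^2}\,ds/s$, as in the proof of Lemma~\ref{cc:lem:heat-gradient}.
\end{itemize}
No constant arises, and smooth-symbol approximation finishes the argument.

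Repaired this way, your route covers every $(\rho,\tau)$ without antisymmetry, and it differs from the paper's. The paper mollifies $\Phi=\varphi\circ\pi$ and $\Psi=\psi\circ\pi$ at scale $s$ and replaces both chamber indicators by $\eta_{\tau,s},\eta_{\rho,s}$. It then uses Lemma~\ref{cc:lem:theta-identity} to move $T_j$ onto $f_s$ and $T_i$ onto $g_s$, gaining $(s/r)^\gamma+m_s$ directly. Your version instead freezes $\varphi$ and reuses the integrated component machinery of Proposition~\ref{cc:prop:component-BMO}, with the Riesz truncation capped at $r$.
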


\begin{proof}
Let $\pi:\R^N\to\overline{\mathcal C}$ send a point to the representative of its
$G$-orbit in the closed chamber.  The H\"older functions $\varphi$ and $\psi$ have unique continuous extensions to
$\overline{\mathcal C}$, which we use without changing notation.  By
Lemma~\ref{cc:lem:closest},
$$
        \|\pi z-\pi w\|=d(z,w)\leq\|z-w\|.
$$
Hence
$$
        \Phi(z)=\varphi(\pi z),   \qquad    \Psi(z)=\psi(\pi z)
$$
are $G$-invariant $C^\gamma$ functions supported in a fixed orbit
enlargement of $B(x_0,r)$, with
$$
     \|\Phi\|_\infty+\|\Psi\|_\infty\leq2,   \qquad
     [\Phi]_{C^\gamma}+[\Psi]_{C^\gamma}\lesssim r^{-\gamma}.
$$
By the definition of the lifted entry,
\begin{align*}
     \langle S_{a,B}^{\rho\tau}\varphi,\psi\rangle_{L^2(\mathcal C,d\omega_{\mathcal C})}
     &=2c_\ast\int_a^B
     \langle\Theta_s(\Phi\chi_\tau),\Psi\chi_\rho\rangle_{L^2(\R^N,d\omega)}  \frac{ds}{s}.
\end{align*}

We first assume that $b$ is smooth and $G$-invariant.  Split the
scale integral at $r$.  For $s\geq r$, the lifted scale-size estimate
\eqref{cc:eq:lift-scale}, chamber volume comparability,
and lower volume growth apply through
$$
    \frac{\omega_{\mathcal C}(Q)}{V_{\mathcal C}(x,y,s)}  \lesssim
    \left(\frac rs\right)^N,  \qquad x,y\in Q.
$$
The lifted scale-size estimate therefore gives
$$
     |\langle\Theta_s(\Phi\chi_\tau),\Psi\chi_\rho\rangle|
     \leq C\|b\|_{\operatorname{Lip}_d}\omega_{\mathcal C}(Q)
     \left(\frac{r}{s}\right)^N.
$$
Thus the large scales contribute at most
$C\|b\|_{\operatorname{Lip}_d}\omega_{\mathcal C}(Q)$.

For $0<s\leq r$, let $\Phi_s$ and $\Psi_s$ be Euclidean
mollifications at radius comparable with $s$, using a radial
mollifier.  They remain $G$-invariant, are supported in a fixed orbit
enlargement of $B(x_0,r)$, and satisfy
\begin{align*}
     \|\Phi-\Phi_s\|_\infty+  \|\Psi-\Psi_s\|_\infty
     \lesssim\left(\frac{s}{r}\right)^\gamma,\qquad   s\|\nabla\Phi_s\|_\infty+
     s\|\nabla\Psi_s\|_\infty  \lesssim\left(\frac{s}{r}\right)^\gamma.
\end{align*}
The scale-size estimate and Gaussian mass therefore give
\begin{align*}
     &\big|\langle\Theta_s(\Phi\chi_\tau),\Psi\chi_\rho\rangle
     -\langle\Theta_s(\Phi_s\chi_\tau),\Psi_s\chi_\rho\rangle
     \big| \leq C\|b\|_{\operatorname{Lip}_d}\omega_{\mathcal C}(Q)  \left(\frac{s}{r}\right)^\gamma.
\end{align*}
This error is integrable in $ds/s$.

Let
\begin{equation*}
        f_s=\Phi_s\eta_{\tau,s}, \qquad    g_s=\Psi_s\eta_{\rho,s}.
\end{equation*}
Replacing the two chamber indicators by their cutoffs produces the
boundary errors
\begin{align*}
     \langle\Theta_s(\Phi_s(\chi_\tau-\eta_{\tau,s})),  \Psi_s\chi_\rho\rangle,\qquad \text{and}\qquad
     \langle\Theta_sf_s,  \Psi_s(\chi_\rho-\eta_{\rho,s})\rangle.
\end{align*}
Let $E=\mathcal O(B(x_0,Cr))$, where the structural constant $C$ is fixed.  By the cutoff estimates,
$$
    |\Phi_s(\chi_\tau-\eta_{\tau,s})|  +|\Psi_s(\chi_\rho-\eta_{\rho,s})|  \lesssim m_s\one_E.
$$
Corollary~\ref{cc:cor:theta} and Gaussian mass give, respectively,
\begin{align*}
    |\langle\Theta_s(\Phi_s(\chi_\tau-\eta_{\tau,s})),   \Psi_s\chi_\rho\rangle|
    &\lesssim   \|b\|_{\operatorname{Lip}_d}\int_E\mathcal G_sm_s\,d\omega,
\end{align*}
and
\begin{align*}
    |\langle\Theta_sf_s,  \Psi_s(\chi_\rho-\eta_{\rho,s})\rangle|
    &\lesssim  \|b\|_{\operatorname{Lip}_d}\int_E m_s\,d\omega.
\end{align*}
Thus their integrated absolute values are bounded by
\begin{equation*}
     C\|b\|_{\operatorname{Lip}_d}  \int_0^r\int_E
     \bigl(m_s+\mathcal G_sm_s\bigr)\,d\omega\frac{ds}{s}.
\end{equation*}
The $q=1$ case of \eqref{cc:eq:orbit-wall} bounds this by
$C\|b\|_{\operatorname{Lip}_d}\omega_{\mathcal C}(Q)$.

It remains to estimate $\langle\Theta_sf_s,g_s\rangle$.  Since
$\Phi_s$ and $\Psi_s$ are $G$-invariant, the Dunkl product rule and
Lemma~\ref{cc:lem:cutoff} imply
\begin{equation*}
     |sT_jf_s|+|sT_ig_s|  \leq C\left(\left(\frac{s}{r}\right)^\gamma+m_s\right)
     \one_{\mathcal O(B(x_0,Cr))}.
\end{equation*}
By Lemma~\ref{cc:lem:theta-identity},
\begin{align*}
     \Theta_sf_s   &=[M_b,T_iH_{s^2}](sT_jf_s)   -sT_iH_{s^2}((\partial_jb)f_s).
\end{align*}
The kernel of the first term is
$(b(x)-b(y))T_{i,x}h_{s^2}(x,y)$.  The orbit Lipschitz estimate and
Lemma~\ref{cc:lem:heat-first} bound its absolute
value by
\begin{equation*}
     C\|b\|_{\operatorname{Lip}_d}\frac1{V(x,y,s)}  \exp\left(-c\frac{d(x,y)^2}{s^2}\right).
\end{equation*}
Since $g_s$ is bounded by one and supported in $E$, the corresponding double integral satisfies
\begin{align*}
    &|\langle[M_b,T_iH_{s^2}](sT_jf_s),g_s\rangle| \lesssim  \|b\|_{\operatorname{Lip}_d}
    \int_E\int_E  \frac{e^{-cd(x,y)^2/s^2}}{V(x,y,s)}   \left(\left(\frac sr\right)^\gamma+m_s(y)\right)
    \,d\omega(y)d\omega(x).
\end{align*}
Gaussian mass, reflection invariance, and the fact that the fixed enlargement $E$ has measure comparable to
$\omega_{\mathcal C}(Q)$ therefore imply
\begin{align*}
     |\langle[M_b,T_iH_{s^2}](sT_jf_s),g_s\rangle|  &\leq C\|b\|_{\operatorname{Lip}_d}
     \left(\left(\frac{s}{r}\right)^\gamma\omega_{\mathcal C}(Q)  +\int_E\mathcal G_sm_s\,d\omega\right).
\end{align*}
For the second term, skew-adjointness of $T_i$, self-adjointness of
$H_{s^2}$, and their commutation give
\begin{align*}
     |\langle sT_iH_{s^2}((\partial_jb)f_s),g_s\rangle|  &=|\langle(\partial_jb)f_s,H_{s^2}(sT_ig_s)\rangle|\\
     &\leq C\|b\|_{\operatorname{Lip}_d}  \int_E\left[  \left(\frac sr\right)^\gamma+\mathcal G_sm_s(x)
     \right]d\omega(x)\\
     &\leq C\|b\|_{\operatorname{Lip}_d}  \left(\left(\frac{s}{r}\right)^\gamma\omega_{\mathcal C}(Q)
     +\int_E\mathcal G_sm_s\,d\omega\right).
\end{align*}
The factor $(s/r)^\gamma$ is integrable in $ds/s$, and the wall term
is controlled by \eqref{cc:eq:orbit-wall} with $q=1$, establishing the
required estimate for smooth $b$.

Finally, use the smooth symbol approximation in
Lemma~\ref{cc:lem:smooth-symbol}.  If $\Theta_{s,\varepsilon}$
is associated with $b_\varepsilon$, then the two-sided estimate in the proof of
Proposition~\ref{cc:prop:component-test} gives
\begin{align*}
    &\left|  \left\langle(\Theta_{s,\varepsilon}-\Theta_s)(\Phi\chi_\tau),  \Psi\chi_\rho\right\rangle\right|
    \lesssim  \omega_{\mathcal C}(Q)s^{-1}\|b_\varepsilon-b\|_\infty,
\end{align*}
and
\begin{align*}
     \left|  \int_a^B  \left\langle(\Theta_{s,\varepsilon}-\Theta_s)(\Phi\chi_\tau),
    \Psi\chi_\rho\right\rangle\frac{ds}{s}  \right|
    \lesssim  \omega_{\mathcal C}(Q)\|b_\varepsilon-b\|_\infty  \int_a^B\frac{ds}{s^2}  \longrightarrow0.
\end{align*}
Passing to the limit in the uniform smooth estimates proves the assertion for every
$b\in\operatorname{Lip}_d$.
\end{proof}

The scale estimates also provide uniform Calder\'on--Zygmund estimates for every finite integrated entry.

\begin{lemma}\label{cc:lem:entry-kernel}
The kernel
$$
    k_{a,B}^{\rho\tau}(x,y)   =2c_\ast\int_a^B\theta_s^{\rho\tau}(x,y)\,\frac{ds}{s}
$$
satisfies, uniformly in $0<a<B<\infty$ and in $\rho,\tau$,
$$
    |k_{a,B}^{\rho\tau}(x,y)|  \leq C\|b\|_{\operatorname{Lip}_d}
    \frac1{V_{\mathcal C}(x,y,\|x-y\|)}.
$$
If $\|x-x'\|\leq\|x-y\|/4$, then
$$
    |k_{a,B}^{\rho\tau}(x,y)-k_{a,B}^{\rho\tau}(x',y)| \leq C\|b\|_{\operatorname{Lip}_d}
    \frac{\|x-x'\|}{\|x-y\|}   \frac1{V_{\mathcal C}(x,y,\|x-y\|)},
$$
and the analogous estimate holds in the second variable.

Consequently, for every fixed $0<\gamma\leq1$, if $\|x-x'\|\leq\|x-y\|/2$, then
$$
    |k_{a,B}^{\rho\tau}(x,y)-k_{a,B}^{\rho\tau}(x',y)| \leq C_\gamma\|b\|_{\operatorname{Lip}_d}
    \left(\frac{\|x-x'\|}{\|x-y\|}\right)^\gamma   \frac1{V_{\mathcal C}(x,y,\|x-y\|)},
$$
and the analogous estimate holds in the second variable when $\|y-y'\|\leq\|x-y\|/2$.
\end{lemma}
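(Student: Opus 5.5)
The plan is to integrate the lifted scale estimates \eqref{cc:eq:lift-scale}--\eqref{cc:eq:lift-scale-y} in $ds/s$ over $(a,B)\subset(0,\infty)$, bounding every integrand absolutely. This makes all constants automatically independent of $a$, $B$, $\rho$, $\tau$. Throughout, write $r=\|x-y\|$ for $x\ne y$ in $\mathcal C$. By Lemma~\ref{cc:lem:closest} we have $r=d(\mathsf g_\rho x,\mathsf g_\tau y)$, and Lemma~\ref{cc:lem:chamber-volume} gives $V_{\mathcal C}(x,y,s)\approx V(\mathsf g_\rho x,\mathsf g_\tau y,s)$. The chamber integrals can therefore be converted into the full-space heat-parameter integrals of Lemma~\ref{cc:lem:heat-integrals}.

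For the size bound, I will use \eqref{cc:eq:lift-scale}, which gives $|k_{a,B}^{\rho\tau}(x,y)|\lesssim\|b\|_{\Lipd}\int_0^\infty V_{\mathcal C}(x,y,s)^{-1}e^{-cr^2/s^2}\,ds/s$. Splitting at $s=r$, the upper volume growth of Lemma~\ref{cc:lem:volume} (transferred to the chamber) bounds the range $s<r$ by $V_{\mathcal C}(x,y,r)^{-1}\int_0^r (r/s)^{\mathbf N}e^{-cr^2/s^2}\,ds/s\lesssim V_{\mathcal C}(x,y,r)^{-1}$. For $s\ge r$, monotonicity of volume in $s$ is not enough alone because of the logarithm, so I will use the lower growth $V_{\mathcal C}(x,y,s)\gtrsim (s/r)^N V_{\mathcal C}(x,y,r)$, which gives $\int_r^\infty (r/s)^N\,ds/s\lesssim1$. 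This is the same computation as \eqref{cc:eq:int-global} after the substitution $t=s^2$.

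For the first-variable regularity, let $\delta=\|x-x'\|\le r/4$ and split the scales at $s=\delta$. On $s\ge\delta$, apply \eqref{cc:eq:lift-scale-x}; the integral $\delta\int_\delta^\infty s^{-1}V_{\mathcal C}(x,y,s)^{-1}e^{-cr^2/s^2}\,ds/s$ is exactly the second term of \eqref{cc:eq:int-perturb} and is bounded by $\delta/(rV_{\mathcal C}(x,y,r))$. On $s<\delta$, use the size bound separately at $(x,y)$ and $(x',y)$. For the pair $(x',y)$ we have $\|x'-y\|\approx r$ and, by Lemma~\ref{cc:lem:stability} (or directly by doubling in the chamber), $V_{\mathcal C}(x',y,\cdot)\approx V_{\mathcal C}(x,y,\cdot)$ at comparable radii. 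The first term of \eqref{cc:eq:int-perturb}, together with Corollary~\ref{cc:cor:nearby}, then gives $\int_0^\delta(r/s)^{\mathbf N}e^{-cr^2/s^2}\,ds/s\lesssim\delta/r$. The second variable is handled symmetrically, using \eqref{cc:eq:lift-scale-y}.

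The Hölder consequence is elementary. For $\|x-x'\|\le r/4$ it follows from $\|x-x'\|/r\le(\|x-x'\|/r)^\gamma$. For $r/4<\|x-x'\|\le r/2$, I will apply the triangle inequality and the size bound at both points, noting that $\|x'-y\|\ge r/2$ and that doubling makes $V_{\mathcal C}(x',y,\|x'-y\|)\approx V_{\mathcal C}(x,y,r)$. Since $(\|x-x'\|/r)^\gamma\ge4^{-\gamma}$ in this range, the bound follows.

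The main care point is keeping the volume normalization consistent when the base point moves from $x$ to $x'$ in the short-time range. I would handle it as Corollary~\ref{cc:cor:nearby} does, working on the full-space side via $V\approx V_{\mathcal C}$.
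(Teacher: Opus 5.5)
Your proposal is correct. Its skeleton is the same as the paper's: split the scales at $\delta=\|x-x'\|$, use size bounds at both points below $\delta$ (with Lemma~\ref{cc:lem:stability} for the moved point), use first-order regularity above $\delta$, and finish with the same elementary interpolation for the order-$\gamma$ estimate.

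The difference is the level at which you integrate. The paper writes $k_{a,B}^{\rho\tau}(x,y)=K_{b;a,B}^{ij}(\mathsf g_\rho x,\mathsf g_\tau y)$ and reruns Lemma~\ref{cc:lem:K-size} and Propositions~\ref{cc:prop:K-x}--\ref{cc:prop:K-y} on $[a^2,B^2]$. In those proofs the symbol difference stays outside the heat integral, via $|b(x)-b(x')|\le\|b\|_{\Lipd}\delta$ and $|b(x')-b(y)|\lesssim\|b\|_{\Lipd}r$. The integrands therefore keep their full decay, Lemma~\ref{cc:lem:heat-integrals} and Corollary~\ref{cc:cor:nearby} apply verbatim, and the chamber identity together with $V\approx V_{\mathcal C}$ then transfers everything.

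You instead integrate the lifted one-scale bounds for $\theta_s^{\rho\tau}$, where the factor $d/s$ has already been absorbed into the Gaussian. So your integrals are not literally those of Lemma~\ref{cc:lem:heat-integrals}, and your phrases ``the same computation'' and ``exactly the second term'' are inaccurate. In the variable $t=s^2$:
\begin{itemize}
\item your size integral lacks the factor $r/\sqrt t$ present in \eqref{cc:eq:int-global};
\item your large-scale regularity integral is a constant times $\delta\int_{\delta^2}^\infty t^{-3/2}V_{\mathcal C}(x,y,\sqrt t)^{-1}e^{-cr^2/t}\,dt$, not the $t^{-2}$ integral in \eqref{cc:eq:int-perturb}, which is only $O(\delta/(r^2V))$.
\end{itemize}
This costs nothing. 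On $t\le r^2$, the inequalities $t^{-1}\le r\,t^{-3/2}$ and $t^{-3/2}\le r\,t^{-2}$ reduce your integrals to $r$ times the ones estimated in the paper. On $t>r^2$, the lower growth $(r/\sqrt t)^N$ that you explicitly invoke closes the integrals. Your direct computations therefore give exactly $V_{\mathcal C}(x,y,r)^{-1}$ and $\delta/(rV_{\mathcal C}(x,y,r))$.

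As for what each buys: your route stays on the chamber and never re-splits the symbol factor, while the paper's is a one-line reduction to full-space estimates already proved. Both are uniform in $a,B,\rho,\tau$ for the same reason: only absolute integrals over subintervals of $(0,\infty)$ are used.
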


\begin{proof}
Let
\begin{equation*}
    K_{b;a,B}^{ij}(z,w)  =c_\ast(b(z)-b(w))\int_{a^2}^{B^2}A_t^{ij}(z,w)\frac{dt}{\sqrt t}.
\end{equation*}
Then $k_{a,B}^{\rho\tau}(x,y)=K_{b;a,B}^{ij}(\mathsf g_\rho x,\mathsf g_\tau y)$.  The proofs of
Lemma~\ref{cc:lem:K-size} and Propositions~\ref{cc:prop:K-x}--\ref{cc:prop:K-y} use absolute
heat-parameter integrals; restricting them to $[a^2,B^2]$ preserves every bound and makes the constants independent
of $a,B$.  With $r=\|x-y\|$, the chamber distance and volume comparisons read
\begin{equation*}
    d(\mathsf g_\rho x,\mathsf g_\tau y)=\|x-y\|,  \qquad
    V(\mathsf g_\rho x,\mathsf g_\tau y,r)\approx V_{\mathcal C}(x,y,r),
\end{equation*}
and hence the stated size and first-order estimates follow.

Let $h=\|x-x'\|$.  The first-order estimate implies the order-$\gamma$ estimate when
$h\leq r/4$.  If $r/4<h\leq r/2$, then $r'=\|x'-y\|\approx r$ and
$V_{\mathcal C}(x',y,r')\approx V_{\mathcal C}(x,y,r)$; hence the two size estimates give
\begin{equation*}
    |k_{a,B}^{\rho\tau}(x,y)-k_{a,B}^{\rho\tau}(x',y)|
    \lesssim\frac{\|b\|_{\operatorname{Lip}_d}}{V_{\mathcal C}(x,y,r)}
    \lesssim_\gamma  \|b\|_{\operatorname{Lip}_d}\left(\frac hr\right)^\gamma
    \frac1{V_{\mathcal C}(x,y,r)}.
\end{equation*}
The second-variable estimate is identical.
\end{proof}

We can now apply the ordinary single-operator $T1$ theorem, not a continuous-scale substitute.

\begin{theorem}\label{cc:thm:entry-L2}
For every $1\leq\rho,\tau\leq M_G$,
$$
    \sup_{0<a<B<\infty}  \|S_{a,B}^{\rho\tau}\|_{L^2(\mathcal C,d\omega_{\mathcal C})\to 
    L^2(\mathcal C,d\omega_{\mathcal C})}  \leq C\|b\|_{\operatorname{Lip}_d}.
$$
Consequently,
$$
    \sup_{0<a<B<\infty}  \|UC_{a,B}^{ij,b}U^{-1}\|_{L^2(\mathcal C;\ell_{M_G}^2)\to L^2(\mathcal C;\ell_{M_G}^2)}
    \leq C\|b\|_{\operatorname{Lip}_d}.
$$
\end{theorem}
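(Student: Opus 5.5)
The plan is to apply the scalar $T1$ theorem on spaces of homogeneous type, \cite[Theorem~12.3]{AuscherHytonenWavelets}, to each entry $S_{a,B}^{\rho\tau}$ separately on $(\mathcal C,\|\cdot\|,d\omega_{\mathcal C})$. By Lemma~\ref{cc:lem:chamber-volume} this is a space of homogeneous type. For fixed $0<a<B<\infty$ the entry is a priori bounded on $L^2(\mathcal C,d\omega_{\mathcal C})$, since the Schur bound $\lesssim\|b\|_{\operatorname{Lip}_d}\log(B/a)$ recorded after Remark~\ref{cc:rem:normalization} transfers through the unitary $U$. Hence there is no question about the existence of the operator or the meaning of its testing distributions; the only task is to make the constant independent of $a,B$.

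I would verify the hypotheses of that theorem with constants $\lesssim\|b\|_{\operatorname{Lip}_d}$ and uniform in $a,B,\rho,\tau$. This takes four checks.

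First, the entry is associated with $k_{a,B}^{\rho\tau}$. This holds for all pairs of test functions, not only separated ones, because the kernel is absolutely integrable for finite endpoints. I would take the standard-kernel estimates, with the $\gamma$-Hölder form and the normalization $V_{\mathcal C}(x,y,\|x-y\|)$, from Lemma~\ref{cc:lem:entry-kernel}. If the cited theorem normalizes by $\omega(B_{\mathcal C}(x,\|x-y\|))$ instead, then doubling on $\mathcal C$ makes the two normalizations comparable.

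Second, Proposition~\ref{cc:prop:component-BMO} gives $S_{a,B}^{\rho\tau}1$ and $(S_{a,B}^{\rho\tau})^*1$ in $\operatorname{BMO}(\mathcal C,d\omega_{\mathcal C})$. One must check that the function $1$ in the $T1$ theorem is interpreted consistently with the pointwise definition used there. For finite endpoints both operators act on bounded functions through absolutely convergent integrals with Gaussian decay, so $S1$ is a genuine function. Its pairing with mean-zero atoms agrees with the cutoff-plus-tail definition, because the tail correction is a constant times the mean of the atom.

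Third, Proposition~\ref{cc:prop:entry-WBP} supplies $\operatorname{WBP}(\gamma)$ with $\gamma<\eta_{\mathcal C}$, which is exactly the regularity required in the Auscher--Hytönen formulation.

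The theorem then yields $\|S_{a,B}^{\rho\tau}\|_{2\to2}\le C(\text{CZ}+\text{BMO}+\text{WBP})\lesssim\|b\|_{\operatorname{Lip}_d}$, uniformly in the endpoints. The step I would watch most closely is the compatibility just described: the hypotheses of the cited theorem (kernel normalization, the WBP test class, the definition of $T1$) must match the forms proved above. I would handle it by the a priori boundedness remark for finite $a,B$, which also excludes any extra diagonal distribution.

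For the matrix bound, $UC_{a,B}^{ij,b}U^{-1}=(S_{a,B}^{\rho\tau})_{\rho,\tau}$. For $F\in L^2(\mathcal C;\ell^2_{M_G})$, Cauchy--Schwarz gives
$$
\sum_\rho\Big\|\sum_\tau S_{a,B}^{\rho\tau}F_\tau\Big\|_2^2\le M_G^2\max_{\rho,\tau}\|S_{a,B}^{\rho\tau}\|_{2\to2}^2\|F\|^2.
$$
So the operator norm is at most $M_G\max_{\rho,\tau}\|S^{\rho\tau}_{a,B}\|_{2\to2}$, and $M_G$ is a structural constant. Finally, since $U$ is unitary, the same bound holds for $C_{a,B}^{ij,b}$ on $L^2(\R^N,d\omega)$.
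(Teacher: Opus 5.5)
Your proposal is correct and follows essentially the same route as the paper: you apply the Auscher--Hyt\"onen $T1$ theorem entrywise on $(\mathcal C,\|\cdot\|,d\omega_{\mathcal C})$. Lemma~\ref{cc:lem:entry-kernel} supplies the uniform kernel estimates, and Proposition~\ref{cc:prop:component-BMO} supplies the two BMO tests, with finite-endpoint absolute convergence identifying $S_{a,B}^{\rho\tau}1$ with the distributional datum. Proposition~\ref{cc:prop:entry-WBP} gives $\operatorname{WBP}(\gamma)$, and Cauchy--Schwarz over the $M_G^2$ entries gives the matrix bound. The only difference is cosmetic: you set up the finite-endpoint operator through the a priori Schur bound $\log(B/a)$, while the paper phrases it as continuity $\mathcal V_\gamma\to\mathcal V_\gamma'$ with endpoint-dependent constants, and both serve the same purpose.
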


\begin{proof}
Lemma~\ref{cc:lem:chamber-volume} shows that the chamber is a metric space of homogeneous
type.  The restricted measure $d\omega_{\mathcal C}$ is a $\sigma$-additive Radon measure on the Borel sets and is
finite on bounded balls.  Thus it also satisfies the measure convention clarified in
\cite{AuscherHytonenAddendum}.

Fix the exponent $0<\gamma<\eta_{\mathcal C}$ chosen above and let
$$
    \mathcal V_\gamma=C_0^\gamma(\mathcal C).
$$
For fixed $a$ and $B$, the finite scale integral defines
$$
    S_{a,B}^{\rho\tau}:\mathcal V_\gamma\longrightarrow\mathcal V_\gamma'.
$$
For $f\in\mathcal V_\gamma$ and $x\notin\supp f$, the lifted scale-size estimate and Gaussian mass give
$$
    \int_a^B\int_{\mathcal C}  |\theta_s^{\rho\tau}(x,y)f(y)|\,d\omega(y)\frac{ds}{s}
    \leq  C\|b\|_{\operatorname{Lip}_d}\|f\|_\infty\log\frac Ba<\infty.
$$
Fubini's theorem therefore gives
$$
    S_{a,B}^{\rho\tau}f(x)  =\int_{\mathcal C}k_{a,B}^{\rho\tau}(x,y)f(y)\,d\omega(y)
$$
for almost every $x\notin\supp f$.  After pairing with a second function in
$\mathcal V_\gamma$, the same estimate gives the required continuity, with a constant which may depend on $a$ and $B$.
Lemma~\ref{cc:lem:entry-kernel} supplies the kernel estimates of order $\gamma$, uniformly in
$a$, $B$, $\rho$, and $\tau$.

The same Gaussian domination makes the finite-scale constant tests absolutely convergent.  Fubini's theorem
therefore identifies the function denoted by $S_{a,B}^{\rho\tau}1$ in
Proposition~\ref{cc:prop:component-BMO} with the distributional $T1$ datum.  With our convention that
the complex pairing is linear in the first variable, the transpose test in the scalar theorem is
$$
    {}^{\mathrm t}S_{a,B}^{\rho\tau}1  =\overline{(S_{a,B}^{\rho\tau})^*1}.
$$
It consequently has the same BMO norm as the adjoint test established in that proposition.  We have
$$
    \|S_{a,B}^{\rho\tau}1\|_{\operatorname{BMO}(\mathcal C,d\omega_{\mathcal C})}
    +\|(S_{a,B}^{\rho\tau})^*1\|_{\operatorname{BMO}(\mathcal C,d\omega_{\mathcal C})}
    \leq C\|b\|_{\operatorname{Lip}_d}.
$$
Proposition~\ref{cc:prop:entry-WBP} verifies $\operatorname{WBP}(\gamma)$ with the same constant.  Hence
\cite[Theorem~12.3]{AuscherHytonenWavelets} applies uniformly and yields the first estimate.  Its
quantitative bound depends on the homogeneous-type structure and on the displayed kernel, WBP, and BMO constants,
all of which are uniform here.  It is therefore independent of $a$, $B$, $\rho$, and $\tau$.  In particular, the
argument applies the scalar theorem separately to each integrated truncation; it uses no continuous-family $T1$
principle.

The matrix has only $M_G^2$ entries.  If $F=(F_1,\ldots,F_{M_G})$, then the first estimate and Cauchy--Schwarz in the
finite coordinate sums give
\begin{align*}
    \| (S_{a,B}^{\rho\tau})_{\rho,\tau=1}^{M_G}F\|_{L^2(\mathcal C;\ell_{M_G}^2)}^2
    &\leq M_G\sum_{\rho,\tau=1}^{M_G}\|S_{a,B}^{\rho\tau}F_\tau\|_2^2 \leq C_{M_G}\|b\|_{\operatorname{Lip}_d}^2
        \sum_{\tau=1}^{M_G}\|F_\tau\|_2^2.
\end{align*}
This establishes the second estimate directly, without invoking any convergence statement.
\end{proof}

It remains to prove joint convergence at the two scale endpoints.  This does not require another $T1$ argument.
The finite heat multipliers converge on a dense Sobolev core, and the uniform estimate just proved extends that
convergence to all of $L^2$.

\begin{proposition}\label{cc:prop:joint-limit}
The net $UC_{a,B}^{ij,b}U^{-1}$ converges in the weak-operator topology on
$L^2(\mathcal C,d\omega_{\mathcal C};\ell_{M_G}^2)$ as $a\to0+$ and $B\to+\infty$, jointly and independently of the path.
Its limit is
$$
    \mathbb T_b^{ij}:=U\mathcal C_{b,\mathrm{abs}}^{ij}U^{-1},
$$
and
$$
\|\mathbb T_b^{ij}\|_{L^2(\mathcal C;\ell_{M_G}^2)\to L^2(\mathcal C;\ell_{M_G}^2)}
    \leq C\|b\|_{\operatorname{Lip}_d}.
$$
\end{proposition}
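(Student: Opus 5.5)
Since $U$ is unitary, it suffices to prove that $\langle C_{a,B}^{ij,b}f,g\rangle\to\langle\mathcal C_{b,\mathrm{abs}}^{ij}f,g\rangle$ for all $f,g\in L^2(d\omega)$, jointly as $a\to0+$, $B\to\infty$. The norm bound for $\mathbb T_b^{ij}$ then follows from Theorem~\ref{cc:thm:operator} with $p=2$, or equally by weak lower semicontinuity from Theorem~\ref{cc:thm:entry-L2}. The plan is a standard $\varepsilon/3$ argument. First prove convergence for $f,g\in C_c^\infty(\R^N)$. Then extend to all of $L^2$ using the uniform bound $\sup_{a,B}\|C_{a,B}^{ij,b}\|_{2\to2}\lesssim\|b\|_{\Lipd}$ from Theorem~\ref{cc:thm:entry-L2}, together with the $L^2$ boundedness of $\mathcal C_{b,\mathrm{abs}}^{ij}$ and the density of $C_c^\infty$. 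Because both bounds are uniform in $(a,B)$, the extension step is exactly the one already used for $B_{b_n}$ in the proof of Theorem~\ref{cc:thm:operator}. The resulting convergence is therefore joint and path independent.

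For the dense core, fix $f,g\in C_c^\infty$. First reduce to bounded $b$: truncate $b$ to $b_n$ as in the proof of Theorem~\ref{cc:thm:operator}. For $n$ large, $b_n=b$ on a neighborhood of $\supp f\cup\supp g$. Terms in $C_{a,B}^{ij,b}$ that involve $b$ far from the supports enter only through the kernel pairing against $f$ and $g$, so for large $n$ nothing changes. Alternatively, work directly: since $f,bf\in\operatorname{Dom}(D)$, Fubini on the finite interval (justified by the Schur bound $\lesssim\log(B/a)$) gives
\[
\langle C_{a,B}^{ij,b}f,g\rangle=\langle m_{a,B}(T)f,\overline b g\rangle-\langle m_{a,B}(T)(bf),g\rangle,
\]
where $m_{a,B}(T)=-\pi^{-1/2}\int_{a^2}^{B^2}T_iT_jH_t\,t^{-1/2}dt$. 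This operator has Dunkl multiplier
\[
\pi^{-1/2}\xi_i\xi_j\int_{a^2}^{B^2}e^{-t\|\xi\|^2}t^{-1/2}\,dt.
\]
The sign must be checked against the convention $\mathcal R_j=-T_jD^{-1}$. This multiplier is dominated by $|\xi_i\xi_j|/\|\xi\|\le\|\xi\|$ and converges pointwise to the multiplier $\xi_i\xi_j/\|\xi\|$ of $T_i\mathcal R_j$, since $\int_0^\infty e^{-t\lambda^2}t^{-1/2}dt=\sqrt\pi/\lambda$. By dominated convergence on the Dunkl transform side, in $L^2((1+\|\xi\|^2)d\omega)$, we get $m_{a,B}(T)u\to T_i\mathcal R_ju$ in $L^2$ for every $u\in\operatorname{Dom}(D)$, jointly in $(a,B)$. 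Since $\overline bg\in L^2$ and $bf\in\operatorname{Dom}(D)$ by the weak product rule \eqref{cc:eq:weak-product} (locally, $b$ being bounded on supports), the pairings converge to $\mathfrak c_b^{ij}(f,g)=\langle\mathcal C_{b,\mathrm{abs}}^{ij}f,g\rangle$ by Theorem~\ref{cc:thm:operator}.

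The main obstacle is justifying the finite-interval identity when $b$ is unbounded. The concern is that $\overline b g$ and $bf$ must be in the right spaces; they are, because $f$ and $g$ are compactly supported and $b$ is locally Lipschitz. One must also interchange the $t$-integral with the kernel integral. The kernel identity follows from writing $C_{a,B}^{ij,b}f(x)=b(x)\,m_{a,B}(T)f(x)-m_{a,B}(T)(bf)(x)$ pointwise, with absolute convergence coming from \eqref{cc:eq:heat-second} and Gaussian mass on $[a^2,B^2]$. It is here that the finiteness of the heat interval is essential. The uniform operator bound is not needed at this stage; it is used only in the density extension.
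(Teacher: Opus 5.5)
Your proposal is correct and takes essentially the paper's route. You prove convergence of the finite heat multipliers on $\operatorname{Dom}(D)$, using dominated convergence with the bound $C\|\xi\|$. You apply this on a dense core of test functions, where the weak product rule gives $f,bf\in\operatorname{Dom}(D)$ and $\overline b g\in L^2$. You then extend by an $\varepsilon/3$ argument using the uniform bound of Theorem~\ref{cc:thm:entry-L2}.

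The differences are cosmetic. You use the core $C_c^\infty(\R^N)$ on the full space and conjugate by the unitary $U$, whereas the paper lifts $C_c^\infty(\mathcal C;\bbC^{M_G})$. You also obtain the norm bound from Theorem~\ref{cc:thm:operator} at $p=2$ rather than from the uniform truncation bound.
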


\begin{proof}  
Let
$$
    \mathscr D:=C_c^\infty(\mathcal C;\bbC^{M_G})
    =\left\{F=(F_1,\ldots,F_{M_G}):F_\rho\in C_c^\infty(\mathcal C)\right\}.
$$
The measure $d\omega|_{\mathcal C}$ is a locally finite Borel measure and the chamber wall is $\omega$-null.
Approximation first by compactly supported continuous functions and then by Euclidean mollification inside the open
chamber shows that $C_c^\infty(\mathcal C)$ is dense in $L^2(\mathcal C,d\omega_{\mathcal C})$.  Hence $\mathscr D$
is dense in the lifted vector-valued space.  If $F=G=0$, there is nothing to prove.  Otherwise, compactness and the
finiteness of the coordinate set ensure
\begin{equation*}
    \delta_{F,G}:=  \min_{\substack{1\leq\tau\leq M_G\\   
    \supp F_\tau\cup\supp G_\tau\ne\varnothing}}
    \dist\bigl(\supp F_\tau\cup\supp G_\tau,\partial\mathcal C\bigr)>0.
\end{equation*}
Hence $f=U^{-1}F$ and $g=U^{-1}G$ vanish in a neighborhood of $\mathcal W$ and belong to
$C_c^\infty(\R^N)$.

Let
$$
    A_{a,B}^{ij}=2c_\ast\int_a^B T_iT_jH_{s^2}\,ds.
$$
Then
$ 
C_{a,B}^{ij,b}=M_bA_{a,B}^{ij}-A_{a,B}^{ij}M_b
$ 
on compactly supported test functions.  Let
$$
    m_{a,B}^{ij}(\xi)  =-2c_\ast\xi_i\xi_j\int_a^B e^{-s^2\|\xi\|^2}\,ds,  \qquad
    m^{ij}(\xi)=\frac{\xi_i\xi_j}{\|\xi\|},
$$
where $m^{ij}(0)=0$.  These are the Dunkl multipliers of $A_{a,B}^{ij}$ and $T_i\mathcal R_j$, respectively.
Since $c_\ast=-\pi^{-1/2}$ and
$$
    \int_0^\infty e^{-s^2\|\xi\|^2}\,ds=\frac{\sqrt\pi}{2\|\xi\|},
$$
for $\xi\ne0$ one has
$$
    m^{ij}(\xi)-m_{a,B}^{ij}(\xi)   =-2c_\ast\xi_i\xi_j   \left(\int_0^a+\int_B^\infty\right)e^{-s^2\|\xi\|^2}\,ds.
$$
The two tails satisfy
\begin{align*}
    \left|\xi_i\xi_j\int_0^a e^{-s^2\|\xi\|^2}\,ds\right|   \leq a\|\xi\|^2,\qquad
    \left|\xi_i\xi_j\int_B^\infty e^{-s^2\|\xi\|^2}\,ds\right|
     \leq C\|\xi\|e^{-B^2\|\xi\|^2/2}.
\end{align*}
At $\xi=0$, both multipliers vanish.  Hence $m_{a,B}^{ij}(\xi)\to m^{ij}(\xi)$ jointly as $a\to0+$ and
$B\to+\infty$.  Moreover,
$$
    |m_{a,B}^{ij}(\xi)-m^{ij}(\xi)|\leq C\|\xi\|,
$$
uniformly in $a$ and $B$.  For $u\in\operatorname{Dom}(D)$,
\begin{equation*}
    |(m_{a,B}^{ij}-m^{ij})\mathcal F_\kappa u|^2
    \leq C\|\xi\|^2|\mathcal F_\kappa u|^2\in L^1(d\omega).
\end{equation*}
Dominated convergence and Plancherel therefore give
$$
    \|(A_{a,B}^{ij}-T_i\mathcal R_j)u\|_2\longrightarrow0
$$
jointly at the two endpoints, independently of the path.

Applying \eqref{cc:eq:weak-product} to $f$ and $g$, with symbols
$b$ and $\overline b$, respectively, and using
$\displaystyle
        \|Du\|_2^2=\sum_{k=1}^N\|T_ku\|_2^2,
$
we obtain $bf,\overline b g\in\operatorname{Dom}(D)$.  Consequently,
\begin{align*}
    \langle C_{a,B}^{ij,b}f,g\rangle  &=\langle A_{a,B}^{ij}f,\overline b g\rangle
      -\langle A_{a,B}^{ij}(bf),g\rangle.
\end{align*}
The preceding pairing has a joint, path-independent limit for every
$F,G\in\mathscr D$.  Since $A_{a,B}^{ij}$ converges to
$T_i\mathcal R_j$ on $\operatorname{Dom}(D)$, the limiting pairing is the
commutator pairing in Theorem~\ref{cc:thm:operator}.
Thus the dense-core limit and the bounded operator constructed by the
half-Laplacian route are the same.

The uniform matrix estimate in Theorem~\ref{cc:thm:entry-L2} now extends the limiting bilinear
form from $\mathscr D\times\mathscr D$ to the whole lifted $L^2$ space.  To see both convergence to the stated
operator and path independence explicitly, approximate arbitrary $F,G$ in $L^2$ by $F_0,G_0\in\mathscr D$ and use
\begin{align*}
    &|\langle\mathbb S_{a,B}(F-F_0),G\rangle|
      +|\langle\mathbb S_{a,B}F_0,G-G_0\rangle| +|\langle\mathbb T_b^{ij}(F-F_0),G\rangle|
      +|\langle\mathbb T_b^{ij}F_0,G-G_0\rangle|\\
     \leq\,&  C\|b\|_{\operatorname{Lip}_d}
    \bigl(\|F-F_0\|_2\|G\|_2+\|F_0\|_2\|G-G_0\|_2\bigr).
\end{align*}
Here $\mathbb S_{a,B}=UC_{a,B}^{ij,b}U^{-1}$.  The displayed expression controls the four approximation
errors; the remaining core term
$|\langle(\mathbb S_{a,B}-\mathbb T_b^{ij})F_0,G_0\rangle|$ tends to zero jointly at the two endpoints.  Since the
right-hand side is independent of $a$ and $B$, the limiting bilinear form converges to
$\mathbb T_b^{ij}=U\mathcal C_{b,\mathrm{abs}}^{ij}U^{-1}$ on the whole
lifted $L^2$ space, with the asserted norm.
\end{proof}

Combining the preceding theorem and proposition proves the second principal result.

\begin{proof}[Proof of Theorem~\ref{cc:thm:heat-limit}]
Theorem~\ref{cc:thm:entry-L2} supplies a uniform $L^2$ bound for the lifted truncations, while
Proposition~\ref{cc:prop:joint-limit} identifies their joint, path-independent weak limit as
$U\mathcal C_{b,\mathrm{abs}}^{ij}U^{-1}$.  Unitarity of $U$ transfers both conclusions to the full space.
\end{proof}

\section{Matrix kernel realization}\label{cc:sec:matrix}

We now identify the canonical heat limit with its chamber kernels.  Write
$$
    T_b^{ij}:=[M_b,T_i\mathcal R_j]
$$
for the common operator in Theorems~\ref{cc:thm:operator} and~\ref{cc:thm:heat-limit}.  The point of
this section is to connect its canonical heat realization with the
kernel constructed in Section~\ref{cc:sec:kernels}, and then to identify the finite scalar kernels after
chamber lifting.  The orbit distance used here is always the full $G$-orbit distance.  If some multiplicities vanish, this convention
may include more orbit coincidences than the effective heat kernel actually sees, but it provides one uniform geometric description for
the whole argument.  The corresponding kernel estimates are uniform upper bounds; they need not describe the
sharp singular geometry of every individual matrix entry.

\subsection{Regularized operators and separated supports}

The limit defining $T_b^{ij}$ is a weak operator limit.  Therefore, before using its scalar kernel description, one must verify that
this limit is associated with the kernel $K_b^{ij}$ in the usual bilinear sense.  This verification is made only for separated
supports, which is the form needed later on the chamber.  No principal value representation of
$T_i\mathcal R_j$ is used.

For bounded compactly supported functions, finite regularized operators are ordinary integral operators.
With the normalization convention of Remark~\ref{cc:rem:normalization}, the change of variables $t=s^2$ gives
$$
    C_{a,B}^{ij,b}    =2c_\ast\int_a^B\Theta_s\,\frac{ds}{s}.
$$
The expression is first considered at the level of finite truncations, where Fubini is justified by the compact supports and by the
heat-kernel bounds away from $t=0$ and $t=\infty$.  The passage to the limiting kernel is then a separated-support dominated
convergence argument.

\begin{proposition}\label{cc:prop:full-kernel}
Let $f,g\in L_c^\infty(\R^N,d\omega)$, and suppose that
$$
    \dist_d(\supp g,\supp f)   :=    \inf\{d(x,y):x\in\supp g,  \ y\in\supp f\}>0.
$$
Then
\begin{equation}\label{cc:eq:full-kernel}
    \langle T_b^{ij}f,g\rangle_{L^2(\R^N,d\omega)}
    =  \iint_{\R^N\times\R^N}  K_b^{ij}(x,y)f(y)\overline{g(x)}\,d\omega(y)d\omega(x),
\end{equation}
where $K_b^{ij}$ is defined in \eqref{cc:eq:kernel-def}.
\end{proposition}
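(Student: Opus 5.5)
The plan is to pass to the limit in the finite heat truncations, using the weak convergence of Theorem~\ref{cc:thm:heat-limit} on one side and dominated convergence in the kernel integrals on the other.

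Set $\sigma:=\dist_d(\supp g,\supp f)>0$. By Theorem~\ref{cc:thm:heat-limit}, since $f,g\in L^\infty_c\subset L^2(d\omega)$,
$$
\langle T_b^{ij}f,g\rangle=\lim_{a\to0+,\,B\to\infty}\langle C_{a,B}^{ij,b}f,g\rangle .
$$
For fixed $0<a<B<\infty$, the truncation is an integral operator with kernel $K_{b;a,B}^{ij}(x,y)=c_\ast(b(x)-b(y))\int_{a^2}^{B^2}A_t^{ij}(x,y)\,dt/\sqrt t$. This holds because the Schur bound from Corollary~\ref{cc:cor:theta} and Lemma~\ref{cc:lem:gaussian-mass} justifies Fubini on bounded compactly supported functions. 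Hence
$$
\langle C_{a,B}^{ij,b}f,g\rangle=\iint K_{b;a,B}^{ij}(x,y)f(y)\overline{g(x)}\,d\omega(y)\,d\omega(x).
$$

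Next comes pointwise convergence together with a domination. For every pair $(x,y)$ with $d(x,y)\ge\sigma$, Lemma~\ref{cc:lem:K-size} gives absolute convergence of $\int_0^\infty A_t^{ij}(x,y)\,dt/\sqrt t$, so $K_{b;a,B}^{ij}(x,y)\to K_b^{ij}(x,y)$. The same estimate, applied with the absolute heat-parameter integral restricted to $[a^2,B^2]$, yields a bound uniform in $a,B$:
$$
|K_{b;a,B}^{ij}(x,y)|\le \|b\|_{\operatorname{Lip}_d}\,d(x,y)\int_0^\infty t^{-3/2}V(x,y,\sqrt t)^{-1}e^{-cd(x,y)^2/t}\,dt\lesssim \frac{\|b\|_{\operatorname{Lip}_d}}{V(x,y,d(x,y))}.
$$

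It remains to check that this majorant, multiplied by $|f(y)g(x)|$, is integrable on the product of the supports. This is the one genuine step. On the support we have $d(x,y)\ge\sigma$, and both supports lie in a fixed ball $B(0,R)$. Moreover $V(x,y,d(x,y))\ge\omega(B(x,\sigma))$, and by the doubling estimate of Lemma~\ref{cc:lem:volume} this quantity is bounded below by a positive constant depending on $\sigma$ and $R$, uniformly for $x\in B(0,R)$. The majorant is therefore bounded on the support of the product, and $\|f\|_{L^1}\|g\|_{L^1}<\infty$ makes it integrable. (Alternatively, orbit-annular summation over $d(x,y)\simeq 2^k\sigma$ up to $2R$ gives the bound $\lesssim \log(R/\sigma)\|f\|_\infty\|g\|_{L^1}$.)

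Dominated convergence then shows that the double integrals converge, jointly in $a\to0+$ and $B\to\infty$, to the right-hand side of \eqref{cc:eq:full-kernel}. Comparing this with the weak limit identifies the two sides, which proves the proposition. The only care required is to keep the bound on the truncated heat integral uniform in the endpoints, and this holds because every estimate in Section~\ref{cc:sec:kernels} uses absolute $t$-integrals.
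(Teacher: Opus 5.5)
Your proposal is correct and follows the paper's proof essentially step by step. It writes the finite truncation as a double integral by Fubini and dominates the truncated kernels, uniformly in $a,B$, by $\|b\|_{\operatorname{Lip}_d}/V(x,y,d(x,y))$ from the proof of Lemma~\ref{cc:lem:K-size}; this majorant is bounded on $\supp g\times\supp f$ because $d(x,y)\geq\sigma$ there and the supports are bounded. It then passes to the limit by dominated convergence and identifies that limit with $\langle T_b^{ij}f,g\rangle$ through Theorem~\ref{cc:thm:heat-limit}. The only differences are cosmetic: you justify Fubini by the Schur bound instead of the local boundedness of $A_t^{ij}$ on $B(0,M)^2\times[a^2,B^2]$, and you get the lower volume bound from Lemma~\ref{cc:lem:volume} instead of the explicit volume formula.
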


\begin{proof}
Let $\delta_0=\dist_d(\supp g,\supp f)>0$, and choose $M>0$ so that both supports lie in $B(0,M)$.
For $0<a<B<\infty$, Fubini gives
\begin{align}\label{cc:eq:trunc-pairing}
    \langle C_{a,B}^{ij,b}f,g\rangle  &=  c_\ast\iint_{\R^N\times\R^N}
     (b(x)-b(y)) \left(\int_{a^2}^{B^2} A_t^{ij}(x,y)\frac{dt}{\sqrt t}\right)
     f(y)\overline{g(x)}\,d\omega(y)d\omega(x).
\end{align}
Indeed, \eqref{cc:eq:heat-second} shows that $A_t^{ij}$ is bounded on
$B(0,M)^2\times[a^2,B^2]$, while $b$ is bounded on $B(0,M)$.

On $\supp g\times\supp f$, Lemma~\ref{cc:lem:K-size}, $d(x,y)\geq\delta_0$, and the volume formula imply
\begin{align*}
    &|b(x)-b(y)|\int_0^\infty|A_t^{ij}(x,y)|\frac{dt}{\sqrt t}  |f(y)g(x)|\\
     \lesssim\,&  \|b\|_{\operatorname{Lip}_d}  \frac{|f(y)g(x)|}{V(x,y,d(x,y))}
    \leq C_{M,\delta_0}\|b\|_{\operatorname{Lip}_d}|f(y)g(x)|\in L^1(d\omega\times d\omega).
\end{align*}
Dominated convergence in \eqref{cc:eq:trunc-pairing} therefore permits the limit
$$
    \lim_{\substack{a\to0+\\ B\to+\infty}}  \langle C_{a,B}^{ij,b}f,g\rangle
    = \iint K_b^{ij}(x,y)f(y)\overline{g(x)}\,d\omega(y)d\omega(x).
$$
Theorem~\ref{cc:thm:heat-limit} identifies the same limit as
$\langle T_b^{ij}f,g\rangle_{L^2(\R^N,d\omega)}$; comparison establishes \eqref{cc:eq:full-kernel}.
\end{proof}

\subsection{The lifted final kernel}

We now express the associated kernel after passing to the chamber.  Recall that
$\mathbb T_b^{ij}=UT_b^{ij}U^{-1}$.
For $x,y\in\mathcal C$ and $1\leq\rho,\tau\leq M_G$, define
\begin{equation*}
    \mathbb K_b^{ij,\rho\tau}(x,y)   :=K_b^{ij}(\mathsf g_\rho x,\mathsf g_\tau y).
\end{equation*}
This definition is made only off the chamber diagonal $x=y$.  Indeed, by the chamber representative identity,
$$
    d(\mathsf g_\rho x,\mathsf g_\tau y)=\|x-y\|,   \qquad x,y\in\mathcal C.
$$
Thus all possible full-space orbit singularities, including those involving different reflected chambers, are represented on the
single ordinary diagonal $x=y$ on the chamber.
The indices $\rho,\tau$ are not quotiented out; they remain as matrix entries.

The following lemma records the associated-kernel identity on the chamber.  Vector-valued pairings are taken in
$L^2(\mathcal C,d\omega;\ell^2_{M_G})$, and full-space scalar pairings in $L^2(\R^N,d\omega)$.

\begin{lemma}\label{cc:lem:lift-kernel}
Let $F,G\in L_c^\infty(\mathcal C,d\omega;\bbC^{M_G})$.  Assume that their coordinate supports are separated in the Euclidean metric
on $\mathcal C$, namely
\begin{equation*}
    \delta_{\mathcal C}(F,G) :=   \inf_{\substack{1\leq \rho,\tau\leq M_G\\
    \supp G_\rho\ne\varnothing,\\   
    \supp F_\tau\ne\varnothing}}    \inf\left\{\|x-y\|:  x\in\supp G_\rho,\\
    y\in\supp F_\tau\right\}>0,
\end{equation*}
with the convention $\inf\varnothing=+\infty$.
Then
\begin{align}\label{cc:eq:lift-kernel}
    \langle \mathbb T_b^{ij}F,G\rangle_{L^2(\mathcal C,d\omega;\ell^2_{M_G})}
    &=  \sum_{\rho,\tau=1}^{M_G}   \iint_{\mathcal C\times\mathcal C}
    \mathbb K_b^{ij,\rho\tau}(x,y)
    F_\tau(y)\overline{G_\rho(x)}\,d\omega(y)d\omega(x).
\end{align}
\end{lemma}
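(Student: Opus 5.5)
The plan is to pull the identity back to the full space through the unitary map $U$ and then apply Proposition~\ref{cc:prop:full-kernel} coordinate by coordinate.

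Put $f=U^{-1}F$ and $g=U^{-1}G$. By linearity it suffices to treat a single pair of coordinates: $F=F_\tau e_\tau$ and $G=G_\rho e_\rho$. In that case $f(z)=F_\tau(\mathsf g_\tau^{-1}z)\one_{\mathsf g_\tau\mathcal C}(z)$, and $g$ is described in the same way with $\rho$ in place of $\tau$. Both functions lie in $L_c^\infty(\R^N,d\omega)$. Unitarity of $U$ gives
$$
\langle\mathbb T_b^{ij}F,G\rangle=\langle T_b^{ij}f,g\rangle_{L^2(\R^N,d\omega)}.
$$

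Next I check the hypothesis of Proposition~\ref{cc:prop:full-kernel}. Let $x'\in\supp g$ and $y'\in\supp f$. Then $x'=\mathsf g_\rho x$ and $y'=\mathsf g_\tau y$ with $x\in\supp G_\rho$ and $y\in\supp F_\tau$, where the supports are taken in $\overline{\mathcal C}$. By Lemma~\ref{cc:lem:closest},
$$
d(x',y')=\|x-y\|\geq\delta_{\mathcal C}(F,G)>0.
$$
There is one technical point here. Closures of supports can touch the walls, but the identity of Lemma~\ref{cc:lem:closest} holds on $\overline{\mathcal C}$, so the orbit separation of the full-space supports is still at least $\delta_{\mathcal C}(F,G)$. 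Proposition~\ref{cc:prop:full-kernel} therefore applies and gives
$$
\langle T_b^{ij}f,g\rangle=\iint K_b^{ij}(x',y')f(y')\overline{g(x')}\,d\omega(y')\,d\omega(x').
$$

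The last step is a change of variables. Substitute $x'=\mathsf g_\rho x$ and $y'=\mathsf g_\tau y$ with $x,y\in\mathcal C$. Each reflection preserves $d\omega$, and the walls are null by Lemma~\ref{cc:lem:partition}. The integral becomes
$$
\iint_{\mathcal C\times\mathcal C}K_b^{ij}(\mathsf g_\rho x,\mathsf g_\tau y)F_\tau(y)\overline{G_\rho(x)}\,d\omega(y)\,d\omega(x)
=\iint_{\mathcal C\times\mathcal C}\mathbb K_b^{ij,\rho\tau}(x,y)F_\tau(y)\overline{G_\rho(x)}\,d\omega(y)\,d\omega(x).
$$
The integral is absolutely convergent for two reasons. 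First, Lemma~\ref{cc:lem:K-size} gives the bound $\|b\|_{\Lipd}V^{-1}$. Second, $V(x,y,d)$ is bounded below uniformly on the compact support region once $d\geq\delta_{\mathcal C}$, by the volume formula. Summing over $\rho$ and $\tau$, using linearity in $F$ and conjugate-linearity in $G$, yields \eqref{cc:eq:lift-kernel}.

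The only step that needs care is the separation transfer. The definition of $\delta_{\mathcal C}$ uses supports in $\mathcal C$, and one must make sure that measure-theoretic supports and their closures on $\overline{\mathcal C}$ keep the distance. They do, because the distance between sets equals the distance between their closures, and the closest-representative identity holds on the closed chamber.
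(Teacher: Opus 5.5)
Your proposal is correct and takes the same approach as the paper. You pull back through the unitary $U$, use Lemma~\ref{cc:lem:closest} to convert the Euclidean chamber separation into orbit separation of the full-space supports, apply Proposition~\ref{cc:prop:full-kernel}, and change variables chamber by chamber. The only difference is bookkeeping: you first reduce to a single coordinate pair by sesquilinearity, whereas the paper applies the proposition to $U^{-1}F$ and $U^{-1}G$ at once and then decomposes both variables into reflected chambers.
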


\begin{proof}
The assertion is immediate if $F=0$ or $G=0$; assume otherwise.
Let $f=U^{-1}F$ and $g=U^{-1}G$.  If
$z=\mathsf g_\rho x\in\supp g$ and $w=\mathsf g_\tau y\in\supp f$, then the chamber identity implies
$$
    d(z,w)=d(\mathsf g_\rho x,\mathsf g_\tau y)=\|x-y\|   \geq\delta_{\mathcal C}(F,G).
$$
Thus Proposition~\ref{cc:prop:full-kernel} applies to $f$ and $g$.  Decomposing both variables
into the reflected chambers and using the measure-preserving changes of variables
$z=\mathsf g_\rho x$ and $w=\mathsf g_\tau y$, we obtain
\begin{align*}
    \langle\mathbb T_b^{ij}F,G\rangle  &=\langle T_b^{ij}f,g\rangle =\sum_{\rho,\tau=1}^{M_G}
    \iint_{\mathcal C\times\mathcal C}   K_b^{ij}(\mathsf g_\rho x,\mathsf g_\tau y)
    F_\tau(y)\overline{G_\rho(x)}\,d\omega(y)d\omega(x),
\end{align*}
which is \eqref{cc:eq:lift-kernel}.
\end{proof}

\begin{lemma}\label{cc:lem:matrix-CZ}
For every $i,j=1,\ldots,N$ and every $1\leq\rho,\tau\leq M_G$, the kernel $\mathbb K_b^{ij,\rho\tau}$ is a standard
Calder\'on--Zygmund kernel on $(\mathcal C,\|\cdot\|,d\omega_{\mathcal C})$.  More explicitly, if $x,y\in\mathcal C$ and $x\ne y$,
then
\begin{equation}\label{cc:eq:matrix-size}
    |\mathbb K_b^{ij,\rho\tau}(x,y)|   \leq   C\|b\|_{\operatorname{Lip}_d}
    \frac1{V_{\mathcal C}(x,y,\|x-y\|)}.
\end{equation}
If $x,x',y\in\mathcal C$ and $\|x-x'\|\leq \|x-y\|/4$, then
\begin{align}\label{cc:eq:matrix-x}
    &|\mathbb K_b^{ij,\rho\tau}(x,y)    -\mathbb K_b^{ij,\rho\tau}(x',y)|
    \leq   C\|b\|_{\operatorname{Lip}_d}   \frac{\|x-x'\|}{\|x-y\|}
    \frac1{V_{\mathcal C}(x,y,\|x-y\|)}.
\end{align}
If $x,y,y'\in\mathcal C$ and $\|y-y'\|\leq \|x-y\|/4$, then
\begin{align}\label{cc:eq:matrix-y}
    &|\mathbb K_b^{ij,\rho\tau}(x,y)    -\mathbb K_b^{ij,\rho\tau}(x,y')|
    \leq   C\|b\|_{\operatorname{Lip}_d}   \frac{\|y-y'\|}{\|x-y\|}
    \frac1{V_{\mathcal C}(x,y,\|x-y\|)}.
\end{align}
All constants are independent of $\rho$ and $\tau$.
\end{lemma}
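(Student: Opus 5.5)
The plan is to transfer the full-space estimates for $K_b^{ij}$ from Section~\ref{cc:sec:kernels} to the chamber by means of the chamber identity. For $x,y\in\mathcal C$ with $x\ne y$, put $z=\mathsf g_\rho x$ and $w=\mathsf g_\tau y$. Lemma~\ref{cc:lem:closest} gives $d(z,w)=\|x-y\|>0$, so $K_b^{ij}(z,w)$ is defined and absolutely convergent. Lemma~\ref{cc:lem:chamber-volume}, together with reflection invariance of the volumes, gives
\[
V(z,w,d(z,w))=V(\mathsf g_\rho x,\mathsf g_\tau y,\|x-y\|)\approx V_{\mathcal C}(x,y,\|x-y\|),
\]
uniformly in $\rho$ and $\tau$. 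The size bound \eqref{cc:eq:matrix-size} then follows at once from \eqref{cc:eq:K-size}.

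For the first-variable regularity, take $x,x',y\in\mathcal C$ with $\|x-x'\|\leq\|x-y\|/4$. Set $z'=\mathsf g_\rho x'$. Since $\mathsf g_\rho$ is an isometry, $\|z-z'\|=\|x-x'\|\leq d(z,w)/4$, which is exactly the hypothesis of \eqref{cc:eq:K-x} at the pair $(z,w)$. That estimate yields the bound
\[
C\|b\|_{\operatorname{Lip}_d}\frac{\|x-x'\|}{d(z,w)}\,V(z,w,d(z,w))^{-1}.
\]
Substituting $d(z,w)=\|x-y\|$ and using the volume comparison gives \eqref{cc:eq:matrix-x}. The second-variable estimate \eqref{cc:eq:matrix-y} is proved the same way from \eqref{cc:eq:K-y}, with $w'=\mathsf g_\tau y'$.

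The only point needing care is that the perturbed point must stay inside the chamber, so that $\mathbb K_b^{ij,\rho\tau}(x',y)$ is the lifted entry and the chamber identity applies to it. This holds by hypothesis, since $x'\in\mathcal C$. Note that the full-space estimate \eqref{cc:eq:K-x} does not itself require $z'$ to lie in any particular chamber, so no further argument is needed.

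The constants are independent of $\rho$ and $\tau$ because the comparison constants in Lemma~\ref{cc:lem:chamber-volume} are uniform and $G$ is finite. Finally, since $(\mathcal C,\|\cdot\|,d\omega_{\mathcal C})$ is a space of homogeneous type and the three bounds have the standard form with exponent one, $\mathbb K_b^{ij,\rho\tau}$ is a standard Calder\'on--Zygmund kernel there. I expect no real obstacle: the substantive work was already done in Lemma~\ref{cc:lem:K-size} and Propositions~\ref{cc:prop:K-x}--\ref{cc:prop:K-y}.
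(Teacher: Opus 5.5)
Your proposal is correct and is essentially the paper's own proof. You set $z=\mathsf g_\rho x$, $w=\mathsf g_\tau y$, use the chamber identity $d(z,w)=\|x-y\|$ and the comparability $V(z,w,r)\approx V_{\mathcal C}(x,y,r)$ uniformly in $\rho,\tau$, and then apply \eqref{cc:eq:K-size}, \eqref{cc:eq:K-x}, \eqref{cc:eq:K-y} via $\|z-z'\|=\|x-x'\|\leq d(z,w)/4$ (and likewise in the second variable), exactly as the paper does.
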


\begin{proof}
Let $z=\mathsf g_\rho x$, $w=\mathsf g_\tau y$, and $r=\|x-y\|$.  The chamber identity and chamber volume
comparability give, uniformly in $\rho$ and $\tau$,
$$
    d(z,w)=r,  \qquad   V(z,w,r)\approx V_{\mathcal C}(x,y,r).
$$
The definition of $\mathbb K_b^{ij,\rho\tau}$ and \eqref{cc:eq:K-size} imply \eqref{cc:eq:matrix-size}.

If $\|x-x'\|\leq r/4$ and $z'=\mathsf g_\rho x'$, then
$ 
\|z-z'\|=\|x-x'\|\leq d(z,w)/4.
$ 
Applying \eqref{cc:eq:K-x} and the same comparisons gives
\eqref{cc:eq:matrix-x}.  Likewise, if $\|y-y'\|\leq r/4$ and
$w'=\mathsf g_\tau y'$, then
$ 
\|w-w'\|=\|y-y'\|\leq d(z,w)/4,
$ 
so \eqref{cc:eq:K-y} gives \eqref{cc:eq:matrix-y}.
\end{proof}

\begin{remark}
For fixed $\rho,\tau$, the entry $\mathbb K_b^{ij,\rho\tau}(x,y)$ may represent a full-space interaction
between different chambers.  Its only possible singular set on the chamber is contained in $x=y$, because
$d(\mathsf g_\rho x,\mathsf g_\tau y)=\|x-y\|$; some nonidentity entries may be strictly smoother.  Thus each entry is a scalar
Calder\'on--Zygmund kernel on the usual space of homogeneous type $(\mathcal C,\|\cdot\|,d\omega_{\mathcal C})$, with the ordinary
chamber diagonal.  The finite matrix indices record the reflected geometry; they do not change the underlying diagonal of the
scalar theory.
\end{remark}

For $1\leq\rho,\tau\leq M_G$,  let
$$
    T^{\rho\tau}:=P_\rho\mathbb T_b^{ij}J_\tau,
$$
where $J_\tau$ injects a scalar function on $\mathcal C$ into the $\tau$-th coordinate and $P_\rho$ projects onto the $\rho$-th
coordinate.  Applying Lemma~\ref{cc:lem:lift-kernel} to $F=J_\tau u$ and $G=J_\rho v$ shows that, for separated
$u,v\in L_c^\infty(\mathcal C,d\omega)$,
$$
    \langle T^{\rho\tau}u,v\rangle_{L^2(\mathcal C,d\omega)}
    =  \iint_{\mathcal C\times\mathcal C}  \mathbb K_b^{ij,\rho\tau}(x,y)
    u(y)\overline{v(x)}\,d\omega(y)d\omega(x).
$$
Thus $T^{\rho\tau}$ is associated, in the usual separated-support sense on
$(\mathcal C,\|\cdot\|,d\omega_{\mathcal C})$, with the scalar kernel $\mathbb K_b^{ij,\rho\tau}$.

\begin{proof}[Proof of Corollary~\ref{cc:cor:matrix-kernel}]
Lemma~\ref{cc:lem:K-size} ensures absolute convergence, the preceding coordinate specialization provides separated-support
association, and Lemma~\ref{cc:lem:matrix-CZ} supplies the three uniform kernel estimates.
\end{proof}

Finally, Theorem~\ref{cc:thm:operator} and the $L^p$ isometry of $U$ give
$$
    \|\mathbb T_b^{ij}F\|_{L^p(\mathcal C,d\omega;\ell^p_{M_G})}
    \leq C_p\|b\|_{\operatorname{Lip}_d}  \|F\|_{L^p(\mathcal C,d\omega;\ell^p_{M_G})},
    \qquad 1<p<\infty.
$$
Since $P_\rho$ and $J_\tau$ are contractions, every entry $T^{\rho\tau}=P_\rho\mathbb T_b^{ij}J_\tau$ satisfies the
corresponding scalar $L^p$ estimate.  Thus each scalar entry inherits the full-space bound.

\bigskip
\noindent\textbf{Acknowledgements:}
Ming-Yi Lee is supported by NSTC 114-2115-M-008-006-MY2.  Ji Li is supported by the Australian Research Council through grants
DP220100285 and DP260100485.  Liangchuan Wu is supported by the National Natural Science Foundation of China through grant
12201002.  Ji Li would like to thank Professor Agnieszka Hejna for very helpful discussions on Dunkl transforms and convolution.

\raggedbottom

\end{document}